\documentclass[reqno,11pt]{amsart}

\usepackage[all,pdf]{xy}
\usepackage{epsfig}
\usepackage{amsmath}
\usepackage{amssymb}
\usepackage{amscd}
\usepackage{graphicx}

\usepackage[colorlinks=true]{hyperref}

\hypersetup{urlcolor=red, citecolor=blue}
\makeatletter
\@namedef{subjclassname@2020}{%
	\textup{2020} Mathematics Subject Classification}

\newtheorem{thm}{Theorem}[section]
\newtheorem{lem}[thm]{Lemma}

\newtheorem{prop}[thm]{Proposition}

\newtheorem{cor}[thm]{Corollary}

\theoremstyle{definition}
\newtheorem{defn}[thm]{Definition}
\newtheorem{notation}[thm]{Notation}
\newtheorem{exam}[thm]{Example}
\theoremstyle{remark}

\theoremstyle{remark}
\newtheorem{rem}[thm]{Remark}

\def \N {\mathbb N}

\def \R {\mathbb R}

\def\a{\alpha}

\def\m{\mathcal M}
\def\n{\mathcal N}

\def\o{\mathcal{O}}
\def\per{{\mathcal{P}er}}
\def\e{\mathcal{E}rg}
\def\lip{{\rm Lip}}
\newcommand{\supp}{\operatorname{supp}}
\def\w{\mathbb{W}}
\def\ZZ{\mathcal{Z}}
\def\X{\mathfrak{X}}
\def\D{\mathfrak{D}}
\def\s{\mathfrak{S}}
\def\i{\mathfrak{I}}
\def\v{\mathbb V}
\def\b{\kern 0.02em \flat \kern 0.02em}

\numberwithin{equation}{section}

\begin{document}
\title[Typical periodic optimization]{Typical periodic optimization for dynamical systems: Symbolic dynamics}

\author[W. Huang, O. Jenkinson, L. Xu and Y. Zhang]{Wen Huang, Oliver Jenkinson, Leiye Xu and Yiwei Zhang}

\address[W. Huang]{School of Mathematical Sciences, University of Science and
	Technology of China, Hefei, Anhui 230026, P.R. China}
\email{wenh@mail.ustc.edu.cn}

\address[O.~Jenkinson]{School of Mathematical Sciences, Queen Mary University of London, Mile End Road,
London, E1 4NS, UK}
\email{o.jenkinson@qmul.ac.uk}

\address[L. Xu]{School of Mathematical Sciences, University of Science and
	Technology of China, Hefei, Anhui 230026, P.R. China}
\email{leoasa@mail.ustc.edu.cn}

\address[Y.~Zhang]{School of Mathematical Sciences and Big Data, Anhui University of Science and Technology, Huainan, Anhui 232001, P.R. China}
\email{zhangyw@aust.edu.cn, yiweizhang831129@gmail.com}

\subjclass[2020]{Primary: 37A99; Secondary: 37B02, 37B10, 37B99, 37D05, 37D35.}

\keywords{Typical periodic optimization, ergodic optimization, maximizing measure, dynamical systems, symbolic dynamics, topological dynamics, ergodic theory, sofic shift}


\begin{abstract}
We develop a new theory of maximizing sets in dynamical systems, for the study of ergodic optimization in systems with weak
 hyperbolicity but where the Ma\~n\'e cohomology lemma does not hold.
This leads to new solutions of the Typical Periodic Optimization problem in the Lipschitz category:
existence of an open dense 
set of Lipschitz functions such that each member 
has a unique maximizing measure and this measure is periodic (an equi-distribution on a single periodic orbit).
The theory yields a structural theorem, 
that isolates the part of the system responsible for any robust non-periodic optimization.

The structural
theorem is developed further in the setting of symbolic dynamics: given any shift space, for typical Lipschitz functions the maximizing measure is shown to be either periodic or supported on the Markov boundary of the shift space.
It follows that Contreras' Typical Periodic Optimization theorem \cite{C02}
for shifts of finite type can be extended to a wide class of shift spaces, 
including every sofic shift.

The structural theorem is used to provide the first known example of a shift space 
where   Typical Periodic Optimization fails despite periodic measures being 
dense in the set of all invariant measures.
\end{abstract}

\maketitle

\setcounter{tocdepth}{1}
\tableofcontents

\section{ Introduction}\label{introsection}

Let $T:X\to X$ be a continuous map on a compact metric space, considered as a discrete time dynamical system,
with $\m(X,T)$ the space of $T$-invariant Borel probability measures.
Assume that $T$ has countably many periodic orbits
$\o_1,\o_2,\ldots$,
and let $\mu_n$ be the (unique) 
measure in $\m(X,T)$ supported by $\o_n$.
A continuous function $f:X\to\R$ has the \emph{periodic optimization} property if
there is a (necessarily unique) $n\in\N$ such that
\begin{equation}\label{periodicoptimization}
\int f\, d\mu_n > \int f\, d\mu\quad\text{for all }\mu\in\m(X,T)\setminus\{\mu_n\}\,.
\end{equation}

A remarkable phenomenon observed experimentally in the 1990s, notably by Hunt \& Ott \cite{HO96a, HO96b},
is that for chaotic maps $T$, if the function $f$ is sufficiently regular (e.g.~smooth, or Lipschitz) then it seems to 
\emph{typically} have the periodic optimization property, despite the fact that periodic measures constitute a topologically small subset of $\m(X,T)$.
Hunt \& Ott conjectured in \cite{HO96a, HO96b} that for suitable chaotic maps $T$, the periodic optimization property is typical in a probabilistic sense, and Yuan \& Hunt \cite{YH99} conjectured that the periodic optimization property is typical
in a topological sense when $T$ is either uniformly expanding, or an Axiom A diffeomorphism.
Progress towards the Hunt-Ott Conjecture has been made in the context of symbolic dynamics by
Bochi \& Zhang \cite{bochizhang}, Ding, Li \& Zhang \cite{DLZ24}, and very recently for circle expanding maps by
Gao, Shen \& Zhang \cite{GSZ25}. Work in the spirit of the Yuan-Hunt conjecture started earlier,
with advances made by Bousch
\cite{B01, B08}, Contreras, Lopes \& Thieullen \cite{CLT01}, Morris \cite{morrisnonlinearity},
and Quas \& Siefken \cite{quassiefken}, culminating in the breakthrough article by Contreras \cite{C02},
who showed that if $T$ is open and distance-expanding, and $\lip(X)$ is the space of Lipschitz functions on $X$, then
there is an \emph{open and dense} subset $P\subseteq \lip(X)$ such that each $f\in P$ has the periodic optimization property.
Subsequently, Huang, Lian, Ma, Xu \& Zhang \cite{HLMXZ19} (see also \cite{Boc19}) generalised Contreras' theorem to all uniformly hyperbolic systems (including Axiom A diffeomorphisms, and two-sided shifts of finite type),
and Li \& Zhang \cite{LZ24} proved the analogous result in the setting of expanding Thurston maps.
While the methods of \cite{HLMXZ19, LZ24} differ from those of \cite{C02}, a common
theme in all of these approaches is a combination of some form of closing lemma, shadowing, and perturbation based on a Ma\~n\'e
cohomology lemma.

While the Yuan-Hunt conjecture was effectively resolved by \cite{C02, HLMXZ19}, the spirit of the Hunt-Ott conjecture
is that the periodic optimization property may be typical 
for a considerably wider
class of chaotic dynamical systems;
this intuition was formulated as a \emph{meta-conjecture} in \cite{bochi_slides}, and articulated as the 
\emph{typically periodic optimization conjecture} in \cite{Jen19}.
Related phenomena in parallel areas, involving some form of typical periodic optimization, include Ma\~n\'e's 
conjecture\footnote{As noted by Contreras \cite[p.~4]{contreras24}, some of the original motivation for the general area of ergodic optimization, of which this article forms a part, was provided by Aubry-Mather theory,
and the prospect of using its techniques to answer questions in topological and symbolic dynamics; however, ergodic optimization has now developed to such an extent that the direction of influence can be reversed,
with its techniques (in particular \cite{bressaudquas, HLMXZ19})
being imported into Aubry-Mather theory to supply
the main ideas for the proof of an important conjecture in that field.}
in Lagrangian dynamics
(see \cite{contreras24,Ma92,Ma96}), and the finiteness conjecture of Lagarias \& Wang \cite{lagariaswang} 
concerning the joint spectral radius for sets of matrices
(although refuted by Bousch \& Mairesse \cite{bouschmairesse}, cf.~\cite{jenkinsonpollicott, morrissidorov},
a \emph{typical} version of it is still conjectured, see \cite{maesumi}).

In this article, we develop an approach to extending typical periodic optimization results beyond the uniformly hyperbolic setting of \cite{C02, HLMXZ19}. By contrast with the approaches of  \cite{C02, HLMXZ19,LZ24},
there is no reliance on a shadowing lemma or a Ma\~n\'e cohomology lemma; indeed typical periodic optimization is shown to hold for systems where these two results are false.
Rather, we develop a new theory of maximizing sets (to avoid confusion with existing notions we call these \emph{maximizable} sets), and \emph{countable maximizable families}.
To fix ideas we concentrate on the space $\lip(X)$ of Lipschitz real-valued functions on $X$,
and say that a dynamical system $T:X\to X$
has the \emph{typical periodic optimization (TPO)} property\footnote{Note that the TPO property is stronger than stating that periodic optimization is a generic property of $\lip(X)$, i.e.~that
functions with the periodic optimization property constitute a residual subset of $\lip(X)$.},
or simply that it \emph{has TPO},
if
there is an open and dense subset $P\subseteq \lip(X)$ such that each $f\in P$ has the periodic optimization property (\ref{periodicoptimization}).
For systems admitting a countable maximizable family, we establish a general \emph{structural theorem},
expressing a certain dense open subset of $\lip(X)$ as a union of two open sets,
one corresponding to periodic measures, the other to (potentially) non-periodic measures.
This effectively isolates the part of the system that may preclude typical periodic optimization,
and further analysis of this part (corresponding to the so-called \emph{boundary} of the system) is needed to reveal whether or not periodic optimization is typical.
Notably, the structural theorem may apply to the boundary dynamical system itself, and this procedure (of successively identifying a boundary, then applying the structural theorem to it) may be applied recursively, yielding in some cases a TPO theorem via a finite descent argument.  Conversely, the structural theorem also allows us to construct the first examples of chaotic systems which do \emph{not} have typical periodic optimization (despite periodic measures being dense in the space of all invariant measures), by suitable choice of a desired boundary system without the TPO property, and construction of a dynamical system whose boundary is 
precisely the desired one and moreover robustly supports non-periodic optimization.

This general approach has an inherent flexibility, and provides for TPO theorems to be established for a variety of
dynamical systems enjoying some hyperbolicity (including, for example, systems admitting a countable maximizable family
consisting of uniformly hyperbolic subsystems).
Here we pursue this programme in the setting of symbolic dynamics (i.e.~where $X$ is a set of symbol sequences over a finite alphabet, and $T$ is the shift map), a context in which typical periodic optimization has not previously been conjectured, beyond the case of shifts of finite type\footnote{Contreras' TPO theorem \cite{C02} is for distance-expanding open maps, which in the context of symbolic dynamics is precisely the class of shifts of finite type
(see 
\cite[Thm.~1]{Pa66}); the case of TPO for two-sided shifts of finite type was proved in \cite{HLMXZ19}.}.
It turns out that the TPO property is widespread in symbolic dynamical systems, and not restricted to shifts of finite type. 
For example, it can be shown to hold for all \emph{sofic} systems
 (i.e.~shift spaces that are the continuous image of a shift of finite type under a shift-commuting map, 
see e.g.~\cite{LM95, Wei73}):

\begin{thm}\label{sofictpointrotheorem}
Every sofic shift has the 
TPO
property.
\end{thm}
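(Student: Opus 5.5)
The plan is to prove Theorem~\ref{sofictpointrotheorem} by a finite descent on the ``boundary'', in the spirit of the structural theorem described above. The structural theorem says the following. If $T:X\to X$ admits a countable maximizable family, then a dense open subset of $\lip(X)$ splits as $U_{\rm per}\cup U_{\partial}$. For $f\in U_{\rm per}$ the maximizing measure is unique and periodic. For $f\in U_\partial$ the maximizing measures are supported on the Markov boundary $\partial X$, a subshift of $X$. Since restriction $\lip(X)\to\lip(\partial X)$ is continuous, open and surjective (by McShane extension), TPO for the subshift $\partial X$ should transfer back to $X$.

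Concretely, take $f\in U_\partial$. By TPO for $\partial X$, perturb $f$ slightly so that $f|_{\partial X}$ lies in the open dense set where a single periodic orbit $\mathcal O\subseteq\partial X$ is the unique maximizer among measures on $\partial X$. Staying inside the open set $U_\partial$, every maximizing measure on $X$ lives on $\partial X$, so $\mu_{\mathcal O}$ is the unique maximizer on $X$. Strict uniqueness for a periodic orbit is itself an open condition in $\lip(X)$; alternatively one can add $-\varepsilon\,d(\cdot,\mathcal O)$ to make the optimization robust. This yields an open dense subset of $U_\partial$ with periodic optimization. Combining it with $U_{\rm per}$ gives TPO for $X$.

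The theorem then follows by induction, using two facts.
\begin{itemize}
\item[(a)] Every sofic shift admits a countable maximizable family. Present $X$ by a right-resolving labelled graph. Its irreducible components give finitely many sofic pieces, each a factor of an irreducible SFT, so Contreras-type closing and specification arguments apply there.
\item[(b)] The Markov boundary of a sofic shift is again sofic and strictly ``smaller'' with respect to a well-founded complexity.
\end{itemize}
For (b), I would use as complexity the number of vertices in a minimal right-resolving presentation, or the topological entropy together with the number of irreducible components. The boundary consists of points not seen in the interior of any Markov (SFT-like) region. In the labelled graph these are the sequences forced along the non-synchronizing part, i.e.\ labels of paths avoiding every synchronizing word. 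The set of paths avoiding a fixed synchronizing word is an SFT inside the cover, and its label image is sofic. Because it misses a synchronizing word of an irreducible component, it has either strictly smaller entropy in that component or lives on fewer vertices. The base case is when the boundary is empty or finite, for instance when $X$ is an SFT. There TPO is Contreras' theorem \cite{C02} (or directly $U_\partial=\emptyset$), and for finite sets it is trivial.

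The main obstacle I expect is step (b): identifying the Markov boundary of a sofic shift precisely enough to show it is sofic and of strictly smaller complexity, so that the descent terminates. I would first try to characterize $\partial X$ as the closure of the points whose every left/right tail fails to contain a synchronizing (magic) word of the Fischer cover. The main thing to check is that the definition of Markov boundary used in the paper matches this synchronizing-word description.
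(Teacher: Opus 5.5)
Your plan has a genuine gap, and it is not where you expect it. For a sofic $X$ the whole collection $\{F_X(w):w\in\w(X)\}$ is finite, so no word can have infinitely many follower sets $F_X(uw)$. Hence $\w_\infty(X)=\emptyset$ and $\partial_M X=\emptyset$ (Lemma~\ref{sofic_iff_empty_boundary}). Your ``synchronizing-word description'' of the boundary does not match the definition. In the even shift, $0^\infty$ avoids every synchronizing word, yet each $0^n$ has at most three follower sets of the form $F_X(u0^n)$, so $0^\infty\notin\partial_M X$. The descent in (b) is therefore vacuous: every sofic shift is already at your base case with $U_\partial=\emptyset$.

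All the content thus sits in the claim that $U_{\mathrm{per}}$ is dense, and that is not established. It needs a countable maximizable family whose non-boundary members are already known to have TPO. Mere existence of a countable maximizable family, as you state the structural theorem, gives nothing: $\{X\}$ is always one. Your step (a) does not supply such a family:
\begin{itemize}
\item It is circular, since it reduces to irreducible sofic pieces, whose TPO is the statement being proved.
\item ``Contreras-type'' arguments do not apply to these pieces. Contreras' method relies on a Ma\~n\'e lemma, which already fails for sofic shifts (Remark~\ref{sofic_non_mane}).
\item You cannot pull TPO down from an SFT cover $\pi:Y\to X$ either. Every $f\circ\pi$ lies in the closed subspace of $\lip(Y)$ of functions constant on fibres of $\pi$. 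This subspace has empty interior, so the open dense set $P(Y,\sigma)$ need not meet it at all.
\end{itemize}

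The missing idea is a mechanism that places some maximizing measure of every $f\in\lip(X)$ inside one of countably many SFTs. The paper does this as follows.
\begin{itemize}
\item Take a minimax set $Y$ for $f$ (Lemma~\ref{minmaxexistence}).
\item Since $Y\not\subseteq\partial_M X$, there is $w\in\w(Y)$ with $Y\subseteq X_w$, and $w$ is synchronizing for $X_w$ (Lemmas~\ref{l-1}, \ref{xwiswsynchronizing}). So $[w]_Y$ has the closing property.
\item Together with the Minimax Birkhoff bound, this makes $S_nf$ uniformly bounded above on $Y$. Morris' criterion then shows $Y$ is completely maximizing and dynamically minimal (Theorem~\ref{minimaxclosingcompletelymaximizing}).
\item Bounded return times to $[w]$ give $Y\subseteq X_{w,q}$, which is a $(q+|w|)$-step SFT (Proposition~\ref{zwqsft}).
\end{itemize}
Hence $\{X_{w,q}\}$ is a countable maximizable family of SFTs. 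Baire category, your McShane-based transfer step (which is correct, cf.\ Theorem~\ref{densedense}), and Contreras' theorem on each $X_{w,q}$ then give TPO for $X$.

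One further correction: having a unique periodic maximizing measure is not by itself an open condition in $\lip(X)$. Your fallback of subtracting $\varepsilon\,d(\cdot,\mathcal O)$ is the right fix.
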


For arbitrary shift spaces $X$, the role of the boundary in the structural theorem described above is played by
 the \emph{Markov boundary} $\partial_M X$,  a canonical subshift of $X$
introduced by Thomsen \cite{Tho06}, 
with the property that $\partial_M X=\emptyset$ when $X$ is sofic.
The Markov boundary is a potential obstacle to TPO: in the non-sofic case it
supports invariant measures, 
and potentially supports non-periodic invariant measures $m$ that are \emph{robustly maximizing}, in the sense
that $\left\{f\in \lip(X): \int f\, dm > \int f\, d\mu\text{ for all }\mu\in\m(X,T)\setminus\{m\}\right\}$ has non-empty interior, a 
situation
that would
preclude TPO.
We show that indeed this can occur, even in cases where every invariant measure can be approximated by periodic ones
(previously, the possible existence of such dynamical systems was an open question):

\begin{thm}\label{non_tpo}
There exist 
shift spaces with periodic measures dense in the space of all invariant measures,
but which do not have TPO.
\end{thm}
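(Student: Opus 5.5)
The plan is an explicit construction: a shift space $X$ whose Markov boundary $\partial_M X$ carries a single non-periodic measure that is robustly maximizing, while periodic measures of $X$ stay dense in $\m(X,T)$.

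\textbf{Choice of boundary system.} Take a Sturmian (or another uniquely ergodic, minimal, non-periodic) subshift $Y$ with unique invariant measure $m$. Since $Y$ is minimal and aperiodic, it has no periodic points, so $m$ is not periodic. Build $X\supseteq Y$ so that $\partial_M X=Y$. Concretely, let $X$ be the closure of bi-infinite concatenations of blocks $w\,0^{k}$, where $w$ ranges over words of $Y$ whose length is tied to $k$ by a growth constraint, say $|w|\ge \phi(k)$ with $\phi$ increasing fast. Then:
\begin{itemize}
\item periodic points are plentiful, coming from periodic concatenations;
\item the only limit points beyond $Y$ and the fixed point $0^\infty$ are sequences in which $Y$-words get longer, so the non-sofic part sits exactly on $Y$.
\end{itemize}
Density of periodic measures in $\m(X,T)$ follows from a specification-type argument. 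Any ergodic measure on $X$ is approximated by periodic orbits obtained by closing up long generic segments, since any $X$-word can be extended and closed by appending a block $0^k$ and a long $Y$-word. Measures on $Y$ are handled the same way, because arbitrarily long $Y$-words occur inside periodic points. Taking convex combinations and using the weak$^*$ closure of the set of periodic measures gives density.

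\textbf{Robust non-periodic optimization.} This is the main obstacle. I need an open set $U\subseteq\lip(X)$ such that every $f\in U$ has $m$ as its unique maximizing measure. Here is the approach I would try first.
\begin{enumerate}
\item Take $f_0=-d(\cdot,Y)$. Then $\int f_0\,d\mu\le 0$, with equality exactly for measures supported on $Y$. Since $Y$ is uniquely ergodic, the only such measure is $m$.
\item For robustness, I need a quantitative gap: every periodic orbit not in $Y$ must pay a penalty $\int -d(\cdot,Y)\,d\mu_n\le -c\,\lip$-deviation that dominates any possible gain from a perturbation $g$ with $\lip(g)<\varepsilon$.
\item This is where the growth constraint $\phi$ enters. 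An orbit that spends time near $Y$ must, at the transitions $w\,0^k$, spend a proportion of time at distance of order $1$ from $Y$. That proportion is bounded below in terms of how closely its long $Y$-segments track $m$.
\item Combining the two, for $f=f_0+g$ and $\mu\ne m$ one gets
\[
\int f\,d\mu-\int f\,dm\le -\,(\text{penalty})+\varepsilon\,W(\mu,m),
\]
where $W$ is the Wasserstein distance, and this is negative.
\end{enumerate}
To make the penalty linear in $W(\mu,m)$, I would tune $X$ so that each excursion away from $Y$ costs a fixed amount. The harder part is bounding how close a measure can get to $m$ while staying cheap. The key estimate to establish is a linear lower bound of the form $\int d(\cdot,Y)\,d\mu\ge c\,W(\mu,m)$ for all $\mu\in\m(X,T)$. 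Measures supported on $Y$ equal $m$, so the bound is trivial there. For other measures I would prove it from the block structure: the fraction of time spent in $0^k$ blocks controls both the distance to $Y$ and the defect from $m$. This uses the uniform convergence of Birkhoff averages on $Y$, which holds because $Y$ is uniquely ergodic.

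\textbf{Conclusion.} Given the linear estimate, every $f$ within $\lip$-distance $c/2$ of $f_0$ has $m$ as its unique maximizer. This gives an open set of functions with no periodic optimization, so $X$ does not have TPO. Finally, I would check against the earlier structural results that $\partial_M X=Y$, so the example is consistent with the boundary mechanism described in the introduction.
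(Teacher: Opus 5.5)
Your architecture matches the paper's: intersperse a minimal, uniquely ergodic, non-periodic $Y$ with a new symbol, take $f_0=-d(\cdot,Y)$, and get robustness from a linear bound $W(\mu,m)\le C\int d(\cdot,Y)\,d\mu$. But that linear bound, which you rightly call the key estimate, is not proved, and the argument you sketch for it cannot work. Unique ergodicity gives only $\epsilon(n):=\sup_{y\in Y}W(E_n(y),m)\to 0$, with no rate. Take a periodic word $w^{(1)}0^{k_1}\cdots w^{(r)}0^{k_r}$ of length $L$, reading $0$ as a new symbol. Then $L\int d(\cdot,Y)\,d\rho$ is comparable to $\sum_i k_i$. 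By contrast, $L\,W(\rho,m)$ is only bounded by roughly $\sum_i\bigl(k_i+|w^{(i)}|\,\epsilon(|w^{(i)}|)\bigr)$. Time spent in $0$-blocks controls the distance to $Y$, but not the defect contributed by the $Y$-blocks themselves. Your constraint $|w|\ge\phi(k)$ points the wrong way, since it still permits $k=1$ with arbitrarily long $w$. Suppose $n\,\epsilon(n)$ is unbounded, as happens for any minimal uniquely ergodic $Y$ with a cylinder of unbounded discrepancy (e.g.\ suitable rotation codings). Choose $Y$-words $w_n$ and $y_n\in[w_n]_Y$ with $|w_n|\,W(E_{|w_n|}(y_n),m)\to\infty$. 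Then $\rho_n=(w_n0)^\infty$ satisfies $\int d(\cdot,Y)\,d\rho_n=O(1/|w_n|)$ but $|w_n|\,W(\rho_n,m)\to\infty$. So for every $\varepsilon>0$, adding $\varepsilon\phi_n$, with $\phi_n$ a normalised $1$-Lipschitz function realising $W(\rho_n,m)$, makes $\rho_n$ beat $m$ once $n$ is large. Hence $f_0$ need not even lie in the interior of the set of functions uniquely maximised by $m$, and as written the proposal does not establish the failure of TPO.

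The paper supplies exactly this missing quantitative input by interspersing only the Morse words $\theta_n=\theta^n(0)$. Because $\omega$ is a concatenation of the blocks $\theta^n(0)$ and $\theta^n(1)=\theta^{n-1}(1)\theta^{n-1}(0)$, pairing (half-)blocks gives $W(E_{2^n}(x),\lambda)\le\frac{2\alpha}{1-\alpha}2^{-n}$ whenever $x$ begins with $\theta_n\b$. So each block $\theta_n\b$ of a periodic word $u$ adds $O(1)$ to $|u|\,W(\rho,\lambda)$ and at least $1$ to $|u|\int d(\cdot,Z)\,d\rho$. This gives $W(\rho,\lambda)\le\frac{1+\alpha}{1-\alpha}\int d(\cdot,Z)\,d\rho$ for every periodic $\rho$, which already suffices: no periodic measure is maximising anywhere on a ball about $-d(\cdot,Z)$.

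Within your set-up you could repair the argument in either of two ways. One is to reverse the coupling, requiring $k\ge\max\{1,\psi(|w|)\}$ for a nondecreasing $\psi(n)\ge n\,\epsilon(n)$ with $\psi(n)=o(n)$. The same per-block bookkeeping then gives the linear bound, while $m$ is still a limit of periodic measures. The other, for Sturmian $Y$ specifically, is to invoke the bounded-discrepancy (balance) property of Sturmian factors, which yields $\epsilon(n)=O(1/n)$. Some such rate is indispensable; unique ergodicity alone does not provide it. (Incidentally, under your constraint the words $0^j$ have infinitely many follower sets, so $0^\infty\in\partial_M X$ and $\partial_M X\neq Y$; this is harmless for the theorem.)
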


However, the Markov boundary is the \emph{only} obstacle to the TPO property: 

\begin{thm}\label{partialX_tpo_implies_X_tpo}
A 
shift space whose Markov boundary has TPO will itself have TPO.
\end{thm}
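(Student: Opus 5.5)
The plan is to deduce Theorem \ref{partialX_tpo_implies_X_tpo} from the structural theorem described above, applied to a shift space $X$ with boundary $\partial_M X$. The structural theorem gives an open dense set $U\subseteq\lip(X)$ with $U=U_{per}\cup U_{\partial}$. Here $U_{per}$ is open and each of its members has periodic optimization. $U_{\partial}$ is open, and each $f\in U_\partial$ has all its maximizing measures supported on $\partial_M X$, robustly in $f$. So it suffices to find an open dense subset $V\subseteq U_\partial$ (dense in $U_\partial$) of functions with periodic optimization. Then $U_{per}\cup V$ is open and dense in $\lip(X)$, and every member has periodic optimization.

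To build $V$, use the restriction map $r:\lip(X)\to\lip(\partial_M X)$, $f\mapsto f|_{\partial_M X}$. It is continuous, linear and surjective, since McShane extension preserves the Lipschitz constant. It is also open: given $f$ and a small perturbation $h$ of $f|_{\partial_M X}$, extend $h-f|_{\partial_M X}$ by McShane, truncated so the sup norm is controlled, and add the extension to $f$. This gives a nearby preimage. By hypothesis there is an open dense $P'\subseteq\lip(\partial_M X)$ whose members have periodic optimization on $\partial_M X$. Set $V=U_\partial\cap r^{-1}(P')$. This set is open by continuity of $r$, and dense in $U_\partial$ because $r$ is open and $P'$ is dense.

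For $f\in V$, every maximizing measure of $f$ on $X$ is supported on $\partial_M X$. Hence the maximum ergodic average over $\m(X,T)$ equals the one over $\m(\partial_M X,T)$. The unique maximizer of $f|_{\partial_M X}$ there is a periodic measure $\mu_n$. So $\mu_n$ is the unique maximizing measure of $f$ on $X$, which is periodic optimization in the strict sense (\ref{periodicoptimization}).

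The main obstacle is the precise form of the structural theorem needed in the first step. Plain maximizing-set statements give only ``some maximizing measure lives on $\partial_M X$''. What is needed is that on an open set \emph{all} maximizing measures lie on the boundary, and that the complementary open piece consists of periodic optimizers with dense union. I would try to obtain this from the countable maximizable family as follows. The non-boundary part of $X$ is exhausted by countably many sofic/SFT-like maximizable pieces, where Contreras-type arguments give periodic optimization. For $f$ whose maximum is not attained on $\partial_M X$, a small perturbation localizes maximization inside one piece, where periodic locking applies. Otherwise, a perturbation that strictly penalizes the complement of a neighbourhood of $\partial_M X$ forces all maximizers onto the boundary, in an open way.
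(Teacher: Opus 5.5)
Your argument is correct, but it is organised differently from the paper's.

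\textbf{The paper's route.} The paper obtains this as the case $Y=\partial_M X$ of Proposition~\ref{Markov_boundary_in_Y_TPO}. The countable family $\{X_{w,q}\}\cup\{\partial_M X\}$ is maximizable (Proposition~\ref{sftmaximizablefamilylip}). Each $X_{w,q}$ is an SFT and so has TPO, and $\partial_M X$ has TPO by hypothesis. So this is a countable maximizable family with TPO, and Theorem~\ref{abstracttpo} applies directly. The boundary is never split off: it is handled exactly like the SFT pieces by the Baire-category and subsystem-reduction argument (Corollary~\ref{nequalsper}). That argument only needs $Z$ to be maximizable on an open set, and it upgrades weak to strict periodic optimization via Lemma~\ref{four_closures}.

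\textbf{Your route.} You start from Theorem~\ref{general_tpbo_theorem}, i.e.\ the structural theorem with the boundary left unresolved. You then redo the reduction on the boundary piece by hand via the open restriction map. You use the stronger fact that on $U_\partial$ all maximizing measures live on $\partial_M X$, which lets uniqueness of the periodic maximizer pass directly from $\partial_M X$ to $X$.

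\textbf{The obstacle you flag.} It is already in the paper as Lemma~\ref{l-3}, and it is easy. If $f\in\lip_\partial(X)$ has a maximizing measure not supported in $\partial_M X$, then $f+\epsilon\, d(\cdot,\partial_M X)\notin\lip_\partial(X)$ for every $\epsilon>0$, so $f\notin\text{int}(\lip_\partial(X))$. Hence $U_\partial=\text{int}(\lip_\partial(X))$ works as it stands. Your proposed penalizing perturbation is neither needed nor a source of openness.

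\textbf{The non-boundary part.} Your sketch misses the actual mechanism, though you may simply cite the theorem. That mechanism is Theorem~\ref{l-6} with Proposition~\ref{sftmaximizablefamilylip}. When $\partial_M X$ is not $f$-maximizable, a minimax set for $f$ lies in some $X_w$ and has the subset closing property. It is therefore a dynamically minimal completely maximizing set contained in some $X_{w,q}$, with no perturbation involved; Baire category over the closed cover then does the rest.

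\textbf{Comparison.} Theorem~\ref{general_tpbo_theorem} is itself proved with this same machinery, including Lemma~\ref{four_closures}, so your route saves nothing overall. The paper's two-line argument also yields the stronger statement for any TPO subshift $Y$ with $\partial_M X\subseteq Y\subseteq X$.
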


Theorem \ref{partialX_tpo_implies_X_tpo}, a consequence of the general structural theorem, is 
a crucial tool for recognising the TPO property in a wider class of shift spaces. 
For example,
 $\partial_M X$ may be particularly simple, consisting of a single fixed point,
or a union of periodic orbits; such subshifts certainly have TPO, therefore so does $X$.
In particular, any \emph{$S$-gap shift} (see e.g.~\cite[\S 1.2]{LM95}) is either sofic, or has Markov boundary 
equal to a singleton fixed point, therefore:

\begin{thm}\label{sgaptpointrotheorem}
Every $S$-gap shift has the 
TPO
property.
\end{thm}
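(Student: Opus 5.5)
The plan is to deduce Theorem \ref{sgaptpointrotheorem} from Theorem \ref{partialX_tpo_implies_X_tpo} and Theorem \ref{sofictpointrotheorem}, by computing the Markov boundary of an arbitrary $S$-gap shift. Let $S\subseteq \N\cup\{0\}$ and let $X_S$ be the $S$-gap shift: the closure of the set of bi-infinite (or one-sided) sequences of the form $\cdots 1 0^{s_{-1}} 1 0^{s_0} 1 0^{s_1} 1\cdots$ with all $s_i\in S$. We split into two cases. If $S$ is finite, or if $S$ is cofinite, or more generally if $\{s+1: s\in S\}$ is eventually periodic, then $X_S$ is sofic, and Theorem \ref{sofictpointrotheorem} gives TPO directly. (This is standard, e.g.\ \cite[\S 3.1]{LM95}; it can also be seen by exhibiting a finite follower-set graph.) In the remaining non-sofic case we will show that $\partial_M X_S=\{0^\infty\}$, the fixed point of all zeros. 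A single fixed point has TPO trivially, since its only invariant measure is the Dirac mass, which is periodic, so every $f\in\lip(\partial_M X_S)$ has the periodic optimization property. Theorem \ref{partialX_tpo_implies_X_tpo} then yields TPO for $X_S$.

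The substantive step is to identify $\partial_M X_S$ using Thomsen's definition \cite{Tho06}. Roughly, a point lies in $\partial_M X$ when it is not \emph{Markov}-like, i.e.\ when some left (or right) ray of it has infinitely many distinct follower/predecessor constraints of increasing memory, so that the point cannot be handled by any finite-memory past. In $X_S$, any point containing a symbol $1$ has its constraints localized: once a $1$ appears, the set of allowed continuations of the past depends only on the length of the run of zeros since the last $1$. For a point with $1$'s occurring both in the past and the future, the local situation is determined by finitely many symbols, so the point is not in the boundary. The only place where unbounded memory is needed is along long runs of zeros, where the run length must be remembered.

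I would argue concretely as follows. First, a point $x$ with a $1$ at coordinate $0$ has its future follower set determined by $x_{[0,\infty)}$ alone, via the "synchronizing" word $1$. Hence every point that contains the symbol $1$ infinitely often in both directions is outside $\partial_M X_S$, and, by shift invariance and closedness, so is any point containing some $1$. Therefore $\partial_M X_S\subseteq\{0^\infty\}$. Second, $\partial_M X_S$ is nonempty in the non-sofic case: Thomsen shows that $\partial_M X=\emptyset$ characterizes sofic shifts among the shifts he considers, or, more directly, the follower sets of $0^n$ are pairwise distinct for infinitely many $n$ precisely when $S$ is not eventually periodic, so $0^\infty$ is a limit of non-Markov behaviour. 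The main obstacle is matching this informal picture precisely to Thomsen's formal definition of $\partial_M X$, including the one-sided versus two-sided convention. I expect to handle it by using the synchronizing word $1$ to verify the defining condition for every point meeting $1$, and by citing the emptiness criterion for the boundary of sofic shifts to get nonemptiness. Even without nonemptiness, $\partial_M X_S\subseteq\{0^\infty\}$ suffices, since the empty set and a fixed point both trivially have TPO.
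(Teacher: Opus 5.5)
Your proposal is correct and is essentially the paper's argument. The paper treats the $S$-gap shift as a special case of an $S$-graph shift and notes that a two-letter word such as $01$ (you use $1$) is synchronizing, hence lies in $\w_{<\infty}(X)$. From this it deduces $\partial_M X\subseteq\{0^\infty\}$, so the boundary is either empty (the sofic case, Theorem \ref{sofictpointrotheorem}) or a single fixed point, which is an SFT and has TPO. It then concludes via Theorem \ref{partialX_tpo_implies_X_tpo}, packaged as ``eventually sofic''.

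Two small clean-ups:
\begin{itemize}
\item The step ``by shift invariance and closedness'' is not a valid inference; for instance nothing rules out $10^\infty$ that way. It is also unnecessary. Any $x$ containing a $1$ contains the word $1$, which satisfies $F_X(u1)=F_X(1)$ for all $u\in P_X(1)$ and so lies in $\w_{<\infty}(X)$. Hence $x\notin\partial_M X$ directly from the definition.
\item You do not need the soficity criterion from \cite{LM95}. Splitting simply on whether $X$ is sofic, and using Lemma \ref{sofic_iff_empty_boundary} for nonemptiness of the boundary in the non-sofic case, suffices. This is also safer than asserting that the empty shift ``trivially'' has TPO.
\end{itemize}
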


The analogue of Theorem \ref{sgaptpointrotheorem}
also holds for more general variants such as \emph{$S$-graph shifts}
(see \cite{Di22}).
More generally, Theorems \ref{sofictpointrotheorem} and \ref{partialX_tpo_implies_X_tpo}
mean that a shift space has TPO whenever its Markov boundary is sofic: for example
this is the case for the well known (non-sofic) \emph{context free shift} (see \cite[\S 1.2]{LM95}),
whose Markov boundary is a non-periodic subshift of finite type
(see Theorem \ref{context_free_theorem}).

Since the Markov boundary of a shift space is itself a shift space, it defines
an operator on the space of all shift spaces, an observation that,
together with Theorems \ref{sofictpointrotheorem} and \ref{partialX_tpo_implies_X_tpo},
motivates the definition
of the class of \emph{eventually sofic} shifts, consisting of those whose iterated Markov boundary is sofic.
We deduce:

\begin{thm}\label{eventuallysofictpointrotheorem}
Every eventually sofic shift has the 
TPO
property.
\end{thm}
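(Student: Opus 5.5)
The plan is a finite descent along the iterated Markov boundary, using only Theorems \ref{sofictpointrotheorem} and \ref{partialX_tpo_implies_X_tpo}. Write $\partial_M^0 X = X$ and $\partial_M^{k+1}X=\partial_M(\partial_M^k X)$. This makes sense because the Markov boundary of a shift space is again a shift space (a subshift of it). The definition of \emph{eventually sofic} says there is some $N\ge 0$ for which $\partial_M^N X$ is sofic. I will prove, by downward induction on $k$ from $k=N$ to $k=0$, that $\partial_M^k X$ has TPO.

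The base case $k=N$ is Theorem \ref{sofictpointrotheorem}. One degenerate point needs care. The iterated boundary may be empty, since $\partial_M Y=\emptyset$ when $Y$ is sofic. If the definition allows $\partial_M^N X=\emptyset$, I will either adopt the convention that the empty shift has TPO vacuously (the whole of $\lip(\emptyset)$ is the only relevant set), or stop the descent at the last nonempty iterate, which is then sofic because its boundary is empty. Which route is cleaner depends on how Theorem \ref{partialX_tpo_implies_X_tpo} treats an empty boundary. The intended reading is surely that a shift with empty Markov boundary is covered, which is consistent with Theorem \ref{sofictpointrotheorem}.

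For the inductive step, suppose $\partial_M^{k+1}X=\partial_M(\partial_M^kX)$ has TPO. Apply Theorem \ref{partialX_tpo_implies_X_tpo} to the shift space $Y=\partial_M^k X$. Its Markov boundary has TPO, so $Y$ has TPO. After $N$ steps we reach $X$ itself.

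The main obstacle is not the induction but checking that Theorem \ref{partialX_tpo_implies_X_tpo} really applies at every level. Each $\partial_M^k X$ must be a genuine shift space in the paper's sense (closed, shift-invariant, on the same finite alphabet, possibly one-sided or two-sided as in the paper's setting). Also, TPO for $\partial_M^{k+1}X$ must be the TPO property with respect to Lipschitz functions on $\partial_M^{k+1}X$ with the restricted metric, which is exactly the hypothesis Theorem \ref{partialX_tpo_implies_X_tpo} uses. Once these compatibility points and the empty-set convention are confirmed, the corollary follows immediately.
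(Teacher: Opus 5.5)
Your proposal is correct and matches the paper's proof: the paper picks $n$ with $\partial_M^n X$ sofic, gets TPO for it from Theorem \ref{sofictpointrotheorem}, and then descends via Corollary \ref{iteration_markov_boundary_tpo}, which is exactly your downward induction using Theorem \ref{partialX_tpo_implies_X_tpo}. Your worry about an empty iterate does not arise, because the definition of eventually sofic of level $n$ requires $\partial_M^n X$ to be a \emph{non-empty} sofic shift. Your first fallback, treating the empty shift as having TPO vacuously, should be dropped: the empty shift has no periodic measures, so it does not have TPO.
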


We investigate the class of eventually sofic shifts, introducing a construction that
realises eventually sofic shifts of every \emph{level} $n$
(i.e.~such that $\partial^n_M X$ is sofic and non-empty), thereby generating a wide class of 
systems with typical periodic optimization.
As a specific example, if $X$ is the well known (and sofic) \emph{even shift} (see e.g.~\cite[\S 1.2]{LM95}),
consisting of 0-1 sequences
such that between any two 1's there is an even number of 0's,
then let $X_1$ be the set of sequences on $\{0,1,2\}$ such that between any two 2's there is an $X$-allowed
block whose length is some power of two; the shift space $X_1$ is non-sofic, but is eventually sofic of level 1,
and in particular has the TPO property by Theorem \ref{eventuallysofictpointrotheorem}.
If $X_2$ is defined as
the set of sequences on $\{0,1,2,3\}$ such that between any two 3's there is an $X_1$-allowed
block whose length is some power of two, then $X_2$ is eventually sofic of level 2,
and has TPO; continuing in this way yields a construction of eventually sofic shift spaces of any level $n$,
all of which have TPO.

There is, however, another class of shift space, disjoint from the eventually sofic class,
that has the typical periodic optimization property.
Shifts in this class arise from the observation,
another consequence of the structural theorem,
that the TPO property on $\partial_M X$ is not a \emph{necessary} condition for TPO
to hold  on $X$:

\begin{thm}\label{X_tpo_does_not_imply_partialX_tpo}
There exist 
shift spaces with TPO, but whose Markov boundary does not have TPO.
\end{thm}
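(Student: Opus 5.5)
The plan is to build $X$ so that its Markov boundary $\partial_M X$ is a shift space $Y$ \emph{without} TPO (for instance one of the shifts from Theorem \ref{non_tpo}), while arranging that the invariant measures on $Y$ are never robustly maximizing among all of $\m(X,T)$. The structural theorem splits a dense open subset of $\lip(X)$ into two open pieces. The first consists of functions maximized by a unique periodic measure. The second consists of functions whose maximizing measures lie on $\partial_M X$. So it suffices to make the second piece have empty interior, or to show that it can be absorbed into the periodic piece after a small perturbation.

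\textbf{Construction.} Take $Y$ on an alphabet $A$ and adjoin a new symbol $s$. Let $X$ consist of all sequences on $A\cup\{s\}$ in which every maximal block avoiding $s$ that lies between two occurrences of $s$ is a $Y$-allowed word whose length belongs to a sparse set (for example, powers of two, as in the eventually sofic constructions). Infinite $s$-free tails must lie in $Y$. Three things then need checking:
\begin{itemize}
\item $\partial_M X = Y$, by computing follower/predecessor sets in Thomsen's sense. This should mirror the paper's computation for the $X_1$, $X_2$ examples.
\item Every $Y$-word of length $2^k$ can be closed into a periodic $X$-orbit $(s\,w)^\infty$.
\item There are many such periodic orbits near every measure on $Y$.
\end{itemize}

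\textbf{Main step.} Take $f$ in the open set where the maximizing measures are supported on $Y$. I want to perturb $f$ slightly in Lipschitz norm so that a periodic orbit of $X$ passing through $s$ becomes the unique maximizer. The plan is to raise $f$ by a small amount $\varepsilon$ on a neighbourhood of the cylinder $[s]$, with the bump decaying linearly away from $[s]$. For a long $Y$-word $w$ whose empirical measure is close to a maximizing measure $m$ on $Y$, the periodic measure on $(s\,w)^\infty$ gains roughly $\varepsilon/(|w|+1)$ from the bump. It loses roughly the cost of the jump into and out of $s$, which is a Lipschitz error of order $\mathrm{Lip}(f)/(|w|+1)$ per period, together with the defect $\int f\,dm-\frac1{|w|}S_{|w|}f$.

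\textbf{Main obstacle.} The gain and the boundary loss are of the same order, so the comparison cannot be made directly. To tip the balance, I would design the bump so that it covers an $s$-neighbourhood comparable in size to the distance scale at which words near $s$ deviate. The alternative is to exploit the freedom in the construction: let many distinct symbols $s_1,\dots,s_N$ play the role of $s$. Then a small Lipschitz bump can favour orbits through $s$ by a fixed positive margin per visit, while measures on $Y$ never see the bump.

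Once some periodic orbit strictly beats every measure on $Y$, the usual argument gives openness and uniqueness after a further small perturbation. That argument is the periodic half of the structural theorem, which isolates a unique periodic maximizer. Hence the functions with the periodic optimization property are dense in the second piece, so its interior contributes nothing and $X$ has TPO, while $\partial_M X = Y$ does not. The hard step is the quantitative comparison above: showing that the inserted marker symbol provides a definite, non-vanishing advantage per period. I expect this to require choosing the allowed gap lengths so that periodic orbits with short periods, where the $1/(|w|+1)$ gain is large, exist near every $Y$-maximizing configuration.
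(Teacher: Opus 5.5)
There is a genuine gap, and it sits at the step you call the hard one. Nothing in the proposal shows that a small Lipschitz bump near $[s]$ lets a periodic orbit through $s$ beat the boundary. For marker constructions of your type this is in fact false.

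The paper's magic Morse shift (Section \ref{magicmorsesection}) is exactly such a construction: the Morse shift $Z$, with a synchronizing marker $\b$ separating $Z$-blocks $\theta_n$ of length $2^n$. Take $f=-d(\cdot,Z)$. Every periodic measure $\rho$ satisfies $\int d(\cdot,Z)\,d\rho\ge\rho([\b])$ and $W(\rho,\lambda)\le\frac{1+\alpha}{1-\alpha}\rho([\b])$. Hence every $g$ with $\|g-f\|_{\lip(X)}<\frac{1-\alpha}{1+\alpha}$ has the Morse measure $\lambda$ as its unique maximizer. In your terms, each visit to the marker costs a definite amount (here at least $1$, since $[\b]$ is at distance $1$ from $Z$), while your bump returns only $\varepsilon$ per visit. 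Using several markers $s_1,\dots,s_N$ does not change this per-visit balance. So the same-order comparison you worry about is not a technicality to be tuned away: it can go the wrong way robustly. Taking $Y$ from Theorem \ref{non_tpo}, i.e.\ a shift already carrying a robustly maximizing non-periodic measure, points in the wrong direction.

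The missing idea is a structural reason for $\lip_\partial(X)$ to have empty interior, which the paper obtains without any perturbative comparison. Take $Z$ dynamically minimal but not uniquely ergodic (Oxtoby, Toeplitz), and let $X=\i(Z)$ be the unconstrained interspersion. Since $u\b w\in\w(X)$ for all $u,w\in\w(X)$, $X$ has specification, and $\partial_M X=Z$ by Corollary \ref{boundary_S_equals_Y}. The argument then runs as follows:
\begin{itemize}
\item Specification gives the subordination principle (Proposition \ref{variable_specification_implies_subordination}, via uniformly bounded Birkhoff sums and Morris' criterion).
\item So if all $f$-maximizing measures lie on the minimal set $Z$, then $\m_{\max}(f)=\m(Z,\sigma)$, which is not a singleton. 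Such $f$ are non-generic, so $\lip^\subseteq_\partial(X)$ has empty interior.
\item By Lemma \ref{l-3}, $\lip_\partial(X)$ also has empty interior, and Theorem \ref{general_tpbo_theorem} gives TPO for $X$.
\item Meanwhile $Z=\partial_M X$ has no periodic orbits, so it does not have TPO.
\end{itemize}
Your sparse length constraint destroys exactly the ingredient this needs. Completing a block to the next power of two requires arbitrarily long fillers, so your $X$ has no (variable) specification. In the paper's route the boundary is fragile not because periodic orbits outcompete it. It is fragile because no single boundary measure can be robustly maximizing when subordination forces all of them to tie.
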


To describe shift spaces corresponding to Theorem \ref{X_tpo_does_not_imply_partialX_tpo},
define a non-sofic shift space $X$ to be \emph{fragile} if its Markov boundary does not robustly support non-periodic optimization
(i.e.~if the set of functions with a maximizing measure supported by $\partial_M X$ has empty interior in $\lip(X)$), and \emph{eventually fragile} if some iterated Markov boundary is fragile.
We establish the following:

\begin{thm}\label{eventually_fragile_tpo_intro}
Every eventually fragile shift space has TPO.
\end{thm}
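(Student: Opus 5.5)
The plan is to reduce the statement to two facts: fragile shifts have TPO, and TPO lifts from $\partial_M X$ to $X$. We then argue by descent along the chain of iterated Markov boundaries.

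\textbf{Setup.} Suppose $\partial^n_M X$ is fragile for some $n\ge 0$, with $\partial^0_M X=X$, and choose $n$ minimal. Write $Y=\partial^n_M X$. Then $Y$ is a non-sofic shift space, and its Markov boundary $\partial_M Y$ does not robustly support non-periodic optimization. That is, the set
\[
B_Y=\{g\in\lip(Y): \text{some maximizing measure of } g \text{ is supported on } \partial_M Y\}
\]
has empty interior in $\lip(Y)$.

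\textbf{Step 1: fragile implies TPO.} I would invoke the structural theorem for shift spaces, already announced above: given any shift space $Y$, for typical Lipschitz functions the maximizing measure is either periodic or supported on $\partial_M Y$. Concretely, it gives a dense open set $U\subseteq\lip(Y)$ of the form $U=U_{\rm per}\cup U_\partial$, where:
\begin{itemize}
\item $U_{\rm per}$ is open and each $g\in U_{\rm per}$ has the periodic optimization property;
\item $U_\partial$ is open and each $g\in U_\partial$ has its (unique) maximizing measure supported on $\partial_M Y$.
\end{itemize}
Since $U_\partial\subseteq B_Y$ and $B_Y$ has empty interior, the open set $U_\partial$ must be empty. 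Hence $U=U_{\rm per}$ is open and dense, so $Y$ has TPO.

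If instead the structural theorem only gives $U_\partial$ as some open set of functions whose maximizing measures include one on $\partial_M Y$, the same conclusion holds, because $U_\partial$ is still an open subset of $B_Y$. The one thing to check is that the paper's definition of "robustly support" matches the open set $U_\partial$ produced by the structural theorem. This matching of definitions is where I expect the only real care to be needed. If the structural theorem is phrased via a maximizable family, I would identify its non-periodic part with the functions optimized on $\partial_M Y$, using Thomsen's characterization of $\partial_M$.

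\textbf{Step 2: lifting back to $X$.} Apply Theorem \ref{partialX_tpo_implies_X_tpo} successively: since $\partial^n_M X$ has TPO, so does $\partial^{n-1}_M X$, and so on down to $X$. This is a finite induction on $n$; the case $n=0$ is exactly Step 1. No subtlety arises here beyond noting that each $\partial^k_M X$ is itself a shift space, so the theorem applies at each level.
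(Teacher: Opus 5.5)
Your proposal is correct and is essentially the paper's proof. Fragility of $\partial_M^n X$ means exactly that $int(\lip_\partial(\partial_M^n X))=\emptyset$, so the Typical Periodic or Boundary Optimization Theorem (Theorem \ref{general_tpbo_theorem}, via Corollary \ref{lip_boundary_empty_interior}) gives TPO for $\partial_M^n X$, and Corollary \ref{iteration_markov_boundary_tpo} (iterating Theorem \ref{partialX_tpo_implies_X_tpo_repeated}) lifts it to $X$. The ``matching of definitions'' you worry about is automatic, since fragility is defined directly as $\lip_\partial$ having empty interior. The ``(unique)'' in your description of $U_\partial$ is not what the paper provides, because its boundary piece is just $int(\lip_\partial)$, but your fallback remark already covers this.
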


As a concrete class of examples of fragile shift spaces, let $Y$ be a shift on the alphabet $\{0,1\}$ that is minimal
(i.e.~all orbits are dense) but not uniquely ergodic (i.e.~supports more than one invariant measure);
for instance $Y$ might be chosen as an Oxtoby shift (see \cite{oxtoby}), or from the class of Toeplitz shifts
(see e.g.~\cite{downarowicz, jacobskeane, markleypaul, williams}). Then let $X$ be the set of sequences on $\{0,1,2\}$
such that between any two 2's there is a $Y$-allowed block.
The shift space $X$ is fragile, and by Theorem \ref{eventually_fragile_tpo_intro} it has the TPO property,
though its Markov boundary $\partial_M X=Y$ does not (cf.~Theorem \ref{X_tpo_does_not_imply_partialX_tpo}).

The organisation of this article is as follows.
The setting for Sections \ref{preliminariessection} to \ref{closingpropertysection} is that of
arbitrary topological dynamical systems,
while that of Sections \ref{symbolicdynamicssection} to \ref{magicmorsesection} is 
symbolic dynamical systems.
In Section \ref{preliminariessection} we introduce notation,
together with concepts from ergodic optimization,
in the context of topological dynamical systems $T:X\to X$ and continuous functions $f:X\to\R$.
Further background on ergodic optimization can be found in e.g.~\cite{Boc18, Jen19}.

In Section \ref{maximizableminimaxsection}, 
we 
develop a new theory of so-called \emph{maximizable sets},
those closed invariant sets supporting at least one maximizing measure for a given function.
For systems without a Ma\~n\'e lemma, the relation between maximizing measures and their support can be subtle, 
and this approach facilitates a more 
detailed analysis.
This leads to the key concept of a \emph{minimax set}, a maximizable set such that all maximizing measures with support within it necessarily have support equal to it. 
Such sets, which always exist, are shown to enjoy an important quantitative bound on Birkhoff sums.

In Section \ref{completelymaximizingsetssection}, the notion of a \emph{completely maximizing set} is introduced, a closed invariant set whose invariant measures are all maximizing\footnote{A completely maximizing set need not have the stronger property, exploited in various previous work 
in ergodic optimization,
 that a measure is maximizing \emph{if and only if} its support lies in the set; this stronger property is referred to here as \emph{subordination maximizing} (cf.~Definition \ref{subordinationmaximizingdefn}).}
(such sets need not exist, though there are important consequences if they do).
A key criterion is given, in terms of Birkhoff sums, for a minimax set to be completely maximizing.

Lipschitz functions are considered in Section \ref{maximizablefamiliessection}, where
closed invariant subsets of $X$ are used to define various subsets of $\lip(X)$ in terms of maximizing measures:
a \emph{maximizable family} is 
a collection of closed $T$-invariant sets 
where
every Lipschitz function has
at least one maximizable set in the collection.
Such families induce a 
cover of $\lip(X)$, and
for countable families, the interiors of the sets in the cover
together form an open dense subset of $\lip(X)$. This observation forms the basis of our approach,
when combined with 
\emph{subsystem reduction} (Theorem \ref{densedense}) which relates denseness in a space of restricted Lipschitz functions to denseness in an open set of Lipschitz functions defined on the whole space $X$.

In Section \ref{periodictposection} we consider
the typical periodic optimization (TPO) property, in the setting of general topological dynamical systems.
After clarifying some of its basic properties, including a necessary condition for TPO to hold,
the tools developed in Sections \ref{maximizableminimaxsection}, \ref{completelymaximizingsetssection} and \ref{maximizablefamiliessection}
are used to establish the important structural theorem (Theorem \ref{abstracttpovariant}) from which much of the subsequent development follows.
In particular, the importance of countable maximizable families, with either fragile boundary or TPO, is highlighted.

Section \ref{closingpropertysection}
marks a departure from the setting of completely arbitrary topological dynamical systems, focusing on 
a certain weak hyperbolicity condition, the \emph{subset closing property}, enjoyed by certain systems.
For Lipschitz functions, if a minimax set has the subset closing property,
we prove that a completely maximizing set is guaranteed to exist.

From Section \ref{symbolicdynamicssection} onwards, attention is focused on  
symbolic dynamical systems.
For any
shift space $X$, we show that a canonical 
subshift $X_w$ can be associated to each of its allowed words $w$,
allowing the subset closing property to be established.
This implies (Theorem \ref{l-6})
that if a Lipschitz function
has no maximizing measures in the Markov boundary $\partial_M X$, then there exists a 
completely maximizing
(and dynamically minimal) subshift contained in $X_w$.

This leads, in Section \ref{SFTmaximizablefamily}, to a certain family
of subshifts of finite type defined in terms of $X_w$,
and by adjoining the Markov boundary $\partial_M X$ this family is shown to be maximizable.
Since subshifts of finite type have TPO, 
this family represents a canonical choice of the
countable maximizable family described in the structural theorem (Theorem \ref{abstracttpovariant}).
From this we immediately deduce that 
the periodic optimization property is typical for all
sofic shifts
(Theorem \ref{sofictpointrotheorem} above),
and can formulate a symbolic dynamics version of the structural theorem,
the \emph{typical periodic or boundary optimization theorem} (Theorem \ref{general_tpbo_theorem}).
The class of eventually sofic shifts is introduced and investigated,
and members of this class, including $S$-gap shifts, $S$-graph shifts, and the context free shift, are also shown to have TPO.

In Section \ref{specimen_section}, a different route to constructing shift spaces with typical periodic optimization is pursued.
The class of (eventually) fragile shifts is introduced, and shown to have TPO. Particular examples of fragile shifts are the \emph{specimen} shifts, those which satisfy the variable specification condition and whose Markov boundary is minimal but not uniquely ergodic.

In Section \ref{TPO_preserving}, operations on shift spaces that preserve the TPO property are considered,
and a mechanism is given for constructing shift spaces $X$ with any given shift space $Y$ as their Markov boundary.
This leads to the construction of eventually sofic shifts of any level $n$ (i.e.~such that $\partial_M^n X$ is sofic and non-empty), and therefore to various examples of non-sofic shifts with TPO.
The method also yields specific examples of eventually specimen shifts,
including shift spaces which have TPO despite their Markov boundary not having this property.

Finally, in Section \ref{magicmorsesection}, we 
consider the problem of whether typical periodic optimization is a universal phenomenon
for shift spaces where periodic measures are (weak$^*$) dense in the space of all invariant measures.
We show that it is not, by explicitly constructing a counterexample.
The strategy consists of choosing a suitable (minimal, uniquely ergodic, non-periodic) shift $Z$
(for example the well known Morse shift),
then constructing a shift space $X$ by interspersing a new symbol between a certain
sparse set of $Z$-allowed blocks, in such a way that the Markov boundary of $X$ is equal to $Z$,
and the unique invariant (non-periodic) measure on $Z$ is robustly maximizing in $\lip(X)$.
Once again, the theoretical underpinning of this construction is the structural theorem (Theorem \ref{abstracttpovariant}).

\subsection*{Acknowledgments} The authors are grateful to Klaus Thomsen for inspirational discussion regarding the Markov boundary, and to the anonymous referees for their careful reading and many helpful suggestions which improved the manuscript.
W. Huang is partially supported by National Key R\&D Program of China (Nos.~2024YFA1013602, 2024YFA1013600) and National Natural Science Foundation of China grants (Nos.~12090012, 12031019, 12090010).
L. Xu is partially supported by National Key R\&D Program of China (Nos.~2024YFA1013602, 2024YFA1013600) and National Natural Science Foundation   (Nos.~12031019, 12371197, 12426201).
Y. Zhang is partially supported by National Natural Science Foundation (Nos.~12161141002, 12271432), and USTC-AUST Math Basic Discipline Research Center.
	This version of the article has been accepted for publication, after peer review (when applicable) but is 
not the Version of Record and does not reflect post-acceptance improvements, or any corrections. The Version of Record is 
available online at:
 \url{http://dx.doi.org/10.1007/s00222-026-01411-x}

\section{ Preliminaries}\label{preliminariessection}

Let $\N$ denote the set of strictly positive integers, and let $\N_0$ denote the set of non-negative integers.

Let $X$ be a compact metric space, with distance function $d$.
Let $C(X)$ denote the set of continuous real-valued functions on $X$, equipped with its Banach norm
$$\|f\|_{C(X)}
:=\max_{x\in X}|f(x)|\,.$$

Let $\lip(X)\subseteq C(X)$ denote the set of Lipschitz real-valued functions on $X$, equipped with its Banach norm
$$\|f\|_{\lip(X)}:= \|f\|_{C(X)} + |f|_{\lip(X)}\,,$$ where
$$|f|_{\lip(X)}:= \sup\left\{ \frac{|f(x)-f(y)|}{d(x,y)} : x,y\in X, x\neq y\right\}.$$

\begin{notation}
By a \emph{(topological) dynamical system} we mean a pair $(X,T)$, where $X$ is a compact metric space, and $T:X\to X$ is continuous.
The class of dynamical systems will be denoted by $\D$.
\end{notation}

For $X$ a metric space, let $\m(X)$ denote the set of Borel probability measures on $X$.
By the \emph{support} of a measure $\mu\in\m(X)$, denoted $\supp(\mu)$, we mean the smallest closed subset $Y\subseteq X$
such that $\mu(Y)=1$.
For a dynamical system $(X,T)$,
 the set of $T$-invariant Borel probability measures on $X$
will be denoted by $\m(X,T)$.
When equipped with the weak$^*$ topology, $\m(X)$ is compact,
as is its closed subset $\m(X,T)$.
Let $\e(X,T) \subseteq \m(X,T)$ denote the set of ergodic measures, in other words the set of extreme points of the simplex $\m(X,T)$.
A closed subset $Y\subseteq X$ will be called \emph{$T$-invariant}, or simply \emph{invariant}, if $T(Y)=Y$, and 
$\m(Y,T)$
will denote the set of those $T$-invariant Borel probability measures $\mu\in\m(X,T)$ with $\text{supp}(\mu)\subseteq Y$.
The set $\m(Y,T)$ is a (weak$^*$) closed subset of $\m(X,T)$, by the portmanteau theorem.
The support of any $\mu\in\m(X,T)$
is a non-empty closed $T$-invariant subset of $X$.
See e.g.~\cite[Ch.~3]{furstenberg}, \cite[Ch.~4]{glasner}, \cite[Ch.~6]{waltersbook}
for background on the ergodic theory of topological dynamical systems.

The key objects in ergodic optimization 
(see e.g.~ \cite{Boc18, Jen19} for surveys of this area)
are the following:

\begin{notation}
For a dynamical system $(X,T)\in\D$, and a function $f\in C(X)$, its 
\emph{maximum ergodic average} is defined
as
\begin{equation}\label{beta(f)defn}
\beta(X,T,f)=\beta(f):=\sup_{\mu\in\m(X,T)} \int f\, d\mu\,.
\end{equation}
Any $m\in \m(X,T)$ satisfying $\int f\, dm=\beta(f)$
is called a \emph{maximizing measure} for $f$, or an \emph{$f$-maximizing measure}.
Let 
\begin{equation}\label{mxtfdefn}
\m_{\max}(X,T,f)=\m_{\max}(f) := \left\{ \text{$f$-maximizing measures} \right\}.
\end{equation}
\end{notation}

Every $f\in C(X)$ has at least one maximizing measure:
the supremum in (\ref{beta(f)defn}) is attained, since $\m(X,T)$ is weak$^*$ compact.

Many of our results involve triples $(X,T,f)$, where $(X,T)\in\D$ is a dynamical system, and $f\in C(X)$ is a continuous real-valued function, so it is convenient to introduce the following notation.

\begin{notation}
Let us write
$$
\X:=\left\{(X,T,f): (X,T)\in\D,\ f\in C(X)\right\}.
$$
\end{notation}

Since $\beta(X,T,f+c)=\beta(X,T,f)+c$, and $\m_{\max}(X,T,f+c)=\m_{\max}(X,T,f)$, for all $(X,T,f)\in\X$, $c\in\R$,
it will often be convenient to assume that $\beta(X,T,f)=0$, thus motivating the following notation:

\begin{notation}
Let us write
$$
\X_0:=
\left\{ (X,T,f)\in\X:\ \beta(X,T,f)=0\right\} .
$$
\end{notation}

\section{ Maximizable sets and minimax sets}\label{maximizableminimaxsection}

A feature of uniformly hyperbolic dynamical systems $(X,T)$ is that for sufficiently regular (e.g.~Lipschitz) functions, 
maximizing measures can be characterised purely in terms of their support.
More precisely, for such a function $f$, it can be shown that there exists a closed invariant set $Y$ such that a measure is $f$-maximizing if and only if its support is contained in $Y$,
an insight  closely related to
the Ma\~n\'e Lemma 
(cf.~Section \ref{introsection}, and Remarks \ref{subordinationremark}(b) and \ref{sofic_non_mane}).
For general $(X,T,f)\in \X$, however,
the relation between maximizing measures and their support can be more subtle
(see e.g.~\cite{BJ,Mor10} for situations where knowledge of the support reveals little about a specific maximizing measure).
In order to progress beyond the uniformly hyperbolic setting, in this section 
we begin by developing a new theory of maximizing
sets\footnote{Here, the term `maximizing set' is used only in a loose sense, to refer to subsets of $X$ that can give information about maximizing measures. Although all the maximizing sets that we study are closed and invariant, and 
are at least \emph{maximizable} in the sense of Definition \ref{maximizabledefn}, in the wider literature various non-invariant maximizing sets have also proved useful, notably
 arising as 
the set of maxima for $f+\varphi-\varphi\circ T$ for certain $\varphi\in C(X)$
(e.g.~such sets were a key tool in \cite{B00}, and
have been termed \emph{action sets} in \cite{CLT01}, and \emph{contact loci} in \cite{GLT09}). Other useful 
maximizing sets for a given $(X,T,f)$ include its \emph{non-wandering (or Aubry) set} (see \cite{CLT01, garibaldi}), and its 
\emph{Mather set} (see \cite{morrisplms}).},
with slightly weaker properties than those demanded in the existing literature. The weakest such notion is as follows:

\begin{defn}\label{maximizabledefn}
For $(X,T,f)\in\X$,
a closed $T$-invariant set $Y$ is
\emph{maximizable} for $(X,T,f)$ (or simply for $f$), 
if 
\begin{equation}\label{maximizableeqn}
\m(Y,T)\cap\m_{\max}(f)\neq\emptyset\,,
\end{equation}
in other words if
some $f$-maximizing measure 
$\mu$ satisfies $\text{supp}(\mu)\subseteq Y$. 
For brevity we may say that $Y$ is \emph{$f$-maximizable}.
\end{defn}

Note that if $f$ has a unique maximizing measure $\mu$, its maximizable sets are precisely the closed invariant subsets containing $\text{supp}(\mu)$;
more generally, if $f$ has several maximizing measures, the maximizable sets are the closed invariant subsets containing the support of \emph{at least one} maximizing measure.
Amongst the $f$-maximizable sets, it will be of key importance to consider those whose support is in a sense minimal:

\begin{defn}\label{minimaxdefn}
If $(X,T,f)\in\X$,
a maximizable set $Y$ is said to be
\emph{minimax} for $(X,T,f)$ (or simply for $f$) if all measures $\mu\in\m(Y,T)\cap \m_{\max}(f)$ satisfy $\text{supp}(\mu)=Y$.
\end{defn}

\begin{rem}\label{notdynamicallyminimal}
The terminology \emph{minimax} in Definition \ref{minimaxdefn} is short for
\emph{minimal maximizable}, which in turn is motivated by Lemma \ref{minmaxexistence} below,
where such sets are seen to be minimal elements for a certain partial order on maximizable sets.
Note, however, that a minimax set need not be minimal\footnote{Henceforth, to avoid any possible confusion,
the term \emph{dynamically minimal} (rather than simply \emph{minimal}) will be used to refer to a 
dynamical system $(X,T)$ in which all orbits are dense.}
 in the dynamical sense (see e.g.~\cite[Defn.~1.3.2]{KH95}) of being equal to the closure of any orbit within it: for example every ergodic measure $\mu\in\m(X,T)$ is known (see \cite{Jen06b})
to be uniquely maximizing for some continuous $f$ (i.e.~$\m_{\max}(f)=\{\mu\}$), and in this case $\text{supp}(\mu)$ is a minimax set for $f$, irrespective of whether  $\text{supp}(\mu)$ is dynamically minimal. 
On the other hand, any maximizable set that is dynamically minimal is automatically a minimax set (since if an
invariant measure has support inside a dynamically
minimal set $Y$, then its support must equal $Y$).
\end{rem}

A key observation is that minimax sets always exist:

\begin{lem}\label{minmaxexistence}
Every $(X,T,f)\in\X$ has a minimax set.
\end{lem}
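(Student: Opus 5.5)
Among the $f$-maximizable sets we look for one that is minimal under inclusion; such a set will turn out to be minimax. Two routes are available. One applies Zorn's lemma to the collection of maximizable sets ordered by inclusion. The other picks an ergodic maximizing measure and uses its support directly. I would use the ergodic-measure route, because it avoids transfinite arguments.

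Step 1: produce an ergodic maximizing measure. The set $\m_{\max}(f)$ is nonempty, as noted after (\ref{mxtfdefn}). It is convex and weak$^*$ compact, since it is the preimage of $\beta(f)$ under the continuous affine map $\mu\mapsto\int f\,d\mu$. It is also a face of $\m(X,T)$: if $\mu=t\mu_1+(1-t)\mu_2$ with $0<t<1$ is maximizing, then $\int f\,d\mu_1\le\beta(f)$ and $\int f\,d\mu_2\le\beta(f)$ force both integrals to equal $\beta(f)$. By Krein--Milman, $\m_{\max}(f)$ has an extreme point $m$. Because $\m_{\max}(f)$ is a face, $m$ is also extreme in $\m(X,T)$, so $m$ is ergodic.

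Step 2: set $Y=\supp(m)$. This is a nonempty closed $T$-invariant set (as recalled in Section \ref{preliminariessection}), and it is maximizable since it contains $m$. We must show that every $\mu\in\m(Y,T)\cap\m_{\max}(f)$ has support exactly $Y$. This is the main obstacle, because an arbitrary invariant measure supported inside $\supp(m)$ can have strictly smaller support. In that case ergodicity of $m$ alone does not finish the argument, and we instead shrink $Y$ further.

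The fix is to descend. Suppose $\mu\in\m(Y,T)\cap\m_{\max}(f)$ has $\supp(\mu)\subsetneq Y$. Then $\supp(\mu)$ is a strictly smaller maximizable set. To guarantee that the descent terminates, I would switch to Zorn's lemma on the family $\mathcal F$ of $f$-maximizable sets, ordered by reverse inclusion.

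Take a chain $(Y_i)$ in $\mathcal F$ and choose maximizing measures $\mu_i\in\m(Y_i,T)$. A weak$^*$ accumulation point $\nu$ of the net $(\mu_i)$ is invariant and maximizing, because $\mu\mapsto\int f\,d\mu$ is continuous. For each fixed $j$, the measures $\mu_i$ with $Y_i\subseteq Y_j$ lie in the closed set $\m(Y_j,T)$ (closed by the portmanteau theorem). Hence $\nu\in\m(Y_j,T)$ for every $j$, so $\supp\nu\subseteq\bigcap_j Y_j$. The intersection $\bigcap_j Y_j$ is a closed set that is nonempty by compactness. It satisfies $T(\bigcap_j Y_j)\subseteq\bigcap_j Y_j$, and to get equality with $T$ I would instead pass to the invariant set $\supp\nu$, which lies below every $Y_j$ in the chain. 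Thus every chain has a lower bound in $\mathcal F$, and Zorn's lemma gives a minimal maximizable set $Y$.

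Finally, $Y$ is minimax. If some $\mu\in\m(Y,T)\cap\m_{\max}(f)$ had $\supp\mu\neq Y$, then $\supp\mu$ would be a maximizable set strictly contained in $Y$, contradicting minimality.
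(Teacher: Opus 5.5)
Your final Zorn argument is correct and is essentially the paper's proof: Zorn's lemma on the $f$-maximizable sets ordered by inclusion, followed by the same argument that a minimal element is minimax. The only difference is the lower bound for a chain. The paper takes $\bigcap_\alpha Y_\alpha$ itself, getting a maximizing measure supported there from the nested nonempty compact sets $\m_{\max}(f)\cap\m(Y_\alpha,T)$. You take $\supp\nu$ for an accumulation point $\nu$, which avoids having to check $T(\bigcap_\alpha Y_\alpha)=\bigcap_\alpha Y_\alpha$. Steps 1--2, the ergodic-measure route, play no role in the final proof and can be deleted; as you note yourself, the support of an ergodic maximizing measure need not be minimax.
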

\begin{proof}
Let $\mathcal{Y}_f$ denote the collection of $f$-maximizable sets, equipped with the partial order $\subseteq$
of set-theoretic inclusion.
We claim that $(\mathcal{Y}_f,\subseteq)$ is such that every totally ordered subset $\{Y_\alpha\}_{\alpha \in A}$
of $\mathcal{Y}_f$
has its lower bound 
$Y_\infty := \bigcap_{\alpha \in A} Y_\alpha$ in $\mathcal{Y}_f$.

To see this, first note that since each $Y_\alpha$ is closed and $T$-invariant, $Y_\infty = \bigcap_{\alpha \in A} Y_\alpha$ 
is also closed and $T$-invariant.
For each $\alpha\in A$, the set
$
K_\alpha := \m_{\max}(f) \cap \m(Y_\alpha,T)
$ is closed, therefore compact, and non-empty since $Y_\alpha$ is maximizable.
Since $\{K_\alpha\}_{\alpha \in A}$ is totally ordered with respect to $\subseteq$,
and each $K_\alpha$ is compact and non-empty,
it follows that $\bigcap_{\alpha \in A} K_\alpha \neq \emptyset$.
Then any $\mu\in \bigcap_{\alpha \in A} K_\alpha$
satisfies $\mu\in\m_{\max}(f)$, and $\supp(\mu)\subseteq Y_\alpha$ for all $\alpha\in A$,
hence $\supp(\mu)\subseteq Y_\infty$, so $\mu\in \m_{\max}(f) \cap \m(Y_\infty,T)$,
thus $Y_\infty$ is maximizable, in other words $Y_\infty \in\mathcal{Y}_f$.

Therefore, by Zorn's lemma, $(\mathcal{Y}_f,\subseteq)$ has a minimal element $Y\in \mathcal{Y}_f$,
in the sense that if $Y'\in\mathcal{Y}_f$ satisfies $Y'\subseteq Y$, then $Y'=Y$.
We claim that any such $Y$ is a minimax set for $f$.
To see this, suppose on the contrary that the support $Y':=\text{supp}(\mu)$ of some $\mu\in\m(Y,T)\cap \m_{\max}(f)$ is a proper subset of $Y$. Since $Y'$ is closed and $T$-invariant, with $\mu\in\m(Y',T)\cap\m_{\max}(f)$, this means that $Y'\in\mathcal{Y}_f$,
with $Y'\subseteq Y$ but $Y'\neq Y$, thereby contradicting the fact that $Y$ is a minimal element of  
$(\mathcal{Y}_f,\subseteq)$.
\end{proof}

\begin{notation} For a map $T:X\to X$, a function $f:X\to\R$, and $n\in\N$, we shall write
$$
S_nf:=\sum_{i=0}^{n-1} f\circ T^i\,.
$$
\end{notation}

Minimax sets $Y$ turn out to have a very useful property: given any non-empty open subset $A\subseteq Y$,
all sufficiently long orbit segments in $Y$ that are disjoint from $A$
give
Birkhoff averages strictly smaller than the maximum ergodic average.

\begin{lem}\label{minimaxnegativeBirkhoff}
Suppose $(X,T,f)\in\X$, and that $Y\subseteq X$ is minimax for $f$.
For any non-empty open subset $A$ of $Y$, there exists $N_A\in\N$ such that
if $x\in Y$, and $n\ge N_A$ 
satisfies $T^i(x)\notin A$ for $0\le i\le n-1$, then 
\begin{equation}\label{strictlynegativeBirkhoffaveragefinite}
\frac{1}{n} S_nf(x) < \beta(f)\,.
\end{equation}
\end{lem}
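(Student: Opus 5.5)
We argue by contradiction, using weak$^*$ compactness of measures. Suppose the conclusion fails for some non-empty open $A\subseteq Y$. Then there are points $x_k\in Y$ and lengths $n_k\to\infty$ such that $T^i(x_k)\notin A$ for $0\le i\le n_k-1$, and
$$\frac{1}{n_k}S_{n_k}f(x_k)\ge\beta(f).$$
Consider the empirical measures
$$\mu_k:=\frac{1}{n_k}\sum_{i=0}^{n_k-1}\delta_{T^i x_k}.$$
Each $\mu_k$ is supported on the closed set $Y\setminus A$. Here $Y\setminus A$ is closed in $Y$ because $A$ is relatively open, and $Y$ is closed in $X$.

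Passing to a subsequence, $\mu_k\to\mu$ in the weak$^*$ topology. The standard Krylov--Bogolyubov argument gives $T$-invariance of $\mu$: we have $|\int g\circ T\,d\mu_k-\int g\,d\mu_k|\le 2\|g\|_{C(X)}/n_k\to0$ for every $g\in C(X)$, and $n_k\to\infty$. By the portmanteau theorem, $\mu(Y\setminus A)\ge\limsup_k\mu_k(Y\setminus A)=1$, so $\supp(\mu)\subseteq Y\setminus A$. Moreover, since $f$ is continuous,
$$\int f\,d\mu=\lim_k\int f\,d\mu_k=\lim_k\frac{1}{n_k}S_{n_k}f(x_k)\ge\beta(f).$$
Combined with the definition of $\beta(f)$, this gives $\int f\,d\mu=\beta(f)$, so $\mu$ is $f$-maximizing.

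We now have $\mu\in\m(Y,T)\cap\m_{\max}(f)$. Because $Y$ is minimax (Definition \ref{minimaxdefn}), this forces $\supp(\mu)=Y$. But $\supp(\mu)\subseteq Y\setminus A$, which is a proper subset of $Y$ since $A$ is non-empty. This is a contradiction. Hence some $N_A$ works: for all $x\in Y$ and $n\ge N_A$ with the orbit segment avoiding $A$, the inequality (\ref{strictlynegativeBirkhoffaveragefinite}) holds.

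The only point needing care is that $N_A$ is uniform in $x$ and in $n\ge N_A$, not just in $n$ along a sequence. The contradiction hypothesis does supply $n_k\to\infty$: if no $N_A$ works, then for each $N$ there is a bad pair $(x,n)$ with $n\ge N$. Letting $N\to\infty$ produces the sequence above, so the argument is complete with no further obstacle.
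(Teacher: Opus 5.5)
Your proof is correct and follows essentially the same route as the paper: argue by contradiction, take empirical measures along bad orbit segments with $n_k\to\infty$, and extract a weak$^*$ limit that is invariant, $f$-maximizing and supported in $Y\setminus A$, contradicting the minimax property. The only differences are cosmetic: the paper first normalizes to $\beta(f)=0$, and it applies the portmanteau theorem to the open set $A$ (to get $\nu(A)=0$) rather than to the closed set $Y\setminus A$.
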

\begin{proof}
It suffices to prove (\ref{strictlynegativeBirkhoffaveragefinite}) in the case that $\beta(f)=0$, in other words to show
that there exists $N_A\in\N$ such that
if $x\in Y$, $n\ge N_A$, and $T^i(x)\notin A$ for $0\le i \le n-1$, then 
\begin{equation}\label{strictlynegativeBirkhoffaveragefinitebetafequalszero}
S_nf(x) < 0\,.
\end{equation}
If (\ref{strictlynegativeBirkhoffaveragefinitebetafequalszero}) were false then there would exist a sequence
$(N_j)_{j=1}^\infty$ of natural numbers, with $\lim_{j\to\infty} N_j=\infty$, and a sequence $x_j\in Y$, such that
\begin{equation}\label{Titeratesnotin}
T^i(x_j)\notin A\quad\text{for all }0\le i\le N_j-1,\ j\in\N\,,
\end{equation}
 with 
\begin{equation}\label{nonnegativefaverage}
S_{N_j}f(x_j)\ge0\quad\text{for all }j\in\N\,.
\end{equation}
Defining the measure $\nu_j\in\m(Y)$ by
$$
\nu_j:= \frac{1}{N_j}\sum_{i=0}^{N_j-1} \delta_{T^i(x_j)},
$$
then (\ref{Titeratesnotin}) gives
\begin{equation}\label{Titeratesnotinnuj}
\nu_j(A)=0 \quad\text{for all }j\in\N\,,
\end{equation}
and (\ref{nonnegativefaverage}) gives
\begin{equation}\label{nonnegativefaveragenuj}
\int f\, d\nu_j \ge0\quad\text{for all }j\in\N\,.
\end{equation}
The weak$^*$ compactness of $\m(Y)$ means that the sequence $\nu_j$ has a subsequential limit $\nu\in\m(Y)$,
and in fact $\nu\in\m(Y,T)$.
Since $A$ is open then (\ref{Titeratesnotinnuj}) gives
\begin{equation}\label{Titeratesnotinnu}
\nu(A)=0 \,.
\end{equation}
So $\supp(\nu)\subseteq Y\setminus A$, and in particular the support of $\nu$ is a proper closed subset of $Y$.
But $\nu\in\m(Y,T)$ is also $f$-maximizing, because
(\ref{nonnegativefaveragenuj}) gives
\begin{equation}\label{nonnegativefaveragenu}
\int f\, d\nu \ge0\,.
\end{equation}
But $Y$ is a minimax set for $f$, so there are no $f$-maximizing measures whose support is
 a proper closed subset of $Y$.
This contradiction completes the proof.
\end{proof}

A consequence is the following bound on Birkhoff sums for points in a minimax set; it is conveniently stated in the case that the maximum ergodic average is zero.  

\begin{prop}\label{minimaxnegativeBirkhoffcor} (Minimax Birkhoff bound)

\noindent
Suppose $(X,T,f)\in\X_0$, and that $Y\subseteq X$ is a minimax set for $f$.
For any non-empty open subset $A$ of $Y$, there exists $N_A\in\N$ such that
if $x\in Y$ and $n\in\N$ satisfy $T^i(x)\notin A$ for $0\le i\le n-1$, then
\begin{equation}\label{coroSnfbound}
S_nf(x) \le N_A\, \|f\|_{C(X)}\,.
\end{equation}
\end{prop}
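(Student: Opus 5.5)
The plan is to combine Lemma \ref{minimaxnegativeBirkhoff} with a trivial bound for short orbit segments, and to use an induction (or greedy decomposition) to handle long segments. Take $N_A$ to be the integer supplied by Lemma \ref{minimaxnegativeBirkhoff}. Since $\beta(f)=0$, that lemma says: if $x\in Y$, $m\ge N_A$, and $T^i(x)\notin A$ for $0\le i\le m-1$, then $S_mf(x)<0$.

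We prove (\ref{coroSnfbound}) by strong induction on $n$.

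First suppose $n< N_A$. The trivial estimate gives $S_nf(x)\le n\|f\|_{C(X)}\le N_A\|f\|_{C(X)}$, and no hypothesis on $A$ is needed.

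Now suppose $n\ge N_A$, with $T^i(x)\notin A$ for $0\le i\le n-1$. We split the orbit segment into an initial block of length $N_A$ and a remainder of length $n-N_A$:
$$S_nf(x)=S_{N_A}f(x)+S_{n-N_A}f(T^{N_A}x),$$
where the second term is interpreted as $0$ when $n=N_A$.

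The first block satisfies the hypotheses of Lemma \ref{minimaxnegativeBirkhoff} with $m=N_A$, so $S_{N_A}f(x)<0$.

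For the remainder, the point $T^{N_A}x$ lies in $Y$ because $Y$ is $T$-invariant. Its first $n-N_A$ iterates are $T^{N_A+j}x$ for $0\le j\le n-N_A-1$, and these avoid $A$ by hypothesis. Hence the inductive hypothesis applies and gives $S_{n-N_A}f(T^{N_A}x)\le N_A\|f\|_{C(X)}$.

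Adding the two estimates gives $S_nf(x)< N_A\|f\|_{C(X)}$, which closes the induction.

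There is no real obstacle here. The only points to check are that $Y$ is invariant, so that the shifted point stays in $Y$ and the lemma applies again, and that the same constant $N_A$ serves both to make blocks of length $N_A$ have negative sum and to bound the leftover piece of length less than $N_A$.

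Equivalently, without induction, one can write $n=kN_A+r$ with $0\le r<N_A$. Each of the $k$ full blocks has negative sum by Lemma \ref{minimaxnegativeBirkhoff}, and the final piece of length $r$ contributes at most $r\|f\|_{C(X)}\le N_A\|f\|_{C(X)}$.
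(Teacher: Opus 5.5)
Your proof is correct, and it is essentially the paper's argument: the trivial bound $S_nf(x)\le n\|f\|_{C(X)}$ for $n<N_A$, and Lemma \ref{minimaxnegativeBirkhoff} for $n\ge N_A$. The induction and block decomposition are redundant, though, because that lemma already gives $S_nf(x)<0\le N_A\|f\|_{C(X)}$ directly for every $n\ge N_A$; your version uses the lemma only at the single length $N_A$, which is valid but gains nothing here.
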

\begin{proof}
Let $N_A$ be as in Lemma \ref{minimaxnegativeBirkhoff}, and
suppose $x\in Y$ satisfies $T^i(x)\notin A$ for $0\le i\le n-1$.
If $n< N_A$ then there is the straightforward bound
$$
S_nf(x) \le n \|f\|_{C(X)} \le N_A\, \|f\|_{C(X)}\,,
$$
while
if $n\ge N_A$ then Lemma \ref{minimaxnegativeBirkhoff} gives
$$S_nf(x)<0 \le N_A\, \|f\|_{C(X)}\,,$$
so indeed  (\ref{coroSnfbound}) holds.
\end{proof}

In what follows, for a given triple  $(X,T,f)\in\X$, 
with $Z\subseteq X$ a non-empty closed $T$-invariant subset,
it will be important to consider the ergodic optimization problem for the
restricted triple
$(Z,T|_Z,f|_Z)\in\X$. 
For this it will be convenient to write
$(Z,T,f)$ instead of $(Z,T|_Z,f|_Z)$, leading to the following notation:

\begin{notation}
If $(X,T,f)\in\X$, 
and $Z\subseteq X$ is a non-empty closed $T$-invariant subset, we write
$$
\beta\left(Z,T,f\right)
= \max_{m\in\m(Z,T)} \int f|_Z\, dm
= \max_{m\in\m(Z,T)} \int f\, dm\,,
$$
and
$$
\m_{\max}(Z,T,f)
= \left\{ m\in\m(Z,T):  \int f\, dm = \beta\left(Z,T,f\right) \right\} \,.
$$
\end{notation}

The following is straightforward:

\begin{lem}\label{obvious}
If $(X,T,f)\in\X$,
and $Z\subseteq X$ is a closed non-empty $T$-invariant set, then
\begin{itemize}
\item[(a)] $\beta(Z,T,f) \le \beta(X,T,f)$.
\item[(b)] $Z$ is maximizable for $(X,T,f)$ if and only if $\beta(Z,T,f) = \beta(X,T,f)$.
\item[(c)] If  $\beta(Z,T,f) = \beta(X,T,f)$
then
$\m_{\max}(Z, T,f) \subseteq  \m_{\max}(X, T,f)$.
\end{itemize}
\end{lem}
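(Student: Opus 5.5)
The plan is to prove all three parts directly from the definitions, using only the identification of $\m(Z,T)$ as a subset of $\m(X,T)$.

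\emph{Part (a).} The key observation is that $\m(Z,T)$ consists of those $\mu\in\m(X,T)$ with $\supp(\mu)\subseteq Z$, so $\m(Z,T)\subseteq\m(X,T)$. For any $m\in\m(Z,T)$ we have $\int f|_Z\,dm=\int f\,dm$. Taking the maximum over the smaller set $\m(Z,T)$ therefore cannot exceed the supremum over $\m(X,T)$, which gives $\beta(Z,T,f)\le\beta(X,T,f)$.

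\emph{Part (b).} Suppose first that $Z$ is maximizable. Then some $\mu\in\m(Z,T)\cap\m_{\max}(X,T,f)$ exists, so $\beta(Z,T,f)\ge\int f\,d\mu=\beta(X,T,f)$, and equality follows from (a). Conversely, suppose $\beta(Z,T,f)=\beta(X,T,f)$. The maximum defining $\beta(Z,T,f)$ is attained at some $m\in\m(Z,T)$, because $\m(Z,T)$ is weak$^*$ compact (it is closed in $\m(X,T)$) and $m\mapsto\int f\,dm$ is continuous. This $m$ satisfies $\int f\,dm=\beta(X,T,f)$, so $m\in\m_{\max}(X,T,f)\cap\m(Z,T)$. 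Hence $Z$ is maximizable.

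\emph{Part (c).} Suppose again that $\beta(Z,T,f)=\beta(X,T,f)$. Any $m\in\m_{\max}(Z,T,f)$ lies in $\m(X,T)$ and satisfies $\int f\,dm=\beta(Z,T,f)=\beta(X,T,f)$, so it is $f$-maximizing on $X$. This gives $\m_{\max}(Z,T,f)\subseteq\m_{\max}(X,T,f)$.

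None of these steps is a real obstacle. The only point needing care is the attainment of the maximum in the converse direction of (b), and that follows from the compactness of $\m(Z,T)$ noted in the preliminaries.
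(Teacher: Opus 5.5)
Your proposal is correct and follows essentially the same route as the paper: (a) from the inclusion $\m(Z,T)\subseteq\m(X,T)$, (b) by combining a maximizing measure supported in $Z$ with (a) and, conversely, taking a measure that attains $\beta(Z,T,f)$, and (c) directly from the equality of the two maxima. Your remark about attainment of the maximum addresses the one point the paper takes for granted by writing $\beta(Z,T,f)$ as a $\max$; strictly, this step also needs $\m(Z,T)\neq\emptyset$, which holds by Krylov--Bogolyubov because $Z$ is non-empty, closed and $T$-invariant.
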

\begin{proof}
(a) Immediate from the definitions, since $\m(Z,T)\subseteq\m(X,T)$.

(b) If $Z$ is maximizable for $(X,T,f)$ then there exists $\mu\in\m(Z,T)$ with $\int f\, d\mu=\beta(X,T,f)$, so that
$\beta(Z,T,f)=\max_{m\in\m(Z,T)}\int f\, dm\ge \int f\, d\mu=\beta(X,T,f)$. 
However $\beta(Z,T,f) \le \beta(X,T,f)$ by (a), so the equality $\beta(Z,T,f) = \beta(X,T,f)$ follows.

Conversely, if $\beta(Z,T,f) = \beta(X,T,f)$ then there exists $\mu \in \m_{\max}(Z,T,f)\subseteq \m(Z,T)$ 
with $\int f \, d\mu = \beta(Z,T,f)=\beta(X,T,f)$. 
So $\mu\in\m(Z,T)\cap \m_{\max}(X,T,f)$, therefore $Z$ is maximizable for $(X,T,f)$.

(c) If $\mu\in\m_{\max}(Z,T,f)$ then $\int f\, d\mu=\beta(Z,T,f) = \beta(X,T,f)$, so $\mu\in\m_{\max}(X,T,f)$,
 and the result follows.
\end{proof}

\section{ Completely maximizing sets}\label{completelymaximizingsetssection}

The following 
is a strengthening of the notion of a maximizable set:

\begin{defn}\label{completelymaximizingdefn}
For  $(X,T,f)\in\X$,
a closed non-empty $T$-invariant set $Y$ 
is called
\emph{completely maximizing} for  $(X,T,f)$ (or simply for $f$)
if 
\begin{equation}\label{mcmmax}
\m(Y,T)\subseteq\m_{\max}(f)\,,
\end{equation}
in other words if every invariant measure with support inside $Y$
is $f$-maximizing.
\end{defn}

We record the following straightforward consequence of the definition:

\begin{lem}\label{subsetcompletelymaximizing}
If $(X,T,f)\in\X$ then
\begin{itemize}
\item[(a)]
Any completely maximizing set for $f$ is maximizable for $f$.
\item[(b)]
Any  closed non-empty $T$-invariant subset
of a completely maximizing set for $f$ is itself
a completely maximizing set for $f$.
\end{itemize}
\end{lem}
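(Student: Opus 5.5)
Both parts follow by unwinding Definition \ref{completelymaximizingdefn} and Definition \ref{maximizabledefn}; the only points to check are non-emptiness of the relevant sets of invariant measures and that restricting the support preserves membership in $\m(\cdot,T)$.

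For (a), let $Y$ be completely maximizing for $f$. Since $Y$ is non-empty, closed and $T$-invariant, the restriction $(Y,T|_Y)$ is a dynamical system on a non-empty compact metric space. By the Krylov--Bogolyubov theorem, $\m(Y,T)\neq\emptyset$. Choose any $\mu\in\m(Y,T)$. Condition (\ref{mcmmax}) gives $\mu\in\m_{\max}(f)$. Hence $\mu\in\m(Y,T)\cap\m_{\max}(f)$, which is (\ref{maximizableeqn}), so $Y$ is maximizable.

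For (b), let $Y$ be completely maximizing and let $Z\subseteq Y$ be closed, non-empty and $T$-invariant. Any $\mu\in\m(Z,T)$ is $T$-invariant with $\supp(\mu)\subseteq Z\subseteq Y$, so $\mu\in\m(Y,T)$. Therefore
\[
\m(Z,T)\subseteq\m(Y,T)\subseteq\m_{\max}(f),
\]
and $Z$ satisfies (\ref{mcmmax}). The non-emptiness and invariance required by Definition \ref{completelymaximizingdefn} are hypotheses on $Z$, so $Z$ is completely maximizing.

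The only step needing any input is the non-emptiness of $\m(Y,T)$ in (a), which is exactly why the definition requires $Y\neq\emptyset$. This is supplied by Krylov--Bogolyubov: take a weak$^*$ limit of empirical measures along the orbit of any point of $Y$.
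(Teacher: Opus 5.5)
Your proof is correct and is exactly the definitional unwinding the paper intends: the paper records this lemma as a ``straightforward consequence of the definition'' and gives no proof. The one point the paper leaves implicit is that $\mathcal{M}(Y,T)\neq\emptyset$ in (a), and you supply it correctly via Krylov--Bogolyubov applied to $T|_Y$.
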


\begin{rem}\label{completelymaximizingremark}
In general, a completely maximizing set need not exist: for example if $X$ is dynamically minimal but supports exactly two ergodic invariant measures $\mu, \mu'$, then  for any $f\in C(X)$ with $\int f\, d\mu \neq \int f\,d\mu'$,
the set $X$ is maximizable, and indeed minimax, though not completely maximizing.
But $X$
is the only closed non-empty $T$-invariant subset, so there are no completely maximizing sets for $f$.
\end{rem}

Any closed non-empty invariant set is completely maximizing for some continuous function
(e.g.~a constant), so 
a completely maximizing set
need not be dynamically minimal.
As noted in Remark \ref{notdynamicallyminimal},
a minimax set need not be dynamically minimal either.
However, a set which is both minimax and completely maximizing must be dynamically minimal:

\begin{lem}\label{completelymaximizingminimaximpliesdynamicallyminimal}
Suppose $(X,T,f)\in\X$.
If $Y$ is minimax for $f$, and also completely maximizing for $f$, then $Y$ is dynamically minimal.
\end{lem}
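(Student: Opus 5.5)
The plan is a direct argument by contradiction that uses only the definitions and the existence of minimal sets. Suppose $Y$ is both minimax and completely maximizing for $f$, and suppose that $Y$ is not dynamically minimal. Then there is a non-empty closed $T$-invariant set $Z\subsetneq Y$. For instance, take $Z$ to be the orbit closure of a point whose orbit is not dense in $Y$. Alternatively, use Zorn's lemma, or compactness of nested intersections, to obtain a dynamically minimal subset $Z$ of $Y$; if $Z=Y$ there would be nothing to prove.

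To keep track of invariance, I would work with a dynamically minimal closed subset $Z\subseteq Y$. Since $Z$ is compact and $T(Z)\subseteq Z$, minimality forces $T(Z)=Z$. By assumption $Z\neq Y$.

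Next, invoke the Krylov--Bogolyubov theorem: $Z$ carries at least one invariant probability measure $\mu$, so $\mu\in\m(Z,T)\subseteq\m(Y,T)$. Because $Y$ is completely maximizing, (\ref{mcmmax}) gives $\mu\in\m_{\max}(f)$. Hence $\mu\in\m(Y,T)\cap\m_{\max}(f)$. Since $Y$ is minimax (Definition \ref{minimaxdefn}), this forces $\supp(\mu)=Y$. On the other hand, $\supp(\mu)\subseteq Z\subsetneq Y$. This is a contradiction, so $Y$ has no proper non-empty closed invariant subsets, and therefore every orbit closure in $Y$ equals $Y$.

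The only point needing care is the passage from ``no proper non-empty closed invariant subset'' to ``all orbits dense''. The orbit closure of any $y\in Y$ is closed and satisfies $T(\overline{O(y)})\subseteq\overline{O(y)}$. It may fail to satisfy $T(\cdot)=\cdot$, so it is not immediately a closed invariant set in the paper's sense. To handle this, I would take a minimal subset $Z'$ of this orbit closure, which satisfies $T(Z')=Z'$, and rerun the argument above to conclude $Z'=Y$. Then $Y=Z'\subseteq\overline{O(y)}\subseteq Y$, so the orbit of $y$ is dense. I expect no real obstacle here beyond this bookkeeping.
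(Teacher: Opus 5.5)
Your proof is correct and follows essentially the same route as the paper: any non-empty closed invariant subset of $Y$ carries an invariant measure, which is $f$-maximizing because $Y$ is completely maximizing, and the minimax property then forces that measure's support, and hence the subset, to be all of $Y$. Your explicit appeal to Krylov--Bogolyubov and your handling of orbit closures that are only forward-invariant fill in steps the paper leaves implicit (via Lemma~\ref{subsetcompletelymaximizing} and the phrase ``in other words'').
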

\begin{proof}
Let $Y'$ be any non-empty closed $T$-invariant subset of $Y$.
By Lemma \ref{subsetcompletelymaximizing}(b), $Y'$ is completely maximizing for $f$.
In particular, there exists an $f$-maximizing measure $\mu$ with $\supp(\mu)\subseteq Y' \subseteq Y$.
But $Y$ is minimax, so necessarily $\supp(\mu)=Y$, from which it follows that $Y'=Y$.
So  any non-empty closed $T$-invariant subset of $Y$ must equal $Y$; in other words, $Y$ is dynamically minimal.
\end{proof}

The following strong notion of maximizing set allows
the precise characterisation of maximizing measures
in terms of their support:

\begin{defn}\label{subordinationmaximizingdefn}
For $(X,T,f)\in\X$,
a closed $T$-invariant set $Y$ is called
\emph{subordination maximizing} for $(X,T,f)$ (or simply for $f$)
if 
\begin{equation}\label{mcequalsmmax}
\m(Y,T) = \m_{\max}(f)\,,
\end{equation}
in other words an invariant measure 
is $f$-maximizing if and only if its support is contained in $Y$.
\end{defn}

\begin{rem}\label{subordinationremark}
\begin{itemize}
\item[(a)]
Clearly every 
subordination maximizing set is
completely maximizing.
A subordination maximizing set need not be unique, since there exist closed invariant sets
$Y\neq Y'$ with $\m(Y,T)=\m(Y',T)$: for example if $(X,T)\in\D$ is surjective, uniquely ergodic, but not dynamically minimal,
then this holds for $Y=X$, and $Y'$ the support of the unique invariant probability measure.
\item[(b)]
The existence of a subordination maximizing set is equivalent 
(by work of Morris \cite[Thm.~1, Prop.~1]{M07})
to the so-called\footnote{In fact Bousch \cite[p.~290]{B01} articulated this
as a subordination \emph{principle}, having proved that it holds for all Walters functions (cf.~\cite{walters});
for more general continuous functions it is a property that may or may not hold,
hence our usage of the term subordination \emph{property}. In this article the term subordination \emph{principle} is reserved to mean that the subordination property holds for all Lipschitz functions (cf.~Definition \ref{lipschitz_subordination_principle}).}
 \emph{subordination property} 
 introduced by Bousch \cite[p.~290]{B01}:
 $f\in C(X)$ has the subordination property
if, for any $\mu,\nu\in\m(X,T)$, with $\nu\in\m_{\max}(f)$ and $\supp(\mu)\subseteq\supp(\nu)$, then 
$\mu\in\m_{\max}(f)$.
\end{itemize}
\end{rem}

Morris \cite[Thm.~1]{M07} established a characterisation of the subordination property, in other words the existence of subordination maximizing sets, in terms of uniform boundedness of Birkhoff sums.
It is convenient to state the result in the case that $\beta(f)=0$:

\begin{lem}\label{morrislemma}
Suppose $(X,T,f)\in\X_0$.
If
\begin{equation}\label{morrisbound}
\sup_{n\ge1} \sup_{x\in X} S_n f(x) 
< \infty
\end{equation}
then there exists a subordination maximizing set for $f$.
\end{lem}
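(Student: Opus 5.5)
The plan is to build, from the uniform bound on Birkhoff sums, a continuous ``sub-action'' whose coboundary-perturbed function is non-positive, and to take $Y$ to be the closed invariant set where the perturbed function vanishes along entire orbits. Write $C:=\sup_{n\ge1}\sup_{x\in X}S_nf(x)<\infty$.

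\textbf{Step 1 (upper semicontinuous sub-action).} Define
$$\varphi(x):=\sup_{n\ge 0} S_nf(x),$$
with $S_0f:=0$. By (\ref{morrisbound}) we have $0\le\varphi\le C$. Since each $S_nf$ is continuous, $\varphi$ is lower semicontinuous, and the identity $S_{n+1}f(x)=f(x)+S_nf(Tx)$ gives
$$\varphi(x)=\max\{0,\,f(x)+\varphi(Tx)\}\ge f(x)+\varphi(Tx).$$
Hence $g:=f+\varphi\circ T-\varphi\le 0$ everywhere. Moreover $g$ is bounded and Borel, and $\int g\,d\mu=\int f\,d\mu$ for every $\mu\in\m(X,T)$, because $\varphi$ is bounded and measurable.

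\textbf{Step 2 (defining $Y$).} Set
$$Y:=\bigcap_{k\ge0}T^{-k}\{x: g(x)=0\}.$$
If $\mu$ is invariant, then $\int f\,d\mu=\int g\,d\mu\le0=\beta(f)$. Equality holds if and only if $g=0$ $\mu$-a.e., which by invariance means $\mu(Y)=1$. If $Y$ is closed, this gives $\mu\in\m_{\max}(f)\iff\supp(\mu)\subseteq Y$. Since $\beta(f)=0$, some maximizing measure exists and so $Y\neq\emptyset$; invariance of $Y$ ($T(Y)=Y$) should then follow by restricting attention to the closed support-union of maximizing measures.

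\textbf{Step 3 (closedness; the main obstacle).} Because $\varphi$ is only lower semicontinuous, $\{g=0\}$ need not be closed. This is the step I expect to be hardest, and I would handle it by replacing $Y$ by the closure $\overline{\bigcup\{\supp\mu:\mu\in\m_{\max}(f)\}}$. One inclusion is clear: every maximizing measure has support in this set. For the converse, it suffices to show that every invariant $\mu$ supported in the closure is maximizing.

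To do this, I would use Birkhoff sums. Points $x$ in the support of a maximizing ergodic $\nu$ are $\nu$-generic and satisfy $S_nf(x)=\varphi(x)-\varphi(T^nx)\ge -C$ for all $n$, using that $g=0$ along $\nu$-typical orbits. By continuity of each $S_nf$, the bound $S_nf\ge -C$ passes to the closure of the union of such generic points, i.e.\ to all of the candidate $Y$. Then any invariant $\mu$ on $Y$ satisfies $n\int f\,d\mu=\int S_nf\,d\mu\ge -C$ for every $n$, so $\int f\,d\mu\ge0$, i.e.\ $\mu$ is maximizing.

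This closure argument actually bypasses semicontinuity issues entirely. The resulting $Y$ is closed, non-empty and invariant, with $\m(Y,T)=\m_{\max}(f)$, so $Y$ is subordination maximizing as required.
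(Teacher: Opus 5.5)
Your proof is correct, and it takes a genuinely different route from the paper's. The paper proves nothing directly here. It invokes Morris \cite[Thm.~1]{M07} to pass from the Birkhoff bound to Bousch's subordination property, then \cite[Prop.~1]{M07} to obtain a closed set $K$ with $\mu\in\m_{\max}(f)$ iff $\supp(\mu)\subseteq K$. It finally replaces $K$ by the eventual image $\bigcap_{n\ge0}T^n\bigl(\bigcap_{i\ge0}T^{-i}K\bigr)$ to get a closed $T$-invariant set. You instead reprove the substance of Morris's theorem. The bounded Borel sub-action $\varphi=\sup_{n\ge0}S_nf$ gives $g=f+\varphi\circ T-\varphi\le0$, and every maximizing measure gives full mass to the Borel set where $g$ vanishes along orbits. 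On that set $S_nf=\varphi-\varphi\circ T^n\ge -C$. Hence every maximizing measure is supported in the closed set $K':=\{x: S_nf(x)\ge -C \ \text{for all}\ n\ge1\}$, while every invariant measure on $K'$ is maximizing because $n\int f\,d\mu\ge -C$. Your argument is therefore self-contained and exhibits an explicit set playing the role of Morris's $K$. Taking $Y$ to be the closure of the union of supports of maximizing measures also yields invariance without the eventual-image construction; the paper's route buys brevity by delegating to the literature. A few details need tidying. First, not every point of $\supp(\nu)$ is $\nu$-generic, but you only need a $\nu$-full-measure subset of the closed set $K'$, and then $K'$ contains $\supp(\nu)$. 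Second, restricting to ergodic $\nu$ is unnecessary, and would require the ergodic decomposition to cover all of your candidate $Y$. Step 2 already gives $\mu\bigl(\bigcap_k T^{-k}\{g=0\}\bigr)=1$ for every maximizing $\mu$. Third, $T(Y)=Y$ deserves its one-line justification. Each invariant $\mu$ has $T(\supp\mu)=\supp\mu$. So, writing $U$ for the union of supports, $T(U)=U$ and $\overline U\subseteq T(\overline U)\subseteq\overline{T(U)}=\overline U$. The first inclusion holds because $T(\overline U)$ is compact and contains $U$.
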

\begin{proof}
By \cite[Thm.~1]{M07}, 
condition (\ref{morrisbound}) implies that $f$ satisfies the subordination property
(cf.~Remark \ref{subordinationremark}(b)),
and by \cite[Prop.~1]{M07} this is equivalent to the existence of a closed subset $K\subseteq X$
such that $\mu\in\m(X,T)$ is $f$-maximizing if and only if $\supp(\mu)\subseteq K$.
The closed $T$-invariant set $Y:= \cap_{n=0}^\infty T^n \left( \cap_{i=0}^\infty T^{-i}(K) \right)$ then has the analogous property:
$\mu\in\m(X,T)$ is $f$-maximizing if and only if $\supp(\mu)\subseteq Y$.
In other words, $Y$ is a
subordination maximizing set
for $f$.
\end{proof}

An important consequence is the following criterion
for a minimax set to be completely maximizing:

\begin{prop}\label{morrisminimaxcompletely} (Completely Maximizing Criterion)
 
\noindent Suppose $(X,T,f)\in\X_0$.
If $Y\subseteq X$ is a minimax set for $f$, and
\begin{equation}\label{morristypebound}
\sup_{n\ge 1} \sup_{x\in Y} S_nf(x) < \infty\,,
\end{equation}
then $Y$ is completely maximizing for $f$,
and is also dynamically minimal.
\end{prop}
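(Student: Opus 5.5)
The plan is to apply Lemma \ref{morrislemma} to the restricted triple $(Y,T,f)$ rather than to $(X,T,f)$. First we check that this restricted triple lies in $\X_0$. Since $Y$ is minimax, it is in particular maximizable for $(X,T,f)$, so Lemma \ref{obvious}(b) gives $\beta(Y,T,f)=\beta(X,T,f)=0$. Hence $(Y,T|_Y,f|_Y)\in\X_0$, and hypothesis (\ref{morristypebound}) is exactly condition (\ref{morrisbound}) for this system.

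Lemma \ref{morrislemma} then provides a closed $T$-invariant set $Z\subseteq Y$ that is subordination maximizing for $(Y,T,f)$, namely $\m(Z,T)=\m_{\max}(Y,T,f)$. By Lemma \ref{obvious}(c), $\m_{\max}(Y,T,f)\subseteq\m_{\max}(X,T,f)$. Moreover $\m_{\max}(Y,T,f)$ is non-empty, so $\m(Z,T)$ is non-empty; in particular $Z\neq\emptyset$ and $Z$ is maximizable for $(X,T,f)$.

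The key step is to show that $Z=Y$. Take any $\mu\in\m(Z,T)$. Then $\mu$ is $f$-maximizing with $\supp(\mu)\subseteq Z\subseteq Y$. Since $Y$ is minimax, $\supp(\mu)=Y$, which forces $Z\supseteq Y$ and hence $Z=Y$. Consequently $\m(Y,T)=\m(Z,T)\subseteq\m_{\max}(X,T,f)$, so $Y$ is completely maximizing.

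Dynamical minimality then follows at once from Lemma \ref{completelymaximizingminimaximpliesdynamicallyminimal}.

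The only point needing care is that Lemma \ref{morrislemma} must be applied on $Y$ as a dynamical system in its own right. This requires $T(Y)=Y$ and $Y$ compact, both of which hold. It also requires the normalization $\beta=0$ on $Y$, which is why Lemma \ref{obvious}(b) is used in the first step.
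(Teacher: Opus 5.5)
Your proof is correct and follows the paper's argument essentially step for step. You apply Lemma \ref{morrislemma} to the restricted system $(Y,T,f)$ to get a subordination maximizing $Z\subseteq Y$, use Lemma \ref{obvious}(b),(c) and the minimax property to force $Z=Y$, and then invoke Lemma \ref{completelymaximizingminimaximpliesdynamicallyminimal}; your explicit checks that $\beta(Y,T,f)=0$ before applying Lemma \ref{morrislemma} and that $\m(Z,T)\neq\emptyset$ are small points the paper leaves implicit.
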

\begin{proof}
Condition (\ref{morristypebound}) implies, by Lemma \ref{morrislemma}, that 
there is a closed $T$-invariant subset $Z\subseteq Y$ that is a subordination maximizing set 
for the restricted system $(Y,T,f)$, in other words
\begin{equation}\label{mymmax}
\m(Z,T)= \m_{\max}(Y, T,f)\,.
\end{equation}

Now $Y$ is minimax for $(X,T,f)$, and hence maximizable for $(X,T,f)$, so
parts  (b) and (c) of
Lemma \ref{obvious} mean that
$\beta(Y,T,f) = \beta(X,T,f)$ and
\begin{equation}\label{mmaxinclusion}
\m_{\max}(Y, T,f) \subseteq  \m_{\max}(X, T,f)\,.
\end{equation}

Combining (\ref{mymmax}) and (\ref{mmaxinclusion}) gives
\begin{equation}\label{mzmmaxinclusion}
\m(Z,T) \subseteq  \m_{\max}(X, T,f)\,.
\end{equation}

But $Y$ is minimax for $f$, so all measures in $\m(Y,T)\cap\m_{\max}(X,T,f)$ must have 
their support equal to all of $Y$.
This means that all measures in $\m(Z,T)$ must have their support equal to $Y$.
But this means that in fact
$Z=Y$.

This implies that (\ref{mzmmaxinclusion}) becomes
\begin{equation*}
\m(Y,T) \subseteq  \m_{\max}(X, T,f)\,.
\end{equation*}
In other words, $Y$ is a completely maximizing set for $(X,T,f)$.

Since $Y$ is both minimax and completely maximizing, it is necessarily dynamically minimal, by
Lemma \ref{completelymaximizingminimaximpliesdynamicallyminimal}.
\end{proof}

\section{ Maximizable families}\label{maximizablefamiliessection}

In this section, we consider the space $\lip(X)$ of Lipschitz real-valued functions on $X$.
Collections of invariant measures will define subsets of $\lip(X)$, via ergodic optimization, in the following way:

\begin{notation}\label{maximizabledomaindefn}
Let $(X,T)\in\D$.
For a subset $\n\subseteq \m(X,T)$, we write
$$
\lip^\n(X,T)  := \left\{f\in \lip(X): \m_{\max}(f) \cap \n \neq\emptyset\right\}\,,
$$
$$
\lip_{\subseteq}^\n(X,T) := \left\{f\in \lip(X): \m_{\max}(f) \subseteq \n\right\}\,.
$$
When $Z$ is a closed $T$-invariant subset of $X$, 
we write
\begin{equation*}\label{LZdef}
\lip_Z(X,T):= \lip^{\m(Z,T)}(X,T)
= \left\{f\in \lip(X): Z\text{ is $f$-maximizable}\right\}\,,
\end{equation*}
\begin{equation*}
\lip_Z^\subseteq(X,T):= \lip^{\m(Z,T)}_\subseteq(X,T)
=  \left\{f\in \lip(X): \m_{\max}(f) \subseteq \m(Z,T)\right\}
 \,.
\end{equation*}
\end{notation}

\begin{rem}
In the sequel, the subset $\n\subseteq \m(X,T)$ will be chosen as either
$\n=\m(Z,T)$ or  $\n=\per(Z,T)$ (the set of periodic measures on $Z$, cf.~Definition \ref{periodic_measures_defn}), where $Z\subseteq X$ is a closed invariant subset.
\end{rem}

The following lemmas are straightforward:

\begin{lem}\label{LipZnonemptyiff}
Let $(X,T)\in\D$.
A closed $T$-invariant subset  $Z\subseteq X$ is non-empty
if and only if $\lip_Z(X,T)$ is non-empty.
\end{lem}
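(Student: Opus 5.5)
The plan is to prove the two implications separately, working directly from Definition \ref{maximizabledefn} and the notation $\lip_Z(X,T)=\{f\in\lip(X): Z\text{ is $f$-maximizable}\}$.

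For the forward direction, I will assume $Z$ is non-empty, closed and $T$-invariant, so that $(Z,T|_Z)\in\D$. The set $\m(Z,T)$ is then non-empty, by the Krylov--Bogolyubov theorem, or equivalently because the weak$^*$ compact set $\m(Z,T)$ contains a subsequential limit of Birkhoff averages of a Dirac mass at a point of $Z$. I will take $f$ to be any constant function, say $f\equiv 0$, which is certainly Lipschitz. Every invariant measure is $f$-maximizing, so $\m_{\max}(f)=\m(X,T)$. Hence $\m(Z,T)\cap\m_{\max}(f)=\m(Z,T)\neq\emptyset$, so $Z$ is $f$-maximizable and $f\in\lip_Z(X,T)$.

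For the converse, I will suppose $f\in\lip_Z(X,T)$. By definition there is some $\mu\in\m_{\max}(f)$ with $\supp(\mu)\subseteq Z$. Since $\mu$ is a probability measure, its support is non-empty, and therefore $Z$ is non-empty. Equivalently, if $Z=\emptyset$ then $\m(Z,T)=\emptyset$, so no $f$ can make $Z$ maximizable.

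I do not expect either step to be a real obstacle. The only point needing any care is that a non-empty closed invariant set carries an invariant probability measure, and this is the standard existence result already implicit in the paper's remark that $\m(X,T)$ is non-empty and compact.
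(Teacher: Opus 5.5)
Your proposal is correct and follows the paper's proof: when $Z=\emptyset$ no measure can be supported in $Z$, and when $Z\neq\emptyset$ a constant function makes $Z$ maximizable. The only difference is that you spell out why $\m(Z,T)\neq\emptyset$ (Krylov--Bogolyubov applied to $(Z,T|_Z)$), a step the paper leaves implicit.
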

\begin{proof}
If $Z=\emptyset$ then $\m(Z,T)=\emptyset$, so  
$Z$ is not $f$-maximizable for any $f\in\lip(X)$, i.e.~$\lip_Z(X,T)=\emptyset$.
If $Z\neq\emptyset$ then $Z$ is $f$-maximizable for constant functions, so $\lip_Z(X,T)\neq\emptyset$.
\end{proof}

\begin{lem}\label{LZclosed}
Let $(X,T)\in\D$.
If $\n$ is a closed subset of $\m(X,T)$,
then $\lip^\n(X,T)$ is a closed subset of $\lip(X)$.
In particular,  if $Z\subseteq X$ is a closed $T$-invariant subset, then
$\lip_Z(X,T)$ is a closed subset of $\lip(X)$.
\end{lem}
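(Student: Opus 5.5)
The plan is to show that the complement of $\lip^\n(X,T)$ is open, or equivalently that $\lip^\n(X,T)$ is sequentially closed. Sequences suffice because $\lip(X)$ is a normed space, hence a metric space. Take $f_k\in\lip^\n(X,T)$ with $f_k\to f$ in $\lip(X)$. The aim is to show that $\m_{\max}(f)\cap\n\neq\emptyset$.

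First, for each $k$ choose $\mu_k\in\m_{\max}(f_k)\cap\n$. Since $\n$ is a closed subset of the weak$^*$ compact space $\m(X,T)$, it is compact and metrizable (as $X$ is a compact metric space). Passing to a subsequence, we may therefore assume $\mu_k\to\mu\in\n$ in the weak$^*$ topology.

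Second, we check that $\mu$ is $f$-maximizing. Convergence in $\lip(X)$ implies uniform convergence, since $\|f_k-f\|_{C(X)}\le\|f_k-f\|_{\lip(X)}$. It follows that $|\beta(f_k)-\beta(f)|\le\|f_k-f\|_{C(X)}\to0$, because a sup over measures of integrals is $1$-Lipschitz in the sup norm. We also have
\[
\Bigl|\int f_k\,d\mu_k-\int f\,d\mu\Bigr|\le\|f_k-f\|_{C(X)}+\Bigl|\int f\,d\mu_k-\int f\,d\mu\Bigr|\to0,
\]
where the second term tends to zero by weak$^*$ convergence and the continuity of $f$. Since $\int f_k\,d\mu_k=\beta(f_k)$ for every $k$, letting $k\to\infty$ gives $\int f\,d\mu=\beta(f)$. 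Hence $\mu\in\m_{\max}(f)\cap\n$, so $f\in\lip^\n(X,T)$.

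For the particular case, it remains to check that $\m(Z,T)$ is closed in $\m(X,T)$. This was noted in the Preliminaries and follows from the portmanteau theorem: if $\mu_k\to\mu$ with $\mu_k(Z)=1$, then $\mu(Z)\ge\limsup_k\mu_k(Z)=1$ because $Z$ is closed. Applying the general statement with $\n=\m(Z,T)$ then gives the claim for $\lip_Z(X,T)$. If $Z=\emptyset$, then $\lip_Z(X,T)=\emptyset$, which is trivially closed.

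No step is a genuine obstacle. The only point needing care is the joint limit $\int f_k\,d\mu_k\to\int f\,d\mu$, which the splitting displayed above handles.
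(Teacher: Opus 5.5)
Your proposal is correct and follows essentially the same route as the paper: take maximizing measures $\mu_k\in\n$ for $f_k$, extract a weak$^*$ convergent subsequence with limit in the closed set $\n$, and pass to the limit. The only small difference is that the paper verifies maximality by letting $k\to\infty$ in $\int f_k\,d\mu_k\ge\int f_k\,dm$ for each fixed $m\in\m(X,T)$, whereas you use the $1$-Lipschitz continuity of $\beta$ in the sup norm, which is an equivalent way of making the same limit argument.
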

\begin{proof}
Suppose $f_i\in \lip^\n(X,T)$, with $f_i\to f$ in $\lip(X)$, so in particular $f_i\to f$ in the topology of $C(X)$.
There exist $f_i$-maximizing measures $\mu_i\in\n$, and by passing to a subsequence we may assume that
$\mu_i\to\mu\in\n$.
But 
\begin{equation}\label{intfexpression}
\int f\,d\mu = \left(\int f\, d\mu -\int f\, d\mu_i\right) +\left(\int f\, d\mu_i - \int f_i\, d\mu_i\right) +  \int f_i\, d\mu_i\,,
\end{equation}
and noting that $\int f_i\, d\mu_i \ge \int f_i\, dm$ for all $m\in\m(X,T)$,
then letting $i\to\infty$  in (\ref{intfexpression}), we see that
$\int f\, d\mu\ge \int f\, dm$ for all $m\in\m(X,T)$, 
so $\mu$ is $f$-maximizing.
But $\mu\in\n$, therefore $\mu\in\m_{\max}(f)\cap\n$, in other words $f\in \lip^\n(X,T)$, so $\lip^\n(X,T)$ is closed.
\end{proof}

The notion of maximizable set 
has the following important generalisation:

\begin{defn}\label{maximizablefamily}
Let $(X,T)\in\D$.
A
\emph{maximizable family}
is a collection $\ZZ$ of closed $T$-invariant subsets 
such that
for all $f\in \lip(X)$ there exists $Z\in\ZZ$ that
is $f$-maximizable.
\end{defn}

Maximizable families $\ZZ$ for dynamical systems $(X,T)$ will be a key tool in the development
that follows: if $\ZZ$ is \emph{non-trivial} (i.e.~does not contain $X$ itself)
then in some cases it is possible to deduce properties of $(X,T)$ from properties of the members of $\ZZ$.
A first step is the straightforward observation that every maximizable family induces a covering of $\lip(X)$:

\begin{lem}\label{LZcoversL}
If $(X,T)\in\D$, with maximizable family $\ZZ$, then
\begin{equation}\label{coverL}
\lip(X) = \bigcup_{Z\in\ZZ} \lip_Z(X,T)\,.
\end{equation} 
\end{lem}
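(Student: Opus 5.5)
The statement is a direct unwinding of Definition \ref{maximizablefamily} together with Notation \ref{maximizabledomaindefn}, so I will prove the equality (\ref{coverL}) by checking the two inclusions separately.

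For the inclusion $\bigcup_{Z\in\ZZ}\lip_Z(X,T)\subseteq\lip(X)$, no argument is needed beyond the definitions. For each $Z\in\ZZ$, the set $\lip_Z(X,T)=\lip^{\m(Z,T)}(X,T)$ is defined as a subset of $\lip(X)$, and therefore so is the union.

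For the reverse inclusion, I take an arbitrary $f\in\lip(X)$. By Definition \ref{maximizablefamily}, there is some $Z\in\ZZ$ that is $f$-maximizable. By Definition \ref{maximizabledefn}, this means that $\m(Z,T)\cap\m_{\max}(f)\neq\emptyset$. This is exactly the condition $f\in\lip^{\m(Z,T)}(X,T)=\lip_Z(X,T)$, so $f$ lies in the union.

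There is no genuine obstacle here. The only point requiring care is that membership in $\lip_Z(X,T)$ is defined through $\m(Z,T)$, which consists of the invariant measures supported inside $Z$. This matches precisely the formulation of $f$-maximizability in (\ref{maximizableeqn}), so the two definitions coincide and the argument goes through.
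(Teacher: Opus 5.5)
Your proof is correct and matches the paper's treatment: the paper gives no written proof, calling the lemma a straightforward observation, and the only content is the unwinding of Definition~\ref{maximizablefamily} and Notation~\ref{maximizabledomaindefn} that you carry out. Both inclusions are handled correctly.
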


When the maximizable family is countable, a key consequence is the following:

\begin{lem}\label{baireconsequence}
If $(X,T)\in\D$,
and $\ZZ=\{Z_i\}_{i=1}^\infty$ is a countable maximizable family, then
$$\bigcup_{i=1}^\infty int\left(\lip_{Z_i}(X,T)\right)$$ is dense in $\lip(X)$.
\end{lem}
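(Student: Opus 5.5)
This is a Baire category argument. By Lemma \ref{LZcoversL}, $\lip(X)=\bigcup_{i=1}^\infty \lip_{Z_i}(X,T)$, and each $\lip_{Z_i}(X,T)$ is closed in $\lip(X)$ by Lemma \ref{LZclosed}, since $Z_i$ is a closed $T$-invariant set. So the Banach space $\lip(X)$ is a countable union of closed sets. We must show that the union of their interiors is dense.

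Let $U\subseteq\lip(X)$ be a non-empty open set; it suffices to show that $U$ meets $int(\lip_{Z_i}(X,T))$ for some $i$. Fix $f_0\in U$ and a closed ball $B=\overline{B}(f_0,r)\subseteq U$ with $r>0$. As a closed subset of the Banach space $\lip(X)$, the ball $B$ is a complete metric space. Moreover $B=\bigcup_i \big(B\cap\lip_{Z_i}(X,T)\big)$, and each piece is closed in $B$.

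By the Baire category theorem, some $B\cap \lip_{Z_i}(X,T)$ has non-empty interior relative to $B$. Hence there exist $g\in B$ and $\varepsilon>0$ with $B\cap B(g,\varepsilon)\subseteq \lip_{Z_i}(X,T)$.

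Relative interior in $B$ need not be interior in $\lip(X)$, so one more step is needed. The set $B\cap B(g,\varepsilon)$ contains a genuine open ball of $\lip(X)$. Indeed, the open ball $B(f_0,r)$ is dense in $B$, so there is $h\in B(f_0,r)\cap B(g,\varepsilon)$. Choosing $\delta>0$ small, the open ball $B(h,\delta)$ lies in $B(f_0,r)\cap B(g,\varepsilon)\subseteq \lip_{Z_i}(X,T)\cap U$.

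Thus $h\in int(\lip_{Z_i}(X,T))\cap U$, which proves density. One could equally apply Baire directly to the open subspace $U$, which is a Baire space as an open subset of a complete metric space; this avoids the final step. The only point needing care is this distinction between relative and absolute interior, and it is routine.
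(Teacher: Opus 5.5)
Your proposal is correct and takes the same route as the paper: Lemmas \ref{LZcoversL} and \ref{LZclosed} write $\lip(X)$ as a countable union of the closed sets $\lip_{Z_i}(X,T)$, and the Baire Category Theorem then gives that the union of their interiors is dense. The paper simply cites this standard consequence of Baire's theorem, while you prove it directly by localizing to a closed ball (or to the open set $U$), and your handling of relative versus absolute interior is sound.
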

\begin{proof}
Since $\ZZ=\{Z_i\}_{i=1}^\infty$ is maximizable,
 Lemma \ref{LZcoversL} gives
$\lip(X) = \bigcup_{i=1}^\infty \lip_{Z_i}(X,T)$.
Since $\lip(X)$ is a Baire space, and each $\lip_{Z_i}(X,T)$ is closed,
 the Baire Category Theorem means that the union of their interiors is dense in $\lip(X)$
(see e.g.~\cite[Thm.~3.34]{AB99}).
\end{proof}

For certain $\n$, in particular
if $\n=\m(Z,T)$, the sets
 $\lip^\n(X)$ and $\lip_\subseteq^\n(X)$ have a common interior, a result that will be useful later.

\begin{lem}\label{Nclosedconvexsameinteriors}
For $(X,T)\in\D$,
if $\n$ is a closed convex subset of $\m(X,T)$
then
$$
int\left(\lip^\n(X,T) \right) = int\left( \lip_\subseteq^\n(X,T) \right) .
$$
\end{lem}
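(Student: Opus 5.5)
Since $\lip_\subseteq^\n(X,T)\subseteq \lip^\n(X,T)$ (every $f$ has at least one maximizing measure, so $\m_{\max}(f)\subseteq\n$ forces $\m_{\max}(f)\cap\n\neq\emptyset$), the inclusion $int(\lip_\subseteq^\n(X,T))\subseteq int(\lip^\n(X,T))$ is immediate. The plan is to prove the reverse inclusion by contradiction. Suppose $f\in int(\lip^\n(X,T))$ but $f\notin \lip_\subseteq^\n(X,T)$. Then some $\mu\in\m_{\max}(f)$ lies outside $\n$, and we will perturb $f$ by an arbitrarily small Lipschitz function so that the perturbed function has no maximizing measure in $\n$. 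This contradicts $f$ being an interior point.

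To build the perturbation, note first that $\n$ is closed and convex in $\m(X,T)$, which is compact convex in the locally convex space $C(X)^*$ with the weak$^*$ topology. By Hahn--Banach separation there is $g\in C(X)$ with
\[
\int g\,d\mu>\sup_{\nu\in\n}\int g\,d\nu .
\]
Lipschitz functions are uniformly dense in $C(X)$ (e.g.\ by Stone--Weierstrass, or via inf-convolution), so approximating $g$ in the sup norm gives $h\in\lip(X)$ with $\int h\,d\mu>\sup_{\nu\in\n}\int h\,d\nu+\delta$ for some $\delta>0$.

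Now set $f_\epsilon:=f+\epsilon h$, so that $\|f_\epsilon-f\|_{\lip(X)}\to0$ as $\epsilon\to0$. For $\nu\in\n$,
\[
\int f_\epsilon\,d\nu\le \beta(f)+\epsilon\int h\,d\nu<\beta(f)+\epsilon\int h\,d\mu-\epsilon\delta<\int f_\epsilon\,d\mu\le\beta(f_\epsilon),
\]
using $\int f\,d\mu=\beta(f)$. Hence no $\nu\in\n$ is $f_\epsilon$-maximizing, that is, $f_\epsilon\notin\lip^\n(X,T)$ for every $\epsilon>0$. This contradicts $f\in int(\lip^\n(X,T))$, so $f\in\lip_\subseteq^\n(X,T)$. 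Thus $int(\lip^\n(X,T))\subseteq\lip_\subseteq^\n(X,T)$, and since it is open, $int(\lip^\n(X,T))\subseteq int(\lip_\subseteq^\n(X,T))$.

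The main obstacle is the separation step. It needs $\n$ to be both convex and weak$^*$ closed, which is why these hypotheses appear, and it needs the separating functional to be realised by a continuous function. Both follow from the standard description of the weak$^*$ dual of $C(X)^*$ as $C(X)$. After that, the Lipschitz approximation is routine.
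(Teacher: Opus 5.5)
Your proof is correct and takes essentially the same approach as the paper. You separate a maximizing measure $\mu\notin\n$ from $\n$ by a continuous function, approximate it uniformly by a Lipschitz function, and perturb $f$ in that direction so that no measure in $\n$ remains maximizing. The paper treats $\n=\emptyset$ as a separate case, whereas your argument covers it implicitly, since then $\lip^\n(X,T)=\emptyset$ and the reverse inclusion is vacuous.
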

\begin{proof}
If $\n=\emptyset$ then $\lip^\n(X,T)=\emptyset=\lip^\n_\subseteq(X,T)$, so the required equality holds.

Suppose that $\n\neq\emptyset$.
Now $ \lip_\subseteq^\n(X,T) \subseteq \lip^\n(X,T)$,
so
 $ int\left(\lip_\subseteq^\n(X,T)\right)$ is contained in $int\left(\lip^\n(X,T)\right)$.
To prove the reverse inclusion 
$
int\left(\lip^\n(X,T) \right) \subseteq int\left( \lip_\subseteq^\n(X,T) \right)
$, 
note that it is equivalent to showing that
$
int\left(\lip^\n(X,T) \right) \subseteq  \lip_\subseteq^\n(X,T)
$. 
To prove this inclusion,
suppose
for a contradiction that there exists  
$f\in  int\left(\lip^\n(X,T)\right) \setminus  \lip_\subseteq^\n(X,T)$.

So there exists $\mu\in\m_{\max}(f)\setminus \n$.
Since $\n$ is a non-empty closed convex subset of the locally convex space $\m(X,T)$, and $\mu\notin\n$,
there is a continuous linear functional $l$ 
(defined on the space of signed measures on $X$ equipped with the weak$^*$ topology)
that strongly separates $\n$ and $\mu$ (see \cite[Cor.~5.59]{AB99}).
Identifying $l$ with some $\phi\in C(X)$,
this means
there exists $\delta>0$ such that $\int \phi\, d\mu =3\delta$ and $\int \phi\, d\nu\le 0$ for all $\nu\in\n$.
Since $\lip(X)$ is uniformly dense in $C(X)$, 
by the Stone-Weierstrass theorem,
there exists $\psi\in \lip(X)$ with $\|\phi-\psi\|_{C(X)}\le\delta$,
so $\int \psi\, d\mu \ge 2\delta\ge \delta +\int\psi\, d\nu$ for all $\nu\in\n$.
It follows that
$\int (f+\epsilon\psi)\, d\mu \ge \delta\epsilon + \int (f+\epsilon\psi)\, d\nu$  for all $\nu\in\n$, $\epsilon>0$.
So if $\epsilon>0$ then $\n$ is disjoint from $\m_{\max}(f+\epsilon\psi)$,
and therefore $f+\epsilon\psi \notin \lip^\n(X,T)$,
thus contradicting the fact that
$f$ is in the interior of $\lip^\n(X,T)$.
\end{proof}

The following important result relates denseness in a space of restricted Lipschitz functions to denseness in an open set of Lipschitz functions defined on the whole space $X$.

\begin{thm}\label{densedense} (Subsystem Reduction)

\noindent
Suppose $(X,T)\in\D$, with $Z$ a closed $T$-invariant subset of $X$.
Let $U$ be a non-empty open subset of $\lip(X)$ such that $Z$ is maximizable for all $f\in U$.
If $\n\subseteq \m(X,T)$ is such that $\lip^{\n\cap\m(Z,T)}(Z,T)$ is dense in $\lip(Z)$,
then $\lip^\n(X,T) \cap U$ is dense in $U$.
\end{thm}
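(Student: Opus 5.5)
Fix $f\in U$ and $\epsilon>0$; the goal is to find $h\in\lip^\n(X,T)\cap U$ with $\|h-f\|_{\lip(X)}<\epsilon$. The plan has three steps: approximate on $Z$ using the hypothesis, extend the correction to $X$, and then use maximizability of $Z$ to transfer maximizing measures from $Z$ to $X$.

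First restrict $f$ to $Z$. By hypothesis there is $g\in\lip(Z)$ with $\|g-f|_Z\|_{\lip(Z)}<\delta$ such that some $m\in\n\cap\m(Z,T)$ is maximizing for $(Z,T,g)$. Write $u:=g-f|_Z\in\lip(Z)$. Extend $u$ to $\tilde u\in\lip(X)$ by a McShane-type extension, $\tilde u(x)=\inf_{z\in Z}\bigl(u(z)+|u|_{\lip(Z)}d(x,z)\bigr)$, and truncate it to the interval $[-\|u\|_{C(Z)},\|u\|_{C(Z)}]$. Truncation preserves both the Lipschitz constant and the values on $Z$, so $\|\tilde u\|_{\lip(X)}\le\|u\|_{\lip(Z)}<\delta$. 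Set $h:=f+\tilde u$. Then $h|_Z=g$ and $\|h-f\|_{\lip(X)}<\delta$. Choosing $\delta\le\epsilon$ small enough that the $\delta$-ball about $f$ lies in $U$ (possible since $U$ is open) gives $h\in U$.

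Since $h\in U$, the set $Z$ is $h$-maximizable. By Lemma \ref{obvious}(b) this gives $\beta(Z,T,h)=\beta(X,T,h)$. By Lemma \ref{obvious}(c), $\m_{\max}(Z,T,h)\subseteq\m_{\max}(X,T,h)$. Integrals of $h$ against measures supported on $Z$ depend only on $h|_Z=g$, so $m\in\m_{\max}(Z,T,h)$, and hence $m\in\m_{\max}(X,T,h)\cap\n$. Therefore $h\in\lip^\n(X,T)\cap U$, which proves density.

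I expect the main obstacle to be the extension step: it must control the full Lipschitz norm, not only the Lipschitz seminorm, so that $h$ stays inside $U$. The truncated McShane extension handles this, provided one remembers to truncate to keep the sup norm bounded. One should also check that the notation $\lip^{\n\cap\m(Z,T)}(Z,T)$ refers to maximizing measures of the restricted system $(Z,T,g)$; this is the reading used above.
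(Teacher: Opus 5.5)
Your proof is correct and follows essentially the same route as the paper. Both arguments perturb $f|_Z$ within $\lip(Z)$ using the density hypothesis, extend the perturbation to $X$ by McShane's theorem so the perturbed function stays in a ball inside $U$, and then use Lemma \ref{obvious}(b),(c) to promote the maximizing measure in $\n\cap\m(Z,T)$ to an $X$-maximizing one. Your explicit truncation of the McShane extension just makes precise the control of the full Lipschitz norm, which the paper obtains by citing the extension theorem in a form that preserves that norm.
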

\begin{proof}
Let $f\in U$, and choose $\delta>0$ small enough that 
\begin{equation}\label{ballinU}
\{g\in\lip(X):\|f-g\|_{\lip(X)}< \delta\} \subseteq U \,.
\end{equation}
Since $\lip^{\n\cap\m(Z,T)}(Z,T)$ is dense in $\lip(Z)$, there exists $h_\delta\in\lip(Z)$ with $\|h_\delta\|_{\lip(Z)}<\delta$
such that $f|_Z + h_\delta \in \lip^{\n\cap\m(Z,T)}(Z,T)$,
and so
there exists $\mu\in\n\cap\m(Z,T)$ such that
\begin{equation}\label{singletonmu}
\mu\in\m_{\max}(Z,T,f|_Z + h_\delta)\,.
\end{equation}

By McShane's extension theorem \cite{Mc34}, the Lipschitz function $h_\delta$ can be extended to a 
function
 $\widehat{h}_\delta\in \lip(X)$ with the same Lipschitz norm (see \cite[Thm.~1.33]{Wea18}),
so $\|\widehat{h}_\delta\|_{\lip(X)} = \|h_\delta\|_{\lip(Z)}<\delta$,
and therefore  (\ref{ballinU}) implies that
\begin{equation}\label{perturbationinU}
f+\widehat{h}_\delta \in U\,.
\end{equation}

The fact that $f+\widehat{h}_\delta \in U$ means that $Z$ is maximizable for  $f+\widehat{h}_\delta$.
Consequently, Lemma \ref{obvious}(b) and (c) give that $\beta(Z,T,f|_Z+h_\delta)=\beta(X,T,f+\widehat{h}_\delta)$,
and 
\begin{equation}\label{maxmeasuresetscontained}
\m_{\max}(Z,T,f|_Z+h_\delta)\subseteq \m_{\max}(X,T,f+\widehat{h}_\delta)\,.
\end{equation}
Combining (\ref{singletonmu}) and (\ref{maxmeasuresetscontained}) gives that 
$\mu\in  \m_{\max}(X,T,f+\widehat{h}_\delta)$.
Since we also have that
 $\mu\in\n\cap\m(Z,T)\subseteq \n$,
 it follows that
\begin{equation}\label{fperturbationinLipN}
f+\widehat{h}_\delta \in \lip^\n(X,T)\,,
\end{equation}
so
(\ref{perturbationinU}) and (\ref{fperturbationinLipN}) give
\begin{equation}\label{fperturbationinLipNandU}
f+\widehat{h}_\delta \in \lip^\n(X,T) \cap U \,.
\end{equation}
Since $f$ was an arbitrary member of $U$, and $\|\widehat{h}_\delta\|_{\lip(X)}<\delta$ can be chosen arbitrarily small, it follows from (\ref{fperturbationinLipNandU}) that
$\lip^\n(X,T) \cap U$ is dense in $U$.
\end{proof}

\section{ Typical periodic optimization and the structural theorem}\label{periodictposection}

In this section, we begin the
study of the main theme of the article, typical periodic optimization.
In particular, the tools developed in
Sections \ref{maximizableminimaxsection}, \ref{completelymaximizingsetssection} and \ref{maximizablefamiliessection}
will be applied so as to establish a key structural theorem (Theorem \ref{abstracttpovariant}).

\begin{defn}\label{periodic_measures_defn}
For $(X,T)\in\D$,
if $x\in X$ is periodic with least period $n$, then its orbit $\{x,\ldots,T^{n-1}(x)\}$
supports a unique $T$-invariant probability measure, namely
$$
\mu=\frac{1}{n} \sum_{j=0}^{n-1}\delta_{T^j(x)}\,,
$$
and any such $\mu$ will be called a \emph{periodic measure}. We define
$$
\per(X,T):=\left\{\mu\in\m(X,T): \mu \text{ is periodic}\right\},
$$
the set of periodic measures for the dynamical system $(X,T)$.
\end{defn}

The following definition was discussed in Section \ref{introsection}:

\begin{defn}
Given $(X,T)\in\D$,
 a continuous function $f:X\to\R$ is said to have the \emph{periodic optimization} property
 if there exists $m\in\per(X,T)$ such that 
$$\int f\, dm > \int f\, d\mu\text{ for all }\mu\in\m(X,T)\setminus\{m\}.
$$
\end{defn}

The periodic optimization property leads to the following definition:

\begin{defn}\label{tpodefn}
A dynamical system $(X,T)\in\D$ has the \emph{typical periodic optimization (TPO)} 
property
if there is an open dense subset $P\subseteq \lip(X)$ such that every $f\in P$ has the periodic
optimization property.
For brevity, we may simply say that $(X,T)$ \emph{has TPO}.
\end{defn}

\begin{rem}
Note that TPO is stronger than saying that Lipschitz functions with the periodic optimization property are generic,
i.e.~constitute a residual subset of $\lip(X)$. 
\end{rem}

If $\lip^\per_{\, !}(X,T)$ is defined to be the set of \emph{all} Lipschitz functions with 
the periodic optimization property, then we can express
\begin{equation}\label{lip!per}
\lip^\per_{\, !}(X,T) := \bigcup_{\mu\in\per(X,T)} \lip^{\{\mu\}}_\subseteq(X,T).
\end{equation}

In general $\lip^\per_{\, !}(X,T)$ need not be an open subset of $\lip(X)$, so its interior
$$
P(X,T):= int\left(  \lip^\per_{\, !}(X,T) \right)
$$
will play the role of the set $P$ from Definition \ref{tpodefn}, i.e.~if $(X,T)$ has TPO
then $P(X,T)$ will be dense in $\lip(X)$.

There are two other open subsets of $\lip(X)$ that, for a given $(X,T)\in\D$, are naturally connected to periodic optimization.
The first of these, denoted by $P_{-}(X,T)$, is given by considering the interiors
of the sets $\lip^{\{\mu\}}_\subseteq(X,T)$ that appear
in (\ref{lip!per}), and then taking their union
\begin{equation}\label{pminusxt}
P_{-}(X,T):= \bigcup_{\mu\in\per(X,T)}  int\left( \lip^{\{\mu\}}_\subseteq(X,T)\right)
= \bigcup_{\mu\in\per(X,T)}  int\left( \lip^{\{\mu\}}(X,T)\right),
\end{equation}
where we note that the second equality in (\ref{pminusxt})
is by Lemma \ref{Nclosedconvexsameinteriors}
with $\n=\{\mu\}$.
The set $P_-(X,T)$ consists of those Lipschitz functions with the so-called \emph{locking property}
(as defined in \cite{Boc19, BZ15}, cf.~\cite{B00, Jen00} for related usage of the term \emph{locking}).
By comparison of (\ref{lip!per}) and (\ref{pminusxt}), the open set $P_{-}(X,T)$ is a subset of  $\lip^\per_{\, !}(X,T)$,
and hence also a subset of $P(X,T)=  int\left(  \lip^\per_{\, !}(X,T) \right)$.

By contrast there is also a \emph{superset} $P_+(X,T)$ of $P(X,T)$ that may be regarded as a natural open subset of $\lip(X)$ in the context of TPO. To describe $P_+(X,T)$ we first say that $f$ has the \emph{weak periodic optimization} property if at least one of its maximizing measures is periodic, i.e.~ if there exists $m\in\per(X,T)$ such that 
$\int f\, dm \ge \int f\, d\mu$ for all $\mu\in\m(X,T)$, and then define
$\lip^\per(X,T)$ as the set of all Lipschitz functions with the weak periodic optimization property.
The set $\lip^\per(X,T)$ can be expressed as
\begin{equation}\label{lipperexpression}
\lip^\per(X,T):=\lip^{\per(X,T)}(X,T) =  \bigcup_{\mu\in\per(X,T)} \lip^{\{\mu\}}(X,T),
\end{equation}
and we define
$$
P_+(X,T):= int\left( \lip^\per(X,T) \right).
$$
Clearly $\lip^\per_{\, !}(X,T)$ is a subset of $\lip^\per(X,T)$,
therefore
$P(X,T)= int\left(  \lip^\per_{\, !}(X,T) \right)$
is a subset of
$P_+(X,T)= int\left( \lip^\per(X,T) \right)$. Summarising, we have:
\begin{equation}\label{containmentp}
P_-(X,T) \subseteq P(X,T) \subseteq P_+(X,T) \subseteq \lip^\per(X,T).
\end{equation}

To investigate the extent to which periodic optimization is typical, it would be most satisfactory if the natural 
open sets
$P_-(X,T)$, $P(X,T)$, $P_+(X,T)$ all have the \emph{same closure} in $\lip(X)$,
and indeed this is the case.
Although 
$
\overline{P_-(X,T)} = \overline{ P(X,T)} =  \overline{ P_+(X,T)}
$
does hold for arbitrary dynamical systems $(X,T)\in\D$,
a general proof 
is a little protracted for our purposes,
so in the following we give a short proof valid in the case
that $T$ is Lipschitz
(which is sufficient for the 
specific classes of systems studied in Section \ref{symbolicdynamicssection} onwards), 
and briefly indicate a route to the more general result.

\begin{lem}\label{four_closures}
If $(X,T)\in\D$ 
then
\begin{equation}\label{4closuresequal}
\overline{P_-(X,T)} = \overline{ P(X,T)} =  \overline{ P_+(X,T)} = \overline{\lip^\per(X,T)}.
\end{equation}
\end{lem}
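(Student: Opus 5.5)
By the chain of inclusions (\ref{containmentp}), $P_-(X,T)\subseteq P(X,T)\subseteq P_+(X,T)\subseteq\lip^\per(X,T)$. Taking closures, all four equalities in (\ref{4closuresequal}) follow once we show $\lip^\per(X,T)\subseteq\overline{P_-(X,T)}$. So the plan is: given $f\in\lip^\per(X,T)$, find Lipschitz functions arbitrarily close to $f$ in $\lip(X)$ that have the locking property. This argument uses that $T$ is Lipschitz with constant $L\ge1$, which is the case treated here; the general case would need a different perturbation.

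\textbf{Construction.} Let $\mu\in\per(X,T)\cap\m_{\max}(f)$ be supported on the periodic orbit $\o$ of least period $p$, and put $d_\o(x):=d(x,\o)$, which is $1$-Lipschitz. For $\epsilon>0$ set
$$
f_\epsilon:=f-\epsilon\, d_\o .
$$
Then $\|f_\epsilon-f\|_{\lip(X)}\to0$ as $\epsilon\to0$. It therefore suffices to show $f_\epsilon\in int\left(\lip^{\{\mu\}}(X,T)\right)\subseteq P_-(X,T)$, using the second expression in (\ref{pminusxt}).

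\textbf{Key estimate.} With $K:=pL^p$, I claim that every $h\in\lip(X)$ and every $m\in\m(X,T)$ satisfy
$$
\int h\,dm-\int h\,d\mu\le K\,|h|_{\lip(X)}\int d_\o\,dm .
$$
\begin{itemize}
\item[(i)] For any $x\in X$, choose $y\in\o$ with $d(x,y)=d_\o(x)$. Since $\frac1p\sum_{j<p}h(T^jy)=\int h\,d\mu$, we get
$$
S_ph(x)-p\int h\,d\mu\le |h|_{\lip(X)}\sum_{j<p}d(T^jx,T^jy)\le pL^p|h|_{\lip(X)}\,d_\o(x).
$$
\item[(ii)] Integrate (i) against $m$. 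Invariance of $m$ gives $\int S_ph\,dm=p\int h\,dm$, and dividing by $p$ yields the claim with constant $L^p$, even better than $K$.
\end{itemize}
Because (ii) integrates directly against $m$, there is no need for an ergodic decomposition or for block-partitioning orbits. The first inequality in (i), which uses only the Lipschitz property of $T$ and the choice of a nearest point of the periodic orbit, is the step I expect to be the real content of the proof.

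\textbf{Locking.} Let $g=f_\epsilon+h$ with $\|h\|_{\lip(X)}<\epsilon/K$. For any $m\in\m(X,T)$, using $\int f\,dm\le\int f\,d\mu$ and $\int d_\o\,d\mu=0$,
$$
\int g\,dm\le\int f\,d\mu-\epsilon\int d_\o\,dm+\int h\,d\mu+K|h|_{\lip(X)}\int d_\o\,dm\le\int g\,d\mu .
$$
Hence $\mu\in\m_{\max}(g)$ for every $g$ in this ball, so $f_\epsilon\in int\left(\lip^{\{\mu\}}(X,T)\right)\subseteq P_-(X,T)$. Letting $\epsilon\to0$ gives $f\in\overline{P_-(X,T)}$, which completes the plan. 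With the strict bound $K|h|_{\lip(X)}<\epsilon$ one even gets uniqueness of the maximizing measure, but that is not needed for membership in $P_-(X,T)$.
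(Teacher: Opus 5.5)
For Lipschitz $T$ your argument is the paper's own, and all your computations are correct. You use the same perturbation $f_\epsilon=f-\epsilon\,d(\cdot,\o)$ and the same nearest-point bound $S_ph(x)-p\int h\,d\mu\le|h|_{\lip(X)}\,d_\o(x)\sum_{j<p}L^j$, integrated against an invariant $m$. You reach the same conclusion: a $\lip(X)$-ball around $f_\epsilon$ lies in $int(\lip^{\{\mu\}}(X,T))\subseteq P_-(X,T)$.

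\textbf{The gap: non-Lipschitz $T$.} The lemma is stated for every $(X,T)\in\D$, so $T$ is only assumed continuous, and your proof stops at the Lipschitz case.

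\textbf{Same perturbation, different estimate.} Your remark that the general case ``would need a different perturbation'' is not right. The same $f_\epsilon$ and your locking computation go through verbatim, provided your key estimate holds with some constant $C_\mu$ in place of $K$. With the paper's definition (\ref{wasserstein_defn_eq}), that estimate is exactly $W(m,\mu)\le C_\mu\int d_\o\,dm$ for all $m\in\m(X,T)$. This is the route the paper indicates (following \cite{BZ15}), and it needs only continuity of $T$.

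\textbf{Proof of the estimate for continuous $T$.} Write $\o=\{y_0,\ldots,y_{p-1}\}$ with $Ty_j=y_{j+1}$ (indices mod $p$).
\begin{itemize}
\item[(1)] Choose $r>0$ with $d(y_i,y_j)>2r$ for $i\neq j$. By continuity, choose $\eta\in(0,r]$ with $T(B(y_j,\eta))\subseteq B(y_{j+1},r)$ for all $j$.
\item[(2)] Set $A_j=B(y_j,\eta)$ and $\delta=\int d_\o\,dm$. Markov's inequality gives $m(\{d_\o\ge\eta\})\le\delta/\eta$.
\item[(3)] Invariance gives $m(A_j)\le m(B(y_{j+1},r))\le m(A_{j+1})+\delta/\eta$, cyclically in $j$. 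Combined with $1-\delta/\eta\le\sum_j m(A_j)\le 1$, this puts each $m(A_j)$ within $p\delta/\eta$ of $1/p$.
\item[(4)] Test against a $1$-Lipschitz $\phi$ normalised by $\phi(y_0)=0$, so $|\phi|\le\text{diam}(X)$. Split $X$ into the disjoint sets $A_j$ and $\{d_\o\ge\eta\}$; note $d_\o(x)=d(x,y_j)$ on $A_j$. This gives $W(m,\mu)\le\bigl(1+(p^2+1)\,\text{diam}(X)/\eta\bigr)\,\delta$.
\end{itemize}

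The paper itself only sketches the non-Lipschitz case. So this is a missing piece needed to match the stated generality, not an error in what you proved.
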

\begin{proof}
Let us suppose that $T$ is Lipschitz, so each iterate $T^i$ is Lipschitz as well, in other words there exist $K_i>0$ such that
$d(T^i(x),T^i(x')) \le K_i d(x,x')$ for all $x,x'\in X$, $i\in\N$.
In view of (\ref{containmentp}), to prove (\ref{4closuresequal}) it suffices to show that 
\begin{equation}\label{suffices_lipper_p_minus}
\lip^\per(X,T)\subseteq \overline{P_-(X,T)}.
\end{equation}
Suppose that $f\in \lip^\per(X,T)$. So there exists a periodic $f$-maximizing measure $\mu$, with corresponding orbit $Q:=\supp(\mu)$ of cardinality $q\in\N$, say.
Note that if $g\in\lip(X)$, $x\in X$, then
\begin{equation}\label{g_x_bound}
S_qg(x)- q \int g\, d\mu \le |g|_{\lip(X)} d(x,Q) \sum_{i=0}^{q-1} K_i,
\end{equation}
since if $y\in Q$ is such that $d(x,y)=d(x,Q):=\min_{z\in Q} d(x,z)$ then the lefthand side of
(\ref{g_x_bound}) equals $\sum_{i=0}^{q-1} (g(T^ix) - g(T^iy))$, which is readily bounded using the 
Lipschitz constants for $g$ and the $T^i$.

For any $\epsilon>0$, define $f_\epsilon:= f -\epsilon d(\cdot, Q)$, and
\begin{equation}\label{U_epsilon_defn}
U_\epsilon:= \left\{ g\in \lip(X):  |g|_{\lip(X)} \sum_{i=0}^{q-1} K_i < \epsilon/2 \right\}.
\end{equation}
We claim that $\mu$ is the unique $(f_\epsilon+g)$-maximizing measure for all $g\in U_\epsilon$.
This means that the open set $V_\epsilon := \{f_\epsilon+g:g\in U_\epsilon\}$ is a subset of 
$int(\lip^{\{\mu\}}(X,T))$, and since $f$ is the limit, in the Lipschitz topology, of members of
$\cup_{\epsilon>0} V_\epsilon$, it follows that
$f\in \overline{ int(\lip^{\{\mu\}}(X,T)) }$.
Since
$
P_{-}(X,T)
= \bigcup_{m\in\per(X,T)}  int\left( \lip^{\{m\}}(X,T)\right)$
(cf.~(\ref{pminusxt})),
we see that
$f\in \overline{P_-(X,T)}$, so (\ref{suffices_lipper_p_minus}) is proved.

It remains to justify the claim that $\mu$ is the unique $(f_\epsilon+g)$-maximizing measure
for all  $g\in U_\epsilon$, in other words that if
$\nu\in\m(X,T) \setminus \{\mu\}$ then
\begin{equation}\label{mu_nu_ineq}
\int (f_\epsilon + g)\, d\nu
<
\int (f_\epsilon + g)\, d\mu .
\end{equation}
For this first note that, in view of (\ref{g_x_bound}), (\ref{U_epsilon_defn}),
\begin{equation}\label{eps2d}
\frac{1}{q}S_q g(x) - \int g\, d\mu \le \frac{1}{q} \frac{\epsilon}{2} d(x,Q) \le \frac{\epsilon}{2} d(x,Q)
\
\text{ for all }x\in X.
\end{equation}
The $T$-invariance of $\nu$, together with (\ref{eps2d}), gives
\begin{equation}\label{inview}
\int g\, d\nu = \int \frac{1}{q} S_q g\, d\nu
= \int \left( \frac{1}{q}S_q g - \int g\, d\mu \right)\, d\nu + \int g\, d\mu
\le \frac{\epsilon}{2} \int d(\cdot,Q)\, d\nu + \int g\, d\mu,
\end{equation}
and (\ref{inview}) implies
\begin{equation}\label{inview2}
\int (f_\epsilon + g)\, d\nu
\le  \int f\, d\nu -\frac{\epsilon}{2}\int d(\cdot,Q)\, d\nu + \int g\, d\mu
<  \int f\, d\nu  + \int g\, d\mu,
\end{equation}
where the final inequality uses that $\nu\neq\mu$, therefore $\int d(\cdot,Q)\, d\nu >0$.
Now $\mu$ is a maximizing measure for both $f$ and $f_\epsilon$, so
$\int f\, d\nu \le \int f\, d\mu = \int f_\epsilon\, d\mu$, and combining this with (\ref{inview2}) gives the required
inequality (\ref{mu_nu_ineq}).

For general $(X,T)\in\D$, i.e.~without the Lipschitz assumption on $T$,
the approach of \cite{BZ15} can be used
to show that if 
$f\in \lip^\per(X,T)$
then $f\in \overline{P_-(X,T)}$,
by first noting that if $\mu\in\per(X,T)\cap\m_{\max}(f)$ then there exists $C_\mu>0$
such that $W(\mu,\nu)\le C_\mu \int d(\cdot,\supp(\mu))\, d\nu$,
where $W$ denotes the Wasserstein distance (see Definition \ref{wassersteindefn}) on $\m(X)$,
from which it follows that for $\epsilon>0$ the functions
$f_\epsilon:=f-\epsilon\, d(\cdot,\supp(\mu))$ lie in 
$int(\lip_\subseteq^{\{\mu\}}(X,T))$, and hence that
$f\in \overline{ int(\lip_\subseteq^{\{\mu\}}(X,T))} \subseteq \overline{P_-(X,T)}$.
\end{proof}

In view of Lemma \ref{four_closures}, various characterisations of 
typical periodic optimization
can be summarised as follows:

\begin{cor}\label{equivalentTPO}
For $(X,T)\in\D$, the following are equivalent:
\begin{itemize}
\item[(a)] $(X,T)$ has the TPO property.
\item[(b)] $P(X,T)$ is dense in $\lip(X)$.
\item[(c)] $\lip_{\, !}^{\per}(X,T)$  is dense in $\lip(X)$.
\item[(d)] $P_+(X,T)$ is dense in $\lip(X)$.
\item[(e)] $\lip^{\per}(X,T)$  is dense in $\lip(X)$.
\item[(f)]  $P_-(X,T)$ is dense in $\lip(X)$.
\end{itemize}
\end{cor}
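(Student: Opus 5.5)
The plan is to reduce everything to Lemma \ref{four_closures} together with the chain of inclusions (\ref{containmentp}), namely $P_-(X,T)\subseteq P(X,T)\subseteq P_+(X,T)\subseteq \lip^\per(X,T)$, and the observation that $\lip^\per_{\,!}(X,T)$ sits between $P(X,T)$ and $\lip^\per(X,T)$. A subset of $\lip(X)$ is dense exactly when its closure is all of $\lip(X)$. Lemma \ref{four_closures} says that $P_-(X,T)$, $P(X,T)$, $P_+(X,T)$ and $\lip^\per(X,T)$ all have the same closure. It follows that (b), (d), (e), (f) are mutually equivalent, since each asserts that this common closure equals $\lip(X)$.

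Next I will fit in (c). From $P(X,T)=int(\lip^\per_{\,!}(X,T))\subseteq \lip^\per_{\,!}(X,T)\subseteq \lip^\per(X,T)$ we get
$$\overline{P(X,T)}\subseteq \overline{\lip^\per_{\,!}(X,T)}\subseteq \overline{\lip^\per(X,T)},$$
and the outer two sets coincide by Lemma \ref{four_closures}. So $\overline{\lip^\per_{\,!}(X,T)}$ equals the common closure as well, and (c) is equivalent to (b).

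Finally I will relate (a) to (b). If (a) holds, there is an open dense $P\subseteq\lip(X)$ all of whose members have the periodic optimization property. Then $P\subseteq \lip^\per_{\,!}(X,T)$, and since $P$ is open, $P\subseteq int(\lip^\per_{\,!}(X,T))=P(X,T)$, so $P(X,T)$ is dense. Conversely, if (b) holds, then $P(X,T)$ is itself an open dense set of functions with the periodic optimization property, which is exactly Definition \ref{tpodefn}.

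There is no real obstacle here; all the substance lies in Lemma \ref{four_closures}, which is taken as given. The only care needed is to use openness of $P$ in the step (a)$\Rightarrow$(b), and to keep the inclusion directions straight when sandwiching the closures.
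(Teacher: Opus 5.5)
Your proposal is correct and follows essentially the same route as the paper: (a)$\Leftrightarrow$(b) via openness of $P(X,T)$ and its maximality among open sets of functions with the periodic optimization property, (b), (d), (e), (f) via the common closure from Lemma~\ref{four_closures}, and (c) via the sandwich $P(X,T)\subseteq\lip^\per_{\,!}(X,T)\subseteq\lip^\per(X,T)$. Your write-up simply spells out the inclusion and closure steps that the paper states in one line each.
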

\begin{proof}
Note that (b) implies (a) because $P(X,T)$ is open, 
and (a) implies (b) since $P(X,T)$ is the largest open set with the property that all its members have the periodic optimization property. 
The equivalence of (b), (d), (e) and (f) is immediate from Lemma \ref{four_closures},
and the equivalence of (c) to these follows because
$P(X,T)\subseteq \lip_{\, !}^\per(X,T)\subseteq\lip^\per(X,T)$.
\end{proof}

Clearly, if $(X,T)\in\D$ has no periodic orbits then it does not have TPO.
In fact a necessary condition for 
TPO is that periodic measures form a dense
subset of the ergodic measures:

\begin{lem}\label{periodicdenseinergodic}
If  $(X,T)\in\D$  has 
TPO
then $\per(X,T)$ is weak$^*$ dense in $\e(X,T)$.
\end{lem}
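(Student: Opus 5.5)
The plan is to argue by contradiction, using the fact (from Jenkinson \cite{Jen06b}, cf.~Remark \ref{notdynamicallyminimal}) that every ergodic measure is uniquely maximizing for some continuous function, upgraded to a Lipschitz-robust statement. Suppose $(X,T)$ has TPO, but some $\mu\in\e(X,T)$ is not in the weak$^*$ closure of $\per(X,T)$. The closure $\overline{\per(X,T)}$ is closed, and its closed convex hull $\n:=\overline{\mathrm{co}}\,\per(X,T)$ is a closed convex subset of $\m(X,T)$. Since $\mu$ is an extreme point of $\m(X,T)$ and not in $\overline{\per(X,T)}$, Milman's converse to the Krein--Milman theorem gives $\mu\notin\n$: the extreme points of $\n$ lie in $\overline{\per(X,T)}$, and any extreme point of $\m(X,T)$ lying in $\n$ is extreme in $\n$.

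Next I separate. As in the proof of Lemma \ref{Nclosedconvexsameinteriors}, there is $\phi\in C(X)$ and $\delta>0$ with $\int\phi\,d\mu=3\delta$ and $\int\phi\,d\nu\le0$ for all $\nu\in\n$. Approximating uniformly by $\psi\in\lip(X)$ with $\|\phi-\psi\|_{C(X)}\le\delta$ gives $\int\psi\,d\mu\ge 2\delta$ and $\int\psi\,d\nu\le\delta$ for every $\nu\in\n$. Hence
\[
\beta(\psi)\ \ge\ \int\psi\,d\mu\ \ge\ \int\psi\,d\nu+\delta
\]
for every periodic measure $\nu$. The same strict gap persists in a neighbourhood: if $g\in\lip(X)$ satisfies $\|g-\psi\|_{C(X)}<\delta/2$, then $\int g\,d\mu>\int g\,d\nu$ for all $\nu\in\per(X,T)$. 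So no periodic measure is $g$-maximizing, i.e.\ $g\notin\lip^\per(X,T)$.

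Finally, the Lipschitz-norm ball of radius $\delta/2$ about $\psi$ is a non-empty open subset of $\lip(X)$ disjoint from $\lip^\per(X,T)$. This contradicts Corollary \ref{equivalentTPO}(e), which says TPO implies $\lip^\per(X,T)$ is dense. Therefore $\per(X,T)$ is dense in $\e(X,T)$. The degenerate case $\per(X,T)=\emptyset$ is covered by the same argument, with $\n=\emptyset$ and $\psi$ constant.

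The only delicate step is the first: showing that $\mu\notin\n$ rather than merely $\mu\notin\overline{\per(X,T)}$. This step is needed because separation requires a convex set, and it is exactly where ergodicity (extremality) of $\mu$ is used. I would invoke Milman's theorem in the compact convex set $\m(X,T)$, which is metrizable and locally convex in the weak$^*$ topology. The alternative is a direct argument via uniqueness of ergodic decomposition, showing that an ergodic $\mu$ in the closed convex hull is the barycentre of a measure on $\overline{\per(X,T)}$, hence lies in it.
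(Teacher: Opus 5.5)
Your argument is correct, but it follows a different route from the paper.

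The paper's proof reduces to an external lemma. TPO makes $P(X,T)$ dense in $\lip(X)$, hence uniformly dense in $C(X)$. Then \cite[Lem.~3.4]{Mor10} gives that, for any uniformly dense set of functions, the ergodic measures that are maximizing for some member of the set are dense in $\e(X,T)$. Every $f\in P(X,T)$ has a unique maximizing measure, which is periodic, so all those ergodic measures are periodic.

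You instead prove the contrapositive directly. Milman's converse to Krein--Milman, together with the fact that an extreme point of $\m(X,T)$ lying in $\n$ is extreme in $\n$, shows that an ergodic $\mu\notin\overline{\per(X,T)}$ also lies outside the closed convex hull $\n$. The Hahn--Banach separation from Lemma \ref{Nclosedconvexsameinteriors} then produces an explicit neighbourhood of $\psi$, even in the uniform norm, of Lipschitz functions for which no periodic measure is maximizing.

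What your route buys is self-containment and a slightly stronger statement. You only need $\lip^{\per}(X,T)$ to be dense, and merely uniformly dense at that, so you use weak periodic optimization alone. You need neither uniqueness of the maximizing measure nor Lemma \ref{four_closures}. The paper's route is shorter, but it needs both the external lemma and the uniqueness built into $P(X,T)$ to conclude that the relevant ergodic maximizing measures are periodic.

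Despite your opening sentence, your argument never uses the result of \cite{Jen06b} that ergodic measures are uniquely maximizing for some continuous function. Extremality of $\mu$ enters only through Milman's theorem, so that reference can be dropped.
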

\begin{proof}
If  $(X,T)\in\D$  has the typical periodic optimization property, then $P(X,T)$ is dense in $\lip(X)$.
Therefore $P(X,T)$ is dense in $C(X)$, with respect to the uniform topology on $C(X)$, since 
$\lip(X)$ is densely embedded in $C(X)$.
This denseness means, by \cite[Lem.~3.4]{Mor10},
that the set
$\mathcal{U}=\e(X,T) \cap \left(\cup_{f\in P(X,T)} \m_{\max}(f) \right)
$
is weak$^*$ dense in $\e(X,T)$.

Now each $f\in P(X,T)$ is such that $\m_{\max}(f)$ consists of a single periodic measure,
so $\mathcal{U}\subseteq \per(X,T)$.
It follows that $\per(X,T)$ is weak$^*$ dense in $\e(X,T)$, as required.
\end{proof}

\begin{rem}\label{not_follow_dense_m}
For TPO to hold, it is not necessary that $\per(X,T)$ be dense in all of $\m(X,T)$:
see e.g.~the (topologically transitive) shift space described in
Example \ref{tpo_despite_periodics_not_dense_in_m}.
\end{rem}

Theorem \ref{densedense} and Corollary \ref{equivalentTPO} together have the following consequence:

\begin{cor}\label{nequalsper}
Let $(X,T)\in\D$,
with $Z$ a closed $T$-invariant subset of $X$.
Let $U$ be a non-empty open subset of $\lip(X)$ such that $Z$ is maximizable for all $f\in U$.
If $(Z,T)$ has TPO, then 
 $P(X,T) \cap U$ is dense in $U$.
\end{cor}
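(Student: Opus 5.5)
The plan is to apply Subsystem Reduction (Theorem \ref{densedense}) with $\n=\per(X,T)$, and then pass from weak periodic optimization to $P(X,T)$ using Lemma \ref{four_closures}.

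First, note that periodic points of $Z$ are periodic points of $X$, so $\per(Z,T)\subseteq\per(X,T)\cap\m(Z,T)$. Conversely, a periodic measure of $X$ supported in $Z$ is a periodic measure of $Z$. Hence $\per(X,T)\cap\m(Z,T)=\per(Z,T)$, and
\[
\lip^{\per(X,T)\cap\m(Z,T)}(Z,T)=\lip^{\per}(Z,T).
\]
Since $(Z,T)$ has TPO, Corollary \ref{equivalentTPO}(e) gives that $\lip^\per(Z,T)$ is dense in $\lip(Z)$. Theorem \ref{densedense} with $\n=\per(X,T)$ then yields that $\lip^\per(X,T)\cap U$ is dense in $U$.

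It remains to upgrade this to density of $P(X,T)\cap U$ in $U$. By Lemma \ref{four_closures}, $\lip^\per(X,T)\subseteq\overline{P(X,T)}$. Take $f\in U$ and $\epsilon>0$ with the $\epsilon$-ball around $f$ contained in $U$. First choose $g\in\lip^\per(X,T)\cap U$ within $\epsilon/2$ of $f$. Then choose $h\in P(X,T)$ within $\epsilon/2$ of $g$. This $h$ lies in the $\epsilon$-ball around $f$, hence in $U$, so $h\in P(X,T)\cap U$ is $\epsilon$-close to $f$.

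The only point requiring care is the identification $\per(X,T)\cap\m(Z,T)=\per(Z,T)$, which is routine. One should also note that Lemma \ref{four_closures} is stated for general $(X,T)\in\D$, with its proof given in full only for Lipschitz $T$ and sketched otherwise via \cite{BZ15}. If one prefers to avoid that sketch, an alternative is to apply Theorem \ref{densedense} directly with $\n=\{\mu\}$-type sets. The cleaner route, however, is simply to invoke Lemma \ref{four_closures} as stated.
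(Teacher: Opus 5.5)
Your proposal is correct and follows the paper's proof essentially verbatim. Like the paper, you apply Subsystem Reduction with $\n=\per(X,T)$, using $\per(X,T)\cap\m(Z,T)=\per(Z,T)$ and Corollary~\ref{equivalentTPO} applied to $(Z,T)$, and then pass from $\lip^\per(X,T)\cap U$ to $P(X,T)\cap U$. For that last step you cite Lemma~\ref{four_closures} directly, where the paper cites Corollary~\ref{equivalentTPO}, and your $\epsilon/2$ argument spells out the intersection with the open set $U$ that the paper leaves implicit.
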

\begin{proof}
The TPO for $(Z,T)$ means that
$\lip^\per(Z,T)=\lip^{\per(Z,T)}(Z,T)$ is dense in $\lip(Z)$, by Corollary \ref{equivalentTPO}.
Define  $\n=\per(X,T)$, so that $\n\cap\m(Z,T)=\per(Z,T)$,
and $\lip^{\per}(X,T)=\lip^\n(X,T)$.
Theorem \ref{densedense}
gives that
 $\lip^\n(X,T) \cap U$ is dense in $U$,
in other words $\lip^\per(X,T) \cap U$ is dense in $U$.
But $P(X,T)$ is dense in $\lip^\per(X,T)$, by Corollary  \ref{equivalentTPO},
so $P(X,T) \cap U$ is dense in $U$, as required.
\end{proof}

Combining  Lemma \ref{baireconsequence} and Corollary \ref{nequalsper},
we obtain the following key structural theorem:

\begin{thm}\label{abstracttpovariant}  (Structural Theorem)

\noindent 
Suppose $(X,T)\in\D$.
Let $\ZZ=\{Z_0\} \cup \{Z_i\}_{i=1}^\infty$ be a countable maximizable family
 such that $(Z_i,T)$ has TPO for all $i\ge1$. 
Then the open set
$$P(X,T) \cup int\left(\lip_{Z_0}(X,T)\right)$$ 
is dense in $\lip(X)$.
\end{thm}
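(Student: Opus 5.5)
The plan is to combine Lemma \ref{baireconsequence} with Corollary \ref{nequalsper}, applied separately to each index $i\ge1$. First, since $\ZZ=\{Z_0\}\cup\{Z_i\}_{i=1}^\infty$ is a countable maximizable family, Lemma \ref{baireconsequence} gives that
$$
\bigcup_{i=0}^\infty int\left(\lip_{Z_i}(X,T)\right)
$$
is dense in $\lip(X)$. Openness of $P(X,T)\cup int(\lip_{Z_0}(X,T))$ is immediate, since both pieces are interiors. So it suffices to show that for each $i\ge1$, the set $int(\lip_{Z_i}(X,T))$ is contained in the closure of $P(X,T)$. Then the closure of $P(X,T)\cup int(\lip_{Z_0}(X,T))$ contains the dense union above, and so equals $\lip(X)$.

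Fix $i\ge1$ and set $U_i:=int(\lip_{Z_i}(X,T))$. If $U_i=\emptyset$ there is nothing to prove. Otherwise $U_i$ is a non-empty open subset of $\lip(X)$, and by definition of $\lip_{Z_i}(X,T)$ the set $Z_i$ is maximizable for every $f\in U_i$. Since $(Z_i,T)$ has TPO by hypothesis, Corollary \ref{nequalsper} applies with $Z=Z_i$ and $U=U_i$, giving that $P(X,T)\cap U_i$ is dense in $U_i$. In particular $U_i\subseteq\overline{P(X,T)}$.

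Putting these together, $\bigcup_{i\ge1}U_i\subseteq\overline{P(X,T)}$ and $U_0\subseteq int(\lip_{Z_0}(X,T))$. Hence $\overline{P(X,T)\cup int(\lip_{Z_0}(X,T))}$ contains $\bigcup_{i\ge0}U_i$, which is dense, so it is all of $\lip(X)$.

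There is no real obstacle here: all the substantive work lies in Lemma \ref{baireconsequence} (the Baire category argument) and in Corollary \ref{nequalsper} (subsystem reduction via McShane extension together with Lemma \ref{four_closures}). The only points needing care are the trivial case of an empty interior, and noting that a union of sets each contained in the closure of $P(X,T)$ lies in that closure.
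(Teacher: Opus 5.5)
Your proposal is correct and follows essentially the same route as the paper's proof: apply Lemma~\ref{baireconsequence} to get density of $\bigcup_{i\ge0} int(\lip_{Z_i}(X,T))$, then use Corollary~\ref{nequalsper} on each $U_i$ with $i\ge1$ to conclude that $P(X,T)\cap U_i$ is dense in $U_i$. Your explicit handling of the case $U_i=\emptyset$, which Corollary~\ref{nequalsper} needs since it assumes $U$ is non-empty, is a small point of care that the paper leaves implicit.
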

\begin{proof}
For each $i\ge0$, define the open set
$$U_i:=int(\lip_{Z_i}(X,T))\,.
$$
Since $\ZZ$ is countable and maximizable,
Lemma \ref{baireconsequence} implies that
\begin{equation}\label{uidenseinL_tpbo}
\bigcup_{i=0}^\infty U_i\ \text{ is dense in }\lip(X)\,.
\end{equation}
Now $\ZZ$ is a maximizable family, so each $Z_i$ is maximizable for all $f\in \lip_{Z_i}(X,T)$, and hence for all
$f\in U_i$. 
For $i\ge1$,  since 
$(Z_i,T)$ has TPO 
by hypothesis, 
 Corollary \ref{nequalsper} then implies that
\begin{equation*}
P(X,T) \cap U_i\ \text{ is dense in }U_i,\text{ for all }i\ge 1.
\end{equation*}
Therefore
$\bigcup_{i=1}^\infty P(X,T)\cap U_i$ is dense in $\bigcup_{i=1}^\infty U_i$,
or in other words
\begin{equation*}
P(X,T) \cap \left( \bigcup_{i=1}^\infty  U_i\right) \text{ is dense in } \bigcup_{i=1}^\infty U_i.
\end{equation*}
It follows that
\begin{equation}\label{u_zero_p}
U_0 \cup \big( P(X,T) \cap \left( \bigcup_{i=1}^\infty  U_i\right) \big) \text{ is dense in } \bigcup_{i=0}^\infty U_i.
\end{equation}
Combining (\ref{uidenseinL_tpbo})
and (\ref{u_zero_p}), we see that
\begin{equation*}
U_0 \cup P(X,T) \text{ is dense in } \lip(X),
\end{equation*}
in other words $int(\lip_{Z_0}(X,T)) \cup P(X,T)$ is dense in $\lip(X)$, as required.
\end{proof}

If a dynamical system $(X,T)$ has a countable maximizable family $\ZZ=\{Z_0\} \cup \{Z_i\}_{i=1}^\infty$ as in Theorem \ref{abstracttpovariant}, the question of whether or not $(X,T)$ has TPO focuses attention on the invariant set $Z_0$.
We shall refer to $Z_0$ as the \emph{boundary}\footnote{The terminology is motivated by the case of symbolic dynamics,
where we show (see Sections \ref{symbolicdynamicssection} and \ref{SFTmaximizablefamily}) that there is a canonical choice of countable maximizable family $\ZZ=\{Z_0\} \cup \{Z_i\}_{i=1}^\infty$ in which the role of the boundary $Z_0$ is played by the \emph{Markov boundary} of the shift space $X$.}
 of $\ZZ$, and say that the boundary is \emph{fragile} if
$\lip_{Z_0}(X,T)$ has empty interior in $\lip(X)$.
By the structural theorem, a fragile boundary means that $(X,T)$ has the TPO property:

\begin{thm}\label{fragile_boundary_theorem} (Fragile boundary implies TPO)

\noindent
Suppose $(X,T)\in\D$.
Let $\ZZ=\{Z_0\} \cup \{Z_i\}_{i=1}^\infty$ be a countable maximizable family
 such that $(Z_i,T)$ has TPO for all $i\ge1$,
and $\lip_{Z_0}(X,T)$ has empty interior in $\lip(X)$.
Then $(X,T)$ has TPO.
\end{thm}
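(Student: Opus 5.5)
The plan is to deduce this directly from the Structural Theorem (Theorem \ref{abstracttpovariant}) together with the characterisation of TPO in Corollary \ref{equivalentTPO}. The hypotheses here are exactly those of Theorem \ref{abstracttpovariant}: $\ZZ=\{Z_0\}\cup\{Z_i\}_{i=1}^\infty$ is a countable maximizable family and each $(Z_i,T)$ with $i\ge1$ has TPO. Applying that theorem gives that the open set
$$P(X,T)\cup int\left(\lip_{Z_0}(X,T)\right)$$
is dense in $\lip(X)$.

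Next we use the fragility assumption. Since $\lip_{Z_0}(X,T)$ has empty interior in $\lip(X)$, we have $int(\lip_{Z_0}(X,T))=\emptyset$. The union above therefore reduces to $P(X,T)$ itself, so $P(X,T)$ is dense in $\lip(X)$.

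Finally, Corollary \ref{equivalentTPO}, via the equivalence of (a) and (b), shows that density of $P(X,T)$ in $\lip(X)$ is equivalent to $(X,T)$ having TPO. Alternatively, one can observe directly that $P(X,T)$ is open and every member of it has the periodic optimization property, so it serves as the set $P$ in Definition \ref{tpodefn}.

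There is no genuine obstacle here: all the substantive work, namely the Baire category argument of Lemma \ref{baireconsequence} and the subsystem reduction of Corollary \ref{nequalsper}, is already contained in Theorem \ref{abstracttpovariant}. The only point to check is that "empty interior" means precisely that the second open set in the union is empty, and this holds by definition.
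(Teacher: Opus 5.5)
Your proposal is correct and follows the paper's proof exactly. You apply the Structural Theorem (Theorem \ref{abstracttpovariant}), note that $int(\lip_{Z_0}(X,T))=\emptyset$ so that $P(X,T)$ itself is dense, and conclude TPO via Corollary \ref{equivalentTPO}.
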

\begin{proof}
If $int(\lip_{Z_0}(X,T))=\emptyset$ then Theorem \ref{abstracttpovariant} implies that $P(X,T)$ is dense in
$\lip(X)$, so $(X,T)$ has TPO (cf.~Corollary \ref{equivalentTPO}).
\end{proof}

The simplest example of a fragile boundary is of course when this boundary is empty, 
prompting the following definition:

\begin{defn}\label{CMFTPO}
Suppose $(X,T)\in\D$.
A countable maximizable family $\ZZ=\{Z_i\}_{i=1}^\infty$ is said to be a
\emph{countable maximizable family with TPO} 
if
$(Z_i,T)$ has
TPO
for all $i\in\N$.
\end{defn}

\begin{thm}\label{abstracttpo} (Countable maximizable family with TPO implies TPO)

\noindent Suppose $(X,T)\in\D$.
If $\ZZ=\{Z_i\}_{i=1}^\infty$ is a countable maximizable family with TPO,
then $(X,T)$ has TPO. 
\end{thm}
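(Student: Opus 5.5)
The plan is to reduce Theorem~\ref{abstracttpo} to Theorem~\ref{fragile_boundary_theorem} by adjoining an empty boundary. Given the countable maximizable family with TPO $\ZZ=\{Z_i\}_{i=1}^\infty$, define the enlarged family $\ZZ'=\{Z_0\}\cup\{Z_i\}_{i=1}^\infty$ with $Z_0:=\emptyset$. The empty set is closed and $T$-invariant, since $T(\emptyset)=\emptyset$.

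Next I check that $\ZZ'$ is a countable maximizable family. Countability is clear. Maximizability holds because for every $f\in\lip(X)$ some $Z_i$ with $i\ge1$ is already $f$-maximizable, since $\ZZ$ is maximizable. By hypothesis each $(Z_i,T)$ with $i\ge1$ has TPO.

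It then remains to see that $\lip_{Z_0}(X,T)$ has empty interior. Lemma~\ref{LipZnonemptyiff} gives $\lip_{\emptyset}(X,T)=\emptyset$, because $\m(\emptyset,T)=\emptyset$. Theorem~\ref{fragile_boundary_theorem} now yields that $(X,T)$ has TPO.

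A direct alternative avoids any convention about the empty set. Lemma~\ref{baireconsequence} shows that $\bigcup_i int(\lip_{Z_i}(X,T))$ is dense in $\lip(X)$. Corollary~\ref{nequalsper} applied to each $U_i=int(\lip_{Z_i}(X,T))$ shows that $P(X,T)\cap U_i$ is dense in $U_i$. Taking the union over $i$, $P(X,T)$ is dense in $\lip(X)$, and Corollary~\ref{equivalentTPO} completes the proof.

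There is no real obstacle. The only point needing care is confirming that the empty set is admissible as a member of a maximizable family and yields an empty $\lip_{Z_0}$; if this is in doubt, the direct alternative settles it.
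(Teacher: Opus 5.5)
Your proposal is correct and is essentially the paper's proof, which simply sets $Z_0=\emptyset$ in the Structural Theorem~\ref{abstracttpovariant}. Going through Theorem~\ref{fragile_boundary_theorem} (the paper notes Theorem~\ref{abstracttpo} is a formal special case of it), or your direct alternative, is just an unpacking of that same argument.
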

\begin{proof}
By setting
$Z_0=\emptyset$
in  Theorem  \ref{abstracttpovariant},
the open set $P(X,T)$ is seen to be dense in $\lip(X)$,
so $(X,T)$ has TPO.
\end{proof}

\begin{rem}\label{non_trivial_CMF}
Theorems \ref{fragile_boundary_theorem} and \ref{abstracttpo} provide a roadmap to establishing TPO for dynamical systems $(X,T)$ admitting a \emph{non-trivial} countable maximizable family with either TPO or, more generally, a fragile boundary.
As noted after Definition \ref{maximizablefamily}, we shall not be concerned with trivial maximizable families: for example
Theorem \ref{abstracttpo} has a tautological interpretation with the trivial choice $\ZZ=\{X\}$
(if $T:X\to X$ is surjective), which clearly cannot yield new information. Note that Theorem \ref{abstracttpo} is formally a special case of Theorem \ref{fragile_boundary_theorem},
and one may wonder whether there even exist dynamical systems with fragile boundary but without a (non-trivial) countable maximizable family with TPO;
we shall see in Section \ref{specimen_section} that such systems do exist, and have a rather different character from those
that can be handled using Theorem \ref{abstracttpo}. 
\end{rem}

\section{ The subset closing property}\label{closingpropertysection}

While the development so far has been in the context of general topological dynamical systems,
henceforth we shall be interested in restricting attention to systems satisfying particular assumptions.
We introduce the following weak hyperbolicity condition, that will be useful in the sequel: 

\begin{defn}\label{closingpropertydefn}
Suppose $(X,T)\in\D$.
A non-empty subset $A\subseteq X$ has the \emph{closing property} relative to $(X,T)$ if there exists $C_A>0$
such that for all $x\in A$, if $T^n(x)\in A$ for some $n\in\N$, then there exists a periodic point $z=T^n(z)\in X$ such that
\begin{equation}\label{sumofdistances}
\sum_{i=0}^{n-1} d\left(T^i(x), T^i(z)\right) \le C_A\,.
\end{equation}
A closed $T$-invariant subset $Y\subseteq X$ is said to have the \emph{subset closing property} if 
there exists a non-empty subset $A\subseteq Y$, where $A$ is open in $Y$,
such that $A$ has the closing property relative to $(X,T)$.
\end{defn}

\begin{rem}
If $(X,T)\in\D$ is open and distance-expanding, or a hyperbolic homeomorphism with local product structure
(such as an Axiom A diffeomorphism), then every closed invariant $Y\subseteq X$ has the subset closing property: 
the existence of the periodic point is a consequence of Anosov's closing lemma
(see e.g.~\cite[Thm.~6.4.15]{KH95}, \cite[Cor.~4.3.7]{URM22}), and the exponential instability of orbits
in such systems (cf.~\cite[Cor.~6.4.17]{KH95}) implies the uniform bound (\ref{sumofdistances}).
Beyond uniform hyperbolicity, various other systems have (non-periodic) invariant sets with the subset closing property: some of these will be useful in Section \ref{symbolicdynamicssection}, in the context of symbolic dynamics. 
\end{rem}

The significance of the subset closing property is in connection with minimax sets.
By combining the Minimax Birkhoff Bound (Proposition \ref{minimaxnegativeBirkhoffcor}),
and the Completely Maximizing Criterion (Proposition \ref{morrisminimaxcompletely}),
we are able to derive the existence of a completely maximizing set:

\begin{thm}\label{minimaxclosingcompletelymaximizing}
Suppose $(X,T)\in\D$, 
and that $f:X\to\R$ is Lipschitz.
If $Y\subseteq X$ is a minimax set for $f$, and $(Y,T)$ has the subset closing property,
then $Y$ is completely maximizing for $f$, and is dynamically minimal.
\end{thm}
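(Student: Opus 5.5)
We may assume $\beta(f)=0$, replacing $f$ by $f-\beta(f)$; this changes neither maximizing measures nor Birkhoff-sum differences. By the Completely Maximizing Criterion (Proposition \ref{morrisminimaxcompletely}), it then suffices to prove the uniform bound
\[
\sup_{n\ge1}\sup_{x\in Y} S_nf(x)<\infty .
\]
That proposition then gives both conclusions: $Y$ is completely maximizing for $f$, and $Y$ is dynamically minimal.

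Let $A\subseteq Y$ be the non-empty relatively open set provided by the subset closing property, with constant $C_A$. Fix $x\in Y$ and $n\in\N$, and split the orbit segment $x,\dots,T^{n-1}x$ according to its visits to $A$. If the segment never enters $A$, then the Minimax Birkhoff Bound (Proposition \ref{minimaxnegativeBirkhoffcor}) gives $S_nf(x)\le N_A\|f\|_{C(X)}$ directly. Otherwise, let $0\le i_1$ be the first time with $T^{i_1}x\in A$ and $i_2\le n-1$ the last such time. This decomposes the sum into three pieces: the initial piece on $[0,i_1)$ avoids $A$, the final piece starting at $T^{i_2}x$ of length $n-i_2$ avoids $A$ after its first point, and the middle piece runs from $T^{i_1}x\in A$ to $T^{i_2}x\in A$. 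The initial piece is bounded by $N_A\|f\|_{C(X)}$ by Proposition \ref{minimaxnegativeBirkhoffcor}. The final piece is bounded by $\|f\|_{C(X)}+N_A\|f\|_{C(X)}$, after peeling off its first term.

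For the middle piece, set $y=T^{i_1}x\in A$ and $m=i_2-i_1$. If $m=0$ there is nothing to prove. If $m\ge1$, then $T^m y\in A$, so the closing property yields a periodic point $z=T^m z\in X$ with $\sum_{i=0}^{m-1}d(T^iy,T^iz)\le C_A$. Since $f$ is Lipschitz,
\[
S_mf(y)\le S_mf(z)+|f|_{\lip(X)}C_A .
\]
The periodic orbit of $z$ carries the invariant measure $\frac1m\sum_{i<m}\delta_{T^iz}$; this holds even when $m$ is a multiple of the least period. Hence $S_mf(z)=m\int f\,d\mu_z\le m\beta(f)=0$. Note that $z$ need not lie in $Y$, but this is harmless because $\beta(f)$ is the maximum over all of $\m(X,T)$. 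Adding the three pieces gives
\[
S_nf(x)\le (2N_A+1)\|f\|_{C(X)}+C_A|f|_{\lip(X)},
\]
which is uniform in $x\in Y$ and $n\in\N$.

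The main point to check is the middle estimate. It relies on the closing property being stated relative to $(X,T)$, so the comparison orbit may leave $Y$, and on $\beta(f)=0$ being a global bound over $\m(X,T)$. Once this uniform bound is in hand, Proposition \ref{morrisminimaxcompletely} finishes the proof.
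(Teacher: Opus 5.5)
Your proposal is correct and follows essentially the same route as the paper's proof. Both normalize $\beta(f)=0$ and reduce to the uniform Birkhoff bound via Proposition \ref{morrisminimaxcompletely}. Both then split at the first and last visits to $A$, handling the head and tail with the Minimax Birkhoff Bound and the middle with the closing property plus the Lipschitz estimate, which yields the same constant $C_A|f|_{\lip(X)}+(1+2N_A)\|f\|_{C(X)}$.
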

\begin{proof}
Without loss of generality, assume that $\beta(f)=0$.
To prove the theorem it suffices to show that
\begin{equation}\label{aimtoshow}
\sup_{n\ge 1} \sup_{x\in Y} S_nf(x) <\infty\,,
\end{equation}
since in this case the fact that $Y$ is minimax for $f$ implies that
$Y$ is completely maximizing for $f$, and is dynamically minimal,
by Proposition \ref{morrisminimaxcompletely}.

Since $(Y,T)$ has the subset closing property, there exists a non-empty subset $A\subseteq Y$, that is open with respect to $Y$,
and has the closing property relative to $(X,T)$. Let 
$N_A\in\N$ be as in 
the Minimax Birkhoff Bound
(Proposition \ref{minimaxnegativeBirkhoffcor}),
and let $C_A>0$ be as in Definition \ref{closingpropertydefn}.

Let $x\in Y$, $n\in\N$.
If $T^i(x)\notin A$ for $0\le i\le n-1$ then
Proposition \ref{minimaxnegativeBirkhoffcor} gives 
\begin{equation}\label{headestimatenonly}
\sum_{i=0}^{n-1} f(T^i(x)) \le N_A\|f\|_{C(X)}\,.
\end{equation}
If on the other hand
$
\{ 0\le i\le n-1: T^i(x) \in A\}
$
is non-empty, then let $n_1$ denote its minimum element, and $n_2$ its maximum element,
so that $0\le n_1 \le n_2 \le n-1$.
We will derive an upper bound on the sum
$
 S_n f(x) = \sum_{i=0}^{n-1}f(T^i(x))
$
by estimating 
in turn
the three sums\footnote{If $n_1=n_2$ then by convention we set $\sum_{i=n_1}^{n_2-1} f(T^i(x))=0$.}
$\sum_{i=0}^{n_1-1} f(T^i(x))$, 
$\sum_{i=n_1}^{n_2-1} f(T^i(x))$,
and
$\sum_{i=n_2}^{n-1} f(T^i(x))$. 

Firstly, since $T^i(x)\notin A$ for $0\le i\le n_1-1$,
Proposition \ref{minimaxnegativeBirkhoffcor} gives 
\begin{equation}\label{headestimate}
\sum_{i=0}^{n_1-1} f(T^i(x)) \le N_A\|f\|_{C(X)}\,.
\end{equation}

Secondly, since $y:=T^{n_1}(x)\in A$, and $T^{n_2-n_1}(y)=T^{n_2}(x)\in A$,
and since $A$ has
the closing property relative to $(X,T)$, there exists a periodic point $z=T^{n_2-n_1}(z)\in X$ 
such that
\begin{equation}\label{sumofdistancesintheorem}
\sum_{i=0}^{n_2-n_1-1} d\left(T^i(y), T^i(z)\right) \le C_A\,.
\end{equation}
The integral of $f$ with respect to the periodic measure supported by the orbit of $z$ is non-positive,
since
$\beta(f)=0$,
in other words
\begin{equation}\label{gperiodicnonpositive}
\sum_{i=0}^{n_2-n_1-1}f(T^i(z)) \le 0\,.
\end{equation}
It follows that
\begin{equation}\label{fboundprelip}
\sum_{i=n_1}^{n_2-1} f(T^i(x))
=
\sum_{i=0}^{n_2-n_1-1} f(T^i(y))
\le
\sum_{i=0}^{n_2-n_1-1} \left(  f(T^i(y))-  f(T^i(z)) \right).
\end{equation}
Using the Lipschitz property of $f$  in (\ref{fboundprelip}),
and then using (\ref{sumofdistancesintheorem}),
 gives
\begin{equation}\label{middlefestimate}
\sum_{i=n_1}^{n_2-1} f(T^i(x))
\le |f|_{\lip(X)} 
\sum_{i=0}^{n_2-n_1-1} d(T^i(y),T^i(z))
\le  C_A  |f|_{\lip(X)}
 \,.
\end{equation}

Thirdly, note that $T^{n_2}(x)\in A$, but $T^i(x)\notin A$ for $n_2+1\le i\le n-1$,
so
\begin{equation}\label{tailfbound}
\sum_{i=n_2}^{n-1} f(T^i(x)) = f(T^{n_2}(x)) + \sum_{i=n_2+1}^{n-1}f(T^i(x)) \le (1+N_A) \|f\|_{C(X)},
\end{equation}
by Proposition \ref{minimaxnegativeBirkhoffcor}.

Combining (\ref{headestimate}), (\ref{middlefestimate}) and (\ref{tailfbound}) gives
\begin{equation}\label{mainsnfboundexplicit}
S_n f(x) \le C_A  |f|_{\lip(X)} + (1+2N_A) \|f\|_{C(X)}.
\end{equation}
Since (\ref{headestimatenonly}) and (\ref{mainsnfboundexplicit}) together account for all $x\in Y$ and $n\in\N$, 
the desired bound
(\ref{aimtoshow}) holds.
\end{proof}

\section{ Symbolic dynamics}\label{symbolicdynamicssection}

Up until now we have developed a theory of countable maximizable families, with application to the TPO problem,
in the setting of rather general dynamical systems.
For the remainder of this article, however, our attention will be focused on \emph{symbolic} dynamical systems.
For convenience we consider one-sided shift spaces, and will in particular
exploit the fact that Contreras' TPO Theorem
(see Lemma \ref{contrerasSFTlemma}) is valid for shifts of finite type.
Analogues of all results in Sections \ref{symbolicdynamicssection} to \ref{magicmorsesection}
also hold for two-sided shift spaces, with proofs that differ only at the level of notation (and
are therefore omitted), and are underpinned by the analogue of Contreras' Theorem for two-sided shifts of finite type,
proved in \cite{HLMXZ19} (cf.~the remarks following the proof of Lemma \ref{contrerasSFTlemma}).

\begin{notation}\label{shift_space_notation} (Topology, metric, shift map, subshifts)

For a finite non-empty set $A$ (referred to as an \emph{alphabet}), the \emph{full shift} on $A$ is the set 
$$A^\N=\{ (x_i)_{i=1}^\infty : x_i\in A\text{ for all }i\ge 1\}\,,$$ 
which is compact when equipped with the product topology.

For any $\alpha\in(0,1)$, 
the space $A^\N$ is metrized by the distance function
$d=d_\alpha$ defined by
$$d_\alpha(x,y)=\alpha^{N(x,y)},$$
where $N(x,y):=\inf\{i\in\N: x_i \neq y_i \} -1$.

The \emph{shift map} $\sigma: A^\N\to A^\N$, defined by $(\sigma(x))_i=x_{i+1}$ for all $i\ge1$, is continuous.
A \emph{shift space} on $A$, also referred to as a \emph{subshift of $A^\N$}, is a closed 
subset
$X\subseteq A^\N$, which is shift-invariant in the sense that $\sigma(X)=X$.
More generally if $X$ and $Y$ are shift spaces, with $Y\subseteq X$, we say that $Y$ is a \emph{subshift of $X$}.
It will be convenient to not insist that a shift space, or a subshift, be non-empty: in what follows, various 
canonically defined subshifts of $X$ will be investigated, which in certain circumstances are empty.

For a given shift space $X$, we write $\sigma$ for the restricted shift map $\sigma|_X$,
and
 regard $(X,\sigma)$ as a topological dynamical system.
\end{notation}

\begin{rem}
For any $\alpha\in(0,1)$, and $\gamma\in(0,1]$, the space of $\gamma$-H\"older functions on a shift space $X$,
with respect to the distance function $d_\alpha$, equipped with the usual H\"older norm, is precisely the space of Lipschitz
functions on $X$, with respect to the distance function $d_{\alpha^\gamma}$.
Consequently, although all of the results in the rest of the article are stated in terms of the space of Lipschitz functions,
for any of the distance functions $d_\alpha$, the same results hold for any space of $\gamma$-H\"older functions, 
with respect to the same family of distance functions.
\end{rem}

\begin{notation} (Words, languages, cylinder sets)

For an alphabet $A$, a \emph{word} or \emph{block} on $A$ will mean a finite string $w=a_1\ldots a_n \in A^n$, $n\ge 0$,
where $|w|=n$ is its \emph{length}.
Let $A^*$ denote the set of all words on $A$ (including the empty word $\epsilon$, i.e.~the unique word of length 0).

If $x\in A^\N$, we define
\begin{equation}\label{xij}
x_{[i,j]}=x_ix_{i+1}\ldots x_j\,,
\end{equation}
with the convention that $x_{[i,j]}=\epsilon$ if $i>j$, and
a word $w\in A^*$ is
said to \emph{appear} in a sequence $x \in A^\N$
if $x_{[i,j]} = w$ for some $i,j$.
We write
\begin{equation}\label{wappearsinx}
w\subset x
\end{equation}
to mean that $w$ appears in $x$.
For a shift space $X$ on $A$, the word $w\in A^*$ is \emph{allowed (in $X$)}
if it appears in some $x\in X$.
The \emph{language} of $X$, denoted $\w(X)$, is the set of all its allowed words.
For $w \in\w(X)$, the corresponding \emph{cylinder set} $[w]$ is defined
by
$$
[w]= \left\{x\in X: x_{[1,|w|]}=w\right\},
$$
and for subshifts $Z\subseteq X$ we write
$$
[w]_Z= [w]\cap Z =\left\{x\in Z: x_{[1,|w|]}=w\right\}.
$$
For $n\in \N_0$, let $\w_n(X)$ denote the set of length-$n$ words in $\w(X)$,
and for $S\subseteq\N_0$, denote
\begin{equation}\label{S_length_words}
\w_S(X):= \bigcup_{n\in S} \w_n(X).
\end{equation}

We extend the notation (\ref{xij}), (\ref{wappearsinx}) to words $w=a_1\ldots a_n\in A^*$, using
$w_{[i,j]}$ to denote the \emph{sub-word} $v=a_i\ldots a_j\in A^*$,
in which case $v$ is said to \emph{appear} in $w$,
written as $v\subset w$.
Each sub-word $w_{[1,k]}$, for $1\le k\le |w|$, is called a \emph{prefix} of $w$,
and each sub-word $w_{[k,|w|]}$, for $1\le k\le |w|$, is called a \emph{suffix} of $w$.

Given $u, v\in A^*$, the \emph{concatenation} $uv$ denotes the word $w\in A^*$ 
of length $|w|=|u|+|v|$, with $w_{[1,|u|]}=u$ and $w_{[|u|+1,|u|+|v| ]}=v$,
and if $x\in A^\N$ then $ux$ denotes the sequence $y\in A^\N$ with
$y_{[1,|u|]}=u$ and $\sigma^{|u|}(y)=x$.
If $n\in\N$ then $u^n$ denotes the word $w$ with $x_{[i|u|+1,(i+1)|u|]}=u$
for $0\le i\le n-1$,
and $u^\infty$ denotes the periodic sequence $x\in A^\N$ with $x_{[i|u|+1,(i+1)|u|]}=u$
for all $i\ge 0$; in this latter case, $u$ is said to be a \emph{periodic word} for $x$,
and also for the periodic orbit of $x$ under $\sigma$, and for the periodic measure
$ \frac{1}{|u|} \sum_{j=0}^{|u|-1} \delta_{\sigma^j(x)}$
 supported by this orbit.
\end{notation}

\begin{notation}
Let $X$ be a shift space with language $\w(X)$. For
$w\in \mathbb{W}(X)$, its \emph{predecessor set} $P_X(w)$ is defined by 
$$P_X(w):=\{u\in \mathbb{W}(X): uw\in\w(X)\}\,,$$
and its \emph{follower set} $F_X(w)$ is defined by 
$$F_X(w):=\{u\in \mathbb{W}(X): wu\in\w(X)\}\,.$$
\end{notation}

\begin{lem}\label{lem-00}
	Let $X$ be a shift space, and suppose $w\in\w(X)$. Then: 
\begin{itemize}
		\item[(a)]$F_X(uw)\subseteq F_X(w)$ for all $u\in P_X(w)$.
		\item[(b)]If $u\in P_X(w)$ is such that $F_X(uw)= F_X(w)$, then $F_X(uwv)=F_X(wv)$ for all $v\in F_X(w)$.
	\end{itemize}
\end{lem}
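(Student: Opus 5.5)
Both parts follow directly from the definitions of predecessor and follower sets, together with the fact that $\w(X)$ is closed under taking sub-words: every sub-word of an allowed word is allowed, since it appears in the same point of $X$. Throughout we fix $w\in\w(X)$ and $u\in P_X(w)$, so that $uw\in\w(X)$ and $F_X(uw)$ makes sense.

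For (a), take any $v\in F_X(uw)$. Then $uwv\in\w(X)$, and $wv$ is a suffix of $uwv$, so $wv\in\w(X)$. Hence $v\in F_X(w)$, which gives $F_X(uw)\subseteq F_X(w)$.

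For (b), assume $F_X(uw)=F_X(w)$ and let $v\in F_X(w)$. Then $v\in F_X(uw)$ by hypothesis, so $uwv\in\w(X)$. It follows that $u\in P_X(wv)$ and that $F_X(uwv)$ is defined. We prove the two inclusions separately.
\begin{itemize}
\item[(i)] $F_X(uwv)\subseteq F_X(wv)$: this is part (a) applied to the allowed word $wv$ with predecessor $u$.
\item[(ii)] $F_X(wv)\subseteq F_X(uwv)$: take $t\in F_X(wv)$, so $wvt\in\w(X)$. Then $vt\in F_X(w)=F_X(uw)$, so $uwvt\in\w(X)$, that is, $t\in F_X(uwv)$.
\end{itemize}
Together these give $F_X(uwv)=F_X(wv)$.

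There is no real obstacle here. The only point needing attention is to check that every word used is allowed, so that each predecessor and follower set invoked is well defined. This is exactly what the observation $uwv\in\w(X)$ at the start of (b) provides.
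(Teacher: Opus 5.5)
Your proof is correct. It is exactly the routine unpacking of the definitions that the paper leaves to the reader (the paper's proof reads simply ``Straightforward''), including the use of part (a) for the word $wv$ in (b)(i) and the check that $uwv\in\w(X)$.
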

\begin{proof}
Straightforward.
\end{proof}

\begin{notation}
For a shift space $X$, define
\begin{equation}\label{equ:inftywords}
\w_\infty(X)=\left\{w\in \w(X) : |\{F_X(uw): u\in P_X(w)\}|=\infty\right\}\,,
\end{equation}
the set of allowed words with infinitely many follower sets,
and
\begin{equation}\label{equ:finitewords}
\mathbb{W}_{<\infty}(X):=\{w\in \mathbb{W}(X) : |\{F_X(uw): u\in P_X(w)\}|<\infty\}\,,
\end{equation}
the set of allowed words with finitely many follower sets.
\end{notation}

The following notion was introduced by Thomsen \cite{Tho06}.

\begin{defn}
For a shift space $X$,
define its \emph{Markov boundary}, denoted $\partial_M X$, by
\begin{equation}\label{equ:markovboundary}
\partial_MX:=\{x\in X:  w\in \mathbb{W}_\infty(X) \text{ for all finite words $w$ that appear in } x\}.
\end{equation}
\end{defn}

\begin{lem} For any shift space $X$, its Markov boundary  
$\partial_MX$ is a (possibly empty) subshift of $X$.
\end{lem}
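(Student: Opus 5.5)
We must show three things about $\partial_M X$: it is closed, it satisfies $\sigma(\partial_M X)\subseteq\partial_M X$, and it satisfies $\partial_M X\subseteq\sigma(\partial_M X)$, so that $\sigma(\partial_M X)=\partial_M X$. The key combinatorial input is that $\w_\infty(X)$ is closed under taking sub-words. Suppose $v\subset w$ with $w\in\w_\infty(X)$. Then $v\in\w_\infty(X)$.

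To prove this, first reduce to the cases where $v$ is a suffix or a prefix of $w$.

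If $w=sv$, then each $u\in P_X(w)$ gives $us\in P_X(v)$ with $F_X(usv)=F_X(uw)$. So the follower sets of $v$ include those of $w$, and there are infinitely many of them.

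If $w=vt$, take $u\in P_X(w)$. Then $F_X(uw)=\{y: ty\in F_X(uv)\}$ is a function of $F_X(uv)$, and $u\in P_X(v)$. Hence if $v$ had only finitely many follower sets $F_X(uv)$, then $w$ would have only finitely many follower sets $F_X(uw)$, a contradiction.

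A general sub-word is a prefix of a suffix, so the claim follows in full.

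Closedness then follows directly. If $x^{(k)}\in\partial_M X$ converge to $x$, any word $w$ appearing in $x$ at a fixed position also appears at that position in $x^{(k)}$ for large $k$, so $w\in\w_\infty(X)$. Also $X$ is closed, so $x\in X$, and hence $x\in\partial_M X$.

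Forward invariance is immediate, since every word appearing in $\sigma(x)$ already appears in $x$.

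The main obstacle is surjectivity. Given $x\in\partial_M X$, we need a symbol $a$ with $ax\in\partial_M X$. For each $n$, the word $w_n:=x_{[1,n]}$ lies in $\w_\infty(X)$, so it has infinitely many follower sets $F_X(uw_n)$ with $u\in P_X(w_n)$. All of these $u$ are nonempty except at most one (the empty word), and the finitely many symbols allow us to group the $u$ by their last letter $a$. By pigeonhole, some $a$ has the property that the words $u'a$ yield infinitely many distinct sets $F_X(u'aw_n)$. This means precisely that $aw_n\in\w_\infty(X)$.

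Write $A_n$ for the set of symbols $a$ with $aw_n\in\w_\infty(X)$. Each $A_n$ is a nonempty subset of the finite alphabet, and the sets are decreasing in $n$, because $aw_n$ is a prefix of $aw_{n+1}$ and the sub-word closure applies. Hence there is a symbol $a$ lying in every $A_n$.

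For this $a$, any word appearing in $ax$ is a sub-word of some $aw_n$, so it lies in $\w_\infty(X)$ by the sub-word claim. In particular every $aw_n$ is allowed in $X$; since $X$ is closed, this gives $ax\in X$. Therefore $ax\in\partial_M X$ and $\sigma(ax)=x$, which completes surjectivity.

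If $\partial_M X$ is empty, all three properties hold trivially.
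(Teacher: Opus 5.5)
Your proof is correct, and it takes a different route from the paper only in that the paper gives no argument of its own: it simply cites \cite[Lem.~2.1]{Tho06}.

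Your proposal is therefore a self-contained alternative. Its core is the closure of $\w_\infty(X)$ under taking sub-words. You handle suffixes via the inclusion $\{F_X(uw):u\in P_X(w)\}\subseteq\{F_X(u'v):u'\in P_X(v)\}$ when $w=sv$. You handle prefixes via the observation that $F_X(uvt)$ is a function of $F_X(uv)$. Closedness and forward invariance then follow formally.

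The only non-formal part is the inclusion $\partial_M X\subseteq\sigma(\partial_M X)$. You obtain it by a pigeonhole argument over the last letters of predecessors, which makes each $A_n$ non-empty. You then take the intersection of the nested non-empty finite sets $A_n$, where nestedness again uses sub-word closure.

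The citation buys brevity. Your argument buys a verification carried out directly in the paper's one-sided setting, using the paper's own definitions of $P_X$, $F_X$ and $\w_\infty(X)$. It also explicitly delivers the equality $\sigma(\partial_M X)=\partial_M X$ that the paper's definition of a subshift requires.
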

\begin{proof}
See \cite[Lem.~2.1]{Tho06}.
\end{proof}

Recall that a shift space $X$ is \emph{sofic} if the collection $\{F_X(w):w\in \w(X)\}$ of distinct follower sets is finite
(or, equivalently, if the collection of its distinct predecessor sets is finite).
There are a number of equivalent definitions of sofic systems, some dating back to the original article of Weiss
\cite{Wei73}, including the characterisation of sofic shifts as precisely the continuous factors of shifts of finite type;
see \cite[\S 3.2]{Bru22}, \cite[\S 6.1]{Kit98} and \cite[Ch.~3]{LM95} for more on sofic shift spaces.
The following characterisation will be important for our purposes:

\begin{lem}\label{sofic_iff_empty_boundary}
A shift space $X$ is sofic if and only if $\partial_M X=\emptyset$.
\end{lem}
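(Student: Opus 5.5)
The plan is to prove the two implications separately, working with the language and follower sets directly.

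\emph{Sofic implies empty boundary.} Suppose $X$ is sofic, so the collection $\{F_X(v):v\in\w(X)\}$ is finite. Then for any $w\in\w(X)$ the set $\{F_X(uw):u\in P_X(w)\}$ is a subcollection of this finite collection, hence finite. So $\w_\infty(X)=\emptyset$. Any $x\in\partial_M X$ would need every word appearing in $x$, including the empty word or any single symbol, to lie in $\w_\infty(X)$. Hence $\partial_M X=\emptyset$.

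\emph{Empty boundary implies sofic.} Suppose $\partial_M X=\emptyset$. First observe that $\w_\infty(X)$ is closed under taking sub-words. If $v\subset w$, write $w=avb$. By Lemma \ref{lem-00}(b)-type reasoning, each follower set $F_X(uw)$ is determined by $F_X(uav)$: indeed $F_X(uw)=\{c: bc\in F_X(uav)\}$. So finitely many follower sets for $v$ (over all left extensions) forces finitely many for $w$. Thus $w\in\w_\infty(X)$ implies $v\in\w_\infty(X)$. For the prefix part one similarly uses that extending on the left by $ua$ is just a particular left extension of $v$.

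Next, the aim is to show that $\w_\infty(X)$ contains only finitely many words. If not, it contains arbitrarily long words, hence words of every length by sub-word closure. By a K\"onig/compactness argument one gets an infinite sequence $x$ all of whose finite sub-words lie in $\w_\infty(X)$. The set of such sequences contains $\sigma^k$-images of long words in $\w_\infty(X)$. To get $x\in X$ with $\sigma$-invariance, take the limit points of $y^{(n)}\in X$ with $y^{(n)}_{[1,n]}=w_n\in\w_\infty(X)$: every word in the limit is a sub-word of some $w_n$, so it lies in $\w_\infty(X)$. Hence $x\in\partial_M X$, contradicting emptiness. So there is $L$ with every word of length $\ge L$ in $\w_{<\infty}(X)$.

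Finally, take any $v\in\w(X)$ with $|v|\ge L$, and let $w$ be its length-$L$ suffix. Then $v=uw$ with $u\in P_X(w)$, so $F_X(v)$ belongs to the finite set $\{F_X(uw):u\in P_X(w)\}$. There are finitely many $w\in\w_L(X)$, and finitely many words of length $<L$. Hence there are only finitely many follower sets overall, so $X$ is sofic.

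The main obstacle is the compactness step producing a point of $\partial_M X$. Sub-word closure of $\w_\infty(X)$ must be checked carefully, since it is what makes every word of the limit point lie in $\w_\infty(X)$. This is presumably \cite[Lem.~2.1]{Tho06}-style and could be cited from Thomsen.
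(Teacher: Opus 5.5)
Your proof is correct. The paper gives no argument of its own here; it simply cites Thomsen \cite[Thm.~2.3]{Tho06}. Your self-contained version has three steps.
\begin{itemize}
\item[(i)] $\w_\infty(X)$ is closed under sub-words. This follows from the identity $F_X(uavb)=\{c:\ bc\in F_X(uav)\}$ together with $ua\in P_X(v)$. These show that, for $v\subset w=avb$, the collection $\{F_X(uw):u\in P_X(w)\}$ is the image under $F\mapsto\{c:\ bc\in F\}$ of a subcollection of $\{F_X(u'v):u'\in P_X(v)\}$. This is the same fact that makes $\partial_M X$ a subshift, which the paper also cites (\cite[Lem.~2.1]{Tho06}).
\item[(ii)] A compactness argument in $X$ turns $\partial_M X=\emptyset$ into a length $L$ such that every allowed word of length at least $L$ lies in $\w_{<\infty}(X)$.
\item[(iii)] For $|v|\ge L$, the set $F_X(v)$ is one of the finitely many $F_X(uw)$, where $w$ is the length-$L$ suffix of $v$.
\end{itemize}
Compared with the citation, this gives an elementary argument, visibly valid for arbitrary (possibly non-transitive, possibly empty) shift spaces, which is the generality in which the paper uses the lemma. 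The citation is, of course, shorter.

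Two small presentational points. First, the sentence about K\"onig's lemma and ``$\sigma^k$-images'' is muddled and can be deleted, since the limit-point argument with $y^{(n)}\in X$ does all the work. What that argument uses is that $X$ is closed and $\sigma(X)\subseteq X$ (to find $y^{(n)}$ beginning with $w_n$), not any $\sigma$-invariance of the limit. Second, the sub-word identity in (i) is a direct computation from the definitions rather than an instance of Lemma \ref{lem-00}(b).
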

\begin{proof}
See \cite[Thm.~2.3]{Tho06}.
\end{proof}

\begin{defn}\label{sync_defn}
Given a shift space $X$, a word $w\in\w(X)$ is called \emph{synchronizing} for $X$ if whenever
$u\in P_X(w)$ and $v\in F_X(w)$, then $uwv\in\w(X)$.
\end{defn}

\begin{rem}
The notion of synchronization was introduced by Blanchard \& Hansel \cite{BH86} in the context of coded shift spaces, which are necessarily topologically transitive. 
Some authors say that a shift space is \emph{synchronized} if it has a synchronizing word,
but many authors follow \cite{BH86} in reserving this term only for those shift spaces that are in addition 
topologically transitive.
In this article we do not require topological transitivity, and in order to avoid confusion we will simply say that $X$ \emph{has a synchronizing word} when this is the case.
\end{rem}

In Section \ref{periodictposection} we showed, in Lemma \ref{periodicdenseinergodic},
that a necessary condition for TPO is that
periodic measures are dense in the set of ergodic measures.
It was also noted, in Remark \ref{not_follow_dense_m}, that the density of periodic measures in the set of all invariant measures
is \emph{not} a necessary condition for TPO; having introduced the relevant symbolic dynamics notation
in this section, we are now able to justify this statement via the following example, 
mentioned in Remark \ref{not_follow_dense_m}, of a topologically transitive shift space $X$ with TPO but with its periodic
measures not weak$^*$ dense in $\m(X,\sigma)$.

\begin{exam}\label{tpo_despite_periodics_not_dense_in_m}
Let $A=\{a,b\}$, and define $X$ to be the closure of the set of shift iterates of sequences of the form
$a^{n_1}b^{n_1}a^{n_2}b^{n_2}a^{n_3}b^{n_3}\ldots$
where $n_i\in\N$, $i\in\N$.
The shift space $X$ is topologically transitive, 
with synchronizing word $ba$,
and 
$\partial_M X = \{a^\infty, b^\infty\} \cup \{a^nb^\infty : n\ge 1\}$,
and since $\partial_M X$ is a shift of finite type, we will see  that $(X,\sigma)$ has TPO
(either by Theorem \ref{partialX_tpo_implies_X_tpo} and the fact that shifts of finite type have TPO, or alternatively because $X$ is eventually sofic, cf.~Definition \ref{presofic_defn}, Theorem \ref{pre_sofic_theorem}).
The set $\per(X,\sigma)$ of periodic measures is dense in the set $\e(X,\sigma)$
of ergodic measures
(indeed this follows from the fact that $X$ has TPO, cf.~Lemma \ref{periodicdenseinergodic}),
however, as noted in Remark \ref{not_follow_dense_m}, $\per(X,\sigma)$ is not dense in all of $\m(X,\sigma)$.
To see this, note that
if $\mu$ is a periodic measure on $X$, supported by a periodic orbit other than
the fixed points $a^\infty$ and $b^\infty$, then $\mu([a])=1/2=\mu([b])$;
consequently, for $\epsilon\in (0,1/2)\cup (1/2,1)$, the (invariant but non-ergodic) measure $\epsilon \delta_{a^\infty}+(1-\epsilon)\delta_{b^\infty}$ is not a weak$^*$ limit of periodic measures.
\end{exam}

\begin{lem}\label{uwsinz}
Let $X$ be a shift space, and suppose  $w\in\w(X)$ is a synchronizing word for the subshift $Z\subseteq X$.
If $x,y\in Z$ can be written as $x=uwr$, $y=vws$, for $u,v\in \w(Z)$, and $r,s\in Z$, then $uws\in Z$.
\end{lem}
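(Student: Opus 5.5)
The plan is to show that every finite prefix of the sequence $uws$ is an allowed word of $Z$, realised at the start of some point of $Z$, and then to use that $Z$ is closed. Since $Z$ is a closed shift-invariant subset of $A^\N$, it is enough to find, for each $n\in\N$, a point $z^{(n)}\in Z$ with $z^{(n)}_{[1,|u|+|w|+n]}=uws_{[1,n]}$. Indeed, $d_\alpha(z^{(n)},uws)\le\alpha^{|u|+|w|+n}\to 0$, so $uws=\lim_n z^{(n)}\in Z$.

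\emph{Step 1: identify the predecessor and follower.} Because $x=uwr\in Z$, the word $uw$ appears in $x$, so $uw\in\w(Z)$. Hence $u\in P_Z(w)$; this holds trivially if $u$ is the empty word. Because $y=vws\in Z$, for each $n$ the word $ws_{[1,n]}=y_{[|v|+1,\,|v|+|w|+n]}$ appears in $y$, so $s_{[1,n]}\in F_Z(w)$.

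\emph{Step 2: apply synchronization.} Since $w$ is synchronizing for $Z$ (Definition \ref{sync_defn}), Step 1 gives $uws_{[1,n]}\in\w(Z)$ for every $n$.

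\emph{Step 3: move the word to the front.} By the definition of the language, $uws_{[1,n]}$ appears in some $p\in Z$, say as $p_{[k+1,\,k+|u|+|w|+n]}$. Shift-invariance of $Z$ gives $\sigma^k(p)\in Z$, and we set $z^{(n)}:=\sigma^k(p)$. This point begins with $uws_{[1,n]}$, and the closedness argument above then finishes the proof.

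No step is a genuine obstacle. The only point needing care is that synchronization is a statement about finite words, so it must be applied to each finite truncation $s_{[1,n]}$ of $s$ separately before passing to the limit. One should also note that $u$ may be empty, which causes no difficulty.
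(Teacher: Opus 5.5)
Your proposal is correct and follows essentially the same route as the paper: you show $uw\in\w(Z)$ and $ws_{[1,n]}\in\w(Z)$, apply synchronization to get $uws_{[1,n]}\in\w(Z)$, pick $z^{(n)}\in Z$ beginning with this word, and use closedness of $Z$ to conclude $uws\in Z$. Your Step 3 also spells out, via shift-invariance, the paper's brief remark that $z^{(n)}$ may be assumed to start with $uws_{[1,n]}$.
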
	
\begin{proof}
Now $uw\subset x\in Z$, so $uw\in\w(Z)$, and if $n\in\N$ then $ws_{[1,n]}\subset y\in Z$, so $ws_{[1,n]}\in \w(Z)$.
Since $w$ is a synchronizing word for $Z$ then $uws_{[1,n]}\in \w(Z)$ for all $n\in\N$.
So there exists $z^{(n)}\in Z$ such that $uws_{[1,n]} \subset z^{(n)}$, and we may assume that
$z^{(n)}_{[1,|u|+|w|+n]}=uws_{[1,n]}$, so that $z^{(n)}\to uws$ as $n\to\infty$, and therefore $uws\in Z$ since $Z$ is closed.
\end{proof}

\begin{defn}\label{d-1} Let $X$ be a shift space. For $w\in\w(X)$,
define $X_w$ to be 
	the set of all $x\in X$ 
such that  if $t\in\w(X)$ satisfies $tw \subset x$, then $F_X(tw)=F_X(w)$.
\end{defn}

\begin{rem}\label{xwremark}
Note that 
$X_w$ contains all those $x\in X$ such that $w$ does not appear in $x$.
In general $X_w$ may be the empty set (this is the case if $X$ has no synchronizing word),
and may equal $X$ (if $w$ is a synchronizing word for $X$).
\end{rem}

\begin{lem}\label{xwiswsynchronizing}
For any shift space $X$,
every $w\in \w(X)$
is a synchronizing word for $X_w$.
\end{lem}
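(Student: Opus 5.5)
The plan is to work with words and follower sets directly, using Lemma \ref{lem-00} as the main tool, and to build the required point of $X_w$ by hand. Suppose $u\in P_{X_w}(w)$ and $v\in F_{X_w}(w)$. If $w\notin\w(X_w)$ both sets are empty and there is nothing to prove. Otherwise we must produce a point of $X_w$ in which $uwv$ appears. Choose $x\in X_w$ with $uw\subset x$ and $y\in X_w$ with $wv\subset y$. Shifting $x$ does not help, since membership in $X_w$ involves the whole past, so we keep the prefixes. Write $x=puwr$ and $y=qwvs$ with words $p,q$ and $r,s\in X$.

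\emph{Step 1: a candidate point in $X$.} Let $z:=puwvs$. Because $x\in X_w$ and $(pu)w\subset x$, we have $F_X(puw)=F_X(w)$. For every $n$, the word $w(vs)_{[1,n]}$ appears in $y$, so $(vs)_{[1,n]}\in F_X(w)=F_X(puw)$. Hence $puw(vs)_{[1,n]}\in\w(X)$ for all $n$. Arguing by closedness exactly as in the proof of Lemma \ref{uwsinz}, we get $z\in X$, and clearly $uwv\subset z$.

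\emph{Step 2: $z\in X_w$.} We must show $F_X(tw)=F_X(w)$ whenever $tw\subset z$. By Lemma \ref{lem-00}(a), $F_X(t'w)\supseteq F_X(tw)$ whenever $t'$ is a suffix of $t$, and always $F_X(tw)\subseteq F_X(w)$. So it suffices to treat, for each occurrence of $w$ in $z$, the maximal choice of $t$, namely the whole prefix of $z$ before that occurrence. The sandwich $F_X(w)\supseteq F_X(tw)\supseteq F_X(\text{full prefix}\cdot w)=F_X(w)$ then handles every other $t$.

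There are two cases for such an occurrence.
\begin{itemize}
\item[(i)] If the occurrence ends within the first $|puw|$ symbols, then the prefix of $z$ up to it coincides with a prefix of $x$, and $x\in X_w$ gives the claim.
\item[(ii)] Otherwise the prefix up to and including the occurrence is $puwc$, where $c=(vs)_{[1,k]}$ and $wc$ ends in $w$.
\end{itemize}
In case (ii), Lemma \ref{lem-00}(b) with the predecessor $pu$ (using $F_X(puw)=F_X(w)$) gives $F_X(puwc)=F_X(wc)$, since $c\in F_X(w)$. The same occurrence appears in $y$ with prefix $qwc$. Since $y\in X_w$ we have $F_X(qwc)=F_X(w)$, and also $F_X(qw)=F_X(w)$, so Lemma \ref{lem-00}(b) gives $F_X(qwc)=F_X(wc)$. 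Together these yield $F_X(wc)=F_X(w)$, and therefore $F_X(puwc)=F_X(w)$.

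The main obstacle is Step 2, case (ii), and in particular occurrences of $w$ that straddle the junction between $puw$ and $vs$. The point to verify carefully is that transporting such an occurrence into $y$ is legitimate: it must be read with the prefix $qwc$ there, even when it overlaps the central $w$. This is where both hypotheses $x,y\in X_w$ and both parts of Lemma \ref{lem-00} are combined.
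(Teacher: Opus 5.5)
Your argument is correct, and its core is the paper's. Both glue $uw$ (from $x$) to $wvs$ (from $y$) along the central $w$, and both handle occurrences of $w$ past the junction by Lemma \ref{lem-00}(b) combined with $y\in X_w$.

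Where you differ is the case analysis. The paper takes $z=uwvs$ and splits the check of $tw\subset z$ into three cases according to where $t$ begins: inside $wvs$, using $\sigma^{|r|}(y)\in X_w$; inside $uw$, using $x$; or straddling $u$ and $w$, using Lemma \ref{lem-00}(b) with the part of $t$ lying in $u$. You instead first use Lemma \ref{lem-00}(a) to reduce to the maximal $t$, namely the whole prefix of $z$ before the occurrence. This merges the paper's first and third cases into your case (ii), leaving a two-case split by where the occurrence of $w$ ends, which is tidier bookkeeping.

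Your Step 1 is also a genuine improvement. The paper asserts $z\in X_w$ without ever checking $z\in X$, and your argument via $F_X(puw)=F_X(w)$ and closedness supplies exactly that.

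Two minor points:
\begin{itemize}
\item Your stated reason for keeping $p$ is mistaken. $X_w$ is forward invariant, since any $tw\subset\sigma(x)$ already appears in $x$, and the defining condition concerns every occurrence of $tw$ in $x$, not just prefixes. So $p$ can be dropped, as the paper effectively does.
\item In case (ii) you do not need $q$. The word $wc$ ends in $w$ and appears in $y\in X_w$, so $F_X(wc)=F_X(w)$ follows immediately from the definition of $X_w$; this is how the paper argues.
\end{itemize}
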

\begin{proof}
Suppose $u,v\in\w(X_w)$ are such that $uw\in\w(X_w)$ and $wv\in\w(X_w)$, so that there exist $x,y\in X_w$ with $uw\subset x$ and $wv\subset y$.
Writing 
\begin{equation}\label{ydef}
y=rwvs
\end{equation}
 for some $r\in\w(X_w)$ and $s\in X_w$, we claim that
\begin{equation}\label{zdef}
z:=uwvs
\end{equation}
belongs to $X_w$, which will then mean that $uwv\in \w(X_w)$, and hence that $w$ is a synchronizing word for $X_w$.

Since $w$ appears in $z$, to check that $z\in X_w$ amounts to showing that if $tw\subset z$ for some $t\in \w(X)$, then
$F_X(tw)=F_X(w)$.
For this we write 
\begin{equation}\label{tuwexplicit}
t=t_1\ldots t_k, \quad u=u_1\ldots u_l , \quad w=w_1\ldots w_m,
\end{equation} 
and consider three cases separately.

In the first case, suppose $z_{[i+1,i+k+m]}=tw$ for some $i\ge l$. From (\ref{ydef}), (\ref{zdef}), (\ref{tuwexplicit}), this means that
$tw\subset wvs=\sigma^{|r|}(y)$.
But $\sigma^{|r|}(y)$ belongs to $X_w$, since $y\in X_w$,
so the definition of $X_w$ means that
$F_X(tw)=F_X(w)$, as required.

In the second case, suppose that $k\le l$, and  $z_{[i+1,i+k+m]}=tw$ for some $0\le i\le l-k$.
From (\ref{zdef}), (\ref{tuwexplicit}), this means that
$tw\subset uw$.
But $uw\subset x$, therefore $tw\subset x\in X_w$, and the definition of $X_w$ then means that
 $F_X(tw)=F_X(w)$, as required.

In the third case, suppose that $tw\subset z$, but that the conditions of the first and second cases do not hold.
In particular, $z_{[i+1,i+k+m]}=tw$ for some $i<l$, so there exists $1\le j\le k$ such that
the length-$j$ prefix of $t$ is the length-$j$ suffix of $u$, in other words
\begin{equation}\label{tu}
t_1\ldots t_j = u_{l-j+1}\ldots u_l\,.
\end{equation}
Now $u_{l-j+1}\ldots u_l w \subset uw\subset x\in X_w$, so the definition of $X_w$ gives
\begin{equation}\label{uwxstar}
F_X(u_{l-j+1}\ldots u_l w) = F_X(w),
\end{equation}
and combining (\ref{tu}), (\ref{uwxstar}) gives
\begin{equation}\label{twostartw}
F_X(t_1\ldots t_j w)=F_X(w).
\end{equation}
Now let $q\in\w(X)$ be such that
\begin{equation}\label{twq}
t_1\ldots t_j wq = tw\,.
\end{equation}
From (\ref{twostartw}) and Lemma \ref{lem-00},
\begin{equation}\label{twwq}
F_X(t_1\ldots t_j wq) = F_X(wq),
\end{equation}
and combining  (\ref{twq}) with (\ref{twwq}) gives
\begin{equation}\label{twwq2}
F_X(tw)=F_X(wq).
\end{equation}
Now $wq\subset wvs = \sigma^{|r|}(y)\in X_w$, and (\ref{twq}) means that $w$ is a suffix of $wq$,
so the definition of $X_w$ means that 
\begin{equation}\label{5starwqw}
F_X(wq)=F_X(w).
\end{equation}
Combining (\ref{twwq2}) and (\ref{5starwqw}) gives $F_X(tw)=F_X(w)$, as required.
\end{proof}

In general, the presence of a synchronizing word has consequences regarding periodic points:

\begin{lem}\label{synchronizingwordperiodicpoint}
Suppose $Z$ is a shift space with synchronizing word $w\in\w(Z)$.
If $x=(x_i)_{i=1}^\infty\in Z$, with $x\in [w]$ and $\sigma^n(x)\in [w]$ for some $n\in\N$,
then the period-$n$ point
$$
 (x_1\ldots x_n)^\infty
$$
belongs to $Z$.
\end{lem}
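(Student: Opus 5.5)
Write $u:=x_1\ldots x_n$. The aim is to show that $u^k w\in\w(Z)$ for every $k\ge1$. Once this is done, each $u^k$ is allowed in $Z$, so every finite block of $u^\infty$ is allowed. Since $Z$ is closed and shift-invariant, the cylinder $[u^k]_Z$ is non-empty for each $k$. Choosing points $z^{(k)}\in Z$ beginning with $u^k$ gives $z^{(k)}\to u^\infty$, and hence $u^\infty\in Z$.

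We prove $u^kw\in\w(Z)$ by induction on $k$, using only the synchronizing property of $w$ (Definition \ref{sync_defn}).

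\emph{Base case.} Since $\sigma^n(x)\in[w]$, we have $x_{[n+1,n+|w|]}=w$, so $uw=x_{[1,n+|w|]}\in\w(Z)$.

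\emph{Useful facts.} Because $x\in[w]$, the sequence $x$ begins with $w$. So if $n\ge|w|$, then $u=wv$ for the word $v=x_{[|w|+1,n]}$, and $uw=wvw$. In general, $uw=x_{[1,n+|w|]}$ is a word that both begins and ends with $w$.

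\emph{Inductive step.} Suppose $u^kw\in\w(Z)$. Then $u^{k-1}u w$ ends in $w$, so $u^k\in P_Z(w)$. Also $uw\in\w(Z)$ begins with $w$; write $uw=wv'$, where $v'=x_{[|w|+1,n+|w|]}$. Then $v'\in F_Z(w)$. The synchronizing property applied to $u^k\in P_Z(w)$ and $v'\in F_Z(w)$ gives $u^k w v'\in\w(Z)$.

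It remains to check that $u^kwv'=u^{k+1}w$. We have $wv'=x_{[1,n+|w|]}=uw$, so $u^kwv'=u^k\cdot uw=u^{k+1}w$. This identity holds even when $n<|w|$, because it only uses the equality $wv'=uw$, which is valid whenever $|w|\le n+|w|$.

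\emph{Conclusion.} This completes the induction, and closedness of $Z$ then yields $(x_1\ldots x_n)^\infty\in Z$.

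The only delicate point is the bookkeeping when the two occurrences of $w$ overlap (the case $n<|w|$). The rewriting $uw=wv'$ avoids splitting into cases, since both sides are the same prefix of $x$, namely $x_{[1,n+|w|]}$.
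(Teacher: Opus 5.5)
Your proposal is correct and follows essentially the same route as the paper. The paper also writes $uw = x_{[1,n+|w|]} = wv$ and repeatedly applies the synchronizing property to $u^k w$ and $wv$ to get $u^{k+1}w\in\w(Z)$ by induction, then concludes $u^\infty\in Z$; your closure argument just spells out that last step in more detail.
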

\begin{proof}
The hypotheses mean that $w$ is both a prefix and a suffix of the word $x_{[1,n+|w|]}$, so
defining 
$u=x_{[1,n]}=x_1\ldots x_n$
and
$v=x_{[|w|+1,|w|+n]}=x_{|w|+1}\ldots x_{|w|+n}
$
gives that
\begin{equation}\label{uwwv}
uw = x_{[1,n+|w|]} = wv\,.
\end{equation}
In particular, (\ref{uwwv}) means that 
\begin{equation}\label{bothbelong1}
uw\in \w(Z) 
\end{equation}
and
\begin{equation}\label{bothbelong2}
wv\in\w(Z)\,.
\end{equation}
Since $w$ is synchronizing, (\ref{bothbelong1}) and  (\ref{bothbelong2}) 
imply that $uwv\in\w(Z)$.
But  (\ref{uwwv}) means that $uwv$ can be written as $uuw$, hence 
\begin{equation}\label{usquaredbelongs}
u^2w=uuw\in\w(Z)\,.
\end{equation}
Combining (\ref{bothbelong2}) and (\ref{usquaredbelongs}), and the fact that $w$ is synchronizing, gives that
$uuwv\in\w(Z)$, and rewriting this using (\ref{uwwv}) we see that $u^3w=uuuw\in\w(Z)$.
Continuing inductively, we see that $u^mw\in \w(Z)$ for all $m\ge1$, and hence that $u^\infty\in Z$.
In other words, $(x_1\ldots x_n)^\infty\in Z$, as required.
\end{proof}

An easy consequence is the following:

\begin{cor}\label{synchronizedclosingproperty}
If $Z$ is a shift space with synchronizing word $w$,
then the cylinder set $[w]_Z$ has the closing property relative to $(Z,\sigma)$.
\end{cor}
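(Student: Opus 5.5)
The plan is to apply Lemma \ref{synchronizingwordperiodicpoint} directly and then bound the sum of distances in (\ref{sumofdistances}) by a geometric series. Take $A=[w]_Z$. This is non-empty because $w\in\w(Z)$. Fix $x\in A$ and $n\in\N$ with $\sigma^n(x)\in A$. By Lemma \ref{synchronizingwordperiodicpoint}, the point $z:=(x_1\ldots x_n)^\infty$ lies in $Z$ and satisfies $\sigma^n(z)=z$. This $z$ is the periodic point required by Definition \ref{closingpropertydefn}.

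It remains to find a constant $C_A$, independent of $x$ and $n$, bounding $\sum_{i=0}^{n-1} d_\alpha(\sigma^i(x),\sigma^i(z))$. We already have $x_{[1,n]}=z_{[1,n]}$. In addition, $\sigma^n(x)\in[w]$ and $\sigma^n(z)=z\in[x_1\ldots x_n]$, and the identity $uw=wv$ in (\ref{uwwv}) shows that $z$ begins with $w$. So the first $|w|$ symbols after position $n$ agree as well: $x_{[n+1,n+|w|]}=w=z_{[n+1,n+|w|]}$. (For this step it suffices to note that $z_{[1,n+|w|]}=u^{k}$ truncated, which agrees with $x_{[1,n+|w|]}$ since $uw=x_{[1,n+|w|]}$ and $u^\infty$ begins with $w$ as $uw=wv$.) Hence $x_{[1,n+|w|]}=z_{[1,n+|w|]}$. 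It follows that $\sigma^i(x)$ and $\sigma^i(z)$ agree on their first $n+|w|-i$ symbols, so $N(\sigma^i x,\sigma^i z)\ge n-i$.

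This gives
\[
\sum_{i=0}^{n-1} d_\alpha(\sigma^i x,\sigma^i z)\le \sum_{i=0}^{n-1}\alpha^{n-i}\le \frac{\alpha}{1-\alpha}=:C_A .
\]
The constant does not depend on $x$ or $n$. Therefore $[w]_Z$ has the closing property relative to $(Z,\sigma)$.

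The only step requiring care is checking that $x$ and $z$ agree up to position $n$, and this follows immediately from the construction of $z$. The rest is routine.
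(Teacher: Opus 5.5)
Your proof is correct and follows essentially the same route as the paper. Like the paper, you obtain $z=(x_1\ldots x_n)^\infty\in Z$ from Lemma \ref{synchronizingwordperiodicpoint}, bound $d_\alpha(\sigma^i(x),\sigma^i(z))\le\alpha^{n-i}$, and sum to get the constant $\alpha/(1-\alpha)$. Your extra check that $x$ and $z$ also agree on positions $n+1,\ldots,n+|w|$ is valid but unnecessary: agreement on the first $n$ symbols already gives the bound.
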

\begin{proof}
By Lemma \ref{synchronizingwordperiodicpoint},
if $x\in [w]_Z$, and $\sigma^n(x)\in [w]_Z$ for some $n\in\N$,
then the period-$n$ point
$
 z=(x_1\ldots x_n)^\infty
$
belongs to $Z$, and moreover satisfies
\begin{equation}
\label{shadowingshiftiteratescor}
d_\alpha(\sigma^i(x),\sigma^i(z)) \le \alpha^{n-i} d_\alpha(\sigma^n(x),z)\ \text{ for }0\le i\le n-1\,.
\end{equation}
It follows that
\begin{equation*}
\sum_{i=0}^{n-1} d_\alpha(\sigma^i(x),\sigma^i(z)) < \frac{\alpha}{1-\alpha}\,,
\end{equation*}
so (\ref{sumofdistances}) holds with $C_{[w]_Z}= \frac{\alpha}{1-\alpha}$.
\end{proof}

As a consequence we deduce:

\begin{cor}\label{l-s-a}
If $X$ is a shift space, and $w\in\w(X)$, and $Y\subseteq X_w$ 
is a subshift such that $w\in\w(Y)$,
then $Y$ has the subset closing property. 
\end{cor}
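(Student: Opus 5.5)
The plan is to exhibit an explicit open subset $A\subseteq Y$ with the closing property relative to $(X,\sigma)$, namely the cylinder $A:=[w]_Y=[w]\cap Y$. Since $w\in\w(Y)$, there is some $y\in Y$ in which $w$ appears. Shifting, $\sigma^k(y)\in[w]_Y$ for a suitable $k$, and $\sigma^k(y)\in Y$ because $Y$ is shift-invariant; hence $A$ is non-empty. It is open in $Y$ because cylinders are open in the product topology. It therefore remains only to verify the closing property (\ref{sumofdistances}) for $A$ relative to $(X,\sigma)$.

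For this I would pass through the subshift $X_w$. By Lemma \ref{xwiswsynchronizing}, $w$ is a synchronizing word for $X_w$. The first thing to check is that $X_w$ is genuinely a subshift, so that Lemma \ref{synchronizingwordperiodicpoint} applies to $Z=X_w$.

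Closedness should follow from the definition. The condition "$tw\subset x\Rightarrow F_X(tw)=F_X(w)$" concerns finitely many coordinates for each occurrence, so a limit of points of $X_w$ satisfies it too. Shift-invariance in the sense $\sigma(X_w)\subseteq X_w$ is immediate, since any occurrence in $\sigma(x)$ is an occurrence in $x$. (Lemma \ref{xwiswsynchronizing} already treats $X_w$ as a shift space, so I would take this as given.)

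Also $w\in\w(X_w)$, since $w\in\w(Y)$ and $Y\subseteq X_w$. Hence Corollary \ref{synchronizedclosingproperty} applies with $Z=X_w$: the cylinder $[w]_{X_w}$ has the closing property relative to $(X_w,\sigma)$, with constant $C=\alpha/(1-\alpha)$.

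The proof then concludes as follows.

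\begin{itemize}
\item[(a)] Take $x\in A\subseteq[w]_{X_w}$ with $\sigma^n(x)\in A\subseteq[w]_{X_w}$.
\item[(b)] By Lemma \ref{synchronizingwordperiodicpoint}, there is a periodic point $z=(x_1\ldots x_n)^\infty\in X_w\subseteq X$ with $\sigma^n(z)=z$.
\item[(c)] This $z$ satisfies $\sum_{i=0}^{n-1}d(\sigma^i x,\sigma^i z)\le \alpha/(1-\alpha)$.
\end{itemize}

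Since $z\in X$, this is exactly the closing property of $A$ relative to $(X,\sigma)$, with $C_A=\alpha/(1-\alpha)$. So $Y$ has the subset closing property.

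The only potential obstacle is the bookkeeping of which ambient system the closing property refers to. Corollary \ref{synchronizedclosingproperty} is stated relative to $(Z,\sigma)$ with $Z=X_w$, but we need it relative to $(X,\sigma)$. This is harmless because the periodic point produced lies in $X_w\subseteq X$, and the distances are the same metric $d_\alpha$. A second minor point is that $Y$ itself need not have $w$ as a synchronizing word, which is why we work in $X_w$ rather than $Y$.
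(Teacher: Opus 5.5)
Your proposal is correct and follows essentially the same route as the paper: $w$ is synchronizing for $X_w$ (Lemma \ref{xwiswsynchronizing}), so $[w]_{X_w}$ has the closing property by Corollary \ref{synchronizedclosingproperty}, and the non-empty open set $[w]_Y\subseteq[w]_{X_w}$ inherits it. You also note that the periodic point lies in $X_w\subseteq X$, so the closing property holds relative to $(X,\sigma)$ as Definition \ref{closingpropertydefn} requires; this makes explicit a point the paper's proof leaves implicit, since it only states the property relative to $(X_w,\sigma)$.
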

\begin{proof}
Since $w$ is a synchronizing word for $X_w$, 
Corollary \ref{synchronizedclosingproperty} implies that
the cylinder set $[w]_{X_w}$ has the closing property relative to $(X_w,\sigma)$.
If $w\in\w(Y)$ then the non-empty set $[w]_Y$ is open in $Y$, and has the closing property relative to $(X_w,\sigma)$
since it is a subset of $[w]_{X_w}$, which has the closing property relative to $(X_w,\sigma)$.
Therefore $Y$ has the subset closing property.
\end{proof}

Any subshift that is not contained in $\partial_M X$ necessarily has a synchronizing word, and indeed is a subset of some $X_w$: 

\begin{lem}\label{l-1} Let $X$ be a shift space. If $Y\subseteq X$ is any subshift that is not contained in the Markov
boundary $\partial_M X$, then 
there exists $w\in\w(Y)$
such that
$
Y \subseteq X_w
$,
and in particular $Y$ has a synchronizing word.
\end{lem}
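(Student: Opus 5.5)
The plan is to extract a word with finitely many follower sets from a point of $Y$ outside the Markov boundary, and then lengthen it to the left in a minimal way. Since $Y\not\subseteq\partial_M X$, there is a point $y\in Y\setminus\partial_M X$. By the definition (\ref{equ:markovboundary}) of $\partial_M X$, some finite word $v$ appearing in $y$ fails to lie in $\w_\infty(X)$. So $v\in\w(Y)$ and $v\in\w_{<\infty}(X)$, meaning that the collection $\{F_X(uv):u\in P_X(v)\}$ is finite.

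Next I would look at the subcollection
$$\mathcal F:=\{F_X(uv): u\in P_X(v),\ uv\in\w(Y)\}.$$
It is non-empty, since it contains $F_X(v)$ (take $u=\epsilon$), and it is finite. A finite non-empty family of sets, partially ordered by inclusion, has a minimal element. I would fix $u_0$ with $w:=u_0v\in\w(Y)$ such that $F_X(w)$ is minimal in $\mathcal F$.

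I then check that $Y\subseteq X_w$. Let $x\in Y$ and let $t\in\w(X)$ satisfy $tw\subset x$. Then $tw=(tu_0)v$ lies in $\w(Y)$, so $tu_0\in P_X(v)$ and $F_X(tw)\in\mathcal F$. Lemma \ref{lem-00}(a), applied with $t\in P_X(w)$, gives $F_X(tw)\subseteq F_X(w)$. Minimality of $F_X(w)$ then forces $F_X(tw)=F_X(w)$. This is exactly the condition of Definition \ref{d-1}, so $x\in X_w$, and hence $Y\subseteq X_w$ with $w\in\w(Y)$.

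The step I expect to be the main obstacle is the final clause, that $Y$ itself has a synchronizing word. Lemma \ref{xwiswsynchronizing} makes $w$ synchronizing for $X_w$, but this does not pass directly to the smaller subshift $Y$. To deduce $uwv'\in\w(Y)$ from $uw,wv'\in\w(Y)$, one needs control of follower sets relative to $Y$, not only relative to $X$.

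My first attempt would be to rerun the minimality argument with $F_Y$ in place of $F_X$, working inside $Y$. Taking the $w$ already found, I would consider the family
$$\{F_Y(sw): sw\in\w(Y)\},$$
which is decreasing under left extension in $Y$. I would try to show that this family is finite, by exploiting the equality $F_X(sw)=F_X(w)$ together with the gluing property of Lemma \ref{uwsinz} applied in $X_w$. If it is finite, a minimal $w'=s_0w$ gives $F_Y(tw')=F_Y(w')$ for every left extension in $Y$, and this is precisely synchronization of $w'$ in $Y$. If finiteness cannot be established, the fallback is to read the clause as saying that $Y$ lies inside the subshift $X_w$, which has the synchronizing word $w\in\w(Y)$. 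That reading is all that Corollary \ref{l-s-a} requires for the subset closing property of $Y$.
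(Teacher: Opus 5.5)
Your proof of the main assertion, that $Y\subseteq X_w$ for some $w\in\w(Y)$, is correct and is essentially the paper's argument. The paper argues by contradiction: starting from $w_0\in\w(Y)\cap\w_{<\infty}(X)$, it builds an infinite chain of left extensions in $\w(Y)$ whose $F_X$-follower sets strictly decrease, which is impossible. You instead take an inclusion-minimal element of the finite family $\mathcal F$ directly. Your verification via Lemma \ref{lem-00}(a) is sound.

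Your suspicion about the final clause is justified, because the clause is false in general. So your first attempt, proving that $\{F_Y(sw): sw\in\w(Y)\}$ is finite, cannot succeed. Take $X=\{0,1\}^\N$ and $Y$ the Morse shift of Section \ref{magicmorsesection}.
\begin{itemize}
\item All follower sets in the full shift coincide. Hence $\w_\infty(X)=\emptyset$, $\partial_M X=\emptyset$ and $X_w=X$ for every $w$, so the hypothesis and the first conclusion hold trivially.
\item $Y$ is dynamically minimal and non-periodic, so it has no synchronizing word. If $w$ were one, a point of $[w]_Y$ would return to $[w]_Y$ by minimality, and Lemma \ref{synchronizingwordperiodicpoint} would put a periodic orbit in $Y$.
\end{itemize}
The example also shows concretely that $F_X(sw)=F_X(w)$ gives no control over $F_Y(sw)$.

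The paper's proof gets the clause by asserting that $w$ is synchronizing for $Y$ because it is synchronizing for $X_w\supseteq Y$ and lies in $\w(Y)$. That is exactly the inference you declined to make, and it is invalid. What is true is your fallback reading: $w\in\w(Y)$ is synchronizing for the subshift $X_w\supseteq Y$ (Lemma \ref{xwiswsynchronizing}). This is all that Corollary \ref{l-s-a} and Theorem \ref{l-6} use, so nothing downstream is affected.

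Your $F_Y$-minimality argument does produce a synchronizing word for $Y$ under a different hypothesis, $Y\not\subseteq\partial_M Y$. That is the lemma with $X$ replaced by $Y$; cf.\ the footnote in the proof of Proposition \ref{partial_Y_v_equals_Y}. However, $Y\not\subseteq\partial_M X$ does not imply $Y\not\subseteq\partial_M Y$: in the example, $\partial_M X=\emptyset$ while $\partial_M Y=Y$.
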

\begin{proof}Since $Y\setminus \partial_MX\neq\emptyset$, 
there exists  $w_0\in \mathbb{W}(Y)\cap \mathbb{W}_{<\infty}(X)$.
	Assume, for a contradiction, that $Y$ is not 
contained in $X_w$
for any $w\in\w(Y)$.
This means that there does not exist $w\in \mathbb{W}(Y)$ such that $F_X(w)=F_X(uw)$
	for all 
$ u\in P_Y(w)$.
So there exists a sequence
$(w_i)_{i=1}^\infty$, where each $w_i\in \w(Y)$, such that
for all $i\in \mathbb{N}$,
the word $w_{i-1}$ is a suffix of $w_{i}$,
and 
	  $F_X(w_{i})$ is a proper subset of $F_X(w_{i-1})$.
This means that 
	 $ |\{F_X(uw_0): u\in P_Y(w_0)\}|\ge  |\{F_X(w_i): i\in\mathbb{N}\}|=\infty$, 
thus contradicting the fact that $w_0\in \mathbb{W}_{<\infty}(X)$. 
So in fact $Y$ is 
contained in $X_w$
for some $w\in\w(Y)$.
Since $w$ is a synchronizing word for $X_w\supseteq Y$, and 
$w\in\w(Y)$, then $w$ is synchronizing for $Y$.
	\end{proof}

Finally, we are able to connect the concepts from Sections \ref{maximizableminimaxsection} and  \ref{completelymaximizingsetssection} on maximizable and completely maximizing sets,
with the
above results on the subshifts $X_w$ and the Markov boundary,
to prove the following important result
about those Lipschitz functions without any maximizing measure supported within the Markov boundary:

\begin{thm}\label{l-6} If $X$ is a shift space, and $f\in\lip(X)$ is such that $\partial_M X$ is not $f$-maximizable,
then $f$ has a completely maximizing set $Y$ that is dynamically minimal.
For any such 
$Y$, there exists
$w\in\w(Y)$
such that 
$Y\subseteq X_w$.
\end{thm}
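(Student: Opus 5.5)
The plan is to take a minimax set for $f$ and upgrade it, via the subset closing property, to a completely maximizing and dynamically minimal set. By Lemma \ref{minmaxexistence}, the triple $(X,\sigma,f)$ has a minimax set $Y$. Since $Y$ is maximizable, some $f$-maximizing measure is supported in $Y$. By hypothesis $\partial_M X$ is not $f$-maximizable, so no $f$-maximizing measure has support inside $\partial_M X$. Hence $Y\not\subseteq \partial_M X$: if $Y$ were contained in $\partial_M X$, then $\partial_M X$, being a closed invariant set containing the support of a maximizing measure, would be maximizable.

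Next, Lemma \ref{l-1} applies to $Y$, which is a subshift since it is closed and $\sigma$-invariant. It supplies a word $w\in\w(Y)$ with $Y\subseteq X_w$. Corollary \ref{l-s-a} then shows that $Y$ has the subset closing property. Here the closing property is taken relative to $(X_w,\sigma)$, and therefore also relative to $(X,\sigma)$, because the periodic points produced lie in $X_w\subseteq X$; this is what Definition \ref{closingpropertydefn} requires.

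With this in hand, Theorem \ref{minimaxclosingcompletelymaximizing}, applied to the Lipschitz function $f$ and the minimax set $Y$, gives that $Y$ is completely maximizing for $f$ and dynamically minimal. This proves the first assertion.

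For the second assertion, let $Y$ be any completely maximizing set for $f$ that is dynamically minimal. By Lemma \ref{subsetcompletelymaximizing}(a), $Y$ is maximizable, so some maximizing measure has support inside $Y$. The argument of the first paragraph therefore gives $Y\not\subseteq\partial_M X$, and Lemma \ref{l-1} yields $w\in\w(Y)$ with $Y\subseteq X_w$.

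The main point needing care is the relativity in the closing property. Corollary \ref{l-s-a} gives closing relative to $X_w$, whereas Theorem \ref{minimaxclosingcompletelymaximizing} asks for closing relative to $(X,T)$. This is harmless: its proof only uses that the periodic point $z$ satisfies $\int f\,d\mu_z\le\beta(X,\sigma,f)=0$, and that holds for any periodic $z\in X$. All other steps are direct citations.
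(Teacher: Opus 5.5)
Your proposal is correct and follows the paper's proof essentially step for step: take a minimax set via Lemma \ref{minmaxexistence}, show it is not contained in $\partial_M X$, apply Lemma \ref{l-1} and Corollary \ref{l-s-a} to get the subset closing property, and conclude with Theorem \ref{minimaxclosingcompletelymaximizing}. The paper leaves two points implicit, and you handle both correctly: the ``for any such $Y$'' clause (note also that any dynamically minimal maximizable set is automatically minimax), and the fact that closing relative to $X_w$ gives closing relative to $X$.
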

\begin{proof}
By Lemma \ref{minmaxexistence}, there exists a minimax set $Y$ for $f$;
in particular, there is some $f$-maximizing measure $\mu$ with $\supp(\mu)=Y$.
The subshift $Y$ is not a subset of $\partial_M X$, since otherwise $\mu$ would be supported inside the Markov boundary, contradicting the assumption that $f$ has no such maximizing measure.
But since $Y$ is not a subset of $\partial_M X$,  Lemma \ref{l-1} implies that there exists $w\in\w(Y)$
such that 
 $Y\subseteq X_w$,
and this inclusion means that $Y$ has the subset closing property, by Corollary \ref{l-s-a}.

As a minimax set with the subset closing property, $Y$ must be completely maximizing for $f$,
and dynamically minimal,
by Theorem \ref{minimaxclosingcompletelymaximizing}.
\end{proof}

\begin{cor}
If $X$ is a sofic shift, then every $f\in\lip(X)$
has a completely maximizing set that is dynamically minimal.
\end{cor}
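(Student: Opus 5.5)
The plan is to deduce this directly from Theorem \ref{l-6}, using Lemma \ref{sofic_iff_empty_boundary} to remove the Markov boundary hypothesis. Let $X$ be sofic and $f\in\lip(X)$.

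First, Lemma \ref{sofic_iff_empty_boundary} gives $\partial_M X=\emptyset$. Next, we check that $\partial_M X$ is not $f$-maximizable. Since $\partial_M X$ is empty, $\m(\partial_M X,\sigma)=\emptyset$, so the condition (\ref{maximizableeqn}) fails trivially. Equivalently, by Lemma \ref{LipZnonemptyiff}, $\lip_{\partial_M X}(X,\sigma)=\emptyset$.

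The hypothesis of Theorem \ref{l-6} is therefore satisfied for every $f\in\lip(X)$. Theorem \ref{l-6} then provides a completely maximizing set $Y$ for $f$ that is dynamically minimal. It also gives a word $w\in\w(Y)$ with $Y\subseteq X_w$, although that part is not needed here.

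The only point needing care is that Theorem \ref{l-6} remains valid when $\partial_M X=\emptyset$. Its proof uses Lemma \ref{minmaxexistence} to produce a minimax set $Y$, and then uses only the fact that $Y\not\subseteq\partial_M X$. This holds automatically because $Y$ is non-empty: it supports a maximizing measure, and $X\neq\emptyset$ whenever $\lip(X)$ carries a maximizing measure. No step relies on the boundary being non-empty, so the argument goes through and there is no substantive obstacle.
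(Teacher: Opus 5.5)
Your proof is correct and follows the paper's argument exactly: Lemma \ref{sofic_iff_empty_boundary} gives $\partial_M X=\emptyset$, so $\partial_M X$ is not $f$-maximizable for any $f\in\lip(X)$, and Theorem \ref{l-6} then yields a dynamically minimal completely maximizing set. Your extra check that Theorem \ref{l-6} still applies when the boundary is empty is not needed, since its hypothesis is simply that $\partial_M X$ is not $f$-maximizable, but it does no harm.
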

\begin{proof}
Since $X$ is sofic, Lemma \ref{sofic_iff_empty_boundary} implies that $\partial_M X=\emptyset$,
therefore $f\in\lip(X)$ does not have a maximizing measure supported inside $\partial_M X$,
i.e.~$\partial_M X$ is not $f$-maximizable.
The result then follows from Theorem \ref{l-6},   
\end{proof}

\begin{rem}
The hypothesis in Theorem \ref{l-6},
that $f$ should not have any maximizing measure supported inside $\partial_M X$,
cannot be omitted.
For example let the shift space $X$ be dynamically minimal, but supporting precisely two ergodic invariant measures $\mu$, $\mu'$; such an $X$ has no synchronizing word (cf.~Lemma \ref{synchronizingwordperiodicpoint}), 
and therefore $\partial_M X = X$.
As noted in Remark \ref{completelymaximizingremark}, provided $f\in C(X)$
satisfies $\int f\, d\mu \neq \int f\, d\mu'$ then there cannot be a completely maximizing subset for $f$,
and since $\lip(X)$ is densely embedded in $C(X)$, such an $f$ may be chosen to be Lipschitz.
\end{rem}

\section{ Typical periodic optimization for sofic and eventually sofic shifts}\label{SFTmaximizablefamily}

In this section we will be able to combine 
Theorem \ref{l-6} with the framework of countable
maximizable families developed in Section \ref{maximizablefamiliessection},
in order to prove various theorems announced in Section \ref{introsection}.
So as to introduce a suitable countable maximizable family, we first define:

\begin{defn}
Let $Z$ be a shift space with synchronizing word $w\in\w(Z)$.
For $q\in\N$, define the associated \emph{$(w,q)$-step subshift} $Z_{w,q}\subseteq Z$ by
\begin{equation}\label{zwq}
Z_{w,q}:=\left\{ x\in Z: \sigma^n(x)\in \bigcup_{j=0}^{q-1} \sigma^{-j}[w]\text{ for all }n\ge0\right\}\,,
\end{equation}
or in other words
$$
Z_{w,q}=
\bigcap_{n=0}^\infty \sigma^{-n}\left( \bigcup_{j=0}^{q-1} \sigma^{-j}[w] \right)
\,.
$$
\end{defn}

It turns out that the corresponding $(w,q)$-step subshift $Z_{w,q}$ is of
\emph{finite type} (see Proposition \ref{zwqsft} below).
In order to prove that result, we first require the following:

\begin{lem}\label{zwqpresftlemma}
Suppose $X$ is a shift space,
and $Z\subseteq X$ is a subshift with synchronizing word $w\in\w(Z)$.
Let $q\in\N$.
If $x,y\in Z_{w,q}$ can be written as $x=uvr$, $y=vs$, for $u,v\in \w(Z_{w,q})$, $r,s\in Z_{w,q}$,
where $|v|\ge q+|w|$, then $uvs\in Z_{w,q}$.
\end{lem}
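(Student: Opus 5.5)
Two things must be checked for $z:=uvs$: first, that $z\in Z$; second, that every shift iterate of $z$ lies in $\bigcup_{j=0}^{q-1}\sigma^{-j}[w]$. In other words, every window of length $q$ in $z$ must contain an occurrence of $w$ starting inside that window.

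For membership in $Z$, we use the fact that $v$ contains an occurrence of $w$. Indeed $\sigma^{|u|}(x)=vr\in Z_{w,q}$, so $w$ begins at some position $j\in\{1,\ldots,q\}$ of $vr$. Since $|v|\ge q+|w|$, the whole occurrence $v_{[j,j+|w|-1]}=w$ lies inside $v$. Write $v=v'wv''$ with $|v'|=j-1$. Then
\[
x=(uv')w(v''r),\qquad y=v'w(v''s),
\]
with $uv'\in\w(Z)$ and $v''r,\ v''s\in Z$ (they are shift iterates of $x,y\in Z$). Lemma \ref{uwsinz}, applied with the synchronizing word $w$ for $Z$, gives $uv'wv''s=uvs\in Z$.

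For the step condition, we split the shifts $\sigma^n(z)$, $n\ge0$, into two ranges. If $n\ge|u|$, then $\sigma^n(z)=\sigma^{n-|u|}(vs)=\sigma^{n-|u|}(y)$, which lies in $\bigcup_{j<q}\sigma^{-j}[w]$ because $y\in Z_{w,q}$. If $n<|u|$, the required occurrence of $w$ starting at some position in $[n+1,n+q]$ of $z$ is found in $x$ instead: since $x\in Z_{w,q}$, there is $j<q$ with $\sigma^{n+j}(x)\in[w]$. This occurrence occupies positions $n+j+1,\ldots,n+j+|w|$, and
\[
n+j+|w|\le |u|-1+q-1+|w|<|u|+|v|.
\]
So it lies within the prefix $uv$, which $x$ and $z$ share, and hence it is also an occurrence of $w$ in $z$ at the same place.

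The only delicate point is the bookkeeping that uses $|v|\ge q+|w|$ twice. It guarantees that $v$ contains a full copy of $w$, which is what makes Lemma \ref{uwsinz} applicable. It also guarantees that for every $n<|u|$ the witnessing occurrence of $w$ in $x$ does not run past the common prefix $uv$. The rest is routine.
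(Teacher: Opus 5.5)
Your proof is correct and follows essentially the same route as the paper. You locate a copy of $w$ within the first $q+|w|$ symbols of $v$, apply Lemma \ref{uwsinz} to get $uvs\in Z$, and then check the step condition by reading the shifts before $|u|$ off $x$ (via the common prefix $uv$) and the later shifts off $y$; the only cosmetic difference is that you find the copy of $w$ using $\sigma^{|u|}(x)=vr$ where the paper uses $y=vs$.
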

\begin{proof}
Now $y\in Z_{w,q}$, so $y\in \bigcup_{j=0}^{q-1}\sigma^{-j}[w]$,
so there exists $t\in\w(Z_{w,q})$ with $|t|\le q-1$ such that
\begin{equation}\label{y1tw}
y_{[1,|tw|]}=tw\,.
\end{equation}
Since $|v|\ge q+|w|\ge |tw|$,
comparison of (\ref{y1tw}) with the expression $y=vs$ implies that 
\begin{equation}\label{vtwp}
v=twp
\end{equation}
 for some $p\in\w(Z_{w,q})$, so that
\begin{equation}\label{ytwps}
y=twps\,.
\end{equation}
Now $x=uvr$, so (\ref{vtwp}) implies that
\begin{equation}\label{xutwpr}
x=utwpr\,.
\end{equation}
Since $x,y\in Z$, and $w$ is synchronizing for $Z$,
Lemma \ref{uwsinz}, together with (\ref{ytwps}) and (\ref{xutwpr}),
implies that
\begin{equation}\label{zutwps}
z:=utwps \in Z\,.
\end{equation}
In other words, $z=uvs\in Z$. 

It remains to show that $z=uvs\in Z_{w,q}$, in other words to show that
\begin{equation}\label{sigmanz}
\sigma^n(z)\in \bigcup_{j=0}^{q-1} \sigma^{-j}[w]\quad\text{for all }n\ge 0.
\end{equation}
If $n\le |u|$ then, since $x\in Z_{w,q}$, the word $w$ appears in 
$x_{[n+1,n+q+|w|-1]}
=
z_{[n+1,n+q+|w|-1]}$, thus indeed (\ref{sigmanz}) holds.

If $n>|u|$ then, since $\sigma^{n-|u|}(y)\in Z_{w,q}$, then
$\sigma^{n-|u|}(y)\in \bigcup_{j=0}^{q-1} \sigma^{-j}[w]$, so $w$ appears
in
$y_{[n-|u|+1,n-|u|+q+|w|]}
=
z_{[n+1,n+q+|w|]}$, and again  (\ref{sigmanz}) holds, so the proof is complete.
\end{proof}

Recalling (see e.g.~\cite[Thm.~2.1.8]{LM95}) that a shift space $Y$ is an \emph{$m$-step shift of finite type (SFT)}
if every word of length at least $m$ in $Y$ is a synchronizing word, we can now prove the following:

\begin{prop}\label{zwqsft}
Suppose $Z$ is a shift space with synchronizing word $w$.
For all $q\in\N$, the associated $(w,q)$-step subshift $Z_{w,q}$ is a $(q+|w|)$-step SFT.
\end{prop}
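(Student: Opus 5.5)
We prove this using the characterisation recalled just before the statement: $Z_{w,q}$ is an $m$-step SFT with $m=q+|w|$ as soon as every word $v\in\w(Z_{w,q})$ with $|v|\ge m$ is synchronizing for $Z_{w,q}$. The real content is Lemma \ref{zwqpresftlemma}, which was set up for exactly this purpose. Before using it, we note that $Z_{w,q}$ is a subshift. It is closed, since it is an intersection of preimages of the clopen set $\bigcup_{j=0}^{q-1}\sigma^{-j}[w]$ under continuous maps. It satisfies $\sigma(Z_{w,q})\subseteq Z_{w,q}$ directly from (\ref{zwq}). Any surjectivity issue (the paper's convention $\sigma(X)=X$) can be handled by passing to $\bigcap_n \sigma^n(Z_{w,q})$. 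I expect this not to be needed, however, since the extension argument below produces preimages.

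Next, fix $v\in\w(Z_{w,q})$ with $|v|\ge q+|w|$, and take $u\in P_{Z_{w,q}}(v)$ and $v'\in F_{Z_{w,q}}(v)$. We must show that $uvv'\in\w(Z_{w,q})$. Since $uv$ is allowed, there is a point of $Z_{w,q}$ in which $uv$ appears. Applying a shift, we get $x\in Z_{w,q}$ of the form $x=u'uvr$ with $r\in Z_{w,q}$. Replacing $u$ by $u'u$ is harmless, so we may assume $x=uvr$ with $r\in Z_{w,q}$, using shift-invariance of $Z_{w,q}$ to see that $r\in Z_{w,q}$. In the same way, since $vv'$ is allowed, we get a point containing $vv'$ and shift it to obtain $y=vv's\in Z_{w,q}$ with $s\in Z_{w,q}$.

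Now Lemma \ref{zwqpresftlemma} applies to $x=uvr$ and $y=v(v's)$, where $v's\in Z_{w,q}$ because it is a shift of $y$. It gives $uvv's\in Z_{w,q}$. Hence $uvv'\in\w(Z_{w,q})$, so $v$ is synchronizing.

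The main point to check is that the hypotheses of Lemma \ref{zwqpresftlemma} are met exactly. The words $u,v$ must lie in $\w(Z_{w,q})$, which holds. The tails $r$ and $v's$ must lie in $Z_{w,q}$, which follows from the shift-invariance noted above. Finally, the length condition $|v|\ge q+|w|$ holds by assumption. Apart from this, the argument is bookkeeping.
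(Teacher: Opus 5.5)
Your argument is the paper's: realise $uv$ and $vv'$ as points $x=uvr$ and $y=v(v's)$ of $Z_{w,q}$, then apply Lemma \ref{zwqpresftlemma} to get $uvv's\in Z_{w,q}$, so every allowed word of length at least $q+|w|$ is synchronizing (the detour through $u'u$ is unnecessary, since shifting to the occurrence of $uv$ already gives $x=uvr$). One caution on your aside: the extension argument does not produce preimages in general --- for the full shift on $\{0,1\}$ with $w=11$ and $q=2$ one gets $Z_{w,q}=\{01^\infty,1^\infty\}$, which is not $\sigma$-surjective --- so if the convention $\sigma(X)=X$ is insisted on, your fallback of passing to $\bigcap_n\sigma^n(Z_{w,q})$ really is needed (the paper's proof, like yours, only checks the synchronizing criterion).
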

\begin{proof}
We must show that if $t,u,v\in\w(Z_{w,q})$ are such that
$tv, vu\in\w(Z_{w,q})$, with $|v|\ge q+|w|$,
then $tvu\in\w(Z_{w,q})$.
There exists $x,y\in Z_{w,q}$ with $x=tvr$ and $y=vus$, for $r,s\in Z_{w,q}$.
By Lemma \ref{zwqpresftlemma}, the fact that $|v|\ge q+|w|$
implies that $z=tvus \in Z_{w,q}$, and therefore $tvu\in\w(Z_{w,q})$, as required.
\end{proof}

The significance of shifts of finite type in the context of the TPO property is fundamental:

\begin{lem}\label{contrerasSFTlemma}
If $Z$ is a SFT, then $(Z,\sigma)$ has the TPO property.
\end{lem}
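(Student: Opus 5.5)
The plan is to reduce the statement to Contreras' theorem \cite{C02} (for one-sided SFTs) via the standard dictionary between shifts of finite type and open distance-expanding maps, taking care over the facts that $Z$ may be empty, non-transitive, or have isolated points.

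First, dispose of trivialities. If $Z=\emptyset$ the statement is vacuous or conventionally true, since $\lip(Z)$ is trivial. Otherwise, recode $Z$ by a higher block presentation, so that it becomes a 1-step SFT, i.e.\ a topological Markov chain on a finite directed graph. The recoding map is a bi-Lipschitz conjugacy for the metrics $d_\alpha$, up to changing constants: the $m$-block map changes $N(x,y)$ by at most $m$. A bi-Lipschitz conjugacy identifies the Lipschitz spaces as Banach spaces (with equivalent norms), preserves invariant measures, and preserves periodic measures. Hence open dense sets of periodic-optimization functions correspond, and TPO transfers.

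Next, reduce to the essential part. Only vertices lying on cycles, and more precisely the non-wandering set, carry invariant measures. Since $\sigma(Z)=Z$ for a shift space, we may prune to the essential graph; in the one-sided setting this subshift is still an SFT. Then $\sigma:Z\to Z$ is open: the image of a cylinder $[a_1\dots a_n]$ is the cylinder $[a_2\dots a_n]$ in a 1-step SFT. It is also distance-expanding, since $d(\sigma x,\sigma y)=\alpha^{-1}d(x,y)$ whenever $x_1=y_1$, which holds whenever $d(x,y)<1$. This is exactly the setting of Contreras \cite{C02} (cf.\ \cite[Thm.~1]{Pa66}), which gives an open dense set of Lipschitz functions with a unique maximizing measure supported on a periodic orbit, i.e.\ TPO in the sense of Definition \ref{tpodefn}.

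The main obstacle is checking that Contreras' hypotheses match exactly. His theorem may be stated for transitive, or for surjective, open expanding maps, and possibly for a specific metric. If transitivity is required, I would decompose the SFT into its finitely many irreducible components $Z_1,\dots,Z_k$, each a transitive SFT with TPO by \cite{C02}. Every ergodic measure lives on some $Z_i$, so $\{Z_i\}$ is a finite, hence countable, maximizable family with TPO, and Theorem \ref{abstracttpo} yields TPO for $Z$. This also neatly handles the non-transitive case. The metric issue is handled by the remark that Lipschitz spaces for the various $d_\alpha$ are just H\"older spaces, so the result holds for every $\alpha$ once it holds for one.
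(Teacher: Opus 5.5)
Your proposal is correct and follows the paper's route. The paper's proof is just the citation: the one-sided shift on an SFT is open and distance-expanding (citing Parry), so Contreras' theorem applies. Your higher-block recoding, direct check of openness and expansion, and fallback decomposition into irreducible components via Theorem \ref{abstracttpo} supply details the paper leaves to those citations.

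The one loose end is your closing H\"older remark. It does not show that TPO for one $\alpha$ gives TPO for every $\alpha$: the identification only passes from H\"older results for $d_\beta$ to Lipschitz results for $d_{\beta^\gamma}$ with $\gamma\le 1$. Nothing depends on it, though, since your verification of openness and expansion already works for each $d_\alpha$ directly.
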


Since the shift map acting on a one-sided SFT is an open expanding map
(see \cite[Thm.~1]{Pa66}, and cf.~\cite[p.~112]{URM22}), 
Lemma \ref{contrerasSFTlemma} follows from \cite{C02} (cf.~also \cite[p.~2609]{Jen19} for a brief discussion of
the proof strategy).
A rather different approach to some elements of the proof of Lemma \ref{contrerasSFTlemma}, 
following \cite{HLMXZ19} (cf.~also \cite{Boc19}), is that a
Ma\~n\'e lemma (see Remark \ref{sofic_non_mane} for further discussion) holds for Lipschitz functions
$f:Z\to\R$ when $Z$ is a SFT,
meaning that such an $f$ can, without loss of generality, be assumed to satisfy $f\le \beta(f)$.
If $f$ has no periodic maximizing measure then the maximal $\sigma$-invariant subset of $f^{-1}(\beta(f))$
does not contain a periodic orbit, but can be shown to be well approximated by periodic orbits, in a certain precise
sense detailed
in \cite{HLMXZ19}, using tools valid for SFTs developed in \cite{bressaudquas},
so that if $Y$ is such an approximating periodic orbit, then functions of the form
$f_\epsilon:=f-\epsilon d(\cdot,Y)$ approximate $f$ in the Lipschitz topology, and their unique maximizing measure is the
periodic one supported on $Y$.

Given a shift space $X$,
we shall be particularly interested in the $(w,q)$-step subshifts $(X_w)_{w,q}$ corresponding to 
the various $X_w$; for ease of notation we shall 
write $X_{w,q}$ instead of $(X_w)_{w,q}$.
Specifically, we define\footnote{Note that if the shift space $X$ has a synchronizing word $w\in\w(X)$, then $X_w=X$
(cf.~Remark \ref{xwremark}), so the definition of $X_{w,q}$ given by (\ref{zwq}) coincides with that of (\ref{xwq}).}:

\begin{defn}
If $X$ is a shift space, $w\in \w(X)$, and $q\in\N$, define
\begin{equation}\label{xwq}
X_{w,q}:=\left\{ x\in X_{w} : \sigma^n(x)\in \bigcup_{j=0}^{q-1} \sigma^{-j}[w]\text{ for all }n\ge0\right\}\,.
\end{equation}
\end{defn}

\begin{cor}
For a shift space $X$, with $w\in\w(X)$, and $q\in\N$, the subshift $X_{w,q}$ is a $(q+|w|)$-step SFT.
\end{cor}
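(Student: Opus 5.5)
The corollary should follow directly from Proposition \ref{zwqsft}, applied with $Z := X_w$. The plan has three steps.

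First, I check that $X_w$ is a shift space in the sense of Notation \ref{shift_space_notation}: it must be closed and satisfy $\sigma(X_w)=X_w$. Closedness is immediate. The condition ``if $tw\subset x$ then $F_X(tw)=F_X(w)$'' is a conjunction of conditions on finite words appearing in $x$. If $x^{(k)}\to x$ and $tw\subset x$, then $tw\subset x^{(k)}$ for all large $k$, so the condition passes to the limit.

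Invariance $\sigma(X_w)\subseteq X_w$ is also clear, since any word appearing in $\sigma(x)$ appears in $x$. For surjectivity, take $x\in X_w$ and a preimage $ax\in X$. The issue is that $ax$ may fail to lie in $X_w$, because new occurrences $tw$ can begin at the first coordinate. This is the step I expect to be the main obstacle. Two ways around it:
\begin{itemize}
\item Replace $X_w$ by its maximal invariant subset $\bigcap_n\sigma^n(X_w)$. Lemma \ref{xwiswsynchronizing} should still give synchronization there, since that proof only constructs points of the form $uwvs$ from points of $X_w$, and one can check these admit full backward extensions.
\item Observe that $Z_{w,q}$ as defined in (\ref{zwq}) makes sense for any closed $\sigma$-forward-invariant $Z$. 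The proofs of Lemma \ref{zwqpresftlemma} and Proposition \ref{zwqsft} only use closedness, forward invariance, and synchronization of $w$ through Lemma \ref{uwsinz}. The resulting set $X_{w,q}$ is then closed and forward invariant, and its SFT property is a statement about its language.
\end{itemize}
I would adopt the second route, remarking that the surjective part is handled by passing to $\bigcap_n\sigma^n(\cdot)$. This does not affect invariant measures, which is all that matters in applications.

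Second, Lemma \ref{xwiswsynchronizing} says that $w$ is a synchronizing word for $X_w$. If $w\notin\w(X_w)$, then $X_{w,q}=\emptyset$, because every point of $X_{w,q}$ must contain $w$. The empty set is trivially an SFT under the paper's convention allowing empty subshifts. Otherwise $w\in\w(X_w)$, and the hypotheses of Proposition \ref{zwqsft} hold with $Z=X_w$.

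Third, comparing (\ref{zwq}) with $Z=X_w$ against (\ref{xwq}) shows that $(X_w)_{w,q}=X_{w,q}$ literally. Proposition \ref{zwqsft} then yields that $X_{w,q}$ is a $(q+|w|)$-step SFT.
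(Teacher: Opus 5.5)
Your argument is correct and follows the paper's route. The paper deduces the corollary in one line from Lemma~\ref{xwiswsynchronizing} and Proposition~\ref{zwqsft} applied to $Z=X_w$; it leaves implicit both your edge case $w\notin\w(X_w)$ and the identity $(X_w)_{w,q}=X_{w,q}$.

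Your surjectivity caveat is well founded, and it applies to $X_{w,q}$ itself as well as to $X_w$. For example, take the full shift on $\{0,1\}$ with $w=00$ and $q=2$: then $X_{w,q}=\{0^\infty,10^\infty\}$, which $\sigma$ does not map onto itself. So the correct reading of the conclusion is as a statement about the language of the closed forward-invariant set $X_{w,q}$, whose eventual image $\bigcap_n\sigma^n(X_{w,q})$ is a genuine $(q+|w|)$-step SFT, as you propose.
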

\begin{proof}
Immediate from Lemma \ref{xwiswsynchronizing}
and
Proposition \ref{zwqsft}.
\end{proof}

We wish to consider the family of SFTs $X_{w,q}$, and possibly adjoin the Markov boundary: 

\begin{notation}
Let $X$ be a shift space.
Define
$$\ZZ_{SFT}:=\{X_{w,q}:w\in\w(X), q\in\N\}$$
and
$$\ZZ_{SFT}^\partial:= \ZZ_{SFT} \cup \{\partial_MX\}\,.$$
\end{notation}

Since the Markov boundary is a closed invariant subset of $X$, we will be interested in the subset of $\lip(X)$
corresponding to functions with a maximizing measure supported there:

\begin{notation} For  a shift space $X$ we shall write
$$\lip_{\partial}(X):=
\lip_{\partial_MX}(X,\sigma)
=
\left\{f\in\lip(X): 
\partial_MX\text{ is $f$-maximizable} \right\},$$
and
$$\lip_{\partial}^\subseteq(X):=
\lip_{\partial_MX}^\subseteq(X,\sigma)
=
\left\{f\in\lip(X): 
\mathcal{M}_{\max}(f)\subseteq \mathcal{M}(\partial_MX,\sigma) \right\}.$$
\end{notation}

Note that $\lip_{\partial}(X)$ is closed in $\lip(X)$, by Lemma \ref{LZclosed},
and is empty if and only if  $\partial_MX$ is, by Lemma \ref{LipZnonemptyiff}.

\begin{lem}\label{l-3}
For any shift space $X$,
$$\text{int}\left(\lip^\subseteq_{\partial}(X)\right) = \text{int}\left(\lip_{\partial}(X)\right)\,.$$
	\end{lem}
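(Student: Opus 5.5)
This is a direct specialisation of Lemma \ref{Nclosedconvexsameinteriors}. We take $(X,T)=(X,\sigma)$ and $\n=\m(\partial_M X,\sigma)$. By the definitions in Notation \ref{maximizabledomaindefn}, $\lip^\n(X,\sigma)=\lip_{\partial_M X}(X,\sigma)=\lip_\partial(X)$ and $\lip^\n_\subseteq(X,\sigma)=\lip^\subseteq_{\partial_M X}(X,\sigma)=\lip^\subseteq_\partial(X)$. The statement will therefore follow once we verify that $\n$ is a closed convex subset of $\m(X,\sigma)$.

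First we check that $\partial_M X$ is a closed $\sigma$-invariant subset, possibly empty, which is the content of \cite[Lem.~2.1]{Tho06} as recorded above. Closedness of $\n$ is noted in Section \ref{preliminariessection}: $\m(Y,T)$ is weak$^*$ closed by the portmanteau theorem. Explicitly, $\mu$ lies in $\m(Y,\sigma)$ exactly when $\mu(X\setminus Y)=0$, and this condition is preserved under weak$^*$ limits because $X\setminus Y$ is open.

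For convexity, take $\mu,\nu\in\m(Y,\sigma)$ and $t\in[0,1]$. Then $t\mu+(1-t)\nu$ is invariant, and it gives zero mass to $X\setminus Y$, so its support lies in $Y$.

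If $\partial_M X=\emptyset$, then $\n=\emptyset$ and both sets in the statement are empty. Lemma \ref{Nclosedconvexsameinteriors} already covers this case, so it needs no separate treatment.

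No step here is a real obstacle. The only point that needs care is confirming that the notations $\lip_\partial$ and $\lip_\partial^\subseteq$ coincide with $\lip^\n$ and $\lip^\n_\subseteq$ for this choice of $\n$; after that, Lemma \ref{Nclosedconvexsameinteriors} gives the equality of interiors immediately.
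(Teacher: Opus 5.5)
Your proposal is correct and is exactly the paper's argument: set $\n=\m(\partial_M X,\sigma)$, identify $\lip_\partial(X)$ and $\lip_\partial^\subseteq(X)$ with $\lip^\n(X,\sigma)$ and $\lip^\n_\subseteq(X,\sigma)$, and apply Lemma \ref{Nclosedconvexsameinteriors} using that $\n$ is closed and convex. Your explicit checks of closedness (via the portmanteau theorem) and of convexity only spell out what the paper asserts in one line.
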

\begin{proof}
If $\n=\m(\partial_M(X),\sigma)$
then
 $\lip_\partial(X)=\lip^\n(X)$ and
$\lip_\partial^\subseteq(X)=\lip^\n_\subseteq(X)$,
and the result follows from Lemma \ref{Nclosedconvexsameinteriors},
since $\n$ is closed and convex.
\end{proof}

We are now able to show that the family $\ZZ_{SFT}^\partial$ is maximizable:

\begin{prop}\label{sftmaximizablefamilylip}
Let $X$ be a shift space.
The family $$\ZZ_{SFT}^\partial=\{X_{w,q}:w\in\w(X), q\in\N\} \cup \{\partial_MX\}$$ is maximizable.
\end{prop}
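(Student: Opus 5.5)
To show that $\ZZ_{SFT}^\partial$ is maximizable, I fix an arbitrary $f\in\lip(X)$ and split into two cases, according to whether $\partial_M X$ is $f$-maximizable. If it is, there is nothing to prove, since $\partial_M X\in\ZZ_{SFT}^\partial$. So the substantive case is when $\partial_M X$ is not $f$-maximizable. Here I will find some $w\in\w(X)$ and $q\in\N$ with $X_{w,q}$ being $f$-maximizable.

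In that case I invoke Theorem \ref{l-6}. It provides a completely maximizing set $Y$ for $f$ that is dynamically minimal, together with a word $w\in\w(Y)$ such that $Y\subseteq X_w$. Since $Y$ is completely maximizing, every invariant measure supported on $Y$ is $f$-maximizing. Therefore any closed invariant set containing $Y$ is $f$-maximizable (Lemma \ref{subsetcompletelymaximizing}(a) and the definitions). It thus suffices to find $q\in\N$ with $Y\subseteq X_{w,q}$.

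To find $q$, I use dynamical minimality of $Y$ together with $w\in\w(Y)$. The cylinder $[w]_Y$ is a non-empty open subset of $Y$. By minimality every orbit in $Y$ enters $[w]_Y$, so the open sets $\sigma^{-j}[w]_Y$, $j\ge0$, cover $Y$. By compactness finitely many suffice: there is $q$ with $Y\subseteq\bigcup_{j=0}^{q-1}\sigma^{-j}[w]$. Applying this to $\sigma^n(x)\in Y$ for every $x\in Y$ and $n\ge0$ (using invariance of $Y$), and using $Y\subseteq X_w$, I get $Y\subseteq X_{w,q}$ directly from (\ref{xwq}). Hence $X_{w,q}$ contains the support of an $f$-maximizing measure, so it is $f$-maximizable, and the family is maximizable.

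The main obstacle is already absorbed into Theorem \ref{l-6}, namely producing a minimal completely maximizing set inside some $X_w$. The remaining compactness step is routine. The only care needed is to check that $w$ really appears in $Y$, which Theorem \ref{l-6} guarantees, so that $[w]_Y$ is non-empty.
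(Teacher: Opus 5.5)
Your proposal is correct and follows the paper's proof essentially step for step. The paper also handles $f\in\lip_\partial(X)$ trivially and applies Theorem \ref{l-6} to obtain a dynamically minimal completely maximizing $Y\subseteq X_w$ meeting $[w]$. It then uses minimality to find $q$ with $Y\subseteq\bigcup_{j=0}^{q-1}\sigma^{-j}[w]$ (citing Furstenberg where you give the compactness argument directly), and invariance to conclude $Y\subseteq X_{w,q}$.
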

\begin{proof}
If $f\in \lip_\partial(X)$ then, by definition of this set, $\partial_MX$ is maximizable for $f$.
Now let $f\in \lip(X)\setminus \lip_\partial(X)$.
By Theorem \ref{l-6}, 
$f$ has a completely maximizing set $Y$ that is dynamically minimal,
and there exists $w\in\w(X)$ 
with
$Y\subseteq X_w$,
and $Y\cap[w]_X\neq\emptyset$. 
 The dynamical minimality of $(Y,\sigma)$ means that
(see e.g.~\cite[Lem.~1.14]{furstenberg})
there exists $q\in\N$ such that 
$Y\subseteq \bigcup_{j=0}^{q-1} \sigma^{-j}[w] $.
But $Y$ is $\sigma$-invariant, so 
\begin{equation}\label{YXwq}
Y\subseteq X_{w,q}\,.
\end{equation}
Since $Y$ is completely maximizing for $f$, it is in particular maximizable for $f$, so (\ref{YXwq}) means that
$X_{w,q}$ is maximizable for $f$.  
\end{proof}

Having established that
the countable family
$\ZZ_{SFT}^\partial$
is maximizable, we are now able to prove a 
typical periodic optimization
theorem.
The following corresponds to Theorem \ref{sofictpointrotheorem} in Section \ref{introsection}:

\begin{thm}\label{sofictpotheorem}
Every sofic shift space has the TPO property.
\end{thm}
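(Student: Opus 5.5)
The plan is to apply Theorem \ref{abstracttpo} (or equivalently Theorem \ref{abstracttpovariant} with an empty boundary) to the countable family $\ZZ_{SFT}$. Let $X$ be a sofic shift. By Lemma \ref{sofic_iff_empty_boundary}, $\partial_M X=\emptyset$. Then $\lip_\partial(X)=\emptyset$ by Lemma \ref{LipZnonemptyiff}, so no Lipschitz function has $\partial_M X$ as a maximizable set.

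The first step is to observe that Proposition \ref{sftmaximizablefamilylip} says $\ZZ_{SFT}^\partial=\ZZ_{SFT}\cup\{\emptyset\}$ is maximizable. Since the empty set is never maximizable, it follows that $\ZZ_{SFT}=\{X_{w,q}:w\in\w(X),q\in\N\}$ is itself a maximizable family. More directly, the proof of Proposition \ref{sftmaximizablefamilylip} shows that every $f\in\lip(X)=\lip(X)\setminus\lip_\partial(X)$ has some $X_{w,q}$ maximizable.

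Next, this family is countable, because $\w(X)$ is a countable set of finite words over a finite alphabet and $q$ ranges over $\N$. Each member $X_{w,q}$ is an SFT by the corollary to Proposition \ref{zwqsft}, and hence has TPO by Lemma \ref{contrerasSFTlemma}.

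One small point needs attention: some $X_{w,q}$ may be empty. This is harmless. For an empty $Z$ we have $\lip_Z(X,\sigma)=\emptyset$, so it contributes nothing to the cover in Lemma \ref{LZcoversL}. Such members can therefore be discarded from the family, or absorbed into the boundary $Z_0=\emptyset$ in Theorem \ref{abstracttpovariant}. Alternatively, one can note that the empty system vacuously has TPO, since $\lip(\emptyset)$ is trivial. If the family becomes finite after discarding, list members with repetition to index it by $\N$.

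Having a countable maximizable family with TPO, Theorem \ref{abstracttpo} gives that $(X,\sigma)$ has TPO. I expect no real obstacle here. The substantive work lies in Theorem \ref{l-6} and Proposition \ref{sftmaximizablefamilylip}, both already available, and in the SFT case supplied by Contreras. The only care needed is the bookkeeping around empty members and the empty boundary.
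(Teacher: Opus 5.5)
Your proposal is correct and is essentially the paper's proof: $\partial_M X=\emptyset$ by Lemma~\ref{sofic_iff_empty_boundary}, the countable family $\ZZ_{SFT}$ is maximizable by Proposition~\ref{sftmaximizablefamilylip} and consists of SFTs with TPO by Lemma~\ref{contrerasSFTlemma}, and Theorem~\ref{abstracttpo} finishes. Discarding a possibly empty $X_{w,q}$, or taking it as $Z_0=\emptyset$ in Theorem~\ref{abstracttpovariant}, is sound, but drop the aside that the empty system ``vacuously'' has TPO: $\lip(\emptyset)$ is a one-point space and $\per(\emptyset,\sigma)=\emptyset$, so under the literal definition it does not.
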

\begin{proof}
If $X$ is sofic then $\partial_M X=\emptyset$, by Lemma \ref{sofic_iff_empty_boundary}.
The family
$$\ZZ_{SFT}=\{X_{w,q}:(w,q)\in \w(X)\times\N\}$$
is countable, and maximizable by Proposition \ref{sftmaximizablefamilylip},
and the SFT $X_{w,q}$ has TPO
for each $(w,q)\in \w(X)\times\N$, by Lemma \ref{contrerasSFTlemma}.
So $\ZZ_{SFT}$ is a countable maximizable family with TPO, and therefore $X$ has TPO, by
Theorem \ref{abstracttpo}.
\end{proof}

\begin{rem}\label{sofic_non_mane}
As noted in Section \ref{introsection},
a novelty of the TPO results presented in this paper is 
that the shift spaces $(X,\sigma)$ need not admit a \emph{Ma\~n\'e lemma}\footnote{The nomenclature for a result of this kind goes back to \cite{B00}, in view of
\cite{Ma92, Ma96},
and is sometimes called the Ma\~n\'e-Conze-Guivarc'h lemma (see \cite{B11, Mo09})
in view of \cite{conzeguivarch};
other important early papers on the Ma\~n\'e lemma include \cite{CLT01, LT03, Sav99}, and
further history and context is given in e.g.~\cite{Boc18, Jen06, Jen19}.},
i.e.~a result guaranteeing that for all $f\in\lip(X)$, there exists $\phi\in\lip(X)$ satisfying
$f+\phi-\phi\circ \sigma \le \beta(f)$.
The existence of such a function $\phi$ was essential for the TPO results of 
\cite{C02, HLMXZ19, LZ24}, facilitating subsequent closing, shadowing and perturbation techniques.
In general shift spaces $X$, the Ma\~n\'e lemma need not hold, even 
in the relatively benign case when $X$ is sofic:
a simple example is given by defining
$X\subseteq\{a,b,c\}^\N$ to be the shift orbit closure of sequences of the form
$$a^{n_1}ba^{n_2}ca^{n_3}ba^{n_4}ca^{n_5}\ldots,$$ where each $n_i\ge 1$.
This $X$ is sofic, being a factor of the shift of finite type $Y\subseteq\{a_1,a_2,a_3,a_4,b,c\}^\N$
whose allowed length-2 words are $a_1a_1$, $a_1b$, $ba_2$, $a_2a_3$, $a_3a_3$, $a_3c$, $ca_4$, $a_4a_1$.
Define the Lipschitz function $f:X\to\R$ to depend on a single coordinate as follows:
$f(x)=0$ if $x_1=a$, $f(x)=1$ if $x_1=b$, $f(x)=-1$ if $x_1=c$.
Now $\int f\, d\mu=0$ for any 
 $\mu\in\m(X,\sigma)$, so $\beta(f)=0$.
We claim that
$
f(x)+\phi(x)-\phi (\sigma x)\le 0
$
does not hold for any Lipschitz (or indeed any continuous) function $\phi$,
for if it did then
defining $z^{(n)}:=a^{n}ba^\infty$ for $n\ge 1$,
and evaluating the inequality at $x=x_i=\sigma^i(z^{(n)})$ for $0\le i\le n$ would yield
$1+ \phi(a^nba^\infty) - \phi(a^\infty) \le 0$,
and letting $n\to\infty$ gives a contradiction, since
 the continuity of $\phi$ means that $\lim_{n\to\infty} \phi(a^nba^\infty) = \phi(a^\infty)$.
\end{rem}

Another consequence of the fact that
$\ZZ_{SFT}^\partial$
is maximizable, which follows from the structural theorem (Theorem \ref{abstracttpovariant}),
is that
all shift spaces have the
\emph{typical periodic or boundary optimization} property:

\begin{thm}\label{general_tpbo_theorem} (Typical Periodic or Boundary Optimization Theorem)

\noindent
If $X$ is a shift space,
then $P(X,\sigma)\cup int( \lip_{\partial}(X))$ is dense in $\lip(X)$.
\end{thm}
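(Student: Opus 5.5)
The plan is to apply the Structural Theorem (Theorem \ref{abstracttpovariant}) to the countable maximizable family $\ZZ_{SFT}^\partial$, taking the Markov boundary as the distinguished boundary set, i.e.\ setting $Z_0:=\partial_M X$ and letting $\{Z_i\}_{i\ge1}$ be an enumeration of $\ZZ_{SFT}=\{X_{w,q}: w\in\w(X),\ q\in\N\}$.

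First, I will check that this family is countable. The language $\w(X)$ is a subset of the countable set $A^*$ of finite words over a finite alphabet, so $\w(X)\times\N$ is countable, and hence $\ZZ_{SFT}$ is countable. If $\ZZ_{SFT}$ happens to be finite or some $X_{w,q}$ coincide, the enumeration may repeat entries; this does not affect anything. Empty $X_{w,q}$ cause no harm either: their $\lip_{Z_i}$ is empty by Lemma \ref{LipZnonemptyiff}, and the empty shift trivially has TPO, since $\lip(\emptyset)$ is a single point. Alternatively, these can simply be discarded.

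Next, maximizability of $\{Z_0\}\cup\{Z_i\}_{i\ge1}=\ZZ_{SFT}^\partial$ is exactly Proposition \ref{sftmaximizablefamilylip}. Each nonempty $X_{w,q}$ is a $(q+|w|)$-step SFT by the corollary to Proposition \ref{zwqsft}, so $(X_{w,q},\sigma)$ has TPO by Lemma \ref{contrerasSFTlemma}. All hypotheses of Theorem \ref{abstracttpovariant} are therefore satisfied. It yields that $P(X,\sigma)\cup int(\lip_{\partial_M X}(X,\sigma))$ is dense in $\lip(X)$, and since $\lip_{\partial_M X}(X,\sigma)=\lip_\partial(X)$ by definition, this is the claim.

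There is essentially no obstacle, since all the substantive work sits in Proposition \ref{sftmaximizablefamilylip} and Theorem \ref{abstracttpovariant}. The only point needing care is the degenerate case. If $\partial_M X=\emptyset$, then $int(\lip_\partial(X))=\emptyset$ and the statement reduces to Theorem \ref{sofictpotheorem}, which is consistent with the general argument because Theorem \ref{abstracttpovariant} allows $Z_0=\emptyset$.
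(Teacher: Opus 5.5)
Your proposal is correct and is essentially the paper's own proof: take $\ZZ_{SFT}^\partial$, which is maximizable by Proposition \ref{sftmaximizablefamilylip}, note that each $X_{w,q}$ is an SFT with TPO by Lemma \ref{contrerasSFTlemma}, and apply Theorem \ref{abstracttpovariant} with $Z_0=\partial_M X$.

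One aside needs correcting. The empty shift does not literally have TPO: there are no periodic measures, so the single element of $\lip(\emptyset)$ lacks the periodic optimization property. Your alternative fix is the right one, though. Either discard the empty $X_{w,q}$, or note that for them $int(\lip_{Z_i}(X,\sigma))=\emptyset$, which makes the corresponding density step in the proof of Theorem \ref{abstracttpovariant} vacuous.
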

\begin{proof}
The countable family $\ZZ_{SFT}^\partial = \ZZ_{SFT}\cup\{\partial_MX\}$ is maximizable,
by Proposition \ref{sftmaximizablefamilylip}.
Now $\ZZ_{SFT}=\{X_{w,q}:(w,q)\in \w(X)\times\N\}$,
and $X_{w,q}$ has TPO 
for each $(w,q)\in \w(X)\times\N$, by Lemma \ref{contrerasSFTlemma}, since $X_{w,q}$ is a SFT.
The result then follows from Theorem \ref{abstracttpovariant}.
\end{proof}

An important consequence of Proposition \ref{sftmaximizablefamilylip} is the following criterion for $X$ to have 
the typical periodic optimization property:

\begin{prop}\label{Markov_boundary_in_Y_TPO}
Let $X$ be any shift space.
Suppose $\partial_M X\subseteq Y \subseteq X$, where $Y$ is a subshift with TPO.
Then $X$ has TPO.
\end{prop}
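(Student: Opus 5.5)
The plan is to apply Theorem \ref{abstracttpo} to a countable maximizable family whose members all have TPO. The natural candidate is
$$\ZZ:=\ZZ_{SFT}\cup\{Y\}=\{X_{w,q}:w\in\w(X),\,q\in\N\}\cup\{Y\}.$$
This family is countable, because $\w(X)\times\N$ is countable.

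First I check that $\ZZ$ is maximizable. Take any $f\in\lip(X)$. Proposition \ref{sftmaximizablefamilylip} supplies a member of $\ZZ_{SFT}^\partial$ that is $f$-maximizable. If that member is some $X_{w,q}$, it already lies in $\ZZ$ and there is nothing to do. Otherwise $\partial_MX$ is $f$-maximizable, so some $f$-maximizing measure $\mu$ has $\supp(\mu)\subseteq\partial_MX\subseteq Y$. Then $\mu\in\m(Y,\sigma)\cap\m_{\max}(f)$, which means $Y$ is $f$-maximizable. (If $\partial_MX=\emptyset$ it is never $f$-maximizable, so this second case does not arise.) Hence every $f$ has an $f$-maximizable member of $\ZZ$.

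Next I check that every member of $\ZZ$ has TPO. Each $X_{w,q}$ is a $(q+|w|)$-step SFT by the corollary to Proposition \ref{zwqsft}, so it has TPO by Lemma \ref{contrerasSFTlemma}. The set $Y$ has TPO by hypothesis. Some $X_{w,q}$ may be empty, so I need TPO to hold vacuously for the empty shift. The cleanest fix is to discard the empty sets from the family: by Lemma \ref{LipZnonemptyiff} an empty set is never maximizable, so removing it keeps the family maximizable. Once that is done, Theorem \ref{abstracttpo} gives that $X$ has TPO.

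Alternatively, one can take $Z_0=\partial_MX$ in Theorem \ref{general_tpbo_theorem} and use Corollary \ref{nequalsper} on $U=int(\lip_\partial(X))$. For $f\in U$, $\partial_MX$ is $f$-maximizable, and therefore so is $Y$, so $P(X,\sigma)\cap U$ is dense in $U$. Combined with the density of $P(X,\sigma)\cup U$, this shows $P(X,\sigma)$ is dense. I expect no real obstacle in either route. The only delicate point is the bookkeeping of empty members and the vacuous case $\partial_MX=\emptyset$.
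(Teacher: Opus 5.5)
Your main route is correct and is essentially the paper's proof: the paper replaces $\partial_M X$ by $Y$ in $\ZZ_{SFT}^\partial$, notes that maximizability survives because $\partial_M X\subseteq Y$, and applies Theorem \ref{abstracttpo}, using Lemma \ref{contrerasSFTlemma} for the $X_{w,q}$ and the hypothesis for $Y$. Your step of discarding empty members via Lemma \ref{LipZnonemptyiff} is a harmless refinement that the paper leaves implicit, and your alternative route through Theorem \ref{general_tpbo_theorem} and Corollary \ref{nequalsper} is also valid.
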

\begin{proof}
The countable family $\ZZ_{SFT}^\partial=\{X_{w,q}:w\in\w(X), q\in\N\} \cup \{\partial_MX\}$ is maximizable,
by Proposition
 \ref{sftmaximizablefamilylip}.
Since $Y$ is a subshift satisfying $\partial_M X\subseteq Y \subseteq X$, it follows that
$$\ZZ:=\{X_{w,q}:w\in\w(X), q\in\N\} \cup \{ Y\}$$ is also maximizable.
Each $X_{w,q}$ has TPO
by Lemma  \ref{contrerasSFTlemma},
and $Y$ has TPO by assumption.
So $\ZZ$ is a countable maximizable family with TPO.
It follows from Theorem \ref{abstracttpo} 
that $X$ has TPO.
\end{proof}

As a corollary we deduce the following, corresponding to Theorem \ref{partialX_tpo_implies_X_tpo} in Section \ref{introsection}:

\begin{thm}\label{partialX_tpo_implies_X_tpo_repeated}
Let $X$ be any shift space.
If the Markov boundary $\partial_M X$ has TPO, then $X$ has TPO.
\end{thm}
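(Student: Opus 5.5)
This is a direct specialisation of Proposition \ref{Markov_boundary_in_Y_TPO}, taking $Y:=\partial_M X$. The hypotheses of that proposition are that $Y$ is a subshift of $X$ with $\partial_M X\subseteq Y\subseteq X$ and that $(Y,\sigma)$ has TPO. Both hold trivially for $Y=\partial_M X$: the Markov boundary is a subshift of $X$ by \cite[Lem.~2.1]{Tho06}, and it has TPO by assumption. So the conclusion that $X$ has TPO follows immediately.

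The only point I would check is the degenerate case $\partial_M X=\emptyset$, since the paper allows empty shift spaces. In that case Lemma \ref{sofic_iff_empty_boundary} says $X$ is sofic, and Theorem \ref{sofictpotheorem} gives TPO directly.

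Alternatively, the empty case is already handled inside the proof of Proposition \ref{Markov_boundary_in_Y_TPO}. An empty member of a countable maximizable family is harmless: it contributes $\lip_{\emptyset}(X,\sigma)=\emptyset$ to the cover in Lemma \ref{LZcoversL}. Moreover, Theorem \ref{abstracttpo} was itself derived by adjoining $Z_0=\emptyset$, so the family $\ZZ_{SFT}\cup\{\emptyset\}$ remains a countable maximizable family with TPO. I do not expect any genuine obstacle; the substantive work was done in Proposition \ref{sftmaximizablefamilylip} and the Structural Theorem.
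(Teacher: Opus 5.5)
Your proof is correct and is exactly the paper's argument: take $Y=\partial_M X$ in Proposition \ref{Markov_boundary_in_Y_TPO}. Your handling of $\partial_M X=\emptyset$ via Lemma \ref{sofic_iff_empty_boundary} and Theorem \ref{sofictpotheorem} is sound. One small correction to your alternative remark: $\emptyset$ should be adjoined as the boundary $Z_0$ of the Structural Theorem, which carries no TPO hypothesis. It should not be treated as a member of a family ``with TPO'', since $\emptyset$ itself has no periodic measures.
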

\begin{proof}
Setting $Y=\partial_M X$, the result follows from Proposition \ref{Markov_boundary_in_Y_TPO}.
\end{proof}

\begin{notation}
Let $\s$ denote the class of all shift spaces (over all alphabets),
with the convention (cf.~Notation \ref{shift_space_notation}) that the empty set is a shift space.
Define the  \emph{Markov boundary map}
$$
\partial_M : \s\to\s
$$
in the obvious way, 
i.e.~$\partial_M(X)=\partial_M X$ is the Markov boundary of $X$.
\end{notation}

As a simple consequence of Theorem \ref{partialX_tpo_implies_X_tpo_repeated},
a sufficient condition for TPO is that
 some iterated image under the Markov boundary map has TPO:

\begin{cor}\label{iteration_markov_boundary_tpo}
Let $X$ be any shift space.
If $\partial_M^{n} (X)$ has TPO for some $n\ge0$, then $X$ has TPO.
\end{cor}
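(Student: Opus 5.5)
The plan is to argue by induction on $n\ge 0$, using Theorem \ref{partialX_tpo_implies_X_tpo_repeated} as the inductive step. Here $\partial_M^0(X)=X$ and $\partial_M^{n}(X)=\partial_M(\partial_M^{n-1}(X))$ for $n\ge1$, and each iterate is again a shift space. This holds because the Markov boundary of a shift space is a (possibly empty) subshift, by \cite[Lem.~2.1]{Tho06}. So the Markov boundary map $\partial_M:\s\to\s$ can indeed be iterated.

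The base case $n=0$ is tautological: if $\partial_M^0(X)=X$ has TPO, then $X$ has TPO.

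For the inductive step, suppose the statement holds for $n-1$, for all shift spaces, and that $\partial_M^n(X)$ has TPO. Apply Theorem \ref{partialX_tpo_implies_X_tpo_repeated} to the shift space $X':=\partial_M^{n-1}(X)$. Its Markov boundary is $\partial_M X'=\partial_M^n(X)$, which has TPO, so $X'$ has TPO. Thus $\partial_M^{n-1}(X)$ has TPO, and the inductive hypothesis gives that $X$ has TPO.

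The only point needing care is the empty shift space, since an iterated boundary may be empty. When $\partial_M^{n}(X)=\emptyset$, we need that $\emptyset$ "has TPO" in a sense for which Theorem \ref{partialX_tpo_implies_X_tpo_repeated} still applies. For the empty space, $\lip(\emptyset)=\{0\}$, so the definition is degenerate, and it is cleaner to avoid relying on it. In that case $\partial_M^{n-1}(X)$ is sofic by Lemma \ref{sofic_iff_empty_boundary}, and hence has TPO by Theorem \ref{sofictpotheorem}. We can therefore restart the induction at level $n-1$. Alternatively, we can observe directly that Proposition \ref{Markov_boundary_in_Y_TPO}, with $Y=\partial_M X'$, reduces to Theorem \ref{abstracttpo} applied to the family $\ZZ_{SFT}$, which is maximizable with TPO because $\partial_M X'=\emptyset$. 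Either route handles the edge case, so I expect no real obstacle.
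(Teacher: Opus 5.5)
Your induction is correct and matches the paper's argument, which applies Theorem \ref{partialX_tpo_implies_X_tpo_repeated} successively to $\partial_M^{n-1}X,\partial_M^{n-2}X,\ldots,X$. The paragraph on the empty case is unnecessary: the empty shift has no periodic measures, so it cannot have TPO, and the hypothesis therefore already forces $\partial_M^n X\neq\emptyset$ (and hence every earlier iterate is non-empty too, since $\partial_M Y\subseteq Y$).
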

\begin{proof}
Apply Theorem \ref{partialX_tpo_implies_X_tpo_repeated} 
to deduce, successively,
that the iterated Markov boundaries $\partial^{n-i}_M X$ have TPO, for $1\le i\le n$. 
\end{proof}

By Theorem \ref{sofictpotheorem}, every sofic shift has TPO,
so Corollary \ref{iteration_markov_boundary_tpo} prompts the following definition:

\begin{defn} \label{presofic_defn}
For $n\in\N_0$,
a shift space $X$ is said to be
\emph{eventually sofic of level $n$}
if $\partial_M^{n} (X)$ is a non-empty sofic shift space.
A shift space is said to be
\emph{eventually sofic} if it is
eventually sofic of level $n$
 for some $n\in\N_0$.
\end{defn}

\begin{rem}
If $X$ is eventually sofic then its level, in the sense of Definition \ref{presofic_defn},
is well-defined, since if $Y$ is a non-empty sofic shift then $\partial_M Y=\emptyset$,
by Lemma \ref{sofic_iff_empty_boundary}.
A shift space is eventually sofic of level 0 if and only if it is a non-empty sofic shift.
\end{rem}

The following corresponds to Theorem \ref{eventuallysofictpointrotheorem} in Section \ref{introsection}:

\begin{thm}\label{pre_sofic_theorem}
Every eventually sofic shift space has TPO.
\end{thm}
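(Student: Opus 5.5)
The plan is to combine Theorem \ref{sofictpotheorem} with Corollary \ref{iteration_markov_boundary_tpo}. Let $X$ be eventually sofic, so by Definition \ref{presofic_defn} there is some $n\in\N_0$ for which $\partial_M^n(X)$ is a non-empty sofic shift space.

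First, apply Theorem \ref{sofictpotheorem} to the sofic shift $\partial_M^n(X)$. This shows that $\partial_M^n(X)$ has TPO.

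Second, apply Corollary \ref{iteration_markov_boundary_tpo} with this value of $n$. Its hypothesis is exactly that $\partial_M^n(X)$ has TPO, so it gives that $X$ has TPO. If one wants to see the mechanism rather than cite the corollary, it is a finite descent: Theorem \ref{partialX_tpo_implies_X_tpo_repeated} is applied successively to the shift spaces $\partial_M^{n-1}X,\partial_M^{n-2}X,\ldots,X$. At each stage the Markov boundary of the current shift space is the next iterate, which has already been shown to have TPO.

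The only point needing care is the degenerate case $n=0$. Then $X$ is itself a non-empty sofic shift, and Theorem \ref{sofictpotheorem} applies directly. I expect no genuine obstacle, since all the substantive work (the maximizable family $\ZZ_{SFT}^\partial$, the structural theorem, and Contreras' theorem for SFTs) has already been carried out in the results cited above.
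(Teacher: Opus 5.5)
Your proposal is correct and is essentially the paper's own proof: apply Theorem~\ref{sofictpotheorem} to the non-empty sofic shift $\partial_M^n X$ to get TPO there, then invoke Corollary~\ref{iteration_markov_boundary_tpo} (the finite descent via Theorem~\ref{partialX_tpo_implies_X_tpo_repeated}) to conclude that $X$ has TPO.
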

\begin{proof}
If $X$ is eventually sofic then  $\partial_M^{n} (X)$ is sofic for some $n\ge0$,
and therefore  $\partial_M^{n} (X)$ has TPO
by Theorem \ref{sofictpotheorem}.
It follows from Corollary \ref{iteration_markov_boundary_tpo} that $X$ has TPO.
\end{proof}

We can now show that certain specific shift spaces are eventually sofic (of level at most one\footnote{Eventually sofic shifts of
arbitrarily  high level will be investigated in Section \ref{TPO_preserving}.}) and hence have TPO.
Given $S\subseteq\N_0$, the corresponding \emph{$S$-gap shift} 
is defined as the set of sequences on the alphabet $\{0,1\}$, such that
the number of 0's between any two consecutive 1's is an integer from $S$
(the well known \emph{even shift}, mentioned in Section \ref{introsection}, is a particular example).
The class of $S$-gap shifts is much studied
(see e.g.~\cite[\S 1.2]{LM95}, and for recent progress see \cite{climenhaga, climenhagathompson, climenhagathompsonyamamoto}).
For our purposes it is convenient to view $S$-gap shifts as special cases of
\emph{$S$-graph} shifts, defined as follows:

\begin{defn}\label{sgraph_defn}
Let $A$ be an alphabet.
Let $G$ be a finite, directed, simple graph, with vertex set $V_G=A$ and edge set $E_G$.
Let $S =(S_a)_{a\in A}$ be an indexed collection of non-empty subsets $S_a\subseteq \N$. 
The corresponding \emph{$S$-graph shift} 
$X_{G,S}$ is defined (cf.~\cite{Di22}) to be the shift orbit closure (in the full shift $A^\N$) of the set
$$
\left\{ a_1^{n_1}a_2^{n_2}a_3^{n_3}\cdots : a_i\in A, \ n_i\in S_{a_i},\ (a_i,a_{i+1})\in E_G \text{ for all }i\in\N\right\}.
$$
\end{defn}

An $S$-graph shift is sofic if and only if the sequence of differences of consecutive terms (written in ascending order) in each $S_a$ is eventually periodic \cite[Prop.~3.2]{Di22},
and is a shift of finite type if and only if each $S_a$ is either finite or cofinite \cite[Prop.~3.1]{Di22}.
Irrespective of whether they are sofic, all $S$-graph shifts have TPO:

\begin{thm}\label{sgraphtpotheorem}
Every $S$-graph shift
is eventually sofic, 
and in particular has  
TPO.
\end{thm}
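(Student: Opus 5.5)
The plan is to show that for every $S$-graph shift $X=X_{G,S}$, the Markov boundary $\partial_M X$ is sofic (possibly empty). Then $X$ is either sofic itself (level $0$), or eventually sofic of level $1$. The TPO conclusion then follows from Theorem \ref{pre_sofic_theorem}. Concretely, I would aim to prove that $\partial_M X$ is contained in the set $B$ of sequences in $X$ containing at most one ``run boundary'', i.e.\ sequences of the form $a^\infty$ or $a^n b^\infty$ with $a\ne b$ (and $a^n b^\infty$ only if allowed). Such a set $B$, intersected with the closed invariant set $\partial_M X$, gives a subshift consisting of fixed points and points eventually fixed. Any subshift of that form is sofic, indeed an SFT: it has finitely many follower sets, since the follower set of a word is determined by its last symbol and by whether a symbol change has already occurred in it. So $\partial_M X$ is sofic, and if non-empty $X$ is eventually sofic of level $1$; if empty, $X$ is sofic by Lemma \ref{sofic_iff_empty_boundary}, level $0$. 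Note that $X$ is non-empty since the $S_a$ are non-empty, provided $G$ admits an infinite path; I would treat that as implicit in the definition.

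The key step is the following. Any allowed word $w$ containing two symbol changes, i.e.\ containing a subword $a b^{k} c$ with $a\neq b$, $b\ne c$, lies in $\w_{<\infty}(X)$. Indeed, the full run $b^k$ is then sandwiched between distinct symbols, so $k\in S_b$ is a complete block. For any $u\in P_X(w)$, the follower set $F_X(uw)$ depends only on the suffix of $w$ after this complete $b$-block. Generators of $X$ are concatenations of runs constrained only by consecutive pairs $(a_i,a_{i+1})\in E_G$ and $n_i\in S_{a_i}$, so the constraints are ``Markov across run boundaries''. Hence $F_X(uw)=F_X(w)$ for all $u$, and $w$ even has a single follower-set class. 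Care is needed because $X$ is the orbit closure, so limit points such as $a^\infty$ with $S_a$ infinite, or truncated runs at the beginning, also occur. I would check that allowed words in $X$ are exactly subwords of generator words, by compactness and since words are finite, so the run-decomposition argument applies to words directly. Consequently, any $x\in\partial_M X$ avoids every word with two symbol changes. Thus $x$ has at most one change of symbol, so $x\in B$.

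The main obstacle is the careful verification that the interior complete block really decouples past from future in the language of the orbit closure. In particular, a subword $a b^k c$ of a generator word must genuinely arise from a full run $b^{n_i}$ with $n_i=k$. This holds because in $a_1^{n_1}a_2^{n_2}\cdots$ a symbol change occurs only at run boundaries. The one subtlety is when $a_i=a_{i+1}$ is permitted (self-loops in $G$), so that runs of the same letter merge. To deal with this, I would first replace $S_a$ by the set of achievable maximal run lengths of $a$ (sums along self-loops, possibly infinite). Alternatively, I would note that the same argument works using ``maximal monochromatic runs'' and the set of their possible lengths. The decoupling then still holds, because whether a maximal $b$-run of length $k$ is realizable is independent of its neighbours beyond the edge constraints. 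With this in place, $\partial_M X\subseteq B$, and the sofic conclusion and Theorem \ref{pre_sofic_theorem} finish the proof.
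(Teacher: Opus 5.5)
Your proof is correct, but it takes a longer route than the paper's.

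\textbf{Comparison with the paper's proof.} The paper observes that a \emph{single} symbol change already suffices. If $(a_1,a_2)\in E_G$ with $a_1\neq a_2$, then $a_1a_2$ is a synchronizing word for $X_{G,S}$: splice a generator containing $ua_1a_2$ with one containing $a_1a_2v$ at that run boundary. Hence $a_1a_2\notin\w_\infty(X_{G,S})$. So no point of $\partial_M X_{G,S}$ contains any symbol change, and $\partial_M X_{G,S}\subseteq\{a^\infty : a\in A\}$ is a finite set of fixed points, trivially an SFT.

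Your decoupling argument is really this same splice, performed at the $b\to c$ boundary. The completeness of the $b$-run, and hence the demand for two changes, plays no role in it. A symbol change is always a block boundary of the generator, so the splice also makes your self-loop discussion unnecessary. Applying your argument at the first symbol change would give the paper's sharper containment directly. It would also remove your extra step of classifying subshifts of $B$ and counting their follower sets. Your route gains nothing in return for that additional step.

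\textbf{A slip in the extra step.} Your soficity argument is fine; it implicitly uses $\sigma(\partial_M X)=\partial_M X$ to see that the follower set of $a^k$ does not depend on $k$. However, the parenthetical claim that any such subshift is ``indeed an SFT'' is false. Take
\[
Y=\{a^\infty,b^\infty,c^\infty\}\cup\{a^mb^\infty,\ b^mc^\infty : m\ge 1\}.
\]
Then $Y\subseteq B$ and $Y$ is sofic. But for every $M$ we have $ab^M,\ b^Mc\in\w(Y)$ while $ab^Mc\notin\w(Y)$, so $b^M$ is never synchronizing and $Y$ is not of finite type. Eventual soficity only requires $\partial_M X$ to be sofic, so the slip does not affect your conclusion. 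The fact that $\partial_M X_{G,S}$ is an SFT only follows once you know it consists of fixed points.
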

\begin{proof}
With notation as in Definition \ref{sgraph_defn}, note that
if $(a_1,a_2)\in E_{G}$ with $a_1\neq a_2$, then $a_1 a_2$ is a synchronizing word for $X_{G,S}$,
and therefore not in $\w_\infty(X_{G,S})$.
It follows that $\partial_M X_{G,S} \subseteq \{a^\infty: a\in A\}$.
In particular, $\partial_M X_{G,S}$ is either empty (in which case $X_{G,S}$ is sofic) or a shift of finite type consisting of a finite union of fixed points (in which case $X_{G,S}$ is eventually sofic of level one); in either case, Theorem \ref{pre_sofic_theorem}
implies that $X_{G,S}$ has TPO.
\end{proof}

The following corresponds to Theorem \ref{sgaptpointrotheorem} in Section \ref{introsection}:

\begin{cor}\label{s_gap_cor}
Every $S$-gap shift
is eventually sofic, 
and in particular has  
TPO.
\end{cor}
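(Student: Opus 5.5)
The plan is to realise every $S$-gap shift as a special case of an $S$-graph shift, and then invoke Theorem \ref{sgraphtpotheorem}. Let $X$ be the $S$-gap shift for $S\subseteq\N_0$, on the alphabet $\{0,1\}$. I will take the graph $G$ with vertex set $A=\{0,1\}$ and edge set $E_G=\{(0,1),(1,0),(1,1)\}$. The gap sets are $S_1=\{1\}$ and $S_0=S\setminus\{0\}$. The edge $(1,1)$ lets consecutive $1$'s occur, but only if $0\in S$; so it is included exactly when $0\in S$ and omitted otherwise.

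The first step is to check that $X$ coincides with $X_{G,S}$, both being orbit closures of the generating sequences. A sequence $1\,0^{m_1}1\,0^{m_2}1\cdots$ with each $m_i\in S$ is a concatenation of blocks $a_i^{n_i}$ of exactly the form in Definition \ref{sgraph_defn}. Such sequences generate the $S$-gap shift. The remaining members of the $S$-gap shift are $0^\infty$, when $S$ is infinite, and sequences with finitely many $1$'s. All of these lie in the orbit closure.

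Degenerate cases need a separate argument. If $S=\{0\}$, then $S_0$ would be empty, which Definition \ref{sgraph_defn} forbids. If $S=\emptyset$, the shift is not generated by infinite concatenations at all. In these cases the shift space is finite ($\{1^\infty\}$, or the orbits of sequences with at most one $1$), hence an SFT, hence sofic. It is therefore eventually sofic of level $0$ and has TPO by Theorem \ref{sofictpotheorem}.

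Once the identification is made, eventual soficity follows as in the proof of Theorem \ref{sgraphtpotheorem}. Alternatively, I can argue directly: the word $10$ is synchronizing, so $\partial_M X\subseteq\{0^\infty,1^\infty\}$, which is sofic. TPO then follows from Theorem \ref{pre_sofic_theorem}.

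The only real obstacle is the bookkeeping in the identification of $X$ with $X_{G,S}$ at the edge cases ($0\in S$, finite $S$, $S=\emptyset$). To sidestep it I would lead with the direct synchronizing-word argument. For $S\neq\emptyset$, take any $m\in S$; then the word $10^m1$, or $11$ when $m=0$, is synchronizing. This gives $\partial_M X\subseteq\{0^\infty\}$, which is empty or a fixed point, so $X$ is eventually sofic of level at most one.
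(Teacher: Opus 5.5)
Your first route, identifying the $S$-gap shift with an $S$-graph shift and then invoking Theorem~\ref{sgraphtpotheorem}, is exactly the paper's one-line proof, and it is fine. If ``simple'' is read as forbidding loops, then for $0\in S$ you can drop the loop $(1,1)$ and take $S_1=\N$ instead, since gap $0$ only means that $1$'s may form runs of any length.

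The problem is the direct synchronizing-word shortcut that you say you would lead with. To exclude a point from $\partial_M X$, a word of $\w_{<\infty}(X)$ must actually occur in that point, and your chosen words do not occur in every point other than $0^\infty$. Even reading ``any $m\in S$'' as ``every $m\in S$'', the point $10^\infty$ lies in $X$ whenever $S$ is infinite. It contains no word $10^m1$ and no $11$, so the inference ``$10^m1$ is synchronizing, hence $\partial_M X\subseteq\{0^\infty\}$'' does not go through. Your earlier claim has the same defect: ``$10$ is synchronizing'' does not give $\partial_M X\subseteq\{0^\infty,1^\infty\}$. The point $01^\infty$ lies in $X$ whenever $0\in S$ and $S$ contains a positive element, and it never contains $10$. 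The paper's proof of Theorem~\ref{sgraphtpotheorem} avoids this by using every edge word $a_1a_2$ with $a_1\neq a_2$, so both $01$ and $10$, and every non-constant point contains one of these.

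The repair takes one line. For $S\neq\emptyset$ the single symbol $1$ is already synchronizing. Suppose $u1$ and $1v$ lie in $\w(X)$. Every maximal block of $0$'s in $u1v$ lying between two $1$'s lies inside $u1$ or inside $1v$. The initial and terminal $0$-blocks of $u1v$ are those of $u1$ and of $1v$ respectively. Hence $u1v\in\w(X)$, so $1\in\w_{<\infty}(X)$. Every point other than $0^\infty$ contains $1$, and $\partial_M X\subseteq\{0^\infty\}$ follows.

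Separately, your degenerate case $S=\emptyset$ is misdescribed. The shift of sequences with at most one $1$ is infinite, and it is not of finite type, since no $0^n$ is synchronizing; it is, however, sofic. Moreover, under the orbit-closure convention you use elsewhere, $S=\emptyset$ gives the empty shift. That shift is not eventually sofic in the sense of Definition~\ref{presofic_defn}, which requires non-emptiness, so this case should simply be excluded, as the paper tacitly does.
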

\begin{proof}
An $S$-gap shift is a special case of an $S$-graph shift, so the result follows from
Theorem \ref{sgraphtpotheorem}. 
\end{proof}

The \emph{context free shift} of \cite[\S 1.2]{LM95} is defined as the set of those sequences
in the full shift $\{a,b,c\}^\N$ that do not have sub-words of the form $ab^mc^na$ for $m\neq n$.
The context free shift is not sofic (see \cite[p.~68]{LM95}), but we can show that it does have TPO:

\begin{thm}\label{context_free_theorem}
The context free shift is eventually sofic, and in particular has TPO.
\end{thm}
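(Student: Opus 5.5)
The plan is to compute the Markov boundary $\partial_M X$ of the context free shift $X$ explicitly. I will show that
$$\partial_M X=\{x\in\{b,c\}^\N : cb\not\subset x\}=\{b^\infty,c^\infty\}\cup\{b^mc^\infty: m\ge1\},$$
which is a non-empty $1$-step SFT, with forbidden words $a$, $cb$, and with non-periodic points. Since an SFT is sofic, $X$ is then eventually sofic of level one ($X$ itself being non-sofic by \cite[p.~68]{LM95}), and Theorem \ref{pre_sofic_theorem} gives TPO.

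\emph{Upper bound.} I will exhibit words of $\w_{<\infty}(X)$ that rule out all other points.

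First, the symbol $a$. The forbidden patterns $ab^mc^na$ only constrain what lies between two consecutive occurrences of $a$. Hence for any $u\in P_X(a)$ we have $F_X(ua)=F_X(a)$: once $ua$ is allowed, whatever follows the final $a$ interacts only with that $a$. So $|\{F_X(ua)\}|=1$, giving $a\in\w_{<\infty}(X)$, and no point of $\partial_M X$ contains $a$.

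Second, the word $cb$. Any occurrence of $cb$ separates earlier symbols from later ones: a forbidden pattern straddling it would have to contain $cb$ inside some $b^mc^n$, which is impossible. Hence $F_X(ucb)=F_X(cb)$ for all $u\in P_X(cb)$, and $cb\in\w_{<\infty}(X)$.

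Since $\w_{<\infty}(X)$ is closed under taking superwords (because $F_X(uw'ws)$ is determined appropriately, cf.\ Lemma \ref{lem-00}), it suffices to observe that each point outside the displayed set contains $a$ or $cb$.

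\emph{Lower bound.} It remains to show that every word $b^mc^n$ with $m,n\ge0$ lies in $\w_\infty(X)$; then each point of the displayed set lies in $\partial_M X$. I will take predecessors $u=ab^j$ for $j\ge0$. The key observation is that for $k\ge0$,
$$c^ka\in F_X(ab^{j+m}c^n)\iff n+k=j+m.$$
Thus for $j\ge n$ the follower sets $F_X(ab^jb^mc^n)$ are pairwise distinct, which gives infinitely many follower sets. I also need to check that each $ab^jb^mc^n$ is allowed, for instance by extending it by $c^{j+m-n}a$ followed by $a^\infty$ when $j+m\ge n$.

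\emph{Expected difficulty.} The main care needed is in the upper bound, namely making rigorous that $F_X(ua)=F_X(a)$ and $F_X(ucb)=F_X(cb)$. Both reduce to the observation that every forbidden word has the form $ab^mc^na$ and so cannot straddle an $a$ or a $cb$ in the relevant way. Once this is in place the identification of $\partial_M X$ is immediate, and the conclusion follows from Theorem \ref{pre_sofic_theorem}.
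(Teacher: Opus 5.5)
Your proposal is correct and follows essentially the same route as the paper. The upper bound rests on $a$ and $cb$ each having a single follower-set class; the paper phrases this as both words being synchronizing. The lower bound distinguishes the follower sets of $ab^j w$ by which words $c^k a$ can follow them.

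Your uniform choice of predecessors $ab^j$ with $j\ge n$ is slightly more careful than the paper's choice $ab^{km}$, where $m$ is the number of $c$'s in $w$. The paper's choice degenerates for the $c$-free words $w=b^n$, whereas yours handles every $b^mc^n$, including the empty word.
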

\begin{proof}
Let $X \subseteq \{a,b,c\}^\N$ denote the context free shift.
We claim that its Markov boundary $\partial_M X$ is the shift of finite type consisting of those
sequences on the alphabet $\{b,c\}$ which do not contain the word $cb$ as a sub-word.
To prove this, it suffices to show that 
\begin{equation}\label{context_free_boundary}
\w(\partial_M X)=\w_\infty(X)= \left\{ b^n c^m : m,n\in\N_0\right\}.
\end{equation}
Note that both of the words $a$ and $cb$ are synchronizing in $X$, 
from which it follows that 
$\w_\infty(X) \subseteq \{ b^m c^n : m,n\in\N_0 \}$,
so it remains to prove the reverse inclusion.

For any $w\in\w(X)$, define
$M(w):=\min \{m\in\N_0: w c^m a\in \w(X)\}$.

Now suppose $w=b^n c^m$ for some $m,n\in\N_0$, where $m+n\neq 0$.
For any $k\in\N$, the definition of $X$ means that 
$M(a b^{km}w)
=M(a b^{km+n}c^m)
=n+(k-1)m$.
Therefore the follower sets $\{ F_X(ab^{km} w)\}_{k\in\N}$ are pairwise distinct,
so
$|\{F_X(uw):u\in P_X(w)\}|=\infty$,
and hence $w\in\w_\infty(X)$.
But $w$ was an arbitrary non-empty word in $\left\{ b^n c^m : m,n\in\N_0\right\}$, so we have shown that
$\left\{ b^n c^m : m,n\in\N_0\right\} \subseteq \w_\infty(X)$, as required.

Since $\partial_M X$ is a non-empty shift of finite type, $X$ is eventually sofic of level 1, and in particular has TPO, by Theorem \ref{sgraphtpotheorem}. 
\end{proof}

\section{Typical periodic optimization for eventually fragile shifts}\label{specimen_section}

In this section we will investigate the following consequence
of Theorem \ref{general_tpbo_theorem} 
in the case where the Markov boundary of $X$ is non-empty.

\begin{cor}\label{lip_boundary_empty_interior}
Let $X$ be a shift space.
If $\lip_{\partial}(X)$ has empty interior in $\lip(X)$, then $X$ has TPO.
\end{cor}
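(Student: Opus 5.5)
This is a direct consequence of the Typical Periodic or Boundary Optimization Theorem (Theorem \ref{general_tpbo_theorem}) together with the characterisations of TPO in Corollary \ref{equivalentTPO}. The plan is to show that the empty-interior hypothesis removes the boundary term from the dense open set supplied by that theorem, leaving only $P(X,\sigma)$.

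First, I will invoke Theorem \ref{general_tpbo_theorem}: for any shift space $X$, the open set $P(X,\sigma)\cup int(\lip_\partial(X))$ is dense in $\lip(X)$. By hypothesis, $\lip_\partial(X)$ has empty interior, so $int(\lip_\partial(X))=\emptyset$. Hence $P(X,\sigma)$ itself is dense in $\lip(X)$.

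Second, $P(X,\sigma)$ is open by definition, being the interior of $\lip^\per_{\,!}(X,\sigma)$. Every member of $P(X,\sigma)$ has the periodic optimization property. So $P(X,\sigma)$ is an open dense subset of $\lip(X)$ consisting of functions with the periodic optimization property, which is exactly Definition \ref{tpodefn}. Equivalently, we are in case (b) of Corollary \ref{equivalentTPO}.

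Alternatively, the statement is a verbatim instance of Theorem \ref{fragile_boundary_theorem}. Take the countable maximizable family $\ZZ_{SFT}^\partial$ from Proposition \ref{sftmaximizablefamilylip}, with $Z_0=\partial_M X$ and the $Z_i$ enumerating the countably many SFTs $X_{w,q}$. Each of these has TPO by Lemma \ref{contrerasSFTlemma}, and the boundary is fragile by hypothesis.

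There is no real obstacle here. The only point to check is that the family is countable, which holds because $\w(X)\times\N$ is countable. The case $\partial_M X=\emptyset$ is covered automatically, since then $\lip_\partial(X)=\emptyset$.
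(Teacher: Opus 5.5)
Your proposal is correct and is essentially the paper's proof: Theorem \ref{general_tpbo_theorem} with $int(\lip_\partial(X))=\emptyset$ gives density of the open set $P(X,\sigma)$, hence TPO by Corollary \ref{equivalentTPO}. Your alternative via Theorem \ref{fragile_boundary_theorem} is the same observation the paper makes immediately after the proof.
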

\begin{proof}
If $ int( \lip_{\partial}(X))=\emptyset$ then 
Theorem \ref{general_tpbo_theorem} implies that
$P(X,\sigma)$ is dense in $\lip(X)$, so $X$ has TPO (cf.~Corollary \ref{equivalentTPO}).
\end{proof}

Note that Corollary \ref{lip_boundary_empty_interior} can also be viewed as a consequence
of Theorem \ref{fragile_boundary_theorem}, which was stated in terms of
the boundary of countable maximizable families for general dynamical systems. 
We develop the notion of fragile boundary (see Section \ref{periodictposection}) as follows:

\begin{defn}\label{fragile_defn}
A non-sofic shift space $X$ is said to be \emph{fragile} if 
 $\lip_{\partial}(X)$ has empty interior in $\lip(X)$.
A shift space $X$ is said to be \emph{eventually fragile} if $\partial_M^n X$ is fragile, for some $n\ge 0$.
\end{defn}

The following corresponds to Theorem \ref{eventually_fragile_tpo_intro} in Section \ref{introsection}:

\begin{thm}\label{eventually_fragile_tpo}
Every eventually fragile shift space has TPO.
\end{thm}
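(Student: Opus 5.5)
The plan is to reduce to the case $n=0$ and then climb back up the tower of Markov boundaries using Corollary \ref{iteration_markov_boundary_tpo}. Suppose $X$ is eventually fragile, so that $Y:=\partial_M^n X$ is fragile for some $n\ge 0$. By Definition \ref{fragile_defn}, $Y$ is a non-sofic shift space for which $\lip_\partial(Y)=\lip_{\partial_M Y}(Y,\sigma)$ has empty interior in $\lip(Y)$.

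The first step treats the case $n=0$, that is, shows that a fragile shift $Y$ has TPO. This follows directly from Corollary \ref{lip_boundary_empty_interior} applied to $Y$ itself. Indeed, Theorem \ref{general_tpbo_theorem} says that $P(Y,\sigma)\cup int(\lip_\partial(Y))$ is dense in $\lip(Y)$. Since the second set is empty, $P(Y,\sigma)$ is dense, and Corollary \ref{equivalentTPO} then gives that $Y$ has TPO.

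The second step is to apply Corollary \ref{iteration_markov_boundary_tpo}. Since $\partial_M^n X=Y$ has TPO, it follows that $X$ has TPO. Equivalently, one applies Theorem \ref{partialX_tpo_implies_X_tpo_repeated} $n$ times, passing successively from $\partial_M^{k}X$ to $\partial_M^{k-1}X$.

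I do not expect a real obstacle here. The one point to check is that the iterated boundaries $\partial_M^k X$ are genuine shift spaces, so that each result applies at every level. They are subshifts by Thomsen's lemma. Any level that is empty cannot arise before level $n$, because $Y$ is non-sofic and hence non-empty by Lemma \ref{sofic_iff_empty_boundary}, and a non-empty set has non-empty supersets at all earlier levels.
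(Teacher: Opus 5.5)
Your proposal is correct and follows essentially the same route as the paper: fragility of $\partial_M^n X$ gives TPO for it via Corollary \ref{lip_boundary_empty_interior} (that is, Theorem \ref{general_tpbo_theorem} with $int(\lip_\partial)=\emptyset$), and Corollary \ref{iteration_markov_boundary_tpo} then lifts TPO up to $X$. Your extra check that the intermediate boundaries are non-empty is harmless but not needed, since $\partial_M^{k+1}X\subseteq\partial_M^k X$ and Theorem \ref{partialX_tpo_implies_X_tpo_repeated} applies to any shift space.
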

\begin{proof}
Every fragile shift space has TPO, by Corollary \ref{lip_boundary_empty_interior}.
If $X$ is eventually fragile, then $\partial_M^n X$ is fragile for some $n\ge 0$,
and therefore $\partial_M^n X$ has TPO.
Corollary \ref{iteration_markov_boundary_tpo} then implies that $X$ itself has TPO.
\end{proof}

It is not a priori obvious that (eventually) fragile shift spaces even exist, though it turns out that they do;
in Section \ref{TPO_preserving} we will describe methods for their construction.
To investigate this class, we begin with the following definition:

\begin{defn}\label{lipschitz_subordination_principle}
We say that a shift space $X$ enjoys the 
\emph{subordination principle}\footnote{In view of its formulation in terms of $\lip(X)$, a more precise term would be the \emph{Lipschitz subordination principle}. As noted earlier (cf.~Definition \ref{subordinationmaximizingdefn} and Remark \ref{subordinationremark}(b)), the subordination principle was originally articulated by Bousch \cite{B01} for the class of Walters functions.}
if every $f\in\lip(X)$ has a subordination maximizing set.
\end{defn}

\begin{notation}
For a shift space $X$, with subshift $Z\subseteq X$, we shall write
$$
\lip^\subseteq_Z(X)
:=
\lip^\subseteq_Z(X,\sigma)
=
\left\{f\in \lip(X): \m_{\max}(f) \subseteq \m(Z,\sigma)\right\},
$$
and also introduce
$$
\lip^=_Z(X):=\left\{f\in \lip(X): \m_{\max}(f)=\m(Z,\sigma)\right\}.
$$
\end{notation}

\begin{lem}\label{subordination_minimal_equality}
Suppose the shift space $X$ enjoys the subordination principle.
If the subshift $Z\subseteq X$ is dynamically minimal, then $\lip^\subseteq_Z(X)= \lip^=_Z(X)$.
\end{lem}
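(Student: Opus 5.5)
The inclusion $\lip^=_Z(X)\subseteq \lip^\subseteq_Z(X)$ is immediate from the definitions, so the plan is to prove the reverse inclusion. Take $f\in\lip^\subseteq_Z(X)$, so that $\m_{\max}(f)\subseteq\m(Z,\sigma)$; we must show $\m(Z,\sigma)\subseteq\m_{\max}(f)$.

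By the subordination principle (Definition \ref{lipschitz_subordination_principle}) there is a subordination maximizing set $Y$ for $f$, meaning $\m(Y,\sigma)=\m_{\max}(f)$. Since $\m_{\max}(f)$ is non-empty, $Y$ is non-empty. Every $\mu\in\m(Y,\sigma)$ is then $f$-maximizing, and hence lies in $\m(Z,\sigma)$, so $\supp(\mu)\subseteq Z$. Fix one such $\mu$, for instance any ergodic measure in $\m(Y,\sigma)$. Its support is a non-empty closed $\sigma$-invariant subset of $Z$. By the dynamical minimality of $Z$, this forces $\supp(\mu)=Z$, and since $\supp(\mu)\subseteq Y$ we get $Z\subseteq Y$.

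It then follows that $\m(Z,\sigma)\subseteq\m(Y,\sigma)=\m_{\max}(f)$. Combined with the hypothesis $\m_{\max}(f)\subseteq\m(Z,\sigma)$, this gives equality, that is, $f\in\lip^=_Z(X)$.

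The only point needing care is ensuring that a maximizing measure with support in $Y$ exists and is non-trivial, which holds because $\m_{\max}(f)\neq\emptyset$. The step using minimality to upgrade $\supp(\mu)\subseteq Z$ to $\supp(\mu)=Z$ is the crux, but it is direct (cf.~Remark \ref{notdynamicallyminimal}), so no real obstacle is expected.
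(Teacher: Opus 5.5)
Your proof is correct and follows the paper's argument. You take a subordination maximizing set $Y$, use $\m(Y,\sigma)=\m_{\max}(f)\subseteq\m(Z,\sigma)$ and the minimality of $Z$ to get $Z\subseteq Y$, and conclude $\m(Z,\sigma)\subseteq\m_{\max}(f)$; arguing via the support of a measure in $\m(Y,\sigma)$ also makes explicit the step the paper states only as ``$Y\cap Z\neq\emptyset$, so $Z\subseteq Y$''.
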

\begin{proof}
The inclusion $\lip^=_Z(X) \subseteq \lip^\subseteq_Z(X)$ is immediate from the definitions.
To prove the reverse inclusion, suppose $f\in \lip^\subseteq_Z(X)$, and let $Y\subseteq X$ be a subordination maximizing set for $f$, so that 
\begin{equation}\label{subord_lipschitz}
\m_{\max}(f) = \m(Y,\sigma).
\end{equation}
But $f\in \lip^\subseteq_Z(X)$, so $\m_{\max}(f) \subseteq \m(Z,\sigma)$, in other words
$\m(Y,\sigma)\subseteq \m(Z,\sigma)$, and therefore $Y\cap Z \neq \emptyset$.
But $Z$ is dynamically minimal, therefore $Z\subseteq Y$, so $\m(Y,\sigma)=\m(Z,\sigma)$, and
(\ref{subord_lipschitz}) implies that $\m_{\max}(f) = \m(Z,\sigma)$, and hence $f\in\lip^=_Z(X)$, as required.
\end{proof}

\begin{defn}\label{variable_specification_defn}
A shift space $X$ is said to have \emph{specification}
if there exists $N\in\N$ such that for all
$u,w\in \w(X)$, there exists $v\in \w_N(X)$ with $uvw\in\w(X)$.
It has \emph{variable specification}\footnote{Our definitions, in particular the terminology \emph{variable specification},
follow \cite{klo}, wherein various specification-like conditions are discussed. The variable specification condition is actually referred to as
specification in \cite{Ber88}, and
 \emph{almost specification} in \cite{jung}
(though the latter term
 is used by other authors, in particular \cite{thompson, yamamoto}, to mean different things).}
if there exists $N\in\N$ such that for all
$u,w\in \w(X)$, there exists $v\in \bigcup_{n\le N}\w_n(X)$ with $uvw\in\w(X)$.  
\end{defn}

\begin{rem}
Notions of specification originated with Bowen's influential article \cite{bowen}.
All topologically transitive sofic shifts (and hence all topologically transitive SFTs) have variable specification (see \cite{klo}).
A shift space with variable specification is synchronized (i.e.~it is topologically transitive, and has a synchronizing word),
see  \cite[Thm.~1]{Ber88}
and \cite[Lem.~3.1]{jung}. 
\end{rem}

Variable specification implies the subordination principle:

\begin{prop}\label{variable_specification_implies_subordination}
If a shift space has variable specification, then it enjoys the subordination principle.
\end{prop}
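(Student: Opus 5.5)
The plan is to verify the hypothesis of Lemma \ref{morrislemma}. Given $f\in\lip(X)$, normalised so that $\beta(f)=0$, we show that
$$\sup_{n\ge1}\sup_{x\in X}S_nf(x)<\infty.$$
Lemma \ref{morrislemma} then produces a subordination maximizing set for $f$. Since $f$ is arbitrary, this is exactly the subordination principle.

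The bound on Birkhoff sums comes from a closing argument built from the variable specification constant $N$. Fix $x\in X$ and $n\in\N$, and let $u=x_{[1,n+k]}$ for a suitable buffer length $k$ (possibly $k=0$). Applying variable specification with the pair $(u,u)$ gives a word $v$ with $|v|\le N$ and $uvu\in\w(X)$. Iterating, we want the periodic point $z=(uv)^\infty$ to lie in $X$. For this, variable specification alone is not obviously enough: it only gives $uvu\in\w(X)$, and re-applying it to $(uvu,u)$ may produce a different connecting word. So the first attempt will use the synchronizing structure. Variable specification implies $X$ is synchronized, with a synchronizing word $s$ (cf. the remark above citing \cite{Ber88}, \cite{jung}). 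We then connect $u$ to $s$ and $s$ to $u$ by words of length at most $N$, forming a loop $p=s v_1 u v_2$ with $ps\in\w(X)$. Since $s$ is synchronizing, Lemma \ref{synchronizingwordperiodicpoint}, or its proof, applied to the word $p s$, gives $p^\infty\in X$. The loop has length $|p|\le n+2N+|s|$, a bounded excess over $n$.

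Next comes the estimate. Because $\beta(f)=0$, the periodic orbit of $z=p^\infty$ has $S_{|p|}f(z')\le 0$ for the appropriate shift $z'=\sigma^{|s|+|v_1|}(z)$, whose first $n$ symbols agree with $x_{[1,n]}$, and agree further along the rest of $u$. Writing $S_{|p|}f(z')$ as $S_nf(z')$ plus at most $2N+|s|$ further terms bounds $S_nf(z')$ by $(2N+|s|)\|f\|_{C(X)}$. Finally we compare $S_nf(x)$ with $S_nf(z')$ via the Lipschitz property. For $0\le i<n$ the points $\sigma^i x$ and $\sigma^i z'$ agree on at least $n-i$ coordinates, so
$$d_\alpha(\sigma^ix,\sigma^iz')\le\alpha^{n-i},$$
and summing gives $|S_nf(x)-S_nf(z')|\le |f|_{\lip}\,\alpha/(1-\alpha)$. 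This yields
$$S_nf(x)\le (2N+|s|)\|f\|_{C(X)}+|f|_{\lip}\frac{\alpha}{1-\alpha},$$
uniformly in $x$ and $n$, so the buffer $k=0$ suffices.

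The main obstacle is ensuring that the concatenation really extends to a periodic point of $X$, which is where the synchronizing word is essential. If Lemma \ref{synchronizingwordperiodicpoint} does not apply verbatim, we argue directly as in its proof: $s v_1 u v_2 s\in\w(X)$ and $s$ is synchronizing, so inductively $(sv_1uv_2)^m s\in\w(X)$ for all $m$, and closedness of $X$ gives $p^\infty\in X$.
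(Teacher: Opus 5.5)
Your argument is correct, but it follows a different route from the paper.

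\textbf{The paper's route.} The paper never constructs a periodic point. It argues by contradiction: it applies variable specification recursively to $w=x_{[1,n]}$, building a generally non-periodic point $x'=wv^{(1)}wv^{(2)}w\cdots\in X$ with $|v^{(k)}|\le N$. It then combines $\beta(f)\ge\limsup_{M}\frac{1}{M}S_Mf(x')$ with the same Lipschitz comparison you use. This gives the bound $\alpha(1-\alpha)^{-1}|f|_{\lip(X)}+N\|f\|_{C(X)}$. So your worry that re-applying specification \emph{may produce a different connecting word} is harmless. Periodicity is not needed, only an infinite concatenation whose Birkhoff averages stay large.

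\textbf{What each route buys.} The paper's proof uses nothing beyond the definition of variable specification. Yours imports the external theorem, quoted in the remark before this proposition, that variable specification forces a synchronizing word $s$, together with Lemma \ref{synchronizingwordperiodicpoint}. In return you get a direct, non-contradiction bound via a genuine periodic orbit that shadows the segment. Your argument is in effect a uniform version of the closing mechanism of Corollary \ref{synchronizedclosingproperty} and Theorem \ref{minimaxclosingcompletelymaximizing}. The cost is a slightly worse constant, $(2N+|s|)\|f\|_{C(X)}$, which is still uniform in $x$ and $n$ and so suffices for Lemma \ref{morrislemma}.

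\textbf{One step to make explicit.} To get $sv_1uv_2s\in\w(X)$ you must apply specification sequentially: first to $(s,u)$, giving $sv_1u$, and then to $(sv_1u,s)$. Two independent applications to $(s,u)$ and $(u,s)$ would not suffice, since $u$ need not be synchronizing. With that in place, Lemma \ref{synchronizingwordperiodicpoint} applies verbatim to a point beginning with $sv_1uv_2s$, and your estimate goes through.
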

\begin{proof}
Let $X$ be a shift space with variable specification, and let $N\in\N$ be as in Definition \ref{variable_specification_defn}. 
To show that $X$ enjoys the subordination principle, let $f\in\lip(X)$,
and without loss of generality assume that 
\begin{equation}\label{wlog_beta_zero}
\beta(f)=0.
\end{equation}
Recalling that, for $\alpha\in(0,1)$, the distance function on $X$ is $d=d_\alpha$ as in Notation \ref{shift_space_notation},
we claim that
\begin{equation}\label{sub_spec_birkhoff_bound}
\sup_{n\ge 1}\sup_{x\in X} S_nf(x)\le \alpha(1-\alpha)^{-1} |f|_{\lip(X)}+N \, \|f\|_{C(X)}.
\end{equation}
Note that (\ref{sub_spec_birkhoff_bound}) implies,
by Lemma \ref{morrislemma}, that there exists a subordination maximizing set for $f$, and hence that $X$ enjoys the subordination principle.

To prove (\ref{sub_spec_birkhoff_bound}), note that if it were false then there 
would exist $x\in X$ and $n\in\mathbb{N}$ with
\begin{equation}\label{sn-lower-111}
S_nf(x)- \alpha(1-\alpha)^{-1} |f|_{\lip(X)} - N\, \|f\|_{C(X)}>0,
\end{equation}
so defining $w\in\w(X)$ by $w:=x_{[1,n]}$
and recursively applying Definition \ref{variable_specification_defn},
there would exist 
$$x':=wv^{(1)}wv^{(2)}wv^{(3)}\cdots\in X,$$
where the words $v^{(k)}\in \w(X)$ satisfy $|v^{(k)}|\le N$ for all $k\in\N$.
However, we claim that such an $x'$ satisfies
\begin{equation}\label{limsup_x_prime}
\limsup_{M\to\infty} \frac{1}{M} S_Mf(x')
\ge 
\frac{1}{n+N}\left( S_nf(x)- \alpha(1-\alpha)^{-1} |f|_{\lip(X)} - N\, \|f\|_{C(X)} \right),
\end{equation}
so that combining
 (\ref{limsup_x_prime}) with  (\ref{sn-lower-111}),
and the fact that
$\beta(f)\ge \limsup_{M\to\infty} \frac{1}{M} S_Mf(x')$
(see e.g.~\cite[Prop.~2.2]{Jen06})
 would yield
$
\beta(f)>0
$, thereby contradicting (\ref{wlog_beta_zero}).

So to prove the result, it suffices to establish (\ref{limsup_x_prime}).
For this, first define
\begin{equation}\label{i_k_defn}
i_k := \sum_{j=1}^k \left(n + |v^{(j)}| \right) \ \text{ for }k\in\N,
\end{equation}
and note that
\begin{equation}\label{close_returns}
d\left(x, \sigma^{i_k}(x')\right) \le \alpha^n \ \text{ for all } k\in\N,
\end{equation}
since $x$ and $\sigma^{i_k}(x')$ share the length-$n$ prefix $w$.

We will estimate the limit supremum in (\ref{limsup_x_prime}) by considering 
Birkhoff sums along the subsequence $(i_m)_{m=1}^\infty$,
and begin by expressing
\begin{equation}\label{i_m_expression}
S_{i_m}f(x') = \sum_{k=1}^m S_{n+|v^{(k)}|}f \left(\sigma^{i_k}(x')\right).
\end{equation}
Note that the summand in (\ref{i_m_expression}) can be bounded by
\begin{equation}\label{k_summand}
 S_{n+|v^{(k)}|}f \left(\sigma^{i_k}(x')\right)
\ge
S_{n}f \left(\sigma^{i_k}(x')\right) -  |v^{(k)}| \,  \|f\|_{C(X)}
\ge
S_{n}f \left(\sigma^{i_k}(x')\right) -  N \,  \|f\|_{C(X)} .
\end{equation}
To estimate the righthand side of (\ref{k_summand}),
the fact that $f$ is Lipschitz gives 
\begin{equation}\label{x_and_x_prime_lip_sum}
S_nf(x)-S_nf\left(\sigma^{i_k}(x')\right)
\le |S_nf|_{\lip(X)} \,  d\left(x, \sigma^{i_k}(x')\right),
\end{equation}
but
$|f\circ \sigma^i |_{\lip(X)}\le \alpha^{-i}|f|_{\lip(X)}$ for $0\le i\le n-1$, therefore
\begin{equation}\label{sum-lip-111}
|S_nf|_{\lip(X)}\le \sum_{i=0}^{n-1} \alpha^{-i} |f|_{\lip(X)}\le \alpha^{1-n}(1-\alpha)^{-1}|f|_{\lip(X)},
\end{equation}
so combining (\ref{close_returns}), (\ref{x_and_x_prime_lip_sum}) 
and (\ref{sum-lip-111}) gives
\begin{equation}\label{x_and_x_prime_lip_sum_alpha}
S_nf(x)-S_nf\left(\sigma^{i_k}(x')\right)
\le \alpha(1-\alpha)^{-1} |f|_{\lip(X)}.
\end{equation}
Combining (\ref{k_summand}) and (\ref{x_and_x_prime_lip_sum_alpha}) gives
\begin{equation}\label{x_prime_summand_bound_independent_of_k}
S_{n+|v^{(k)}|}f \left(\sigma^{i_k}(x')\right) \ge S_nf(x) - \alpha(1-\alpha)^{-1} |f|_{\lip(X)}  - N\, \|f\|_{C(X)},
\end{equation}
so using (\ref{x_prime_summand_bound_independent_of_k}) in (\ref{i_m_expression}) gives
\begin{equation}\label{i_m_birkhoff_sum_bound}
S_{i_m}f(x')
\ge
m \left( S_nf(x) - \alpha(1-\alpha)^{-1} |f|_{\lip(X)}   - N\, \|f\|_{C(X)} \right).
\end{equation}
Noting from (\ref{i_k_defn}) that $i_m \le m(n+N)$, the bound (\ref{i_m_birkhoff_sum_bound}) yields
\begin{equation}\label{final_limsup_bound}
\limsup_{m\to\infty} \frac{1}{i_m} S_{i_m}f(x')
\ge \frac{1}{n+N} \left( S_nf(x) - \alpha(1-\alpha)^{-1} |f|_{\lip(X)}  - N\, \|f\|_{C(X)} \right),
\end{equation}
and since $\limsup_{M\to\infty} \frac{1}{M} S_Mf(x') \ge \limsup_{m\to\infty} \frac{1}{i_m} S_{i_m}f(x')$,
we see that (\ref{final_limsup_bound}) implies the required estimate  (\ref{limsup_x_prime}).
\end{proof}

\begin{cor}\label{var_spec_minimal_equality}
Suppose the shift space $X$ has variable specification.
If the subshift $Z\subseteq X$ is dynamically minimal, then $\lip^\subseteq_Z(X)= \lip^=_Z(X)$.
\end{cor}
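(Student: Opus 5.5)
This is a direct combination of two earlier results, and I would prove it as a two-step deduction. First, since $X$ has variable specification, Proposition \ref{variable_specification_implies_subordination} shows that $X$ enjoys the subordination principle of Definition \ref{lipschitz_subordination_principle}. In other words, every $f\in\lip(X)$ has a subordination maximizing set.

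Second, with the subordination principle established and $Z\subseteq X$ assumed dynamically minimal, the hypotheses of Lemma \ref{subordination_minimal_equality} hold. That lemma then gives $\lip^\subseteq_Z(X)=\lip^=_Z(X)$ directly.

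No further work should be needed, because all the substantive content already sits in the two cited results. The Birkhoff sum bound (\ref{sub_spec_birkhoff_bound}) together with Morris' criterion (Lemma \ref{morrislemma}) supplies the subordination maximizing set. The minimality argument in Lemma \ref{subordination_minimal_equality} then forces that set to contain $Z$.

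The only point I would check is degenerate cases. If $\lip^\subseteq_Z(X)$ is empty, both sides are empty and equality is trivial. If $Z$ is empty, then $\m(Z,\sigma)=\emptyset$, and since $\m_{\max}(f)$ is never empty, both sets are again empty. Neither case therefore causes a problem, and the main step is simply to verify that the hypotheses match.
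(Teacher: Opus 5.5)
Your proposal is correct and is essentially the paper's proof. The paper likewise obtains the corollary immediately from Proposition~\ref{variable_specification_implies_subordination} (variable specification gives the subordination principle) combined with Lemma~\ref{subordination_minimal_equality}.
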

\begin{proof}
Immediate from
Lemma \ref{subordination_minimal_equality} and Proposition \ref{variable_specification_implies_subordination}.
\end{proof}

As we shall soon see, the property of being dynamically minimal but not uniquely ergodic
(i.e.~supporting more than one shift-invariant probability measure)
is a useful mechanism for creating fragility in shift spaces. First we require:

\begin{cor}\label{var_spec_Z_minimal_nue}
If a shift space $X$ has variable specification,
and a subshift $Z\subseteq X$ is dynamically minimal but not uniquely ergodic,
 then $\lip^\subseteq_Z(X)$  
has empty interior in $\lip(X)$.
\end{cor}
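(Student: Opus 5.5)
By Corollary \ref{var_spec_minimal_equality}, $\lip^\subseteq_Z(X)=\lip^=_Z(X)$, so it suffices to show that $\lip^=_Z(X)$ has empty interior in $\lip(X)$. I would argue by contradiction: suppose some $f$ lies in the interior of $\lip^=_Z(X)$. Then every $g$ in a Lipschitz-neighbourhood of $f$ satisfies $\m_{\max}(g)=\m(Z,\sigma)$. In particular every invariant measure on $Z$ is $g$-maximizing, so $\int g\,d\mu=\int g\,d\mu'$ for all $\mu,\mu'\in\m(Z,\sigma)$ and all $g$ near $f$.

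Since $Z$ is not uniquely ergodic, fix two distinct measures $\mu\neq\mu'$ in $\m(Z,\sigma)$. I would then find $\psi\in\lip(X)$ with $\int\psi\,d\mu\neq\int\psi\,d\mu'$. Distinct Borel probability measures are separated by some $\phi\in C(X)$, and Lipschitz functions are uniformly dense in $C(X)$. Concretely, in the shift setting the characteristic functions of cylinder sets are Lipschitz, and measures are determined by their values on cylinders, so some cylinder indicator $\psi=\chi_{[u]}$ already works.

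Now take $g=f+\epsilon\psi$ with $\epsilon>0$ small enough that $g$ still lies in the assumed open neighbourhood. Because $\int f\,d\mu=\int f\,d\mu'$ (as $f\in\lip^=_Z(X)$), we get $\int g\,d\mu-\int g\,d\mu'=\epsilon\left(\int\psi\,d\mu-\int\psi\,d\mu'\right)\neq0$. So $\mu$ and $\mu'$ cannot both be $g$-maximizing, hence $\m_{\max}(g)\neq\m(Z,\sigma)$, which is the required contradiction.

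I expect no step to be a real obstacle. The only substantive input is the equality $\lip^\subseteq_Z=\lip^=_Z$, which is exactly where variable specification and dynamical minimality enter (through the subordination principle); the separating Lipschitz function is routine. Note that this also uses nothing about $f$ beyond membership in $\lip^=_Z(X)$, so in fact $\lip^=_Z(X)$ is shown to contain no ball at all.
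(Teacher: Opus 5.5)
Your proof is correct and is essentially the paper's argument. Both reduce to $\lip^=_Z(X)$ via Corollary \ref{var_spec_minimal_equality} and then perturb $f$ by $\epsilon\psi$, where $\psi$ separates two distinct measures in $\m(Z,\sigma)$; this is exactly the direct argument in the paper's footnote, while its main text instead cites the density of Lipschitz functions with a unique maximizing measure.
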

\begin{proof}
We must show that every $f\in\lip^\subseteq_Z(X)$ can be approximated in $\lip(X)$
by functions that do not belong to $\lip^\subseteq_Z(X)$. 
Now $\lip^\subseteq_Z(X) = \lip^=_Z(X)$ 
by Corollary \ref{var_spec_minimal_equality},
and $Z$ is not uniquely ergodic, so $\m(Z,\sigma)$ is not a singleton.
So if $f \in \lip^\subseteq_Z(X) = \lip^=_Z(X)$ then $f$ does not have a unique maximizing measure.
But the set of Lipschitz functions with a unique maximizing measure is a residual subset of $\lip(X)$
(see e.g.~\cite[Thm.~2.4]{Jen19}),
and in particular dense\footnote{An appeal to the literature on typical uniqueness of maximizing measures is convenient, though a direct proof is also 
straightforward:
if $f\in \lip^\subseteq_Z(X) = \lip^=_Z(X)$ then there exist two distinct measures $\mu,\nu\in \m(Z,\sigma)$,
and $g\in\lip(X)$, with $\int g\, d\mu > \int g\, d\nu$, so that $f+\epsilon g\notin \lip^=_Z(X) = \lip^\subseteq_Z(X)$ for all $\epsilon>0$,
yet $f+\epsilon g\to f$ in $\lip(X)$ as $\epsilon\to0$.}, since $\lip(X)$ is a Baire space;
so $f$ can indeed be  approximated in $\lip(X)$
by functions that do not belong to $\lip^\subseteq_Z(X)$, as required.
\end{proof}

We are particularly interested in shift spaces $X$ where the hypotheses of 
Corollary \ref{var_spec_Z_minimal_nue} are satisfied for the 
Markov boundary subshift $Z=\partial_M X$, prompting the following definition:

\begin{defn}\label{specimen_defn}
Define a \emph{specimen}\footnote{This is a portmanteau term, its prefix reflecting the (variable) specification condition, and final syllable an acronymic description of the Markov boundary being \emph{m}inimal with \emph{e}rgodic measures 
\emph{n}on-unique.}
  shift space to be one with variable specification, whose Markov boundary is dynamically minimal but not uniquely ergodic.

\noindent
A shift space $X$ is said to be \emph{eventually specimen} if there exists $n\in\N_0$ such that the iterated Markov boundary $\partial_M^n X$ is a specimen shift space.
\end{defn}

First we show that specimen shift spaces  are fragile, and hence have the typical periodic optimization property:

\begin{thm}\label{specimen_implies_TPO}
Every specimen shift space is fragile, and hence has TPO.
\end{thm}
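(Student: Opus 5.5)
To show that a specimen shift space $X$ is fragile, two things must be checked against Definition \ref{fragile_defn}: that $X$ is non-sofic, and that $\lip_\partial(X)$ has empty interior in $\lip(X)$. The TPO conclusion then follows at once from Corollary \ref{lip_boundary_empty_interior} (equivalently, Theorem \ref{eventually_fragile_tpo} with $n=0$).

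For non-soficity, I would note that a dynamically minimal subshift is by definition non-empty, so $\partial_M X\neq\emptyset$. Lemma \ref{sofic_iff_empty_boundary} then says $X$ is not sofic.

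For the empty interior, the plan is to reduce to Corollary \ref{var_spec_Z_minimal_nue} with $Z=\partial_M X$. That corollary applies directly: $X$ has variable specification, and $\partial_M X$ is dynamically minimal but not uniquely ergodic. It gives that $\lip^\subseteq_{\partial_M X}(X)=\lip_\partial^\subseteq(X)$ has empty interior. By Lemma \ref{l-3}, $\text{int}(\lip_\partial(X))=\text{int}(\lip_\partial^\subseteq(X))=\emptyset$, which is exactly fragility.

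There is no serious obstacle here, since every ingredient is already in place. The only point needing care is the passage from $\lip_\partial$ to $\lip_\partial^\subseteq$. This works because $\m(\partial_M X,\sigma)$ is closed and convex, so Lemma \ref{Nclosedconvexsameinteriors}, packaged as Lemma \ref{l-3}, equates the two interiors.
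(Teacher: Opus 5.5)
Your proposal is correct and follows the paper's proof essentially step for step. You apply Corollary \ref{var_spec_Z_minimal_nue} with $Z=\partial_M X$, transfer empty interior from $\lip^\subseteq_\partial(X)$ to $\lip_\partial(X)$ via Lemma \ref{l-3}, note non-soficity from the non-empty Markov boundary, and conclude TPO from Corollary \ref{lip_boundary_empty_interior}. (Non-emptiness of $\partial_M X$ is most safely justified by its not being uniquely ergodic, i.e.\ supporting more than one invariant measure, rather than by the definition of minimality.)
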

\begin{proof}
Let $X$ be a specimen shift space, and let $Z=\partial_M X$.
By Corollary \ref{var_spec_Z_minimal_nue}, it follows that
$$\lip^\subseteq_Z(X) = \lip^\subseteq_{\partial_M X}(X) = \lip^\subseteq_\partial(X)$$ 
has empty interior in $\lip(X)$.
Now $\lip^\subseteq_\partial(X)$ and $\lip_\partial(X)$ have identical interior in
$\lip(X)$, by Lemma \ref{l-3}, so  $\lip_\partial(X)$ has empty interior in $\lip(X)$.
It follows that $X$ is fragile (note that $X$ is clearly non-sofic, since its Markov boundary is non-empty). 
It follows from Corollary \ref{lip_boundary_empty_interior}
(or Theorem \ref{eventually_fragile_tpo}) that $X$ has TPO.
\end{proof}

We readily deduce that eventually specimen shift spaces are eventually fragile, and therefore have the TPO
property:

\begin{thm}\label{eventually_specimen_implies_TPO}
Every eventually specimen shift space is eventually fragile, and has TPO.
\end{thm}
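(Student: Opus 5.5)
The plan is to unwind the definitions and chain together results already established. Suppose $X$ is eventually specimen, so there exists $n\in\N_0$ with $Y:=\partial_M^n X$ a specimen shift space.

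First, Theorem \ref{specimen_implies_TPO} applied to $Y$ gives that $Y$ is fragile. Concretely, its proof shows that Corollary \ref{var_spec_Z_minimal_nue}, applied with $Z=\partial_M Y$, gives that $\lip_\partial^\subseteq(Y)$ has empty interior in $\lip(Y)$. Lemma \ref{l-3} then transfers this to $\lip_\partial(Y)$. Moreover $Y$ is non-sofic, since $\partial_M Y$ is non-empty (a minimal non-uniquely-ergodic shift is non-empty) and Lemma \ref{sofic_iff_empty_boundary} applies.

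Second, since $\partial_M^n X = Y$ is fragile, Definition \ref{fragile_defn} says directly that $X$ is eventually fragile. Theorem \ref{eventually_fragile_tpo} then yields that $X$ has TPO. Equivalently, one may note that $Y$ has TPO by Corollary \ref{lip_boundary_empty_interior} and then apply Corollary \ref{iteration_markov_boundary_tpo} to pass TPO back up through the $n$ iterated Markov boundaries.

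There is no real obstacle; the argument is a bookkeeping exercise. The only points to check are conventions. When $n=0$ the statement reduces exactly to Theorem \ref{specimen_implies_TPO}. The indexing $n\in\N_0$ in Definition \ref{specimen_defn} matches $n\ge 0$ in Definition \ref{fragile_defn}, so the same $n$ works in both.
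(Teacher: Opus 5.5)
Your proposal is correct and follows the paper's proof. The paper also applies Theorem \ref{specimen_implies_TPO} to $\partial_M^n X$ to get fragility, hence that $X$ is eventually fragile, and TPO of $\partial_M^n X$, and then lifts TPO to $X$ via Corollary \ref{iteration_markov_boundary_tpo}, which is your second route.
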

\begin{proof}
If $X$ is eventually specimen
then $\partial_M^n X$ is specimen for some $n\in\N_0$,
so  $\partial_M^n X$ is fragile (hence $X$ is eventually fragile) and has TPO by Theorem \ref{specimen_implies_TPO}.
It follows that $X$ has TPO, by
Corollary \ref{iteration_markov_boundary_tpo}.
\end{proof}

The results in this section have been somewhat abstract, as so far we have not given specific examples of
eventually fragile (or specimen) shift spaces; this lack of concreteness will be rectified in the following 
Section \ref{TPO_preserving}.

\section{TPO-preserving operations on shift spaces}\label{TPO_preserving}

In Section \ref{SFTmaximizablefamily} we showed that if $\partial_M X$ has TPO, or more generally some iterated Markov boundary $\partial_M^n X$ has TPO, then $X$ itself must have TPO (see Theorem \ref{partialX_tpo_implies_X_tpo_repeated}
and Corollary \ref{iteration_markov_boundary_tpo}).
Having shown that sofic shifts have TPO (Theorem \ref{sofictpotheorem}), we deduced that certain non-sofic shift spaces of interest had TPO, by showing that their Markov boundary is a shift of finite type, and therefore sofic
(see Theorem \ref{sgraphtpotheorem}, Corollary \ref{s_gap_cor}, and Theorem \ref{context_free_theorem}); in particular these shifts are eventually sofic of level $n=1$.
These examples prompt two questions: Do eventually sofic shift spaces of level $n\ge 2$ exist?  And are there shift spaces whose Markov boundary is strictly sofic (i.e.~sofic but not of finite type)?

In this section we answer both of these questions in the affirmative, by introducing a method for realising any shift space $Y$ as the Markov boundary of a shift space $X$ (where $X$ has a synchronizing word); in fact, as we shall see, for a given $Y$ there are many such $X$ with $\partial_M X=Y$.
The method will also lead to constructions of shift spaces satisfying the conditions 
introduced in Section \ref{specimen_section}, i.e.~those which are (eventually) specimen, and hence (eventually) fragile, and therefore have TPO despite not being eventually sofic.
In fact the approach developed here leads to various \emph{TPO-preserving operations} on shift spaces,
in the sense that if a shift space $Y$ is known to have TPO, then its image under a suitable operation will also have TPO;
as such, the methods give a procedure for constructing new TPO spaces from old ones.

The method is not, however, restricted to producing new TPO systems from old ones: it also allows for the construction of shift spaces $X$ with the TPO property, from spaces $Y$ (satisfying $\partial_M X=Y$) that do \emph{not} have this property. 
On the other hand the method
does not always produce spaces with TPO; indeed it 
will be used, in Section \ref{magicmorsesection}, to construct a shift space without TPO, despite
its periodic measures being dense in the space of all invariant measures.

The starting point is the following definition:

\begin{defn}\label{interspersion_defn}
For an alphabet $A$, let $A':= A \sqcup \{\b\}$, i.e.~the disjoint union of $A$ with a singleton set whose unique element is denoted by $\b$.
If $Y$ is a shift space on the alphabet $A$,
and $\v$ is a subset of the language $\w(Y)$,
define the \emph{interspersion} (or \emph{$\v$-interspersion of $Y$}), denoted by
$Y_\v$, to be the smallest shift space,
with alphabet $A'$, that contains
all sequences of the form
\begin{equation}\label{interspersed_sequence}
\b v^{(1)}\b v^{(2)}\b v^{(3)}\ldots
\end{equation}
where $v^{(i)}\in \v$ for all $i\in\N$.
That is, $Y_\v$ is the shift orbit closure of the union of all sequences of the form (\ref{interspersed_sequence}).

\noindent The symbol $\b$, that belongs to the alphabet for $Y_\v$ but not the alphabet for $Y$, is referred to as the \emph{magic symbol} for $Y_\v$.
\end{defn}

A notable property of any interspersion $Y_\v$ of $Y$ is that the Markov boundary
$\partial_M Y_\v$ is contained in $Y$:

\begin{lem}\label{S(Z)properties} If $Y$ is any shift space, and $\v\subseteq\w(Y)$,
then
\begin{itemize}
\item[(a)]
The magic symbol is a synchronizing word for $Y_\v$.
\item[(b)]
$\partial_M( Y_\v )\subseteq Y \cap  Y_\v$.
\end{itemize}
\end{lem}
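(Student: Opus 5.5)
For (a) I will show directly from the definition of $Y_\v$ that whenever $u\b\in\w(Y_\v)$ and $\b v\in\w(Y_\v)$, the concatenation $u\b v$ lies in $\w(Y_\v)$. The key observation is that every allowed word of $Y_\v$ is a subword of some generating sequence $\b v^{(1)}\b v^{(2)}\b\cdots$ with $v^{(i)}\in\v$. This holds because $Y_\v$ is the orbit closure of the set $G$ of such sequences. Any word appearing in a point of the closure already appears in a point of $G$, since a finite window is determined by a cylinder, and the shift of a point of $G$ is a tail of a point of $G$.

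\emph{Gluing at a magic symbol.} Given $u\b$ appearing in $g=\b v^{(1)}\b v^{(2)}\cdots$, I will look at the particular occurrence of $\b$ that ends that copy of $u\b$. Then $u\b$ is a suffix of $\b v^{(1)}\b\cdots\b v^{(k)}\b$ for some $k$, with $u$ possibly starting partway through some block. Similarly, $\b v$ appears in some $g'$, and I take the $\b$ at its start. Then $\b v$ is a prefix of $\b v'^{(j)}\b v'^{(j+1)}\cdots$ for some $j$.

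The spliced sequence $\b v^{(1)}\b\cdots\b v^{(k)}\b v'^{(j)}\b v'^{(j+1)}\cdots$ is again of the form (\ref{interspersed_sequence}), so it lies in $Y_\v$, and $u\b v$ appears in it. The only care needed is the bookkeeping of where $u$ begins. This is harmless because we only require $u\b$ to be a suffix of the finite prefix, and that prefix can be extended to the left freely by starting the generating sequence earlier. Hence $\b$ is synchronizing.

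\emph{Points of the boundary avoid $\b$.} For (b), the inclusion $\partial_M(Y_\v)\subseteq Y_\v$ is automatic, so it remains to show $\partial_M(Y_\v)\subseteq Y$. First I will show that a synchronizing word lies in $\w_{<\infty}(Y_\v)$. If $w$ is synchronizing, then $F(uw)=F(w)$ for every $u\in P(w)$, so only one follower set arises and $w\notin\w_\infty$. More generally, any word $w$ containing $\b$ has finitely many follower sets $F(uw)$. Writing $w=w'\b w''$, we have $F(uw)=\{v: w''v\in F(\b)\}$ whenever $uw$ is allowed, because $\b$ is synchronizing, and this set is independent of $u$. So no point of $\partial_M(Y_\v)$ contains $\b$.

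\emph{Placing such a point in $Y$.} Let $x\in\partial_M(Y_\v)$. Every finite word of $x$ is an allowed word of $Y_\v$ without $\b$, hence a subword of some $\b v^{(1)}\b\cdots$ lying strictly between two magic symbols, or in a leading segment before the first $\b$. In the leading-segment case it is a subword of a tail of some $v^{(i)}$, or of a word appearing in $Y$ in any event. Either way it is a subword of some $v\in\v\subseteq\w(Y)$, hence in $\w(Y)$. So every finite subword of $x$ lies in $\w(Y)$. Since $Y$ is a closed shift space, this gives $x\in Y$: each prefix $x_{[1,n]}$ is in $\w(Y)$, so there is a point of $Y$ whose prefix is $x_{[1,n]}$ (using $\sigma(Y)=Y$ to shift the occurrence to the start), and closedness finishes.

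The main obstacle I expect is the precise verification in (a) that every allowed word of $Y_\v$ sits inside a generating sequence aligned with a specific magic symbol. I will handle this via the cylinder argument above, noting that shifts of generating sequences are handled by dropping an initial partial block.
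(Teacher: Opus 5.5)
Your proposal is correct and follows essentially the paper's route. For (a), the paper likewise glues the explicit forms of allowed words at the magic symbol; your splicing of generating sequences is the same idea. For (b), the paper argues that any $x\in Y_\v\setminus Y$ contains the magic symbol, which is synchronizing and hence in $\w_{<\infty}(Y_\v)$, so $x\notin\partial_M(Y_\v)$; this is the contrapositive of your argument. Your version also spells out why a point of $Y_\v$ avoiding the magic symbol must lie in $Y$, which the paper leaves implicit. Your extension to all words containing the magic symbol is harmless but not needed.
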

\begin{proof}
(a) 
Let $\b$ denote the magic symbol in $Y_{\v}$.
The elements of $\w(Y_\v)$ are precisely those of the form
\begin{equation}\label{yvform}
w\b w^{(1)}\b w^{(2)}\b\cdots \b w^{(l)}\b w'
\end{equation}
where $w$ is a suffix of some word in $\v$, and $w'$ is the prefix of some word in $\v$,
and $w^{(1)},\ldots, w^{(l)}\in\v$ for some $l\in\N_0$.

So if $u\b,\b v\in\w(Y_\v)$ then 
$u=w\b u^{(1)}\b u^{(2)}\b\cdots \b u^{(m)}$ for some suffix $w$ of a word in $\v$,
and $u^{(1)},\ldots, u^{(m)}\in\v$ for some $m\in\N_0$,
and $v=  v^{(1)}\b v^{(2)}\b\cdots \b v^{(n)}\b w'$
for some prefix
$w'$ of a word in $\v$,
and $v^{(1)},\ldots, v^{(n)}\in\v$ for some $n\in\N_0$.
So
$$
u\b v = w\b u^{(1)}\b u^{(2)}\b\cdots \b u^{(m)} \b  v^{(1)}\b v^{(2)}\b\cdots \b v^{(n)}\b w',
$$
which is of the form (\ref{yvform}), so
$u\b v\in\w(Y_\v)$, and therefore $\b$ is a synchronizing word in $Y_\v$.

(b) The inclusion $\partial_M( Y_\v )\subseteq  Y_\v$ follows from the definition of the Markov boundary, so it remains
to show that  $\partial_M( Y_\v )\subseteq  Y$.
Suppose $x\in Y_\v \setminus Y$. Then $\b\subset x$.
But $\b$ is a synchronizing word in $Y_\v$, so $\b\in \w_{<\infty}(Y_\v)$.
Therefore $x\notin \partial_M(Y_\v)$.
So we have shown that $Y_\v\setminus Y \subseteq Y_\v \setminus \partial_M Y_\v$.
Since $\partial_M Y_\v\subseteq Y_\v$, it follows that $\partial_M Y_\v \subseteq Y$, as required.
\end{proof}

In many situations, it will be useful to know that the shift space $Y$ is itself a subset of 
the interspersion $Y_\v$; this motivates the following definition.

\begin{defn}
Let $Y$ be a shift space.
A subset $\v\subseteq\w(Y)$ is said to be \emph{$Y$-descriptive}
if for all $y\in Y$, there exists a sequence $(s_j)_{j=1}^\infty$ with $s_j\to\infty$
as $j\to\infty$, such that for each $j\in\N$, the prefix $y_{[1,s_j]}$ is the suffix of some word in $\v$.

\noindent
(Equivalently, $\v$ is $Y$-descriptive if $\w(Y)$ is the suffix-closure of $\v$, i.e.~$Y$ is the smallest
shift space whose language contains every suffix of a word in $\v$).
\end{defn}

\begin{lem}\label{V_lengths_infinity_Y_contained}
Let $Y$ be any shift space. If $\v\subseteq\w(Y)$
is $Y$-descriptive,
then $Y \subseteq Y_\v$.
\end{lem}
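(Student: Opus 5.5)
To prove Lemma \ref{V_lengths_infinity_Y_contained}, I will show directly that every $y\in Y$ is a limit of points of $Y_\v$; since $Y_\v$ is closed, this gives $y\in Y_\v$. Fix $y\in Y$. By $Y$-descriptiveness there is a sequence $s_j\to\infty$ such that each prefix $y_{[1,s_j]}$ is a suffix of some word $v_j\in\v$. So I can write $v_j=p_j\,y_{[1,s_j]}$ for some word $p_j$ on the alphabet $A$.

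Next I will build a point of $Y_\v$ that begins with $y_{[1,s_j]}$. Here I need $\v$ to be non-empty, which holds because $Y$ is non-empty and descriptiveness produces the words $v_j$; the case $Y=\emptyset$ is trivial. Pick any fixed $u\in\v$, for instance $u=v_1$, and set
\[
z^{(j)}:=\b\, v_j\,\b\, u\,\b\, u\,\b\, u\cdots .
\]
This sequence has the form (\ref{interspersed_sequence}), so $z^{(j)}\in Y_\v$. Since $Y_\v$ is shift-invariant, the shifted point $x^{(j)}:=\sigma^{1+|p_j|}(z^{(j)})$ also lies in $Y_\v$, and its first $s_j$ symbols are exactly $y_{[1,s_j]}$.

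Finally, because $s_j\to\infty$, we get $d_\alpha(x^{(j)},y)\le\alpha^{s_j}\to0$. Closedness of $Y_\v$ then gives $y\in Y_\v$, and since $y\in Y$ was arbitrary, $Y\subseteq Y_\v$.

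There is no substantial obstacle. The only points needing care are that $\v\neq\emptyset$, so that the sequences (\ref{interspersed_sequence}) exist, and the index bookkeeping in the shift $1+|p_j|$ that places the prefix $y_{[1,s_j]}$ at the start of $x^{(j)}$.
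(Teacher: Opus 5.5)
Your proof is correct and follows the paper's argument. Both approximate $y$ by points of $Y_\v$ that begin with $y_{[1,s_j]}\b$, obtained by shifting an interspersed sequence whose first block has $y_{[1,s_j]}$ as a suffix, and both conclude from the closedness of $Y_\v$. Your version simply makes explicit the construction of these points (padding with a fixed $u\in\v$) and why $\v\neq\emptyset$, which the paper leaves implicit.
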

\begin{proof}
Suppose $y\in Y$.
There exists a sequence $s_j\to\infty$
such that
$$y_{[1,s_j]}=u^{(j)}_{\left[|u^{(j)}|-s_j+1,|u^{(j)}|\right]}$$ for some $u^{(j)}\in\v$ for all $j\in\N$.
So $y_{[1,s_j]} \b$ is the prefix of some $y^{(j)}\in Y_\v$.
So $y^{(j)}\to y$ as $j\to\infty$, and $Y_\v$ is closed by definition, so $y\in Y_\v$.
Since $y$ was an arbitrary element of $Y$, it follows that $Y \subseteq Y_\v$, as required.
\end{proof}

The double inclusion $\partial_M Y_\v\subseteq Y\subseteq Y_\v$ established in Lemmas
 \ref{S(Z)properties}
and \ref{V_lengths_infinity_Y_contained}
means that
we are now able to derive 
another important consequence of Proposition \ref{Markov_boundary_in_Y_TPO},  
that if $\v$ is $Y$-descriptive, then the corresponding interspersion is a TPO-preserving operation:

\begin{thm}\label{TPO_descriptive_TPO}
Let $Y$ be any shift space with TPO. Suppose $\v \subseteq\w(Y)$ is $Y$-descriptive.
Then the interspersion $Y_\v$ also has TPO.
\end{thm}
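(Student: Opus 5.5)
The plan is to apply Proposition \ref{Markov_boundary_in_Y_TPO} to the shift space $X:=Y_\v$, with $Y$ itself playing the role of the intermediate subshift. That proposition requires three things: $Y$ is a subshift of $Y_\v$, the chain of inclusions $\partial_M(Y_\v)\subseteq Y\subseteq Y_\v$ holds, and $Y$ has TPO. The third is exactly our hypothesis, so the work reduces to verifying the two inclusions.

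First, since $\v$ is $Y$-descriptive, Lemma \ref{V_lengths_infinity_Y_contained} gives $Y\subseteq Y_\v$. Here $Y$ is regarded as a shift space on the alphabet $A'$ that happens not to use the magic symbol. It is closed and satisfies $\sigma(Y)=Y$, so it is a subshift of $Y_\v$. The metric $d_\alpha$ on $Y$, viewed inside $(A')^\N$, coincides with its metric inside $A^\N$. Hence $\lip(Y)$ and the TPO property of $(Y,\sigma)$ do not depend on the ambient alphabet, and this identification needs only a brief remark.

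Second, Lemma \ref{S(Z)properties}(b) gives $\partial_M(Y_\v)\subseteq Y\cap Y_\v\subseteq Y$. With both inclusions established, Proposition \ref{Markov_boundary_in_Y_TPO} applies directly and shows that $Y_\v$ has TPO.

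I expect no substantive obstacle. The only point needing care is the embedding of $Y$ into the larger alphabet, namely that TPO for $Y$ as a subshift of $A^\N$ is the same as TPO for $Y$ as a subshift of $(A')^\N$. This holds because the two metrics, and hence the two Lipschitz spaces, agree.
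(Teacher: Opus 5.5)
Your proposal is correct and matches the paper's proof: it obtains $\partial_M(Y_\v)\subseteq Y$ from Lemma \ref{S(Z)properties}(b) and $Y\subseteq Y_\v$ from Lemma \ref{V_lengths_infinity_Y_contained}, then applies Proposition \ref{Markov_boundary_in_Y_TPO}. Your extra remark, that $d_\alpha$ and hence $\lip(Y)$ and the TPO property do not depend on whether $Y$ is viewed in $A^\N$ or $(A')^\N$, is a harmless detail that the paper leaves implicit.
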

\begin{proof}
Since $\v \subseteq\w(Y)$,
Lemma \ref{S(Z)properties}(b)
implies that
$\partial_M( Y_\v )\subseteq Y$.
Since moreover $\v$  is $Y$-descriptive,
Lemma \ref{V_lengths_infinity_Y_contained} implies that $Y\subseteq Y_\v$.
So $\partial_M( Y_\v )\subseteq Y\subseteq Y_\v$, and $Y$ has TPO, so Proposition \ref{Markov_boundary_in_Y_TPO}
implies that $Y_\v$ has TPO.
\end{proof}

The following strengthening of Lemma \ref{S(Z)properties}(b) implies, in particular, that all shift spaces without a synchronizing word are the Markov boundary
of some shift space with a synchronizing word; the sense in which interspersion is a (right) inverse to the operator
$\partial_M$ will be explored further below, notably in  Theorem \ref{prescribed_markov_boundary}.

\begin{prop}\label{partial_Y_v_equals_Y}
Suppose $Y$ is a shift space that  does not have a synchronizing word.
 If $\v\subseteq\w(Y)$
is $Y$-descriptive,
then $\partial_M( Y_\v ) = Y$.
\end{prop}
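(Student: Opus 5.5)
By Lemma \ref{S(Z)properties}(b) we already have $\partial_M(Y_\v)\subseteq Y$, so the plan is to prove the reverse inclusion $Y\subseteq \partial_M(Y_\v)$. By Lemma \ref{V_lengths_infinity_Y_contained}, $Y\subseteq Y_\v$. It therefore suffices to show that every word $w\in\w(Y)$ lies in $\w_\infty(Y_\v)$, i.e.\ that $|\{F_{Y_\v}(uw):u\in P_{Y_\v}(w)\}|=\infty$. By the definition of $\partial_M$, this gives $Y\subseteq\partial_M(Y_\v)$.

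The approach is by contradiction. Suppose some $w\in\w(Y)$ has only finitely many follower sets $F_{Y_\v}(uw)$. Then $Y$ is a subshift of $Y_\v$ not contained in $\partial_M(Y_\v)$. Lemma \ref{l-1}, applied with $X=Y_\v$, then yields $w'\in\w(Y)$ with $Y\subseteq (Y_\v)_{w'}$. It also shows that $Y$ has a synchronizing word, which contradicts the hypothesis. This is where Lemma \ref{l-1} does the work: its conclusion that ``$Y$ has a synchronizing word'' is derived from $w'$ being synchronizing for $(Y_\v)_{w'}\supseteq Y$ (Lemma \ref{xwiswsynchronizing}).

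The main point to check is that the hypothesis of Lemma \ref{l-1} really holds, namely $Y\not\subseteq\partial_M(Y_\v)$. The word $w$ appears in some $y\in Y$, and $w\in\w_{<\infty}(Y_\v)$ forces $y\notin\partial_M(Y_\v)$. Since $y\in Y$, this gives $Y\not\subseteq\partial_M(Y_\v)$, as required. Everything else is bookkeeping, because the synchronizing property passes to subshifts containing the word: a subshift of a shift in which $w'$ is synchronizing, and whose language contains $w'$, inherits $w'$ as a synchronizing word.

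Combining the two inclusions gives $\partial_M(Y_\v)=Y$. I expect no serious obstacle. The one subtlety is that $Y$ may be empty, in which case $Y_\v$ is degenerate. The statement then reads $\partial_M(Y_\v)\subseteq\emptyset$, which is trivially equality.
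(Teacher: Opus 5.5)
Your reduction is sound up to the application of Lemma~\ref{l-1}. You correctly get $Y\subseteq Y_\v$, you correctly see that a word $w\in\w(Y)\cap\w_{<\infty}(Y_\v)$ gives $Y\not\subseteq\partial_M(Y_\v)$, and Lemma~\ref{l-1} with $X=Y_\v$ does give $w'\in\w(Y)$ with $Y\subseteq(Y_\v)_{w'}$.

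The gap is the next step. The principle you call bookkeeping is false in general: a word synchronizing for a shift need not be synchronizing for a subshift whose language contains it. In the full shift $\{0,1\}^\N$ every word is synchronizing, and its Markov boundary is empty. Yet the Morse shift $Z\subseteq\{0,1\}^\N$ has no synchronizing word, since it is dynamically minimal and non-periodic (Lemma~\ref{synchronizingwordperiodicpoint}). Since $Z\not\subseteq\partial_M(\{0,1\}^\N)=\emptyset$, this example also satisfies the hypotheses of Lemma~\ref{l-1}. So the lemma's closing clause ``in particular $Y$ has a synchronizing word'', which rests on exactly this inheritance step, cannot be used as a black box. Concretely, from $u\in P_Y(w')$ and $v\in F_Y(w')$ you only get $F_{Y_\v}(uw')=F_{Y_\v}(w')\ni v$, hence $uw'v\in\w(Y_\v)$. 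You do not get $uw'v\in\w(Y)$, so as written your contradiction is not established.

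The missing idea is specific to interspersions. A word of $Y_\v$ that avoids the magic symbol $\b$ must lie inside a single block $v^{(i)}\in\v$ of some sequence $\b v^{(1)}\b v^{(2)}\cdots$, and therefore belongs to $\w(Y)$. Since $uw'v$ avoids $\b$, it lies in $\w(Y)$. Thus $w'$ is synchronizing for $Y$, and your contradiction then goes through.

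This restriction property, in the form $F_{Y_\v}(uw)\cap\w(Y)=F_Y(uw)$ for $u\in P_Y(w)$, is precisely what the paper's proof uses. The paper applies the two ingredients in the opposite order to you. First it runs the descending-chain argument inside $Y$, where no inheritance problem arises, to obtain $\partial_M Y=Y$; so every $w\in\w(Y)$ has infinitely many distinct sets $F_Y(uw)$. Then it uses the restriction identity to see that these remain pairwise distinct as the sets $F_{Y_\v}(uw)$, giving $w\in\w_\infty(Y_\v)$.
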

\begin{proof}
We know that $\partial_M( Y_\v )\subseteq Y$ by Lemma \ref{S(Z)properties}(b), so it remains to establish that
$Y\subseteq \partial_M( Y_\v )$.
Suppose $x\in Y$.
Since $Y$ does not have a synchronizing word, then $\partial_M Y=Y$ by Thomsen 
\cite[Thm.~3.1]{Tho06}.\footnote{Since the statement in \cite[Thm.~3.1]{Tho06} includes a 
topological transitivity assumption on $Y$,
we give a short proof here that, even without this assumption, if $Y\setminus \partial_M Y\neq\emptyset$ then $Y$ must have a synchronizing word. If $y\in Y\setminus \partial_M Y$ then there exists $w\subset y$ such that
$\{F_Y(uw):u\in P_Y(w)\}$ has finite cardinality $c(w)\ge 1$. If $c(w)=1$ then $w$ is synchronizing,
while if $c(w)>1$ then there exists $w^{(1)}=u^{(1)}w\in\w(Y)$ such that $F_Y(u^{(1)}w)\neq F_Y(w)$, so $c(w^{(1)})\le c(w)-1$. Proceeding inductively, there exists $n\le c(w)-1$, and words
$w^{(i)}=u^{(i)}w^{(i-1)}\in\w(Y)$ with $1 \le c(w^{(i+1)})\le c(w^{(i)})-1$ for $1\le i\le n-1$,  and $c(w^{(n)})=1$. 
In particular, $w^{(n)}$ is a
synchronizing word in $Y$, as required.}
So $x\in \partial_M Y$.
By definition of $\partial_M Y$, this means that if $w\in\w(Y)$
with $w\subset x$, then
$w\in\w_\infty(Y)$.
By definition of $\w_\infty(Y)$, this means that
\begin{equation}\label{Y_follower_infinite}
|\{F_Y(uw): u\in P_Y(w)\}|=\infty.
\end{equation}

Now  $\v$
is $Y$-descriptive,
so
\begin{equation}\label{YsubsetYv}
Y \subseteq Y_\v
\end{equation}
by Lemma \ref{V_lengths_infinity_Y_contained}.
From (\ref{YsubsetYv}) we deduce that $w\in\w(Y_\v)$, and that
\begin{equation}\label{predecessor_Y_v}
P_{Y_\v}(w) \supseteq P_Y(w).
\end{equation}
Moreover, for each $u\in P_Y(w)$,
\begin{equation}\label{follower_Y_v}
F_{Y_\v}(uw) \supseteq F_Y(uw)  \text{ and } F_{Y_\v}(uw)\cap \w(Y)=F_Y(uw).
\end{equation}
Thus, by \eqref{follower_Y_v}, we know that 
\begin{equation}\label{follower_Y_v-1}
F_{Y_\v}(u_1w)\neq F_{Y_\v}(u_2w)
\end{equation}
whenever $u_1,u_2\in P_Y(w)$ and $F_Y(u_1w)\neq F_Y(u_2w)$.
It follows from (\ref{Y_follower_infinite}), (\ref{predecessor_Y_v}) and (\ref{follower_Y_v-1}) that
$$
|\{F_{Y_\v}(uw): u\in P_{Y_\v}(w)\}|\ge |\{F_{Y_\v}(uw): u\in P_{Y}(w)\}|\ge |\{F_{Y}(uw): u\in P_{Y}(w)\}|=\infty,
$$
and hence that $w\in \w_\infty(Y_\v)$.

Since $w$ was an arbitrary word appearing in $x$, we deduce that $x\in \partial_M Y_\v$.

But $x$ was an arbitrary element in $Y$, so we deduce that $Y\subseteq \partial_M Y_\v$, as required.
\end{proof}

The following consequence of Proposition \ref{partial_Y_v_equals_Y} will be used in Section \ref{magicmorsesection}, when a particular 
shift space, defined as a
$\v$-interspersion,
is shown to not have TPO, despite its periodic measures being dense in the space of all invariant measures.

\begin{cor}\label{minimal_partial_Y_v_equals_Y}
Suppose $Y$ is a shift space that  is dynamically minimal but non-periodic.
If $\v\subseteq\w(Y)$
is $Y$-descriptive,
then  $\partial_M( Y_\v ) = Y$.
\end{cor}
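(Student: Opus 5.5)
The plan is to reduce to Proposition \ref{partial_Y_v_equals_Y}. It therefore suffices to show that a shift space $Y$ which is dynamically minimal but non-periodic has no synchronizing word. Once this is established, the hypothesis that $\v$ is $Y$-descriptive lets Proposition \ref{partial_Y_v_equals_Y} apply verbatim, giving $\partial_M(Y_\v)=Y$.

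To show that $Y$ has no synchronizing word, suppose for a contradiction that $w\in\w(Y)$ is synchronizing for $Y$. Pick any $y\in Y$. By dynamical minimality, $y$ is uniformly recurrent: every cylinder $[w]_Y$ (non-empty and open in $Y$) is visited by the orbit of $y$ with bounded gaps. In particular there exist $k\ge 0$ and $n\in\N$ such that both $\sigma^k(y)$ and $\sigma^{k+n}(y)$ lie in $[w]_Y$.

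Lemma \ref{synchronizingwordperiodicpoint}, applied to $x=\sigma^k(y)$ in the shift space $Z=Y$, then shows that the periodic point $z=(x_1\ldots x_n)^\infty$ belongs to $Y$. Its orbit is a finite, closed, non-empty invariant subset of $Y$. Minimality forces $Y$ to equal this orbit, so $Y$ is a finite periodic orbit, contradicting the assumption that $Y$ is non-periodic. Hence $Y$ has no synchronizing word.

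I do not anticipate a genuine obstacle. The only point needing care is to interpret ``non-periodic'' as ``$Y$ is not a single periodic orbit'', which is exactly what the contradiction uses. I would also check that the return to $[w]$ exists even when $w$ occurs only once in $y$, but recurrence in a minimal system guarantees infinitely many occurrences.
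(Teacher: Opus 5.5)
Your proposal is correct and is essentially the paper's argument. The paper deduces the corollary immediately from Proposition \ref{partial_Y_v_equals_Y}, citing Lemma \ref{synchronizingwordperiodicpoint} for the fact that a dynamically minimal non-periodic shift has no synchronizing word. Your recurrence argument spells out that step: two visits of an orbit to $[w]_Y$ produce a periodic point in $Y$, and minimality then forces $Y$ to be that finite orbit.
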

\begin{proof}
Immediate from Proposition \ref{partial_Y_v_equals_Y}, since a dynamically minimal non-periodic shift space does not have a synchronizing word (cf.~Lemma \ref{synchronizingwordperiodicpoint}).
\end{proof}

For the remainder of this section, we shall be
concerned with the following restricted kind of interspersion, that will be sufficient
(see Theorem \ref{prescribed_markov_boundary})
 for solving the problem
of constructing shift spaces with a given Markov boundary.

\begin{defn}
Let $Y$ be a shift space.
Given a subset $S\subseteq\N_0$, 
and (cf.~(\ref{S_length_words})) $\w_S(Y)=\bigcup_{n\in S} \w_n(Y)$,
 define the corresponding \emph{length-constrained interspersion}
$\i_S(Y)$ to be the $\w_S(Y)$-interspersion of $Y$,
\begin{equation}
\i_S(Y) := Y_{\w_S(Y)}.
\end{equation}
This defines a self-map
$$\i_S:\s\to\s$$
on the class $\s$ of all shift spaces,
referred to as the \emph{$S$-interspersion map}.

\noindent In the special case that $S=\N_0$, the map $\i_{\N_0}$ will be denoted simply by $\i$, and referred to as \emph{unconstrained interspersion}.
\end{defn}

\begin{rem}
For $S\subseteq\N_0$, the shift space $\i_S(Y)$ is the closure of the set of shift-orbits of all sequences of the form
(\ref{interspersed_sequence}), where the length of each word $v^{(i)}$ must belong to $S$.
In particular, when $Y$ is the singleton shift space $Y=\{a^\infty\}$ then $\i_S(Y)$ is precisely the
$S$-gap shift (cf.~Section \ref{SFTmaximizablefamily}) corresponding to the set $S$,
i.e.~the shift orbit closure of all sequences of the form $\b a^{i_1}\b a^{i_2}\b a^{i_3}\ldots$ where each $i_j\in S$.

More generally, if $(Y,\sigma)$ is topologically transitive, and $y\in Y$ has a dense orbit, then $\i_S(Y)$
is the \emph{$(S,y)$-gap shift} in the sense of \cite{RS25}. 
\end{rem}

First we consider the case when $S$ is infinite:

\begin{lem}\label{S_infinite_Y_descriptive}
Let $Y$ be a shift space, and suppose $S\subseteq\N_0$.
If $S$ is infinite then
the collection $\w_S(Y)=\bigcup_{n\in S} \w_n(Y)$
is $Y$-descriptive.
\end{lem}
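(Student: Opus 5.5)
The plan is to verify the definition of $Y$-descriptive directly: for each $y\in Y$ we will produce lengths $s_j\to\infty$ such that each prefix $y_{[1,s_j]}$ is a suffix of some word in $\w_S(Y)$.

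If $Y=\emptyset$ there is nothing to prove, so fix $y\in Y$. Since $S$ is infinite, enumerate it in increasing order as $n_1<n_2<\cdots$, so that $n_j\to\infty$, and put $s_j:=n_j$ (discarding $n_j=0$ if it occurs). The natural witness would be $y_{[1,n_j]}$ itself, which lies in $\w_{n_j}(Y)\subseteq\w_S(Y)$ because it appears in $y\in Y$. A word is trivially a suffix of itself, so $y_{[1,s_j]}$ is the suffix of a word in $\w_S(Y)$ for every $j$, and the definition is satisfied.

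It remains to check that we have matched the intended meaning of the definition. The alternative formulation says that $\w(Y)$ is the suffix-closure of $\v$, and the proof of Lemma \ref{V_lengths_infinity_Y_contained} only uses that $y_{[1,s_j]}$ is a suffix of some $u^{(j)}\in\v$, so that $y_{[1,s_j]}\b$ is an allowed prefix in $Y_\v$. Both are consistent with the witness $u^{(j)}=y_{[1,n_j]}$. If one preferred a witness in which the suffix is proper, a different argument would be needed. In that case the step I expect to be the main obstacle is producing an extension $u\,y_{[1,s]}\in\w(Y)$ of a prescribed length. For this I would use $\sigma(Y)=Y$: pick a preimage $z\in Y$ with $\sigma^k(z)=y$ for any $k$, and take $u=z_{[1,k]}$. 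Choosing $k=n_j-s$ for $n_j\ge s$ then gives a word of length $n_j\in S$ ending in $y_{[1,s]}$. This surjectivity argument also shows that every word of $Y$ is a suffix of words of all sufficiently large lengths in $S$, which is the suffix-closure characterisation.
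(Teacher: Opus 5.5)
Your proposal is correct and matches the paper's proof: enumerate $S$ increasingly as $s_1<s_2<\cdots$, and note that each prefix $y_{[1,s_j]}$ already lies in $\w_{s_j}(Y)\subseteq\w_S(Y)$, so it is trivially a suffix of a word in $\w_S(Y)$. The final paragraph, with the surjectivity argument for proper suffixes, is correct but not needed for this lemma.
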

\begin{proof}
Suppose $S$ is infinite. Let $S=\{s_1,s_2,\ldots\}$ be enumerated such that $s_j < s_{j+1}$ for all $j\in\N$.
Then for all $y\in Y$, the sequence $s_j\to\infty$
and is such that $y_{[1,s_j]}\in\w_S(Y)$ for all $j\in\N$.
So $\w_S(Y)$ is $Y$-descriptive.
\end{proof}

Length-constrained interspersion using \emph{any} infinite $S\subseteq\N_0$
therefore gives a right inverse to the Markov boundary operator,
when restricted to those
shift spaces without  a synchronizing word:

\begin{cor}\label{partial_Y_v_equals_Y_cor}
Suppose $Y$ is a shift space that  does not have a synchronizing word.
 If $S\subseteq\N_0$ is infinite,
then $\partial_M( \i_S(Y) ) = Y$.
\end{cor}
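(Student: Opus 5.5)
This is a direct combination of two earlier results, so the plan is short. By definition $\i_S(Y)=Y_{\w_S(Y)}$, the $\w_S(Y)$-interspersion of $Y$. I will apply Proposition \ref{partial_Y_v_equals_Y} with $\v=\w_S(Y)$.

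That proposition has two hypotheses. The first is that $Y$ has no synchronizing word, which is assumed in the present statement. The second is that $\v=\w_S(Y)$ is a subset of $\w(Y)$ and is $Y$-descriptive. The inclusion $\w_S(Y)\subseteq\w(Y)$ holds by definition. The descriptiveness follows from Lemma \ref{S_infinite_Y_descriptive}, since $S$ is infinite.

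With both hypotheses checked, Proposition \ref{partial_Y_v_equals_Y} gives $\partial_M(Y_{\w_S(Y)})=Y$, that is, $\partial_M(\i_S(Y))=Y$.

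I do not expect any real obstacle. The only point to watch is the degenerate case $Y=\emptyset$: then $\w(Y)$ contains only the empty word, or is empty depending on convention. In that case I will read the claim via the same argument, or simply note that both sides are handled trivially by Lemma \ref{S(Z)properties}(b), which gives $\partial_M(\i_S(Y))\subseteq Y=\emptyset$.
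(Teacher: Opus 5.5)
Your proof is correct and is essentially the paper's own argument: you obtain $Y$-descriptiveness of $\w_S(Y)$ from Lemma \ref{S_infinite_Y_descriptive} and then apply Proposition \ref{partial_Y_v_equals_Y} with $\v=\w_S(Y)$. Your remark on the case $Y=\emptyset$ is harmless but unnecessary, since the proposition's argument already covers it vacuously.
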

\begin{proof}
Since $S$ is infinite,
Lemma \ref{S_infinite_Y_descriptive} implies that $\w_S(Y)$ is $Y$-descriptive.
Since in addition $Y$ does not have a synchronizing word,
Proposition \ref{partial_Y_v_equals_Y} implies that $\partial_M( Y_{\w_S(Y)})=Y$,
in other words
 $\partial_M( \i_S(Y) ) = Y$.
\end{proof}

In particular, the conclusion of Corollary \ref{partial_Y_v_equals_Y_cor} holds
when $Y$ is dynamically minimal but non-periodic, a result that will be used 
to exhibit examples of specimen (and therefore fragile) shift spaces (see Corollary \ref{Z_min_nue_interspersion_specimen}):

\begin{cor}\label{boundary_S_equals_Y}
Suppose $Y$ is a shift space that  is dynamically minimal but non-periodic,
and $S\subseteq\N_0$ is infinite.
Then $\partial_M( \i_S(Y))=Y$.
\end{cor}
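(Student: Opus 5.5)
The plan is to reduce the statement directly to Corollary \ref{partial_Y_v_equals_Y_cor}. That corollary already gives $\partial_M(\i_S(Y))=Y$ whenever $S$ is infinite and $Y$ has no synchronizing word. So the only thing left to check is that a dynamically minimal, non-periodic shift space $Y$ has no synchronizing word. This is the step flagged in the proof of Corollary \ref{minimal_partial_Y_v_equals_Y}; alternatively, one could invoke Corollary \ref{minimal_partial_Y_v_equals_Y} together with Lemma \ref{S_infinite_Y_descriptive}, since $\i_S(Y)=Y_{\w_S(Y)}$.

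To show there is no synchronizing word, I would argue by contradiction. Suppose $w\in\w(Y)$ is synchronizing for $Y$. Since $w$ is allowed, the cylinder $[w]_Y$ is a non-empty open subset of $Y$. By dynamical minimality, every $y\in Y$ has a dense orbit. Pick any $x\in[w]_Y$; its orbit returns to $[w]_Y$, so $\sigma^n(x)\in[w]$ for some $n\in\N$. Lemma \ref{synchronizingwordperiodicpoint} then gives that the periodic point $(x_1\ldots x_n)^\infty$ belongs to $Y$. Its orbit is a finite, closed, invariant, non-empty subset of $Y$, so by minimality it equals $Y$. Hence $Y$ is a single periodic orbit, contradicting non-periodicity.

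With that in hand, $S$ infinite and $Y$ lacking a synchronizing word are exactly the hypotheses of Corollary \ref{partial_Y_v_equals_Y_cor}, which yields $\partial_M(\i_S(Y))=Y$. There is no real obstacle here. The only point needing care is the return of the orbit of $x$ to $[w]$ at a strictly positive time $n$. This holds because a minimal system is recurrent: density of the forward orbit $\{\sigma^k(x)\}_{k\ge1}$ follows from minimality applied to $\sigma(x)$, whose orbit closure is all of $Y$.
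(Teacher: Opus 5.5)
Your proposal is correct and follows the paper's proof: the paper also deduces the statement immediately from Corollary \ref{partial_Y_v_equals_Y_cor}, noting that a dynamically minimal non-periodic shift has no synchronizing word (cf.~Lemma \ref{synchronizingwordperiodicpoint}). Your argument correctly spells out that remark: the orbit returns to $[w]_Y$ at a time $n\ge1$, Lemma \ref{synchronizingwordperiodicpoint} then gives a periodic point in $Y$, and minimality forces $Y$ to be a single periodic orbit.
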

\begin{proof}
Immediate from  Corollary \ref{partial_Y_v_equals_Y_cor}, since a dynamically minimal non-periodic shift space does not have a synchronizing word (cf.~Lemma \ref{synchronizingwordperiodicpoint}).
\end{proof}

The following is a simple consequence of  Lemmas \ref{V_lengths_infinity_Y_contained} and \ref{S_infinite_Y_descriptive}:

\begin{cor}\label{length_constrained_S_infinite_finite}
Let $Y$ be a shift space.
If $S\subseteq\N_0$ is infinite then $Y \subseteq \i_S(Y)$.
\end{cor}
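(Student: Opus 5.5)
The plan is to combine Lemma \ref{S_infinite_Y_descriptive} with Lemma \ref{V_lengths_infinity_Y_contained}, exactly as the sentence preceding the statement suggests. First, since $S\subseteq\N_0$ is infinite, Lemma \ref{S_infinite_Y_descriptive} shows that the collection $\w_S(Y)=\bigcup_{n\in S}\w_n(Y)$ is $Y$-descriptive. Concretely, enumerate $S=\{s_1<s_2<\cdots\}$; then for each $y\in Y$ the prefix $y_{[1,s_j]}$ already lies in $\w_S(Y)$, so it is trivially a suffix of a word in $\w_S(Y)$.

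Second, apply Lemma \ref{V_lengths_infinity_Y_contained} with $\v=\w_S(Y)\subseteq\w(Y)$. This gives $Y\subseteq Y_{\w_S(Y)}$. By the definition of the length-constrained interspersion, $Y_{\w_S(Y)}=\i_S(Y)$, which is the required inclusion $Y\subseteq\i_S(Y)$.

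There is essentially no obstacle here; the only point to check is that the hypotheses of Lemma \ref{V_lengths_infinity_Y_contained} are met. That lemma needs $\v\subseteq\w(Y)$, which holds by construction, and $\v$ to be $Y$-descriptive, which is the first step. The degenerate case $Y=\emptyset$ is trivial.
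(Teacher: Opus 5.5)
Your proposal is correct and follows essentially the same route as the paper, which also obtains $Y$-descriptiveness of $\w_S(Y)$ from Lemma \ref{S_infinite_Y_descriptive} and then applies Lemma \ref{V_lengths_infinity_Y_contained} to conclude $Y\subseteq Y_{\w_S(Y)}=\i_S(Y)$.
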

\begin{proof}
Suppose $S$ is infinite.
By Lemma \ref{S_infinite_Y_descriptive},
 $\w_S(Y)$ is $Y$-descriptive, and so
 Lemma \ref{V_lengths_infinity_Y_contained} implies that $Y \subseteq Y_{\w_S(Y)}= \i_S(Y)$.
\end{proof}

If  $S$ is finite then
the relation between
$Y$ and $\i_S(Y)$
is completely different to that of Corollary \ref{length_constrained_S_infinite_finite}:

\begin{lem}\label{S_finite_disjoint}
Let $Y$ be a shift space.
If $S\subseteq\N_0$ is finite then $Y\cap\i_S(Y)=\emptyset$.
\end{lem}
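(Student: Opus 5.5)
The plan is to show that every point of $\i_S(Y)$ contains the magic symbol $\b$ within a bounded window. Since points of $Y$ use only the alphabet $A$ and never contain $\b$, disjointness then follows at once.

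Let $L:=\max S$; if $S$ is empty, handle it separately as described at the end. First take a generating sequence $x=\b v^{(1)}\b v^{(2)}\b\cdots$ with each $|v^{(i)}|\in S$. Consecutive occurrences of $\b$ in $x$ are separated by at most $L$ symbols of $A$. Hence every subword of $x$ of length $L+1$ contains $\b$, and the same holds for every shift iterate $\sigma^n(x)$.

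Next, the property ``every subword of length $L+1$ contains $\b$'' is the same as saying the point lies in $\bigcap_{n\ge0}\sigma^{-n}\big(\bigcup_{j=0}^{L}\sigma^{-j}[\b]\big)$ in the full shift on $A'$. This set is closed, because each cylinder is clopen, and it is shift-invariant. It contains the shift orbits of all generating sequences. So it contains their closure $\i_S(Y)$, by the definition of $\i_S(Y)$ as the smallest shift space containing those sequences. Consequently every $x\in\i_S(Y)$ has $x_i=\b$ for some $i\le L+1$.

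On the other hand, every $y\in Y$ lies in $A^\N$, so $y_i\neq\b$ for all $i$. Therefore $Y\cap\i_S(Y)=\emptyset$. If $S=\emptyset$, the set of generating sequences is empty, so $\i_S(Y)=\emptyset$ and the claim is trivial. The case $S=\{0\}$ is already covered by the argument, giving $\i_S(Y)=\{\b^\infty\}$. The only step needing care is passing the bounded-gap property to the orbit closure, and that is exactly the closedness of the set above.
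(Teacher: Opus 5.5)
Your proof is correct and is essentially the paper's argument. Both rest on the observation that, with $L=\max S$, every word of length greater than $L$ occurring in $\i_S(Y)$ contains the magic symbol $\b$, while no point of $Y$ contains $\b$. Your closed-set formulation makes explicit the passage from the generating sequences to their orbit closure, and your separate treatment covers the case $S=\emptyset$; the paper's shorter word-based proof leaves both of these implicit.
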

\begin{proof}
Suppose $S$ is finite.
All words in $\w(\i_S(Y))$ of length strictly greater than $\max S$ contain the magic symbol $\b$, whereas no words in $\w(Y)$ contain $\b$.
So if $y\in Y$, and $w\subset y$ with $|w|>\max S$, then $w\notin \w(\i_S(Y))$, so $y\notin \i_S(Y)$.
Therefore $Y\cap\i_S(Y)=\emptyset$.
\end{proof}

\begin{cor}\label{S_finite_sofic}
Let $Y$ be a shift space.
If $S\subseteq\N_0$ is finite then $\i_S(Y)$ is sofic.
\end{cor}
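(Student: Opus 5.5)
The plan is to show directly that $\partial_M(\i_S(Y))=\emptyset$ and then invoke Lemma \ref{sofic_iff_empty_boundary}. Write $X:=\i_S(Y)$ and $L:=\max S$. We may assume $S\neq\emptyset$; otherwise $\w_S(Y)=\emptyset$, the interspersion contains only the sequence $\b^\infty$ (or is empty), and it is trivially sofic.

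First, Lemma \ref{S(Z)properties}(b) gives $\partial_M X\subseteq Y\cap X$. Since $S$ is finite, Lemma \ref{S_finite_disjoint} gives $Y\cap X=\emptyset$. Hence $\partial_M X=\emptyset$, and Lemma \ref{sofic_iff_empty_boundary} then shows that $X$ is sofic.

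If one prefers not to rely on Thomsen's characterisation, there is a more hands-on route via follower sets. Any $x\in X$ contains the magic symbol $\b$ in every window of length $L+1$. So every word $w\in\w(X)$ of length at least $L+1$ has the form $u\b v$, where $v$ contains no $\b$ and $|v|\le L$. Because $\b$ is synchronizing (Lemma \ref{S(Z)properties}(a)), we get $F_X(u\b v)=F_X(\b v)$. Thus $F_X(w)$ is determined by the short word $v$, which ranges over the finite set $\bigcup_{n\le L}\w_n(Y)$. Words of length at most $L$ are themselves finitely many. Therefore $X$ has only finitely many follower sets, which is the definition of sofic.

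I do not expect a real obstacle here. The only point needing care is the synchronizing property, which holds because the $\b$ in $u\b v$ is an actual occurrence of the magic symbol; that step is exactly Lemma \ref{S(Z)properties}(a).
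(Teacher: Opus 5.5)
Your main argument is exactly the paper's proof: Lemma \ref{S(Z)properties}(b) gives $\partial_M \i_S(Y)\subseteq Y\cap \i_S(Y)$, Lemma \ref{S_finite_disjoint} shows this intersection is empty, and Lemma \ref{sofic_iff_empty_boundary} concludes. Your follower-set alternative is also correct: every long word has the form $u\b v$ with $|v|\le\max S$, and synchronization of $\b$ gives $F_X(u\b v)=F_X(\b v)$; this route has the mild advantage of avoiding Thomsen's characterisation of soficity.
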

\begin{proof}
A shift space is sofic if and only if its Markov boundary is empty, by
Lemma \ref{sofic_iff_empty_boundary}, so it suffices to show that $\partial_M  \i_S(Y) = \emptyset$.
By Lemma \ref{S(Z)properties}(b),
$$\partial_M  \i_S(Y)
= \partial_M Y_{\w_S(Y)}
\subseteq Y \cap Y_{\w_S(Y)}
= Y \cap \i_S(Y),
$$
but $Y\cap\i_S(Y)=\emptyset$
by Lemma \ref{S_finite_disjoint},
therefore $\partial_M  \i_S(Y) =\emptyset$, as required.
\end{proof}

\begin{cor}
If $y$ is any sequence on some alphabet $A$, and $S\subseteq\N_0$ is finite, 
then the corresponding $(S,y)$-gap subshift of \cite{RS25} is sofic.
\end{cor}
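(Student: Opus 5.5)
The plan is to deduce this directly from Corollary \ref{S_finite_sofic}, by identifying the $(S,y)$-gap subshift of \cite{RS25} with a length-constrained interspersion $\i_S(Y)$ for a suitable shift space $Y$. The natural candidate is the orbit closure
$$Y:=\overline{\{\sigma^n(y):n\ge 0\}}\subseteq A^\N .$$
This is a non-empty closed shift-invariant set. If the convention $\sigma(Y)=Y$ is insisted upon, I will instead take $Y$ to be the $\omega$-limit set of $y$, or more simply note that $\w(Y)$ is exactly the set of words appearing in $y$. In either case the relevant object is the language.

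The first step is to check that the $(S,y)$-gap shift is, by its definition in \cite{RS25} (consistent with the Remark following the definition of $\i_S$), the shift orbit closure of sequences $\b v^{(1)}\b v^{(2)}\b\cdots$ in which each $v^{(i)}$ is a word appearing in $y$ with $|v^{(i)}|\in S$. This is precisely the $\v$-interspersion $Y_\v$ with $\v=\w_S(Y)$, i.e.~$\i_S(Y)$.

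The main point to verify is that $\w(Y)$ coincides with the set of subwords of $y$. For the orbit closure this holds by definition. If an $\omega$-limit set is needed, one could instead define $Y$ as the smallest shift space whose language contains all subwords of $y$. The interspersion construction only uses $\v$ as a set of words, so this identification is all that matters.

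Once the identification is made, Corollary \ref{S_finite_sofic} gives the result immediately. For $S$ finite, Lemma \ref{S_finite_disjoint} and Lemma \ref{S(Z)properties}(b) yield $\partial_M\i_S(Y)=\emptyset$, so $\i_S(Y)$ is sofic by Lemma \ref{sofic_iff_empty_boundary}. Note that the argument in Lemma \ref{S_finite_disjoint} uses only that words of $\i_S(Y)$ longer than $\max S$ contain $\b$. This remains true for any set of words of bounded length, so no subtlety about invariance of $Y$ arises.
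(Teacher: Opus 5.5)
Your argument is correct and matches the paper's proof: take $Y$ to be the shift orbit closure of $y$, identify the $(S,y)$-gap shift with $\i_S(Y)$, and apply Corollary \ref{S_finite_sofic}. One slip is the suggested fallback to the $\omega$-limit set of $y$, since its language can be strictly smaller than the set of subwords of $y$ (e.g.\ for $y=10^\infty$); your closing observation, that the argument only needs $\w_S(Y)$ to consist of words of bounded length, already makes that detour unnecessary.
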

\begin{proof}
If $Y$ denotes the shift orbit closure of $y$, then the $(S,y)$-gap subshift is $\i_S(Y)$,
and the result follows from Corollary \ref{S_finite_sofic}.
\end{proof}

We can now show that for \emph{any} subset $S\subseteq \N_0$, the corresponding length-constrained interspersion is a TPO-preserving operator:

\begin{thm}\label{TPO_descriptive_TPO_S_infinite}
If $Y$ is any shift space with TPO, and $S\subseteq\N_0$,
then $\i_S(Y)$ has TPO.
\end{thm}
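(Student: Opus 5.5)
The argument splits according to whether $S$ is infinite or finite, since these two cases were shown above to give completely different relations between $Y$ and $\i_S(Y)$.

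Suppose first that $S$ is infinite. Then $\w_S(Y)$ is $Y$-descriptive by Lemma \ref{S_infinite_Y_descriptive}. Since $\i_S(Y)=Y_{\w_S(Y)}$ by definition, Theorem \ref{TPO_descriptive_TPO} applies directly and gives that $\i_S(Y)$ has TPO. Equivalently, the chain $\partial_M\i_S(Y)\subseteq Y\subseteq \i_S(Y)$, obtained from Lemma \ref{S(Z)properties}(b) and Corollary \ref{length_constrained_S_infinite_finite}, feeds into Proposition \ref{Markov_boundary_in_Y_TPO}. This case needs nothing beyond quoting these results.

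Suppose instead that $S$ is finite. Then $\i_S(Y)$ is sofic by Corollary \ref{S_finite_sofic}, and so it has TPO by Theorem \ref{sofictpotheorem}. In this case the TPO hypothesis on $Y$ is not needed at all.

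The only point requiring care is the degenerate situation in which $\i_S(Y)$ is empty. This happens, for instance, if $Y=\emptyset$, or if $S$ is finite and $\w_S(Y)$ is empty; by Lemma \ref{S_finite_disjoint} the latter can only occur when $S=\emptyset$, since the empty word always lies in $\w_0(Y)$. If one also needs to exclude $S=\{0\}$ with $Y=\emptyset$, note that then $\i_S(Y)$ is the single fixed point $\b^\infty$ or is empty. In all such cases the result holds either vacuously or by the sofic case, using whatever convention the paper adopts for TPO of the empty shift (where $\lip(\emptyset)$ is trivial). I expect no genuine obstacle beyond checking these conventions; the substantive work was already done in Theorem \ref{TPO_descriptive_TPO} and Corollary \ref{S_finite_sofic}.
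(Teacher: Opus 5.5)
Your proof is correct and follows the paper's argument exactly. For finite $S$ you get soficity from Corollary \ref{S_finite_sofic}, hence TPO from Theorem \ref{sofictpotheorem}; for infinite $S$ you combine Lemma \ref{S_infinite_Y_descriptive} with Theorem \ref{TPO_descriptive_TPO}.

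Your extra paragraph on degenerate cases goes beyond the paper, which is silent on them, and it is slightly muddled. Lemma \ref{S_finite_disjoint} has no bearing on whether $\w_S(Y)$ is empty: for non-empty $Y$ this happens exactly when $S=\emptyset$, because $Y$ has allowed words of every length. When $S=\emptyset$ you get $\i_S(Y)=\emptyset$, which has TPO only by convention, since the unique function in $\lip(\emptyset)$ has no periodic maximizing measure.
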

\begin{proof}
If $S$ is finite then $\i_S(Y)$ is sofic, by Corollary \ref{S_finite_sofic}, so $\i_S(Y)$ has TPO.
If $S$ is infinite, then $\w_S(Y)$ is $Y$-descriptive,
by Lemma \ref{S_infinite_Y_descriptive},
so $\i_S(Y)=Y_{\w_S(Y)}$
has TPO by Theorem \ref{TPO_descriptive_TPO}, as required.
\end{proof}

We now focus on the problem of realising all shift spaces $Y$ as Markov boundaries
of shift spaces $X$ (where $X$ has a synchronizing word).
Note that by
Proposition \ref{partial_Y_v_equals_Y}
(cf.~also Corollaries \ref{minimal_partial_Y_v_equals_Y} and \ref{boundary_S_equals_Y}),
this problem is already solved in the case where $Y$ does not have a synchronizing word.
In the case of general $Y$, the solution to the problem will use length-constrained interspersion, though with the following stronger assumption on the set $S$:

\begin{defn}
An infinite subset $S\subseteq\N_0$,
with its distinct elements
enumerated as $s_1<s_2<\ldots$,
will be called \emph{superlinear} if
\begin{equation}\label{differences}
s_{n+1} - s_n < s_{n+2} - s_{n+1}\quad\text{for all }n\in\N.
\end{equation}
\end{defn}

\begin{thm}\label{prescribed_markov_boundary}
If $Y$ is a shift space, and $S\subseteq \N_0$ is superlinear, then 
the $S$-interspersion $\i_S(Y)$ is a shift space with
Markov boundary equal to $Y$; that is, $\partial_M ( \i_S(Y)) = Y$.
\end{thm}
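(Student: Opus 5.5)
The inclusion $\partial_M(\i_S(Y))\subseteq Y$ is immediate from Lemma \ref{S(Z)properties}(b), since $\w_S(Y)\subseteq\w(Y)$. So the work is the reverse inclusion $Y\subseteq\partial_M(\i_S(Y))$. A superlinear $S$ is infinite, so Corollary \ref{length_constrained_S_infinite_finite} gives $Y\subseteq\i_S(Y)$. It therefore suffices to show that every word $w\in\w(Y)$ lies in $\w_\infty(\i_S(Y))$, that is, that $w$ has infinitely many distinct follower sets $F_{\i_S(Y)}(uw)$ as $u$ ranges over $P_{\i_S(Y)}(w)$. Unlike Proposition \ref{partial_Y_v_equals_Y}, we cannot import infinitely many follower sets from $Y$, since $Y$ may be sofic or even a single fixed point. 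The infinitude must instead be manufactured from the magic symbol and the arithmetic of $S$.

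The plan is to use predecessors of the form $u_k:=\b p_k$, where $p_kw$ is a word of length $s_k$ in $\w(Y)$ that has $w$ as a suffix. Such words exist for all large $k$: since $w\in\w(Y)$ and $Y$ is shift-invariant, $w$ extends to the left in $Y$ to arbitrary length. Then $\b p_kw$ is allowed in $\i_S(Y)$, since $p_kw\in\w_S(Y)$ and a block of length $s_k$ may be followed by $\b$. The claim is that the follower sets $F_{\i_S(Y)}(\b p_k w)$ are pairwise distinct for distinct large $k$. To distinguish them, I would look at which lengths $\ell$ satisfy $\b p_kw\,v\,\b\in\w(\i_S(Y))$ for some $v$ of length $\ell$ avoiding $\b$. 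For this to hold, the block between the two magic symbols, namely $p_kwv$, must lie in $\w_S(Y)$. Hence $|p_k w|+\ell = s_k-|w|+|w|+\ell\in S$, so the set of admissible $\ell$ is contained in $\{s_j-s_k+|w|... \}$; more precisely, it equals $\{s-|p_kw|+|w| ... \}$ restricted to those $s\in S$ for which $p_kw$ extends to the right in $Y$ to length $s$. Since $p_kw\in\w(Y)$, it extends to the right indefinitely, so every $s\in S$ with $s\ge |p_kw|$ is realisable. The admissible set of lengths for $v$ is therefore essentially the shifted tail $\{s_j-s_k: j\ge k\}$, and this set is read off from the follower set.

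The main obstacle is showing that these tails are pairwise distinct, and this is exactly where superlinearity enters. The smallest positive element of $\{s_j-s_k:j>k\}$ is the gap $s_{k+1}-s_k$. By (\ref{differences}) the gaps are strictly increasing, so distinct $k$ give distinct smallest positive elements, hence distinct admissible sets, hence distinct follower sets. Some care is needed with the bookkeeping: $v$ must be a word in $\w(Y)$ and carry no $\b$, and one must check that $\b p_kw v\b$ being allowed forces the entire stretch between the magic symbols to be a single $\w_S(Y)$ block. This follows from the explicit description (\ref{yvform}) of $\w(\i_S(Y))$. The case of the empty word (length $0$, when $0\in S$) also needs handling.

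Given distinct follower sets for infinitely many $k$, we get $w\in\w_\infty(\i_S(Y))$ for every $w\in\w(Y)$. Consequently each $y\in Y\subseteq\i_S(Y)$ lies in $\partial_M(\i_S(Y))$, which gives equality.
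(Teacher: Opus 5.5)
Your proof is correct and takes essentially the same approach as the paper. Both use predecessors $\b v$ whose block length is tied to $s_n$, and both separate the follower sets by how soon the next magic symbol can appear, with superlinearity making the gaps $s_{n+1}-s_n$ strictly increasing. The paper takes $|v^{(n)}w|=s_n+1$, so the earliest admissible next $\b$ sits directly at distance $s_{n+1}-s_n$; you take $|p_kw|=s_k$ and use the smallest \emph{positive} admissible length instead, which works equally well. Your garbled intermediate formula with the stray ``$+|w|$'' terms should simply read $\{s_j-s_k: j\ge k\}$.
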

\begin{proof}
Let $X:=\i_S(Y)$.
Note that $Y\subseteq X$, and that $Y$ is precisely the set of sequences in $X$ that do not contain the symbol $\b$.
By Lemma \ref{S(Z)properties}(a), 
$\b$ is a synchronizing word for $X$,
so any words containing it do not belong to $\w_\infty(X)$,
thus
$
\partial_M X \subseteq Y
$.
It remains to show that
$Y \subseteq \partial_M X$,
in other words that 
\begin{equation}\label{reverse inclusion2_later}
\w(Y) \subseteq \w_\infty(X).
\end{equation}
For this, let $w$ be an arbitrary word in $\w(Y)$.
There exists $N\in\N$ such that if
 $n>N$ then there exists $v^{(n)}\in\w(Y)$ such that $v^{(n)} w\in\w(Y)$ and
\begin{equation*}\label{length choice}
| v^{(n)}w| = s_n +1.
\end{equation*}
Now $\b v^{(n)}w\in\w(X)$, and we wish to consider the follower set $F_{X}(\b v^{(n)}w)$.
We claim that the length constraint 
in the definition of $X$ means that 
$F_{X}( \b v^{(n)}w)$ contains a word $u\in\w_{s_{n+1}-s_n}(X)$ 
such that 
$$u_{s_{n+1}-s_n}=\b\quad\text{and}\quad
u_i\neq \b\text{ for }1\le i\le s_{n+1}-s_n-1.$$
We now prove this claim. 
Since $v^{(n)}w\in \w_{s_n+1}(Y)$, it must appear as the prefix of a length-$s_{n+1}$ word in $\w(Y)$; 
that is, there exists 
$u'\in \w_{s_{n+1}-s_n-1}(Y)$ such that $v^{(n)}wu'\in \w_{s_{n+1}}(Y)$.
The definition of $X$ then means that $\b v^{(n)}wu'\b\in \w(X)$. 
Defining $u=u'\b$, we then see that 
 $u\in \w_{s_{n+1}-s_n}(X)$, $u\in F_{X}( \b v^{(n)}w)$, $u_{s_{n+1}-s_n}=\b$, 
and $u_i=u'_i\neq \b$ for $1\le i\le s_{n+1}-s_n-1$, so indeed $u$ satisfies the requirements of the claim.

On the other hand, $F_{X}( \b v^{(n)}w)$ does not contain any words of the form
$v=v'\b$ where $|v'|<s_{n+1}-s_n-1$ (the argument is similar to the one above, again using the length constraint).

Now let us compare $F_{X}( \b v^{(n)}w)$ to $F_{X}( \b v^{(m)}w)$, for $m>n>N$, and argue that these sets are distinct.
From the above, $F_{X}( \b v^{(n)}w)$ contains some word $u=u'\b$ where $|u|=s_{n+1}-s_n$,
but $F_{X}( \b v^{(m)}w)$ does not contain any word of the form $v=v'\b$
where $|v|<s_{m+1}-s_{m}$.
Now $s_{n+1}-s_n < s_{m+1}-s_{m}$ since $S$ is superlinear and $m>n$,
therefore $u$ does not belong to $F_{X}( \b v^{(m)}w)$,
and so the sets $F_{X}( \b v^{(n)}w)$ and $F_{X}( \b v^{(m)}w)$ are distinct.

So  $F_{X}( \b v^{(n)}w) \neq F_{X}( \b v^{(m)}w)$
for all $m>n>N$,
therefore the cardinality of the set
$
 \{ F_{X}(vw) : v\in P_{X}(w) \}
$
is infinite,
and hence $w\in \w_\infty(X)$.
But $w\in \w(Y)$ was arbitrary, so 
$\w(Y) \subseteq \w_\infty(X)$, which is the required inclusion (\ref{reverse inclusion2_later}),
so the result is proved.
\end{proof}

In the language of operators on the class $\s$ of all shift spaces, Theorem \ref{prescribed_markov_boundary}
can be interpreted in terms of right-inverses of the Markov boundary operator:

\begin{cor}\label{right_inverse}
If $S\subseteq \N_0$ is superlinear, then 
the $S$-interspersion map $\i_S:\s\to\s$ is a right-inverse
to the Markov boundary map $\partial_M:\s\to\s$, i.e.~$\partial_M\circ\i_S={\rm id}_\s$.
\end{cor}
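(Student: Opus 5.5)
This is a direct reformulation of Theorem \ref{prescribed_markov_boundary}, so the plan is simply to unwind the operator notation. The map $\i_S:\s\to\s$ is defined on every shift space $Y$, including the empty one. We must show that $\partial_M(\i_S(Y))=Y$ for every $Y\in\s$, since that is exactly the identity $\partial_M\circ\i_S={\rm id}_\s$.

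For non-empty $Y$, Theorem \ref{prescribed_markov_boundary} applies verbatim with the given superlinear $S$ and yields $\partial_M(\i_S(Y))=Y$. Nothing further is needed in this case.

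The only step requiring a moment's care is the degenerate case $Y=\emptyset$, which the convention in Notation \ref{shift_space_notation} admits as a shift space. Here $\w(Y)$ consists of no words at all: not even the empty word belongs to it, because no sequence lies in $Y$. So $\w_S(Y)=\emptyset$, and there are no sequences of the form (\ref{interspersed_sequence}). The smallest shift space containing them is therefore $\emptyset$, giving $\i_S(\emptyset)=\emptyset$. The Markov boundary of the empty shift is empty, since $\partial_M X\subseteq X$. Hence $\partial_M(\i_S(\emptyset))=\emptyset$ as well.

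If instead one adopted the convention that the empty word is allowed in $\emptyset$, we would get $\i_S(\emptyset)=\{\b^\infty\}$ when $0\in S$. This shift is sofic, so its Markov boundary is again empty by Lemma \ref{sofic_iff_empty_boundary}. Either way the identity holds on all of $\s$, and the corollary follows.
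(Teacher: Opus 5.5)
Your proposal is correct and matches the paper, whose proof is simply that the corollary is immediate from Theorem~\ref{prescribed_markov_boundary}. Your separate check of the degenerate case $Y=\emptyset$ is correct but not strictly needed: by the paper's convention the empty set is a shift space, so the theorem as stated already covers it.
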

\begin{proof}
Immediate from Theorem \ref{prescribed_markov_boundary}.
\end{proof}

\begin{rem}
The superlinear length-constrained interspersions $\i_S$ of Corollary \ref{right_inverse} are not the only right-inverses
to the Markov boundary map: for example the context free shift $X$ was shown, in the proof
of Theorem \ref{context_free_theorem}, to have Markov boundary equal to a certain shift of finite type $Y$,
however $X$ is not equal to any length-constrained interspersion of that $Y$.
\end{rem}

While Theorem \ref{TPO_descriptive_TPO_S_infinite} asserts, for completely general $S\subseteq\N_0$, that $\i_S(Y)$ has TPO whenever $Y$ does, the advantage of Theorem \ref{prescribed_markov_boundary}
is in allowing the construction of eventually sofic shifts of \emph{arbitrary level} $n$, 
via iterated interspersion (thereby answering the two questions posed at the start of this section):

\begin{cor}\label{superlinear_eventually_sofic_thm}
Suppose $Y$ is a sofic shift,  and $n\in\N_0$.
If $S_i$ is a superlinear subset of $\N_0$, for $1\le i\le n$,
 then
$$X= (\i_{S_n} \circ \cdots \circ \i_{S_1}) (Y)$$ 
is eventually sofic of level $n$.
In particular, the shift space $X$ has TPO.
\end{cor}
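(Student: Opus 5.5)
The plan is to apply Corollary \ref{right_inverse} $n$ times, peeling off one interspersion at a time. Write $X_0:=Y$ and $X_k:=\i_{S_k}(X_{k-1})$ for $1\le k\le n$, so that $X=X_n$. Each $S_k$ is superlinear, so Theorem \ref{prescribed_markov_boundary} gives $\partial_M X_k = X_{k-1}$ for each $k$. Induction on $j$ then yields $\partial_M^j X = X_{n-j}$ for $0\le j\le n$; in particular $\partial_M^n X = X_0 = Y$.

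Next I would verify the definition of eventually sofic of level $n$ (Definition \ref{presofic_defn}). This requires $\partial_M^n X$ to be a \emph{non-empty} sofic shift. Sofic is immediate, since $\partial_M^n X=Y$. Non-emptiness is not automatic: if $Y=\emptyset$, then every $X_k$ is empty as well, since the interspersion of the empty shift has no sequences of the form (\ref{interspersed_sequence}), because $\w(\emptyset)$ contains no words. So I would read the statement as implicitly assuming $Y\neq\emptyset$, as is needed for the level to be meaningful, and state this explicitly in the proof. With $Y\neq\emptyset$, $\partial_M^n X$ is a non-empty sofic shift, so $X$ is eventually sofic of level $n$. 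The level is well defined by the remark following Definition \ref{presofic_defn}.

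Finally, Theorem \ref{pre_sofic_theorem} gives that $X$ has TPO. Alternatively, one can apply Theorem \ref{TPO_descriptive_TPO_S_infinite} iteratively, starting from the fact that $Y$ has TPO by Theorem \ref{sofictpotheorem}; this route does not even need the non-emptiness caveat.

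The only delicate point is the empty-$Y$ edge case. Everything else is a direct chaining of Theorem \ref{prescribed_markov_boundary}, so I expect no substantive obstacle.
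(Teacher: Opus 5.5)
Your proof is the paper's own argument: iterate Theorem \ref{prescribed_markov_boundary} (equivalently Corollary \ref{right_inverse}) to get $\partial_M^n X=Y$, then apply Theorem \ref{pre_sofic_theorem}. Your explicit assumption $Y\neq\emptyset$ makes precise what the paper tacitly uses when it calls $Y$ ``sofic and non-empty''. One correction to your aside: the alternative route via Theorem \ref{TPO_descriptive_TPO_S_infinite} also needs $Y\neq\emptyset$. For $Y=\emptyset$ you get $X=\emptyset$, and since $\per(\emptyset,\sigma)=\emptyset$, the sole element of $\lip(\emptyset)$ has no periodic maximizing measure. So the TPO conclusion genuinely fails there, and Theorem \ref{sofictpotheorem} cannot be applied to the empty shift.
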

\begin{proof}
By
Theorem \ref{prescribed_markov_boundary} (or equivalently Corollary \ref{right_inverse}),
$\partial_M\circ \i_{S_i}$ is the identity map on $\s$, for each $1\le i\le n$.
So
 $\partial_M^n X = \partial_M^n \circ (\i_{S_n} \circ \cdots \circ \i_{S_1}) (Y)=Y$ is sofic and non-empty, 
and therefore $X$ is eventually sofic of level $n$.
In particular $X$ is eventually sofic, so Theorem \ref{pre_sofic_theorem} implies it has TPO.
\end{proof}

\begin{cor}\label{superlinear_eventually_sofic_cor}
Suppose $Y$ is a sofic shift, $S$ is a superlinear subset of $\N_0$, and $n\in\N_0$. Then
the iterated interspersion $X= \i_S^n(Y)$ is eventually sofic of level $n$.
In particular, the shift space $X$ has TPO.
\end{cor}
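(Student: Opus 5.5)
This is a direct specialisation of Corollary \ref{superlinear_eventually_sofic_thm}, obtained by taking all the superlinear sets equal. Set $S_i:=S$ for every $1\le i\le n$. Each $S_i$ is then superlinear, and the composition $\i_{S_n}\circ\cdots\circ\i_{S_1}$ is exactly the $n$-fold iterate $\i_S^n$. Corollary \ref{superlinear_eventually_sofic_thm} therefore applies verbatim to $X=\i_S^n(Y)$. It gives that $X$ is eventually sofic of level $n$, and that $X$ has TPO by Theorem \ref{pre_sofic_theorem}.

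The only point to check is the degenerate case $n=0$. Here $\i_S^0$ is the identity, so $X=Y$ is sofic. For $X$ to be eventually sofic of level $0$ we need $Y\neq\emptyset$, and this is the same non-emptiness that Corollary \ref{superlinear_eventually_sofic_thm} implicitly uses when it asserts that $\partial_M^nX=Y$ is non-empty.

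It may also help to see the key identity directly. Corollary \ref{right_inverse} gives $\partial_M\circ\i_S={\rm id}_\s$. Applying this $n$ times shows that $\partial_M^n\circ\i_S^n={\rm id}_\s$, so $\partial_M^n X=Y$, which is sofic and non-empty. No step here is a real obstacle; all of the substance sits in Theorem \ref{prescribed_markov_boundary}.
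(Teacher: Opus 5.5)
Your proposal is correct and is exactly the paper's argument: specialise Corollary~\ref{superlinear_eventually_sofic_thm} to the choice $S_i=S$ for $1\le i\le n$, so that the composition $\i_{S_n}\circ\cdots\circ\i_{S_1}$ becomes $\i_S^n$. Your side remark is also apt: since $\partial_M^n X=Y$ by iterating Corollary~\ref{right_inverse}, both statements tacitly require $Y\neq\emptyset$ for $X$ to be eventually sofic of level $n$.
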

\begin{proof}
Immediate from Corollary \ref{superlinear_eventually_sofic_thm}, by choosing $S_i=S$ for all $1\le i\le n$.
\end{proof}

\begin{rem}
The example in Section \ref{introsection}, involving the construction of eventually sofic shifts of level $n$ whose iterated
Markov boundary is the even shift, corresponds to the choice, in Corollary \ref{superlinear_eventually_sofic_cor},
of $Y$ to be the even shift, and $S=\{2^i:i\in\N_0\}$.
\end{rem}

We now turn our attention away from eventually sofic shifts, and towards the fragile,
and specimen, shift spaces of Section \ref{specimen_section}.
In this case the unconstrained interspersion $\i=\i_{\N_0}$ 
(corresponding to the non-superlinear case $S=\N_0$) will be key,
since it guarantees a \emph{specification} condition:

\begin{lem}\label{unconstrained_interspersion_implies_specification}
Suppose $Z$ is any shift space, and let $X=\i(Z)$ be the corresponding unconstrained interspersion.
Then $X$ has specification, and hence variable specification.
\end{lem}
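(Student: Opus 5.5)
We will show directly that $N=2$ works in the definition of specification: for all $u,w\in\w(X)$ there is a word $v\in\w_2(X)$ with $uvw\in\w(X)$. Variable specification then follows immediately, since $v\in\w_2(X)\subseteq\bigcup_{n\le 2}\w_n(X)$.

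The plan rests on the explicit description of the language $\w(Y_\v)$ from the proof of Lemma \ref{S(Z)properties}(a), taken with $\v=\w(Z)$ (all lengths allowed, since $S=\N_0$). By that description, every $u\in\w(X)$ has the form $w\b w^{(1)}\b\cdots\b w^{(l)}\b w'$, where $w$ is a suffix of a word in $\v$, $w'$ is a prefix of a word in $\v$, and the $w^{(i)}$ lie in $\v$. Because $\v=\w(Z)$ is closed under taking subwords, prefixes and suffixes of words in $\v$ are again in $\v$. In particular, the final segment of $u$ after its last $\b$ (or all of $u$, if $\b$ does not appear) belongs to $\w(Z)$. Symmetrically, the initial segment of $w$ before its first $\b$ belongs to $\w(Z)$.

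First choice of $v$: take $v=\b\b$. Write $u=u_0u_1$, where $u_1\in\w(Z)$ is the part after the last $\b$, and $w=w_1w_0$, where $w_1\in\w(Z)$ is the part before the first $\b$. Then
\[
uvw=u_0\,u_1\,\b\,\epsilon\,\b\,w_1\,w_0 .
\]
Here the block $u_1$ and the block $w_1$ are full members of $\v$, and so is the empty word $\epsilon\in\w_0(Z)$ sitting between the two inserted magic symbols. The whole word is therefore again of the form (\ref{yvform}): it begins with a suffix of a word of $\v$ and ends with a prefix of one. Hence $uvw\in\w(X)$. Strictly, a single $\b$ already suffices, because $u_1\b w_1$ is of the required form; so in fact $N=1$ works. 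I would present $v=\b\in\w_1(X)$ for this reason, noting that $\b\in\w(X)$ holds trivially.

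The only point needing care is the bookkeeping step: the leading and trailing segments of $u$ and $w$, which were merely suffixes or prefixes of $\v$-words, become full $\v$-blocks once they are flanked by $\b$'s. This is exactly where $\v=\w(Z)$, being closed under subwords and containing all lengths, is used. It is also why the argument would fail for a constrained $S$.

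Two degenerate cases also need checking.

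1. If $u$ or $w$ is empty, the empty word is allowed as a block.
2. If $Z=\emptyset$, then $\v=\{\epsilon\}$ by the convention $\w_0=\{\epsilon\}$ (or the claim is vacuous), so $X=\{\b^\infty\}$, which trivially has specification.

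Everything else is a direct reading of (\ref{yvform}).
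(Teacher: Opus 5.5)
Your argument is correct and matches the paper's, which simply takes $N=1$ and $v=\b$ and observes that $u\b w\in\w(X)$. Your use of the description (\ref{yvform}) of $\w(Y_\v)$, together with the fact that $\w(Z)$ is closed under subwords, just makes explicit the justification the paper leaves to the reader. Two cosmetic points: the detour through $N=2$ is unnecessary, though $v=\b\b$ also works since $\epsilon\in\w(Z)$. Also, for $Z=\emptyset$ the paper's conventions give $\w(Z)=\emptyset$ and hence $X=\emptyset$ rather than $\{\b^\infty\}$, so only your ``vacuous'' alternative is accurate there.
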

\begin{proof}
Let $\b$ denote the magic symbol 
for $X$.
Choosing $N=1$, 
we note that if
$u,w\in \w(X)$ then 
 $u\b w\in\w(X)$,
so the word $v:=\b\in \w_1(X)$ satisfies the condition of Definition \ref{variable_specification_defn}.
\end{proof}

\begin{cor}\label{Z_min_nue_interspersion_specimen}
Suppose $Z$ is a dynamically minimal non-uniquely ergodic shift space, and let $X=\i(Z)$ be the corresponding unconstrained interspersion.
Then $X$ is a specimen shift space.
In particular, $X$ is fragile, and has TPO.
\end{cor}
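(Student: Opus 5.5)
The plan is to verify the two defining conditions of a specimen shift space (Definition \ref{specimen_defn}) for $X=\i(Z)$. After that, fragility and TPO follow at once from Theorem \ref{specimen_implies_TPO}.

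First, variable specification. This is exactly Lemma \ref{unconstrained_interspersion_implies_specification}: for any $u,w\in\w(X)$, inserting the magic symbol gives $u\b w\in\w(X)$, so $X$ has specification with $N=1$, and hence variable specification.

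Second, the Markov boundary. Here I would show that $\partial_M X=Z$. The set $S=\N_0$ is infinite, and $Z$ is dynamically minimal. I also need $Z$ to be non-periodic. This holds because a periodic dynamically minimal shift space is a single finite periodic orbit, which supports only one invariant measure and is therefore uniquely ergodic; this contradicts the hypothesis. So Corollary \ref{boundary_S_equals_Y}, applied with $Y=Z$ and $S=\N_0$, gives $\partial_M(\i(Z))=Z$. Alternatively, one can argue directly: a dynamically minimal non-periodic shift has no synchronizing word (cf.~Lemma \ref{synchronizingwordperiodicpoint}), so Corollary \ref{partial_Y_v_equals_Y_cor} applies.

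Putting these together, $\partial_M X=Z$ is dynamically minimal but not uniquely ergodic, and $X$ has variable specification, so $X$ is specimen. Theorem \ref{specimen_implies_TPO} then says $X$ is fragile and has TPO; note that $X$ is non-sofic because its boundary $Z$ is non-empty.

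The only point needing care is the non-periodicity step, i.e.\ the observation that non-unique ergodicity rules out a periodic minimal set. It is short, so I do not expect a real obstacle anywhere.
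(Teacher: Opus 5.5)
Your proposal is correct and follows the paper's proof: variable specification from Lemma \ref{unconstrained_interspersion_implies_specification}, $\partial_M X = Z$ from Corollary \ref{boundary_S_equals_Y}, then Theorem \ref{specimen_implies_TPO}. Your explicit check that non-unique ergodicity forces $Z$ to be non-periodic, which is needed to apply Corollary \ref{boundary_S_equals_Y}, is a step the paper leaves implicit, and your argument for it is correct.
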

\begin{proof}
By Lemma \ref{unconstrained_interspersion_implies_specification}, $X$ has variable specification.
By Corollary \ref{boundary_S_equals_Y}, the Markov boundary of $X$ is equal to $Z$.
So the hypothesis about $Z$ means that $\partial_M X$ is
dynamically minimal and non-uniquely ergodic, therefore $X$ is specimen
(cf.~Definition \ref{specimen_defn}).
But specimen shifts are fragile, and have TPO, by Theorem \ref{specimen_implies_TPO}, so the result follows.
\end{proof}

In light of Corollary \ref{Z_min_nue_interspersion_specimen},
more concrete examples of specimen shift spaces  $X=\i(Z)$ can be given by choosing $Z$ explicitly.
Oxtoby \cite{oxtoby} was the first to construct examples of dynamically minimal non-uniquely ergodic shift spaces.
More generally, the class of Toeplitz shifts (introduced by Jacobs \& Keane \cite{jacobskeane})
was shown to provide many examples of dynamically minimal non-uniquely ergodic shift spaces
(see e.g.~\cite{downarowicz, markleypaul, williams}).
The concrete class of fragile shift spaces mentioned in Section \ref{introsection}, on the alphabet $\{0,1\}$,
are of this kind.
In particular, we are now able 
to prove the following, corresponding to Theorem \ref{X_tpo_does_not_imply_partialX_tpo} in Section \ref{introsection}: 

\begin{thm}
There exist shift spaces with TPO, but whose Markov boundary does not have TPO.
\end{thm}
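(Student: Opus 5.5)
The plan is to take $X=\i(Z)$, where $Z$ is a dynamically minimal but not uniquely ergodic shift space. Such $Z$ exist: for instance an Oxtoby shift, or a suitable Toeplitz shift (see \cite{oxtoby, downarowicz, williams}). By Corollary \ref{Z_min_nue_interspersion_specimen}, $X$ is a specimen shift space, and in particular $X$ has TPO. By Corollary \ref{boundary_S_equals_Y}, applied with $S=\N_0$ (which is infinite), we have $\partial_M X=Z$; here we use that $Z$, being dynamically minimal and not uniquely ergodic, is certainly non-periodic, since a periodic minimal shift is a single finite orbit and hence uniquely ergodic.

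It remains to show that $Z$ itself does not have TPO. Since $Z$ is dynamically minimal and non-periodic, it contains no periodic points at all. Indeed, any periodic orbit would be a closed invariant subset, hence equal to $Z$, which would make $Z$ finite and periodic. Therefore $\per(Z,\sigma)=\emptyset$.

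Now Lemma \ref{periodicdenseinergodic} says that if $Z$ had TPO, then $\per(Z,\sigma)$ would be weak$^*$ dense in $\e(Z,\sigma)$. But $\e(Z,\sigma)$ is non-empty, so an empty set cannot be dense in it. This already gives a contradiction. More directly, no $f\in\lip(Z)$ has the periodic optimization property, so $P(Z,\sigma)=\emptyset$, which is not dense in the non-empty space $\lip(Z)$. Either way, $Z=\partial_M X$ does not have TPO, while $X$ does.

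No step here is a genuine obstacle. The only point needing care is the verification that $Z$ is non-periodic, so that Corollary \ref{boundary_S_equals_Y} applies. This follows from the non-unique ergodicity, since a periodic minimal system is uniquely ergodic.
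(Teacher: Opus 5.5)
Your proposal is correct and follows essentially the same route as the paper: take a specimen shift $X=\i(Z)$ with $Z$ dynamically minimal and not uniquely ergodic (Corollary \ref{Z_min_nue_interspersion_specimen}), so that $X$ has TPO while its Markov boundary $\partial_M X=Z$ has no periodic orbits and therefore cannot have TPO. You also spell out a few details the paper leaves implicit: that $Z$ is non-periodic, that it has no periodic points, and that $P(Z,\sigma)=\emptyset$.
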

\begin{proof}
Any specimen shift space $X$ has TPO, by
Theorem \ref{specimen_implies_TPO},
and its Markov boundary is dynamically minimal but not uniquely ergodic, so in particular does not contain any periodic orbits,
therefore $\partial_M X$ does not have TPO.
The class of specimen shift spaces is non-empty by 
Corollary \ref{Z_min_nue_interspersion_specimen}.
\end{proof}

Specific examples of specimen shifts can be used to provide similarly specific examples of eventually specimen shifts.
A generalisation of Corollary \ref{Z_min_nue_interspersion_specimen},
and a particular instance of Theorem \ref{eventually_specimen_implies_TPO}, is the following:

\begin{cor}
Suppose $Z$ is a dynamically minimal non-uniquely ergodic shift space.
Suppose $S_i$ is a superlinear subset of $\N_0$, for each $ i\in\N$.
For each $n\in\N$,
$$X_n= (\i_{S_n} \circ \cdots \circ \i_{S_1}\circ \i) (Z)$$ 
is an eventually specimen shift space,
and is eventually fragile, and has TPO.
Moreover, the family $\{X_n\}_{n\in\N}$ consists of pairwise distinct shift spaces.
\end{cor}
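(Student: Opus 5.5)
The plan is to compute the iterated Markov boundaries of $X_n$ explicitly, and then read off the three conclusions from that computation together with one counting argument. Set $X_0:=\i(Z)$. By Corollary \ref{Z_min_nue_interspersion_specimen}, $X_0$ is a specimen shift space. Each $S_i$ is superlinear, so Theorem \ref{prescribed_markov_boundary} (equivalently Corollary \ref{right_inverse}) gives $\partial_M\circ\i_{S_i}={\rm id}_\s$. Applying this $n$ times yields
$$\partial_M^n X_n=\partial_M^n\circ(\i_{S_n}\circ\cdots\circ\i_{S_1})(X_0)=X_0.$$
So $\partial_M^n X_n$ is specimen, which means $X_n$ is eventually specimen. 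Theorem \ref{eventually_specimen_implies_TPO} then shows that $X_n$ is eventually fragile and has TPO.

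For pairwise distinctness, I would use the same computation to show that the number of boundary iterations needed to reach the empty set is different for each $n$. Since $\partial_M^n X_n=X_0=\i(Z)$, applying Corollary \ref{boundary_S_equals_Y} with $S=\N_0$ gives $\partial_M X_0=Z$. This uses that $Z$ is dynamically minimal and non-periodic; $Z$ is non-periodic because it is not uniquely ergodic, so in particular not a single periodic orbit. Next, $\partial_M Z=Z$, because a dynamically minimal non-periodic shift has no synchronizing word (Lemma \ref{synchronizingwordperiodicpoint}) and hence equals its own Markov boundary, by the argument of Proposition \ref{partial_Y_v_equals_Y}. So for every $k\ge n+1$ we get $\partial_M^k X_n=Z$.

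Now suppose $X_n=X_m$ with $m<n$. Taking boundaries gives $\partial_M^{m}X_m=X_0$ and $\partial_M^{m}X_n=(\i_{S_n}\circ\cdots\circ\i_{S_{m+1}})(X_0)$, so
$$X_0=(\i_{S_n}\circ\cdots\circ\i_{S_{m+1}})(X_0).$$
Applying $\partial_M$ to both sides gives $Z=(\i_{S_{n-1}}\circ\cdots\circ\i_{S_{m+1}})(X_0)$. If $n-1>m$, the right-hand side is an interspersion, so it contains a magic symbol. That magic symbol is synchronizing by Lemma \ref{S(Z)properties}(a), contradicting the fact that $Z$ has no synchronizing word. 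If $n-1=m$, the equation reads $Z=X_0=\i(Z)$. But $\i(Z)$ also has a synchronizing word (its magic symbol), while $Z$ has none, so again we have a contradiction.

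The main obstacle is this distinctness step, specifically making sure that an interspersion can never coincide with a shift that lacks synchronizing words. The point I would stress is that the magic symbol actually occurs in the language of any interspersion: every sequence of the form (\ref{interspersed_sequence}) contains it. Since it is a synchronizing word there, the interspersion has a synchronizing word, which rules out equality with $Z$.
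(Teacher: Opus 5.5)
Your proof is correct. Its first half is exactly the paper's argument: Corollary~\ref{right_inverse} gives $\partial_M^n X_n=\i(Z)$, which is specimen by Corollary~\ref{Z_min_nue_interspersion_specimen}, and Theorem~\ref{eventually_specimen_implies_TPO} finishes.

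For pairwise distinctness you take a different route. The paper uses the fact that each interspersion adjoins a symbol disjoint from the previous alphabet. So for $m<n$, the outermost magic symbol of $X_n$ occurs in sequences of $X_n$ but is not even a letter of $X_m$, which is a one-line bookkeeping argument. You instead separate the $X_n$ by a structural invariant:
\begin{itemize}
\item for $0\le k\le n$, $\partial_M^k X_n$ is an interspersion of a non-empty shift, so its magic symbol is a synchronizing word;
\item $\partial_M^{n+1}X_n=\partial_M\i(Z)=Z$ has no synchronizing word.
\end{itemize}
So the least $k$ for which $\partial_M^k X_n$ has no synchronizing word is $n+1$. This costs more, since it uses Corollary~\ref{boundary_S_equals_Y}, Lemma~\ref{S(Z)properties}(a) and Lemma~\ref{synchronizingwordperiodicpoint}. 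In return, it depends only on languages and the Markov boundary map, not on the convention for labelling new symbols.

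There are two harmless slips. First, $\partial_M$ removes the outermost interspersion first, so $\partial_M^m X_n=(\i_{S_{n-m}}\circ\cdots\circ\i_{S_1})(X_0)$, not $(\i_{S_n}\circ\cdots\circ\i_{S_{m+1}})(X_0)$. Only the number $n-m$ of interspersions matters, so nothing breaks. Second, your stated plan about ``the number of boundary iterations needed to reach the empty set'' does not match what you prove. The iterated boundaries stabilise at the non-empty $Z$ and never become empty. The invariant you actually use is the first iterate lacking a synchronizing word.
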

\begin{proof}
By Corollary \ref{Z_min_nue_interspersion_specimen}, $\i(Z)$ is specimen.
Each $X_n= (\i_{S_n} \circ \cdots \circ \i_{S_1}\circ \i) (Z)$
is therefore eventually specimen, because
$\partial_M^n (\i_{S_n} \circ \cdots \circ \i_{S_1}\circ \i) (Z) =\i(Z)$ 
by Corollary \ref{right_inverse}. 
Hence each $X_n$ is eventually fragile and has TPO, by Theorem \ref{eventually_specimen_implies_TPO}.

To see that the shift spaces $X_n$ are pairwise distinct,
simply note that if $m<n$ then there is a (magic) symbol appearing in certain sequences belonging to
$X_n= (\i_{S_n} \circ \cdots \circ \i_{S_1}\circ \i) (Z)$
that does not appear in any sequences belonging to
$X_m= (\i_{S_m} \circ \cdots \circ \i_{S_1}\circ \i) (Z)$.
\end{proof}

\section{Shift spaces without typical periodic optimization}\label{magicmorsesection}

A shift space $X$ without any periodic orbits clearly does not have TPO.
More generally, 
if $X$  has 
periodic orbits but
$\per(X,\sigma)$ is not weak$^*$ dense in $\e(X,\sigma)$, then
Lemma \ref{periodicdenseinergodic} implies that $X$ does not have TPO.
For example if $X\subseteq\{0,1,2,3\}^\N$ is defined as the shift orbit closure of
the set of sequences
of the form 
$$01^{|w^{(1)}|+1}w^{(1)} 01^{|w^{(2)}|+1}w^{(2)} 01^{|w^{(3)}|+1}w^{(3)}\ldots$$
where each $w^{(i)}\in\w(Z)$ for some 
dynamically minimal and uniquely ergodic
non-periodic subshift $Z\subseteq\{2,3\}^\N$,
then $X$ has  
synchronizing word 
0, and the unique ergodic measure supported by $Z\subseteq X$
cannot be approximated by periodic orbit measures, since the cylinder set $[1]$ is given mass zero by the former measure, and mass $1/2$ by all of the latter measures; this $X$ does not have TPO.

In this section we show that, even if
periodic measures are dense in the set of ergodic measures, 
indeed even if they are dense in the set of all invariant measures,
then typical periodic optimization need not hold
(see Theorem \ref{magic_Morse_non_tpo} below, and cf.~Theorem \ref{non_tpo} in Section \ref{introsection}).
For simplicity of exposition, we will do this by constructing a specific shift space $X$,
which we call the \emph{magic Morse shift}, by augmenting the well known Morse shift 
with certain sequences containing an additional (synchronizing) symbol.

Let $\theta$ denote the \emph{Morse substitution} on the alphabet $\{0,1\}$ (see e.g.~\cite[p.~7]{pytheasfogg}), given by $\theta(0)=01$, $\theta(1)=10$,
and extending in the usual way to a morphism of $\{0,1\}^*$ by setting $\theta(a_1\ldots a_k)= \theta(a_1)\ldots\theta(a_k)$.
The \emph{Morse sequence} 
$$\omega
= 0110100110010110\ldots$$
is the fixed point of $\theta$ beginning with the symbol 0
(see e.g.~\cite{pytheasfogg} for properties of the Morse sequence, which appeared in
Morse \cite{morse}, having previously been studied by Prouhet \cite{prouhet} and Thue \cite{thue}).
For $n\ge0$, if the length-$2^n$ word $\theta_n$ is defined by
$$\theta_n=\theta^n(0),$$
with the convention that $\theta^0(0)=0$, then each $\theta_n$ is a prefix of $\omega$.
The \emph{Morse shift} is defined to be the shift orbit closure $Z=\overline{\{\sigma^n(\omega) : n\ge 0\}}$,
and is well known
to be dynamically minimal and uniquely ergodic.
Let $\lambda$ denote the Morse measure, i.e.~the unique $\sigma$-invariant measure supported by $Z$.

Introducing a new symbol $\b\notin\{0,1\}$, define the \emph{magic Morse shift} $X$
to be the closure in $\{0,1,\b\}^\N$ (which as usual is equipped with the metric $d=d_\alpha$, for some $\alpha\in(0,1)$)
of the set of all points of the form
$$
\sigma^n(\theta_{n_1}\b\, \theta_{n_2}\b\cdots),
$$
where $n\ge 0$
and $(n_i)_{i=1}^\infty$  is a sequence
of natural numbers.
In other words, $X$ is the $\v$-interspersion of $Z$ (cf.~Definition \ref{interspersion_defn}),
where $\v=\{\theta_n: n\ge 0\}\subseteq \w(Z)$, with magic symbol $\b$.
In particular,
$\b$ is
a synchronizing word for $X$, 
by  Lemma \ref{S(Z)properties}(a),
and $Z = \partial_M X \subseteq X$ by Corollary \ref{minimal_partial_Y_v_equals_Y},
since $Z$ is dynamically minimal but non-periodic,
and $\v$ is $Z$-descriptive.

For $n\in\N$, $x\in X$, define the \emph{empirical measure} $E_n(x)$ by
$$
E_n(x)=\frac{1}{n}\sum_{i=0}^{n-1} \delta_{\sigma^i(x)}\,,
$$
noting that this is a probability measure, but is $\sigma$-invariant only when $\sigma^n(x)=x$.

A convenient tool in the following analysis will be the Wasserstein distance 
(see e.g.~\cite{villani, villani2}) 
on the set of Borel probability measures:

\begin{defn}\label{wassersteindefn}
For a compact metric space $(Y,d)$, the \emph{Wasserstein distance} $W$ on $\m(Y)$ is defined by
\begin{equation}\label{wasserstein_defn_eq}
W(\mu,\nu)= \max\left\{ \int \phi\, d\mu - \int \phi\, d\nu : \phi\in\lip(Y), |\phi|_{\lip(Y)}\le 1\right\}.
\end{equation}
\end{defn}

The Wasserstein distance
 is consistent with the weak$^*$ topology, and for our purposes the following simple properties
will be useful:

\begin{lem}\label{wasserstein_lemma}
If $(Y,d)$ is a compact metric space, the Wasserstein distance $W$ satisfies
\begin{itemize}
\item[(a)] If $\mu,\nu\in\m(Y)$ then $W(\mu,\nu)\le \text{diam}(Y)$.
\item[(b)] If $x,y\in Y$ then $W(\delta_x,\delta_y)=d(x,y)$.
\item[(c)] If $n\in\N$, and  $a_i\ge0$ for $1\le i\le n$ with $\sum_{i=1}^n a_i=1$,  and $\mu_i,\nu_i\in\m(Y)$  for $1\le i\le n$,
then 
$$
W\left( \sum_{i=1}^n a_i \mu_i, \ \sum_{i=1}^n a_i \nu_i \right)
\le \
\sum_{i=1}^n a_i  W(\mu_i, \nu_i ).
$$
\end{itemize}
\end{lem}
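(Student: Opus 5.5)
The plan is to work directly from Definition \ref{wassersteindefn}, taking the maximum over $\phi\in\lip(Y)$ with $|\phi|_{\lip(Y)}\le 1$. The maximum is attained, by Arzel\`a--Ascoli after normalising $\phi$ by an additive constant, and adding a constant does not change $\int\phi\,d\mu-\int\phi\,d\nu$. Each of the three parts will reduce to a one-line estimate for a fixed admissible $\phi$, followed by taking the supremum.

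For (a), fix an admissible $\phi$ and a point $y_0\in Y$. Subtracting the constant $\phi(y_0)$ does not change the difference of integrals, so I may assume $\phi(y_0)=0$. Then $|\phi(y)|\le d(y,y_0)\le \text{diam}(Y)$ for all $y$. A more careful bound comes from writing the difference as a double integral:
\[
\int\phi\,d\mu-\int\phi\,d\nu=\iint(\phi(x)-\phi(y))\,d\mu(x)\,d\nu(y)\le \iint d(x,y)\,d\mu\,d\nu\le\text{diam}(Y).
\]
Taking the supremum over $\phi$ gives (a).

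For (b), the upper bound is immediate: $\phi(x)-\phi(y)\le d(x,y)$ for every admissible $\phi$. For the lower bound I will take $\phi=d(\cdot,y)$, which is $1$-Lipschitz by the triangle inequality and satisfies $\phi(x)-\phi(y)=d(x,y)$. Together these give $W(\delta_x,\delta_y)=d(x,y)$.

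For (c), I will use linearity of the integral in the measure. For any admissible $\phi$,
\[
\int\phi\,d\Big(\sum_i a_i\mu_i\Big)-\int\phi\,d\Big(\sum_i a_i\nu_i\Big)=\sum_i a_i\Big(\int\phi\,d\mu_i-\int\phi\,d\nu_i\Big)\le\sum_i a_iW(\mu_i,\nu_i),
\]
where the last step uses $a_i\ge0$. Taking the maximum over $\phi$ on the left gives (c).

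None of these steps is a real obstacle. The only point needing a little care is that the maximum in the definition is attained, or equivalently that one can work with suprema throughout, and the argument above is written so that only supremum bounds are needed.
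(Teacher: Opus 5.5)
Your proposal is correct and follows the paper's approach: the paper's proof is just the remark that all three parts follow directly from the definition (\ref{wasserstein_defn_eq}), and your three one-line estimates (the double-integral bound for (a), the test function $d(\cdot,y)$ for (b), linearity in the measure for (c)) fill in exactly those details. The preliminary normalisation $\phi(y_0)=0$ in (a) is superfluous, since on its own it gives only $2\,\text{diam}(Y)$, but the double-integral argument you then give yields the stated bound.
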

\begin{proof}
These follow easily from
(\ref{wasserstein_defn_eq}).
\end{proof}

Recalling that $X$ is equipped with the metric $d=d_\alpha$ for some $\alpha\in(0,1)$,
a key estimate will be on the Wasserstein distance between the Morse measure $\lambda$ and certain empirical measures:

\begin{lem}
If $\lambda$ is the Morse measure, and $x=\theta_n \b x'$ for some $n\in\N$, $x'\in X$, 
then
\begin{equation}\label{2_to_the_n}
W(E_{2^{n}}(x),\lambda)\le \left(\frac{2\a}{1-\a}\right) \frac{1}{2^{n}},
\end{equation}
and
\begin{equation}\label{2_to_the_n_plus_one}
W(E_{2^{n}+1}(x),\lambda)\le \left(\frac{1+\a}{1-\a}\right) \frac{1}{2^{n}+1}.
\end{equation}
\end{lem}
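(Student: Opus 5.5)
We will compare $E_{2^n}(x)$ with a measure built from the same words but placed inside Morse sequences, and then average. The key structural fact is that $\theta_n=\theta^n(0)$ is a prefix of the Morse sequence $\omega$, and moreover $\lambda$ can be written as an exact average of point masses along a period-like block. Since $\omega=\theta^n(\omega)$ is a concatenation of blocks $\theta^n(0)=\theta_n$ and $\theta^n(1)=\overline{\theta_n}$, the frequency argument gives
\[
\lambda=\frac{1}{2^n}\sum_{i=0}^{2^n-1}\bigl(\tfrac12(\sigma^i)_*\lambda_0+\tfrac12(\sigma^i)_*\lambda_1\bigr),
\]
where $\lambda_a$ is $\lambda$ conditioned on points of $Z$ lying in $\theta^n(Z)$ and beginning with $\theta^n(a)$. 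In particular, $\lambda=\frac{1}{2^n}\sum_{i=0}^{2^n-1}(\sigma^i)_*\rho$ for a probability measure $\rho$ supported on points of $Z$ of the form $\theta^n(z)$. By symmetry under the bit-flip, which preserves $\lambda$, it should suffice to use $\rho$ supported on $\theta^n(Z)\cap[\theta_n]$. I would justify this via the standard fact that $\lambda=\frac{1}{2^n}\sum_i(\sigma^i)_*\lambda|_{\theta^n Z}\cdot 2^n$ together with flip invariance. Any such $\rho$ gives the needed decomposition, and the main obstacle is ensuring all points $y$ in $\supp\rho$ start with $\theta_n$.

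With $\rho$ in hand, Lemma \ref{wasserstein_lemma}(c) reduces (\ref{2_to_the_n}) to bounding
\[
\frac{1}{2^n}\sum_{i=0}^{2^n-1} d\bigl(\sigma^i x,\sigma^i y\bigr)
\]
uniformly for $y\in[\theta_n]$, with $\lambda$ written as $\int E_{2^n}(y)\,d\rho(y)$. For $0\le i\le 2^n-1$, the points $\sigma^ix$ and $\sigma^iy$ agree on at least the first $2^n-i$ symbols, since both continue with the tail of $\theta_n$. Hence $d(\sigma^ix,\sigma^iy)\le\alpha^{2^n-i}$, using Lemma \ref{wasserstein_lemma}(b). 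Summing the geometric series gives at most $\alpha/(1-\alpha)$, so $W\le \frac{\alpha}{1-\alpha}2^{-n}$, which is better than the stated $\frac{2\alpha}{1-\alpha}2^{-n}$. The extra factor of $2$ leaves slack in case the decomposition only works with blocks of the form $\theta^n(a)$ for $a\in\{0,1\}$, i.e. if $\rho$ must also charge $[\overline{\theta_n}]$. In that case I would instead split $\theta_n=\theta_{n-1}\overline{\theta_{n-1}}$ and match halves, costing two geometric series and giving the factor $2\alpha/(1-\alpha)$.

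For (\ref{2_to_the_n_plus_one}), write
\[
E_{2^n+1}(x)=\frac{2^n}{2^n+1}E_{2^n}(x)+\frac{1}{2^n+1}\delta_{\sigma^{2^n}x},
\]
and similarly $\lambda=\frac{2^n}{2^n+1}\lambda+\frac{1}{2^n+1}\lambda$. By Lemma \ref{wasserstein_lemma}(c) and (a), using $\mathrm{diam}\le 1$,
\[
W\le \frac{2^n}{2^n+1}\cdot\frac{2\alpha}{1-\alpha}\cdot 2^{-n}+\frac{1}{2^n+1}=\frac{1}{2^n+1}\Bigl(\frac{2\alpha}{1-\alpha}+1\Bigr)=\frac{1+\alpha}{1-\alpha}\cdot\frac{1}{2^n+1},
\]
as required. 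The main obstacle remains the first step, namely the precise decomposition of $\lambda$ as an average of empirical measures over $\theta^n$-blocks beginning with $\theta_n$. I would first try the recognizability / $2^n$-periodic tower structure of the Morse shift, i.e. $Z=\bigsqcup_{i<2^n}\sigma^i\theta^n(Z)$. Pushing $\lambda$ restricted to the base of this tower gives $\rho$, and the matching of $\sigma^ix$ with $\sigma^iy$ only uses agreement on a $\theta^n$-block of the same type. To handle blocks of type $\overline{\theta_n}$, the halving argument above absorbs the factor $2$.
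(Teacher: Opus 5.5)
Your final argument works, and its core estimate is the paper's. You bound $W(E_{2^n}(x),E_{2^n}(y))$ uniformly over $y$ whose length-$2^n$ prefix is either $\theta_n$ (direct matching, $<\frac{\alpha}{1-\alpha}2^{-n}$) or $\overline{\theta_n}=\overline{\theta_{n-1}}\,\theta_{n-1}$ (crosswise matching of the two halves, $<\frac{2\alpha}{1-\alpha}2^{-n}$). You then handle the extra point $\sigma^{2^n}x=\b x'$ with weight $\frac{1}{2^n+1}$ and $\text{diam}\le 1$.

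However, you should discard the flip-symmetry reduction, because it is false. Flip invariance of $\lambda$ only says that the part of $\rho$ on $[\overline{\theta_n}]$ mirrors the part on $[\theta_n]$. It does not make $\frac{1}{2^n}\sum_{i<2^n}(\sigma^i)_*\rho_0=\lambda$, where $\rho_0$ is the normalised restriction of $\lambda$ to $\theta^n(Z)\cap[\theta_n]$; that average is not even $\sigma$-invariant. For $n=1$ it gives $[01]$ mass $\frac12$, whereas $\lambda([01])=\frac13$ (the Morse frequencies of $00,01,10,11$ are $\frac16,\frac13,\frac13,\frac16$).

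Consequently the sharper bound $\frac{\alpha}{1-\alpha}2^{-n}$ you claim is false. For $n=1$, the $1$-Lipschitz function $\phi=-d(\cdot,\{x,\sigma x\})$ has $\int\phi\,dE_2(x)=0$ and $\int\phi\,d\lambda=-\left(\frac16\alpha+\frac13\alpha^2+\frac12\alpha\right)$. Hence $W(E_2(x),\lambda)\ge\frac23\alpha+\frac13\alpha^2$, which exceeds $\frac{\alpha}{2(1-\alpha)}$ for every $\alpha\le\frac14$. So the $\overline{\theta_n}$-blocks are not a contingency to be absorbed by slack: they carry half the mass. The halving argument is genuinely needed, and it is where the constant $\frac{2\alpha}{1-\alpha}$ comes from.

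Where your route does differ from the paper is in how $\lambda$ is written as a mixture of length-$2^n$ block empirical measures.

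The paper uses only that $\omega=\theta^n(\omega)$ is generic for $\lambda$. It writes $\lambda=\lim_{M\to\infty}E_{2^nM}(\omega)$ with $E_{2^nM}(\omega)=\frac1M\sum_{m<M}E_{2^n}(\sigma^{m2^n}(\omega))$, and each $\sigma^{m2^n}(\omega)$ begins with $\theta^n(0)$ or $\theta^n(1)$ automatically. Lemma \ref{wasserstein_lemma}(c) is then applied to a finite average, and the bound passes to the weak$^*$ limit one Lipschitz test function at a time.

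Your Kakutani--Rokhlin tower gives the exact identity $\lambda=\int E_{2^n}(y)\,d\rho(y)$ with no limit, but it costs more input:
\begin{itemize}
\item Recognizability of $\theta$: the levels $\sigma^i\theta^n(Z)$, $0\le i<2^n$, must be pairwise disjoint. Since $\sigma$ is non-invertible, you also need $Z\cap\sigma^{-i}\sigma^i\theta^n(Z)=\theta^n(Z)$, in order to get $\lambda(\theta^n(Z))=2^{-n}$ and $\lambda(A)=\sum_{i<2^n}\lambda(\theta^n(Z)\cap\sigma^{-i}A)$.
\item An integral version of Lemma \ref{wasserstein_lemma}(c). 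This follows at once from the definition of $W$ as a supremum of linear functionals.
\end{itemize}
Recognizability is standard for the Morse substitution, so your argument is valid once the flip step is removed. The paper's is simply the more elementary way to reach the same uniform block estimate.
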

\begin{proof}
For each $n\in\N$, 
$\lambda$ is the weak$^*$ limit
$\lim_{M\to\infty} E_{2^nM}(\omega)$, and we can write
\begin{equation}\label{E_m_M}
E_{2^nM}(\omega)
=
\frac{1}{M}\sum_{m=0}^{M-1} E_{2^n}\left(\sigma^{m2^n}(\omega)\right),
\end{equation}
so that if it can be shown that
\begin{equation}\label{W_E_m}
W\left(E_{2^n}(x),  E_{2^n}\left(\sigma^{m2^n}(\omega)\right) \right) \le \left(\frac{2\a}{1-\a}\right) \frac{1}{2^{n}}
\end{equation}
for $0\le m\le M-1$, then (\ref{E_m_M}), (\ref{W_E_m}) and Lemma \ref{wasserstein_lemma} give
$$
W\left(E_{2^{n}}(x),  E_{2^nM}(\omega)\right)\le
\frac{1}{M} \sum_{m=0}^{M-1} W( E_{2^{n}}(x),  E_{2^n}\left(\sigma^{m2^n}(\omega)\right) )
\le \left(\frac{2\a}{1-\a}\right) \frac{1}{2^{n}},
$$
for all $M\in\N$, and hence (\ref{2_to_the_n}) follows from the fact that $\lambda=\lim_{M\to\infty} E_{2^nM}(\omega)$.

To prove (\ref{W_E_m}), each of the $2^n$ points in the support of $E_{2^n}(x)$ 
will be paired with a nearby point in the support of
$E_{2^n}(\sigma^{m2^n}(\omega))$, with the distance between these points 
equal to the Wasserstein distance between the corresponding Dirac measures, by Lemma \ref{wasserstein_lemma}(b).
Specifically, writing $\omega=\omega_1\omega_2\ldots$, so that
	$\omega=\theta^n(\omega)=\theta^n(\omega_1)\theta^n(\omega_2)\cdots$
for all $n\in\N$,
we see that 
 the length-$2^n$ prefix of
$\sigma^{m2^n}(\omega)$
is either $\theta^n(0)$ or $\theta^n(1)$,
for all $m\ge0$.

In the case that $\theta^n(0)=\theta_n$
is the length-$2^n$ prefix of
$\sigma^{m2^n}(\omega)$, since $\theta_n$ is also the length-$2^n$ prefix of $x=\theta_n \b x'$
then 
\begin{equation*}
d(\sigma^i(x),\sigma^i(\sigma^{m2^n}(\omega)))\le \alpha^{2^n-i} \quad\text{for }0\le i\le 2^n-1,
\end{equation*}
and therefore
$$
W\left(E_{2^n}(x),  E_{2^n}\left(\sigma^{m2^n}(\omega)\right) \right) \le
\frac{1}{2^n} \sum_{i=0}^{2^n-1} d(\sigma^i(x),\sigma^i(\sigma^{m2^n}(\omega)))
\le
\frac{1}{2^n} \sum_{i=0}^{2^n-1} \alpha^{2^n-i} < \frac{\alpha}{1-\alpha} \frac{1}{2^n},
$$
which in particular implies (\ref{W_E_m}).

In the case that $\theta^n(1)$
is the length-$2^n$ prefix of
$\sigma^{m2^n}(\omega)$,
since
$\theta^n(1)=\theta^{n-1}(1)\theta^{n-1}(0)$ and $\theta_n=\theta^{n-1}(0)\theta^{n-1}(1)$,
we see that $\sigma^{2^{n-1}}(\sigma^{m2^n}(\omega))$ and $x$
share the same length-$2^{n-1}$ prefix $\theta^{n-1}(0)=\theta_{n-1}$,
and
$\sigma^{m2^n}(\omega)$ and $\sigma^{2^{n-1}}(x)$
share the same length-$2^{n-1}$ prefix $\theta^{n-1}(1)$.
Consequently,
\begin{equation}\label{813}
d\left( \sigma^i(x), \sigma^i( \sigma^{2^{n-1}}(\sigma^{m2^n}(\omega)) )\right)
\le \alpha^{2^{n-1}-i} \quad\text{for }0\le i\le 2^{n-1}-1,
\end{equation}
and
\begin{equation}\label{814}
d\left( \sigma^i(\sigma^{2^{n-1}}(x)), \sigma^i(\sigma^{m2^n}(\omega) )\right)
\le \alpha^{2^{n-1}-i} \quad\text{for }0\le i\le 2^{n-1}-1,
\end{equation}
so the bounds (\ref{813}), (\ref{814}),
together with
an argument analogous to the one above, give
\begin{equation}\label{815}
W\left(E_{2^{n-1}}(x),E_{2^{n-1}}(\sigma^{2^{n-1}}(\sigma^{m2^n}(\omega)))\right) < \frac{\alpha}{1-\alpha}\frac{1}{2^{n-1}}
\end{equation}
and
\begin{equation}\label{816}
W\left(E_{2^{n-1}}(\sigma^{2^{n-1}}(x)),
E_{2^{n-1}}(\sigma^{m2^n}(\omega))\right) 
< \frac{\alpha}{1-\alpha}\frac{1}{2^{n-1}}.
\end{equation}
Now
\begin{equation*}
E_{2^n}(x)= \frac{1}{2}E_{2^{n-1}}(x) + \frac{1}{2}E_{2^{n-1}}(\sigma^{2^{n-1}}(x))
\end{equation*}
and
\begin{equation*}
E_{2^n}(\sigma^{m2^n}(\omega))
= \frac{1}{2} E_{2^{n-1}}(\sigma^{m2^n}(\omega))
+ \frac{1}{2} E_{2^{n-1}}(\sigma^{2^{n-1}}(\sigma^{m2^n}(\omega))),
\end{equation*}
so
\begin{align*}
&W \left(E_{2^n}(x),  E_{2^n}\left(\sigma^{m2^n}(\omega)\right) \right) \\
&\le
\frac{1}{2}W\left(E_{2^{n-1}}(x),E_{2^{n-1}}(\sigma^{2^{n-1}}(\sigma^{m2^n}(\omega)))\right) 
+
\frac{1}{2} W\left(E_{2^{n-1}}(\sigma^{2^{n-1}}(x)),
E_{2^{n-1}}(\sigma^{m2^n}(\omega))\right) ,
\end{align*}
and therefore (\ref{815}), (\ref{816}) yield the required inequality (\ref{W_E_m}).

Noting that $E_{2^n+1}(x)= \frac{2^n}{2^n+1} E_{2^n}(x)+\frac{1}{2^n+1} E_1(\b x')$, it follows from Lemma 
\ref{wasserstein_lemma}(c)
 that
$$
W(E_{2^{n}+1}(x),\lambda)\le  \frac{2^n}{2^n+1} W( E_{2^n}(x),\lambda) +\frac{1}{2^n+1} W(E_1(\b x'),\lambda),$$
and using (\ref{2_to_the_n}) together with the bound $ W(E_1(\b x'),\lambda)\le 1$
(see  Lemma \ref{wasserstein_lemma}(a))
 gives
(\ref{2_to_the_n_plus_one}).
\end{proof}

\begin{thm}\label{magic_Morse_non_tpo}
The magic Morse shift $X$ does not have TPO, though its periodic measures are weak$^*$ dense in the set of all invariant 
measures.
\end{thm}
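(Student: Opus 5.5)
The plan has two parts: density of periodic measures, and an open set of Lipschitz functions for which the non-periodic Morse measure $\lambda$ is always maximizing. Both rest on the Wasserstein estimate (\ref{2_to_the_n_plus_one}).

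First I will describe the points of $X$. From the form (\ref{yvform}) of $\w(X)$, any $x\in X$ containing $\b$ infinitely often is, after deleting a finite prefix ending in $\b$, of the form $\theta_{n_1}\b\,\theta_{n_2}\b\cdots$. Any $x$ containing only finitely many $\b$'s has $\sigma^k(x)\in Z$ for some $k$. Consequently every ergodic $\mu\neq\lambda$ satisfies $\mu([\b])>0$, since $Z$ is uniquely ergodic. Moreover, $\mu$-almost every point is generic and of the first form.

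\emph{Key estimate.} I will show that
$$W(\mu,\lambda)\le C\,\mu([\b])\quad\text{for all }\mu\in\e(X,\sigma),\qquad C:=\frac{1+\a}{1-\a}.$$
Take a $\mu$-generic point $x=\theta_{n_1}\b\,\theta_{n_2}\b\cdots$ and let $L_k=\sum_{j\le k}(2^{n_j}+1)$. Then $E_{L_k}(x)$ is the convex combination, with weights $(2^{n_j}+1)/L_k$, of the empirical measures $E_{2^{n_j}+1}$ along the successive blocks. By Lemma \ref{wasserstein_lemma}(c) and (\ref{2_to_the_n_plus_one}),
$$W(E_{L_k}(x),\lambda)\le \sum_{j\le k}\frac{2^{n_j}+1}{L_k}\cdot\frac{C}{2^{n_j}+1}=C\,\frac{k}{L_k}=C\,E_{L_k}(x)([\b]).$$
The same argument applies to $n_j=0$, where the bound $W\le 1$ suffices. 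Letting $k\to\infty$ and using weak$^*$ continuity of $W$, together with $[\b]$ being clopen, gives the estimate.

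\emph{Non-TPO.} Let $f=-\mathbf 1_{[\b]}$, which is Lipschitz because it is locally constant. Take any $g\in\lip(X)$ with $|g|_{\lip(X)}<1/C$. For every ergodic $\mu$,
$$\int(f+g)\,d\mu-\int(f+g)\,d\lambda=-\mu([\b])+\Big(\int g\,d\mu-\int g\,d\lambda\Big)\le-\mu([\b])+|g|_{\lip}W(\mu,\lambda)\le 0.$$
By ergodic decomposition, $\lambda$ is then $(f+g)$-maximizing. Since $\lambda$ is not periodic, $f+g$ cannot have the periodic optimization property. So the open ball $\{f+g:\|g\|_{\lip(X)}<1/C\}$ is disjoint from $P(X,\sigma)$, and $X$ does not have TPO by Corollary \ref{equivalentTPO}.

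\emph{Density of periodic measures.} First, $\lambda$ is the weak$^*$ limit of the periodic measures on $(\theta_n\b)^\infty\in X$, by (\ref{2_to_the_n_plus_one}). Next let $\mu\neq\lambda$ be ergodic, with generic point $x$ starting with $\b$. Let $\ell$ be large with $\sigma^\ell(x)\in[\b]$. By Lemma \ref{synchronizingwordperiodicpoint}, since $\b$ is synchronizing, $(x_1\ldots x_\ell)^\infty\in X$. Its orbit measure lies within $O(1/\ell)$ of $E_\ell(x)$ in Wasserstein distance, by the pairing argument of Corollary \ref{synchronizedclosingproperty}, so it converges to $\mu$. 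Finally, a general invariant measure is a limit of finite rational convex combinations $\sum_i (p_i/q)\mu_i$ of such periodic measures. Each $\mu_i$ has a periodic word $u_i$ beginning with $\b$. By (\ref{yvform}), the concatenation $u_1^{a_1}u_2^{a_2}\cdots u_r^{a_r}$ generates a periodic point of $X$, for suitable multiplicities $a_i$ proportional to $p_i/|u_i|$, scaled up by a factor $K$. The junction points contribute $O(1/K)$ to the Wasserstein distance, so these periodic measures approximate the combination.

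I expect the main obstacle to be the key estimate. The delicate points are verifying the block structure of generic points (in particular ruling out $Z$-tails for $\mu$-typical points) and passing correctly to the limit.
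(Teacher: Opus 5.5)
Your proof is correct. The density half is essentially the paper's argument. The non-TPO half takes a different route.

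The paper proves the Wasserstein bound only for periodic measures. Writing a periodic word as $u=\theta_{n_1}\b\cdots\theta_{n_k}\b$, it obtains $W(\rho,\lambda)\le C\,k/|u|\le C\int d(\cdot,Z)\,d\rho$ by a finite computation. It then takes $f=-d(\cdot,Z)$ and shows that, for every function in the ball of radius $1/C$ about $f$, each periodic $\rho$ does strictly worse than $\lambda$; this uses $\int d(\cdot,Z)\,d\rho>0$. Hence the ball misses $\lip^{\per}(X,\sigma)$.

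You instead extend the bound to every ergodic $\mu$, using generic points and Birkhoff's theorem, and work with $f=-\mathbf{1}_{[\b]}$. You conclude that $\lambda$ is maximizing for every function in the ball.

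The paper's route is more elementary. Periodic measures have explicit block structure, so it needs no generic points, no limiting argument in $W$, and no ergodic decomposition. Your route gives more: $\lambda$ is robustly maximizing, i.e.\ $\mathrm{int}(\lip_\partial(X))\neq\emptyset$. By Lemma \ref{Nclosedconvexsameinteriors}, $\lambda$ is then in fact the unique maximizing measure on that ball, which is exactly the robust non-periodic optimization described in the introduction. The paper could reach the same conclusion by extending its periodic estimate to all of $\m(X,\sigma)$. Both sides of that estimate are weak$^*$ continuous, and periodic measures are dense.

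One small slip. For a block with $n_j=0$, which arises only if $\theta_0=0$ is admitted as in the description $\v=\{\theta_n:n\ge 0\}$, you need $W(E_2(0\b x'),\lambda)\le C/2$. The trivial bound $W\le 1$ gives this only when $\alpha\ge 1/3$. The inequality is nonetheless true. Split $\lambda=\tfrac12\lambda_0+\tfrac12\lambda_1$ according to the cylinders $[0]$ and $[1]$, and use Lemma \ref{wasserstein_lemma}(c) together with $W(\delta_x,\nu)\le\int d(x,y)\,d\nu(y)$. This gives $W\le \tfrac12\alpha+\tfrac12\le C/2$. Alternatively, replace $C$ by $\max\{C,2\}$; this changes nothing in the rest of your argument.
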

\begin{proof}
To show that the TPO property does not hold,
let $\rho$ be any periodic measure on $X$.
There exists a periodic word $u$ for $\rho$ of the form $u=\theta_{n_1}\b\, \theta_{n_2}\b\ldots \theta_{n_k}\b$,
for some $k\in\N$ and $n_1,\ldots,n_k\in\N$.
Since the symbol $\b$ does not appear in the words $\theta_{n_1},\ldots, \theta_{n_k}$,
then $\rho([\b])=k/|u|$.
Writing 
$z^{(1)}=\b u^\infty$, 
and $z^{(j+1)}=\sigma^{2^{n_j}+1}(z^{(j)})$ for $1\le j\le k-1$,
the points $z^{(1)}, \ldots, z^{(k)}$ in $\supp (\rho)$ all begin with symbol $\b$, so are distance 1 from 
the Morse shift $Z\subseteq\{0,1\}^\N$,
therefore
\begin{equation}\label{distanceintegral}
\int d(\cdot, Z)\, d\rho \ge \frac{k}{|u|}=\rho([\b]).
\end{equation}
Writing $y^{(1)}=u^\infty$, 
and $y^{(j+1)}=\sigma^{2^{n_j}+1}(y^{(j)})$ for $1\le j\le k-1$, then
$$
\rho = \frac{1}{|u|} \sum_{j=1}^k  (2^{n_j}+1)  E_{2^{n_j}+1}(y^{(j)}).
$$
Now $W(E_{2^{n_j}+1}(y^{(j)}),\lambda)\le \left(\frac{1+\a}{1-\a}\right) \frac{1}{2^{n^j}+1}$ by 
(\ref{2_to_the_n_plus_one}),
therefore Lemma \ref{wasserstein_lemma}(c) gives
\begin{equation}\label{wasserstein_upper_bound}
W(\rho,\lambda) \le \frac{1}{|u|}\sum_{j=1}^k (2^{n_j}+1) W(E_{m_j}(y^{(j)}),\lambda)
\le \left(\frac{1+\a}{1-\a}\right)\frac{k}{|u|},
\end{equation}
and (\ref{distanceintegral}), (\ref{wasserstein_upper_bound}) together give
\begin{equation}\label{big_Delta_W}
W(\rho,\lambda) \le \left(\frac{1+\a}{1-\a}\right) \int d(\cdot, Z)\, d\rho .
\end{equation}

Now define $f:=-d(\cdot,Z)$. Clearly $f\in\lip(X)$, with $\beta(f)=0$, and $\lambda$ is the unique $f$-maximizing measure.
The inequality (\ref{big_Delta_W}) means that
\begin{equation}\label{Wcbetaphi_morse}
\int \varphi\, d\rho - \int \varphi\, d\lambda \le |\varphi|_{\lip(X)}
\left(\frac{1+\a}{1-\a}\right)
\left(\beta(f) -\int f\, d\rho \right)
\end{equation}
for all $\varphi\in\lip(X)$.
If moreover $|\varphi|_{\lip(X)}< \frac{1-\a}{1+\a}$, then (\ref{Wcbetaphi_morse}),
together with the fact that $\beta(f)-\int f\, d\rho = \int d(\cdot,Z)\, d\rho>0$,
 gives
\begin{equation}\label{Wcbetaphisimple_morse}
\int \varphi\, d\rho - \int \varphi\, d\lambda < \beta(f) -\int f\, d\rho \,,
\end{equation}
and $\beta(f)=\int f\, d\lambda$, so
(\ref{Wcbetaphisimple_morse}) becomes
\begin{equation}\label{Wcbetaphisimpler_morse}
\int (f+\varphi)\, d\rho < \int (f+ \varphi)\, d\lambda  \,,
\end{equation}
which means that $\rho$ is not $(f+\varphi)$-maximizing.
Defining $$B=\left\{g\in\lip(X):\|f-g\|_{\lip(X)}< \frac{1-\a}{1+\a} \right\},$$
we see that if $g\in B$ then 
$\rho$ is not $g$-maximizing.
But $\rho$ was an arbitrary periodic measure, so all members of the open ball $B$ do not have a periodic maximizing measure, and therefore $X$ does not have the TPO property.

One method of showing that the periodic measures on $X$ are dense in $\m(X,\sigma)$ is to note that both the closability and linkability properties of Gelfert \& Kwietniak \cite{GK18} are satisfied, though we here indicate a self-contained proof.

We first claim that periodic measures are dense in the set of ergodic measures. To see this, note that on the one hand the Morse measure
$\lambda$ is weak$^*$ approximated by the sequence of periodic measures concentrated on period-$(2^n+1)$ orbits
with periodic word $\theta_n\b$. For any other ergodic measure $\mu$, there exists some $x=\theta_{n_1}\b\, \theta_{n_2}\b\ldots \in X$ that is generic in the sense that $\mu=\lim_{n\to\infty} E_n(x)$, and the sequence $\rho_k$ of periodic
measures with periodic word
$\theta_{n_1}\b\, \theta_{n_2}\b\ldots \theta_{n_k}\b$ is readily checked to satisfy $\lim_{k\to\infty}\rho_k = \mu$.

If $\mu\in\m(X,\sigma)$ is not ergodic, then it can be weak$^*$ approximated by an equi-barycentre $\frac{1}{k}\sum_{i=1}^k \mu_i$ of (not necessarily distinct) ergodic measures $\mu_i$, each of which can (by the above argument) be approximated by a periodic measure $\rho_i$. If $p$ is a common multiple of the (least) periods of the orbits corresponding to the $\rho_i$,
then these measures have periodic words $w_i$ of common length $p$, 
where each $w_i$ begins with symbol $\b$,
and the self-concatenation $w_i^n$ is also a periodic word for $\rho_i$, for any $n\in\N$.
Concatenating these latter words gives a word $w^{(n)}:=w_i^n\ldots w_k^n$ (of length $knp$) that is the periodic word for some periodic measure $\rho^{(k)}$ on $X$. The $\rho^{(k)}$ converge to $\frac{1}{k}\sum_{i=1}^k \rho_i$,
and therefore approximate $\mu$, as required.
\end{proof}

\end{document}